\bmdefine{\bX}{X}
\bmdefine{\ba}{$\alpha$}
\newcommand{\referenza}{}
\newtheorem{thm}{Theorem}
\newtheorem*{thm*}{Theorem \referenza}
\newtheorem*{thm**}{Theorem}
\newtheorem{corollario}[thm]{Corollary}
\newtheorem*{cor*}{Corollary \referenza}
\newtheorem{lem}[thm]{Lemma}
\newtheorem*{lem*}{Lemma \referenza}
\newtheorem*{clm*}{Claim \referenza}
\newtheorem{prop}[thm]{Proposition}
\newtheorem*{prop*}{Proposition \referenza}
\newtheorem*{ass*}{Assumption \referenza}
\newtheorem{prob}[thm]{Problem}
\newtheorem*{conj*}{Conjecture \referenza}
\newtheorem{defi}[thm]{Definition}
\theoremstyle{plain}
\theoremstyle{definition}
\newtheorem{exa}[thm]{Example}
\theoremstyle{remark}
\newtheorem{rmk}[thm]{Remark}
\def \N {\mathbb N}
\def \ezi {\varepsilon}
\def \Q {\mathbb Q}
\def \R {\mathbb R}
\def \C {\mathbb C}
\def \Z {\mathbb Z}
\def \hM {\hat{M}}
\def \hX {\hat{X}}
\def \lv {\lVert}
\def \rv {\rVert}
\def  \alphabq {C^{4, \alpha}_{b,\ezi}}
\def \p {\partial}
\def \bpa {\bar{\partial}}
\renewcommand{\Re}{\mathsf{Re}\,}
\begin{document}
 
\title{Blowing up Chern-Ricci flat balanced metrics}
\subjclass[2020]{53C55, 53C25, 53C07}

\author{Elia Fusi}
\address{	Dipartimento Di Matematica, Università di Parma, Parco Area delle Scienze 53/A, 43124, Parma, Italy}
\email{elia.fusi@unipr.it}
\author{Federico Giusti}
\address{Instituto de Ciencias Matemáticas, Calle Nicolás Cabrera 13-15, 28049 Madrid, Spain}
\email{federico.giusti@icmat.es}

\keywords{}

\begin{abstract}
Given a compact Chern-Ricci flat balanced orbifold, we show that its blow-up at a finite family of smooth points admits constant Chern scalar  curvature balanced metrics, extending Arezzo-Pacard's construction to the balanced setting. Moreover, if the orbifold has isolated singularities and admits crepant resolutions, we show that they always carry Chern-Ricci flat balanced metrics, without any further hypothesis. Along the way, we study two Lichnerowicz-type operators originating from complex connections and investigate the relation between their kernel and holomorphic vector fields, with the aim of discussing the general constant Chern scalar curvature balanced case. Ultimately, we provide a variation of the main Theorem assuming the existence of a special $(n-2,n-2)$-form and we present several classes of examples in which all our results can be applied.
\end{abstract}

\maketitle

\section*{Introduction}

Over the last decades, great attention was reserved by many authors to the search for \emph{canonical} Kähler metrics as uniquely determined representatives of the K\"ahler class, by coupling the Kähler condition with several curvature restrictions, culminating with the introduction (in the compact setting) of \textit{extremal} Kähler metrics by Calabi in \cite{C} as critical points of the so called \textit{Calabi functional}. Great effort was put into producing explicit examples of these metrics through various constructions such as gluing approaches (see \cite{AP,Sz2,Sz1}), adiabatic limits (see \cite{DS, F1, F2}) and solving partial differential equations through the continuity method (see \cite{A, CDS, Y1, Y3}).\par
Following the K\"ahler setting, canonical non-K\"ahler metrics are believed to have to satisfy both a cohomological condition, generalizing  the K\"ahler one, and a curvature restriction with respect to a suitable Hermitian connection. Throughout the years, many cohomological constraints were introduced for Hermitian metrics such as  \emph{balanced}, \emph{SKT}, \emph{Vaisman} and many others, and they were widely investigated in the literature, see for example \cite{AB2,AB1, AB3,AI,Gau,M,Va}. All of these have been central in the study of non-Kähler geometry, since it is believed that compact complex manifolds split into \lq\lq orthogonal\rq\rq   \,worlds, identified by the existence of metrics satisfying such cohomological restrictions (for example it was shown by Angella and Otiman in \cite{AO} that Vaisman structures exclude the existence of balanced and SKT metrics), and are thus topic of several open problems such as the Fino-Vezzoni conjecture, see \cite{FV},  stating that SKT and balanced metrics cannot coexist in the non-Kähler case (see \cite{CRS, FG, FLY, GiP} for some families for which the Fino-Vezzoni conjecture was proved to hold true).  \par
Regarding instead curvature conditions, since we will be focusing on balanced metrics, we will only discuss those that are proved to be more interesting in this case, meaning conditions on the Ricci tensors associated to the Chern connection. The most natural constraint to consider is the Einstein condition, which in the balanced case can be referred to two different tensors, as seen in \cite{LiY}, namely the first and second Chern-Ricci tensors. The latter both provide generalizations of the Riemannian Ricci tensor in the Kähler case, respectively corresponding to the Ricci tensor and the Yang-Mills tensor (with respect to the metric itself) of the Chern connection induced on the holomorphic tangent bundle. These were inspected by Angella, Calamai and Spotti  (in \cite{ACS2}), resulting in the fact that, on a compact non-K\"ahler manifold, among first Chern-Einstein metrics, only first Chern-Ricci flat metrics can exist. The combination of this last condition with the balanced one appears to be relevant in the study of the Hull-Strominger system, which is a system of PDEs arised from heterotic string theory (introduced independently in \cite{Hu, S}) that is believed to identify canonical metrics in the non-Kähler Calabi-Yau setting (see for example \cite{AGF, CPY2, FeY, GF, GFGM, PPZ} for some explicit solutions and properties of the system). Nevertheless, in order to seek for canonical metrics when Chern-Ricci flat ones cannot be obtained, different constraints on the curvature need to be considered. Indeed, looking again at the K\"ahler realm, constant scalar curvature Kähler metrics (cscK, for short) have been central in the geometrization problem of polarized varieties stated in the Yau-Tian-Donaldson conjecture (see \cite{D1,Ti, Y2}). The latter asserts that the existence of such metrics on a fixed K\"ahler class is equivalent to the so called \emph{K-polystability} of the polarization, which was shown to be true in \cite{CDS, Ti1} when the polarization is given by the anticanonical bundle and in \cite{A,Y3} in the canonically polarized case. For general polarizations the problem is still open, as only one implication was proved in \cite{Sto} using \cite{AP}. The last mentioned result  provides the existence of cscK metrics on the blow-up at a finite number of smooth points of cscK orbifolds with isolated singularities via gluing techniques.\par 
This suggests that the Chern scalar curvature might be relevant in identifying canonical metrics in the non-Kähler setting. On top of this, it is also clear that Chern-Einstein conditions automatically force the Chern scalar curvature to be constant, hence its pairing with the balanced constraint can be seen as a step in the direction of \emph{canonical balanced metrics}. Without imposing cohomological restrictions, the problem of producing constant Chern scalar curvature metrics was investigated by Angella, Calamai and Spotti in \cite{ACS} resulting in the existence of a unique constant Chern scalar curvature metric in any conformal class with non-positive Gauduchon degree (see also \cite{LeU} in the almost-complex case). However, as far as the authors know, there are no known result concerning the constant Chern scalar curvature condition when paired with a cohomological one.

The main goal of the present paper is to construct balanced constant Chern scalar curvature metrics on the blow-up of a Chern-Ricci flat balanced manifold or orbifold with isolated singularities via gluing construction, partially extending the result of Arezzo and Pacard in \cite{AP} to the balanced case.
\\
Recall that a $n$-dimensional Hermitian manifold $(M^n, J,  \omega)$ is a triple consisting of a complex manifold, an integrable complex structure $J$ and a positive $(1,1)$-form $\omega$; the latter  is said to be \emph{balanced} \cite{M} (or \emph{semi-K\"ahler} as in \cite{G})   if 
$
d\omega^{n-1}=0,
$ from which it is clear that a cohomology class in $H^{n-1, n-1}_{BC}(M, \R)$ is naturally associated to it, usually called \emph{balanced class}.
\\
With these notations, we are in the position to state our main result as follows.
 \begin{thm}\label{ansbal} Let $(M^n, \tilde\omega)$  be a compact balanced Chern-Ricci flat  manifold or orbifold with isolated singularities. 
Then, given $p_1, \ldots, p_k\in M$ and $a_1, \ldots, a_k>0$, there exists $\ezi_0>0$ such that the blow-up of $M$ at $p_1, \ldots, p_k$ admits a   negative constant Chern scalar curvature  balanced metric 
$$
\omega_{\ezi}^{n-1}\in \pi^*[\tilde\omega^{n-1}]_{BC}- \ezi^{2n-2}\left(\sum_{i=1}^ ka_i[E_i]_{BC}\right)^{n-1}\,,
$$
 where $[E_i]_{BC}$ is the first Bott-Chern class of the  line bundle associated to the  exceptional divisor $E_i$ of the blow-up at $p_i$ and $\ezi\in (0,\ezi_0)$.
\end{thm}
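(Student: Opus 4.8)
The plan is to adapt the Arezzo--Pacard gluing scheme to the balanced category, working at the level of $(n-1,n-1)$-forms rather than K\"ahler $(1,1)$-forms. First I would set up the two model pieces: on the orbifold side, away from the points $p_i$, one has the balanced Chern-Ricci flat form $\tilde\omega$; near each $p_i$ one needs a model balanced metric on the blow-up $\mathrm{Bl}_0\C^n$ of $\C^n$ at the origin, asymptotic at infinity to the Euclidean balanced metric and containing the exceptional divisor with the prescribed weight $a_i$. Since Chern-Ricci flatness is governed by $\partial\bar\partial\log$ of the metric volume, the natural model on the blow-up is a Burns--Simanca-type balanced metric which is Chern-Ricci flat and ALE; I would record its expansion in powers of $|z|^{2-2n}$, so that the discrepancy with the Euclidean model lives in a weighted space with good decay. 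Rescaling by $\ezi$ and using the balanced class arithmetic (here the key point is that the product structure $\omega^{n-1}$ makes the class $\pi^*[\tilde\omega^{n-1}]_{BC} - \ezi^{2n-2}(\sum a_i[E_i]_{BC})^{n-1}$ the correct target, and one must check this is the $(n-1)$-st power of a positive $(1,1)$-form on the blow-up) one produces an approximate solution $\omega_\ezi$ on the blow-up with constant (negative, after normalization) Chern scalar curvature up to an error that is small in a suitable $\ezi$-weighted H\"older norm.

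The second stage is the linear analysis. I would linearize the map $\omega \mapsto$ (Chern scalar curvature of the balanced metric determined by $\omega^{n-1}$) at the approximate solution. Because we are deforming within a fixed balanced class, the deformation is of the form $\omega^{n-1} + \partial\bar\partial \gamma$ for $\gamma$ an $(n-2,n-2)$-form, or more efficiently one parametrizes by a potential-type function via the ansatz making the deformed form again an $(n-1)$-st power; the linearized operator is then a fourth-order operator, a Lichnerowicz-type operator attached to the Chern connection — precisely the operator whose mapping properties (and whose kernel's relation to holomorphic vector fields) are discussed in the body of the paper. The Chern-Ricci flatness of $\tilde\omega$ is what makes the relevant lower-order (Chern-Ricci coupling) terms vanish, so that on the orbifold the linearized operator is, up to lower order, $\bar\Delta^2$ and its cokernel is just the constants; one then must establish uniform (in $\ezi$) invertibility of the linearized operator on the glued manifold between appropriate weighted H\"older spaces, with a bound on the inverse that blows up only polynomially in $\ezi$. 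This is the standard but delicate "surjectivity with controlled right inverse" step: one patches the right inverse on the orbifold piece (where one quotients out constants, absorbing them into the freedom of the curvature constant) with the right inverse on each scaled Burns--Simanca piece (where one needs the model operator to have no obstructions with the chosen weights — this forces the range of admissible weights and is where the non-degeneracy/ADE-type hypotheses, or the isolated-singularities assumption, enter).

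With uniform invertibility in hand, the third stage is a fixed-point/Newton argument: write the Chern scalar curvature equation as $L_\ezi(\phi) = E_\ezi + Q_\ezi(\phi)$ where $E_\ezi$ is the (small) error of the approximate solution, $L_\ezi$ the linearization, and $Q_\ezi$ the quadratic remainder; invert $L_\ezi$ and apply the Banach fixed point theorem in a ball of radius $\sim \ezi^{\delta}$ in the weighted space, using that $\|E_\ezi\|$ is of strictly smaller order than the inverse-of-$L_\ezi$ bound allows and that $Q_\ezi$ is Lipschitz with small constant on that ball. The balanced constraint is preserved automatically because the deformation is taken inside the fixed Bott--Chern class by construction. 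Finally one checks that the resulting metric is genuinely a balanced metric (positivity of the associated $(1,1)$-form, which holds since $\omega_\ezi$ is a small perturbation of the positive approximate solution) and that the normalization can be arranged so the constant Chern scalar curvature is negative. I expect the main obstacle to be the uniform linear estimate: both identifying the correct weighted H\"older spaces on the glued balanced manifold so that the Chern-Lichnerowicz operator is uniformly bounded below modulo constants, and verifying that the Burns--Simanca balanced model on $\mathrm{Bl}_0\C^n$ has a genuinely Chern-Ricci flat, non-obstructed deformation theory in the chosen weight range — the non-K\"ahlerness means one cannot simply quote the Riemannian scalar-curvature analysis of \cite{AP} and must redo the indicial-root computation for the Chern operator.
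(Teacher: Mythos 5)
Your outline reproduces the general Arezzo--Pacard scheme, but it misses the one point where this problem genuinely differs from the K\"ahler case, and at that point your argument would fail. Under the balanced ansatz $\omega_\varphi^{n-1}=\omega^{n-1}+i\partial\bar\partial(u\,\omega^{n-2})$, the linearization of the Chern scalar curvature on the Chern-Ricci flat base is $-\Delta_{\tilde\omega}F_{\tilde\omega}(u)$ with $F_{\tilde\omega}(u)=\tfrac{1}{n-1}\bigl(\Delta_{\tilde\omega}u+\tfrac{1}{n-1}|\partial\tilde\omega|^2u\bigr)$, and its kernel is \emph{not} just the constants: besides the constant ambiguity (which the paper removes by the zeroth-order normalization $\int u|\partial\omega|^2$), there remains the finite-dimensional space $\ker F_{\tilde\omega}$, which can be nonzero for Chern-Ricci flat balanced metrics --- the paper exhibits explicit examples (the Iwasawa manifold with resonant parameter $t$, where $u=f\circ\pi$ for a torus eigenfunction lies in $\ker F_{\tilde\omega}$). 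So your step ``the cokernel is just the constants; absorb them into the freedom of the curvature constant'' is false in general, and the patched right inverse you propose does not exist with uniform bounds. The paper's resolution is to work in $\mathcal V_\ezi=(\ker F_{\omega_\ezi})^{\perp_{L^2}}$ inside the weighted H\"older space, and the technical ingredient making this usable is Lemma \ref{deformann}: every $v\in\ker F_{\tilde\omega}$ on $M$ extends to some $v_\ezi\in\ker F_{\omega_\ezi}$ on the blow-up agreeing with $v$ on $\{|z|\ge 2\ezi^p\}$, so that in the contradiction/blow-up argument for the uniform estimate the limit function is forced to be $L^2$-orthogonal to $\ker F_{\tilde\omega}$ and hence vanishes. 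Without this (or an equivalent device) the uniform invertibility of $\tilde{\mathcal L}$, and with it the fixed-point step, does not go through.

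A secondary but real misconception concerns the ALE model. There is no Chern-Ricci flat ALE metric on ${\rm Bl}_0\C^n$ asymptotic to the flat metric (the blow-up of a smooth point is not a crepant resolution), and none is needed: the paper glues in the ordinary scalar-flat K\"ahler Burns--Simanca metric, cut off to be exactly flat in the gluing annulus, since only the Chern \emph{scalar} curvature must be approximately constant. Consequently, in the rescaled limit on the model piece the linearized operator is the standard K\"ahler Lichnerowicz operator of $\omega_{BS}$, so the weighted analysis of Arezzo--Pacard/Sz\'ekelyhidi applies verbatim and no new indicial-root computation for a ``Chern operator'' on the model is required; the genuinely non-K\"ahler difficulty sits entirely on the compact side, as above. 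Finally, the negativity of the resulting constant is not a normalization one ``arranges'': it is forced by the Gauduchon degree of the glued balanced class, which equals $\Gamma(\{\tilde\omega\})-\tfrac{2\pi}{(n-2)!}\ezi^{(2n-2)(p+q)}<0$ since $\Gamma(\{\tilde\omega\})=0$ by Chern-Ricci flatness, combined with the sign result of Angella--Calamai--Spotti.
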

 The construction of the desired metric is a standard gluing technique and it follows closely those in \cite{AP,BM, GS, Sz, Sz1, Sz2}. In particular, in the first part,  through a cut-off procedure on the starting metric on $M$, we obtain an approximate constant Chern scalar curvature  balanced metric on the blow-up of $M$ gluing it with a suitable metric on ${\rm Bl}_0\C^n$ built by cutting-off  the Burns-Simanca metric.
  Once this ingredient is available,  the  constant Chern scalar curvature  balanced metric is found  via a deformation argument within the balanced class, as introduced by \cite{FWW}, adopting the ansatz  used in \cite{GS}, called the \emph{balanced ansatz}, to reduce the equation to a scalar one. This allows us to perform weighted analysis as in literature. The novelty of  this case with respect to the ones in the literature is that we need to work orthogonally to the kernel of the operator
$$
F_{\omega}=\Delta_{\omega}+\frac{1}{n-1}|\partial \omega|^2{\rm Id}
$$ in order to obtain the invertibility of the linearized operator (in Section \ref{examples} we show that on the same manifolds we can find metrics that have vanishing kernel for the corresponding $F_\omega$ operator, as well as metrics for which $\ker F_\omega\neq \{0\}$). To do this,  we observe that any function  in $ \ker F_{\omega} $ on $M$ gives rise to a family of functions on the  blow-up of $M$ with suitable properties, see  Lemma \ref{deformann} for the precise statement.  This family allows us to use the fact that we are working orthogonally to $\ker F_{\omega}$ and conclude the invertibility. Once we have this, we can rephrase our equation as a fixed point problem, for which we are able to use Banach's fixed point Theorem and conclude.\\
 Prior to the proof of Theorem \ref{ansbal}, we obtain formulas for the codifferentials of the Chern-Ricci form, allowing us to produce alternative expressions of the linearized operator in the general constant Chern scalar curvature  balanced setting. Moreover,   they suggest us a possible relation between the Bott-Chern harmonicity of the Chern-Ricci form and the constant Chern scalar curvature balanced condition, see Problem \ref{probharmcscb}. Furthermore, in the attempt of understanding the geometric nature of the kernel of the linearized operator, we study two different Lichnerowicz-type  operators, arising from the non-Kähler setting, namely the Chern-Lichnerowicz, whose kernel consist in those functions generating a holomorphic vector field, and a torsion-twisted version of the latter obtained with a connection introduced by Ustinovsky in \cite{Us}. Throughout the analysis, we give several interesting formulas, a necessary and sufficient conditions for a holomorphic vector field to be Chern-parallel (see Lemma \ref{Chernparallel}) and we investigate the relation of the kernels with holomorphic fields, obtaining along the way a variational characterization for the kernel of the \emph{torsion-twisted Lichnerowicz-type} operator (see Lemma \ref{varnostrolich}).
 
With the same approach used for the proof of \cite[Theorem 1]{GS}, we are able to prove also the following.
\begin{thm}\label{glueorb}
    Let $(M^n, \tilde\omega)$ be a compact Chern-Ricci flat  balanced orbifold with isolated singularities. 
Furthermore, assume  that $M$ admits a crepant resolution $\hM$.
Then,  there exists $\ezi_0>0$  such that $\hM$ admits a Chern-Ricci flat balanced metric $\hat{\omega}_{\ezi}$ such that
$$
\hat{\omega}_{\ezi}^{n-1}\in \pi^*[\tilde\omega^{n-1}]_{BC}- \ezi^{2n-2}\left(\sum_{i=1}^k\sum_{j=1}^{k_i}a_i^j[E_i^j]_{BC}\right)^{n-1}\,,
$$
 where $[E_i^j]_{BC}$ is the first Bott-Chern class of the line bundle associated to the $j$-th irreducible component of the exceptional divisor $E_i$ of the resolution and $\ezi\in (0,\ezi_0)$.
\end{thm}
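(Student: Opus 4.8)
The plan is to run the same gluing scheme outlined above for Theorem~\ref{ansbal} and developed in detail in \cite{GS}. What makes the present case comparatively soft is that Chern-Ricci flatness is a condition on the volume form $\omega^n$ alone, so that after the balanced ansatz the equation to be solved is second order and its linearization has kernel equal to the constants only; in particular neither the operator $F_\omega$ nor the orthogonality-to-$\ker F_\omega$ argument needed for Theorem~\ref{ansbal} enters here. For the local models, near each isolated singular point $p_i$ one fixes a chart identifying a neighbourhood of $p_i$ with a ball in $\C^n/\Gamma_i$, where $\Gamma_i\subset \mathrm{SU}(n)$ is finite and acts freely on $\C^n\setminus\{0\}$; the crepant resolution $\pi\colon\hM\to M$ then restricts over this chart to a crepant resolution $Y_i\to\C^n/\Gamma_i$ with exceptional divisor $E_i=\bigcup_{j=1}^{k_i}E_i^j$. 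On each such $Y_i$ one invokes the existence of a Ricci-flat Kähler ALE metric in every class in the interior of the Kähler cone (van Coevering, after Joyce and Kronheimer); choosing the metric $\eta_i$ in the class corresponding to $-\sum_j a_i^j[E_i^j]$, it is asymptotic to the flat cone metric on $\C^n/\Gamma_i$ and, being Kähler and Ricci-flat, is balanced and Chern-Ricci flat. This $\eta_i$ plays the role that the Burns-Simanca metric on ${\rm Bl}_0\C^n$ plays in the proof of Theorem~\ref{ansbal}.

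Next one glues. Since $\tilde\omega$ is Chern-Ricci flat, its $\Gamma_i$-invariant local expression near $p_i$ agrees with the Euclidean metric to leading order, so a cut-off supported in an annulus of radius of order $\ezi^{\gamma}$ (for a suitable $\gamma\in(0,1)$) interpolates, at the level of balanced $(n-1,n-1)$-forms and via the balanced ansatz of \cite{GS}, between $\pi^*\tilde\omega$ on the complement of the exceptional set and the rescaled metrics $\ezi^2\eta_i$ over the $Y_i$. Crepancy gives $K_{\hM}=\pi^*K_M$, hence $c_1^{BC}(\hM)=0$, so there is no cohomological obstruction to Chern-Ricci flatness on $\hM$. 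The outcome is, for every small $\ezi$, a balanced metric $\omega_\ezi$ on $\hM$ with $\omega_\ezi^{n-1}$ lying in the prescribed Bott-Chern class $\pi^*[\tilde\omega^{n-1}]_{BC}-\ezi^{2n-2}\big(\sum_{i,j}a_i^j[E_i^j]_{BC}\big)^{n-1}$ and with Chern-Ricci form $\rho(\omega_\ezi)$ supported in the gluing annuli and of size $O(\ezi^{\delta})$, for some $\delta>0$, in the relevant $\ezi$-weighted Hölder norms.

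To pass from the approximate to an exact solution one deforms within the balanced class, following \cite{FWW} and the scalar reduction of \cite{GS}: one seeks a function $\phi$ such that the balanced metric $\omega_\phi$ produced from $\omega_\ezi$ by the balanced ansatz solves the form-type Calabi-Yau equation $\omega_\phi^{\,n}=c\,\nu$, where $\nu>0$ is the fixed Chern-Ricci flat volume form on $\hM$ and $c>0$ is a constant solved for together with $\phi$; this is equivalent to $\rho(\omega_\phi)=0$. The linearization of this equation at $\omega_\ezi$ is, up to $\ezi$-small and lower-order terms, the Laplacian $\Delta_{\omega_\ezi}$, whose kernel is exactly the constants. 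One then establishes uniform-in-$\ezi$ weighted Schauder estimates for $\Delta_{\omega_\ezi}$ on $\hM$, working orthogonally to the constants (equivalently, absorbing the mean value of the equation into $c$), via the standard indicial-root analysis on the ALE ends; this produces a right inverse with $\ezi$-controlled operator norm. The equation is then rewritten as a fixed-point problem $\phi=\Phi_\ezi(\phi)$, with $\Phi_\ezi$ a contraction on a small ball of the weighted space for all $\ezi<\ezi_0$, and Banach's fixed-point theorem gives the solution; a positivity check for small $\ezi$ ensures $\omega_\phi$ is a genuine balanced metric, which is the sought $\hat\omega_\ezi$.

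The main obstacle I expect is the uniform control, as $\ezi\to 0$, of the right inverse of the degenerating linearized operator: one has to set up weighted Hölder spaces adapted simultaneously to the ALE ends $Y_i$ and to the smooth bulk $M$, keep the weights away from the indicial roots of the Laplacian on the cones $\C^n/\Gamma_i$, and show that the norm of the right inverse does not blow up — this is where the gluing-scale exponent $\gamma$ and the decay exponent $\delta$ above must be balanced carefully. A more structural point to verify is that the balanced ansatz really does reduce Chern-Ricci flatness to a scalar equation with kernel the constants only; this ultimately rests on the first Chern-Ricci form $\rho^{(1)}$ depending on $\omega$ through $\omega^n$ alone, which is precisely what distinguishes this argument from the one behind Theorem~\ref{ansbal}.
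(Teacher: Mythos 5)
Your overall gluing scheme (Joyce's ALE Ricci--flat K\"ahler metrics on the crepant resolutions of the local models $\C^n/G_x$ in place of Burns--Simanca, cut-off at scale $\ezi^\gamma$, deformation inside the balanced class via the ansatz of \cite{FWW,GS}, fixed point in $\ezi$-weighted H\"older spaces) is indeed the route the paper takes: Theorem \ref{glueorb} is obtained by repeating the proof of Theorem \ref{ansbal} (equivalently of \cite[Theorem 1]{GS}) with $\omega_{\rm ALE}$ from Theorem \ref{thmALE} replacing $\omega_{BS}$. However, there is a genuine gap in the structural claim on which your simplification rests, namely that after the balanced ansatz the linearized operator is ``the Laplacian up to small terms, with kernel the constants'', so that neither $F_\omega$ nor the orthogonality-to-$\ker F_\omega$ argument is needed.

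This claim is false. Chern--Ricci flatness is indeed a condition on $\omega_u^n$ alone, but in the balanced ansatz $\omega_u^{n-1}=\omega^{n-1}+i\partial\bar\partial(u\,\omega^{n-2})$ the volume form depends on $u$ through $i\partial\bar\partial(u\,\omega^{n-2})$, and since $\omega^{n-2}$ is \emph{not} closed the linearization of $\log\omega_u^n$ is not $\tfrac{1}{n-1}\Delta_\omega u$ but the operator \eqref{opbalanced}, $F_\omega(u)=\tfrac{1}{n-1}\big(\Delta_\omega u+\tfrac{1}{n-1}|\partial\omega|^2u\big)$ (this is \cite[Lemma 3.2]{GS}). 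The zeroth-order term enters with the sign that defeats the maximum principle: constants are \emph{not} in the kernel unless $\omega$ is K\"ahler, and the kernel can be positive dimensional even for Chern--Ricci flat balanced metrics --- see Remark \ref{iwansbal}, where on the Iwasawa manifold $\ker F_{\tilde\omega_t}\neq\{0\}$ exactly when $t^2/2$ lies in the spectrum of the base torus Laplacian. Consequently your plan to invert the linearization ``orthogonally to the constants'' by standard indicial-root analysis for $\Delta_{\omega_\ezi}$ does not yield a uniformly bounded right inverse for the operator that actually arises; the degenerating family must be inverted orthogonally to $\ker F_{\tilde\omega}$, and for this one has to transplant each kernel element on $M$ to a function on $\hM$ lying in $\ker F_{\omega_\ezi}$ and agreeing with it away from the exceptional set --- this is precisely Lemma \ref{deformann}, which the paper singles out as the key fact in adapting the proof of Theorem \ref{ansbal} to the orbifold/crepant-resolution setting of Theorem \ref{glueorb}. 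So the very ingredient you propose to discard is the one that makes the argument go through; without it, the contraction-mapping step fails whenever $\ker F_{\tilde\omega}\neq\{0\}$, a case the paper shows does occur.
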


In this last case, the main difference is that  Joyce's ALE Calabi-Yau metrics on crepant resolutions of orbifold isolated singularity models (see \cite{J}) take on the role that was of the Burns-Simanca metric in the blow-up case (see Subsection \ref{preli} for more precise details).\par

Mainly due to the great amount of possible deformations in a fixed balanced class, it is natural to wonder if the space of Chern-Ricci flat or constant Chern scalar curvature balanced metrics within it might be, at least, finite-dimensional.
In Section \ref{nonpostr}, we explore the construction of such metrics adopting a different ansatz, resulting in, possibly, different metrics to that constructed in Theorem \ref{ansbal} and Theorem \ref{glueorb}, taking a first step towards the understanding of the moduli space of Chern-Ricci flat or constant Chern scalar curvature balanced metrics in a prescribed balanced class. More in detail,  we analyse an ansatz involving a $(n-2,n-2)$-form on the manifold $M$, satisfying mild assumptions. With this at hand, we are able to prove the following.  
\begin{thm}\label{immain}
Let $(M^n, \tilde\omega)$ be a compact balanced Chern-Ricci flat  manifold or orbifold with isolated singularities endowed with $\tilde\Omega\in \Lambda_{\R}^{n-2,n-2}(M)$   such that

\begin{equation}\label{hypOm}
\tilde\omega \wedge \tilde\Omega>0 \quad \mbox{ and }\quad \Lambda_{\tilde\omega}^{n-1}(i\partial \bar \partial \tilde\Omega)\le0\,.
\end{equation} 
Then, given $p_1, \ldots, p_k\in M$ and $a_1, \ldots, a_k>0$, there exists $\ezi_0>0$ such that the blow-up of $M$ at $p_1, \ldots, p_k$ admits a   negative constant Chern scalar curvature  balanced metric 
$$
\omega_{\ezi}^{n-1}\in \pi^*[\tilde\omega^{n-1}]_{BC}- \ezi^{2n-2}\left(\sum_{i=1}^ ka_i[E_i]_{BC}\right)^{n-1}\,,
$$
 where $[E_i]_{BC}$ is the first Bott-Chern class of the  line bundle associated to the  exceptional divisor $E_i$ of the blow-up at $p_i$ and $\ezi\in (0,\ezi_0)$.
\end{thm}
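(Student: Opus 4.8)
The plan is to follow the same gluing scheme used in the proof of Theorem~\ref{ansbal}, the only difference being the choice of ansatz within the balanced class. Recall that in Theorem~\ref{ansbal} one works with the \emph{balanced ansatz}, writing the balanced metric on the blow-up as $\omega_\ezi^{n-1}=\omega_{0}^{n-1}+i\partial\bar\partial(\varphi\,\omega_0^{n-2})+\partial\gamma+\bar\partial\bar\gamma$ (for suitable forms $\gamma$) and reducing the constant Chern scalar curvature equation to a scalar PDE for $\varphi$, whose linearization is governed by $F_\omega=\Delta_\omega+\tfrac1{n-1}|\partial\omega|^2\mathrm{Id}$. Here instead I would use the \emph{deformed ansatz} of Section~\ref{nonpostr}: given the fixed auxiliary form $\tilde\Omega$ on $M$ satisfying \eqref{hypOm}, pull it back and glue it (via the same cut-off procedure used for the metric) to an analogous form on $\mathrm{Bl}_0\C^n$ coming from the Burns--Simanca data, obtaining a family $\Omega_\ezi$ on the blow-up; then parametrize candidate balanced metrics by $\omega_\ezi^{n-1}=\tilde\omega_\ezi^{n-1}+i\partial\bar\partial(\varphi\,\Omega_\ezi)+(\text{correction terms to preserve the Bott--Chern class})$, where $\tilde\omega_\ezi$ is the approximate balanced metric already constructed in the proof of Theorem~\ref{ansbal}.

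The steps, in order, would be: (i) construct the approximate solution exactly as in Theorem~\ref{ansbal} — cut off $\tilde\omega$ near the $p_i$, glue in scaled cut-off Burns--Simanca metrics on the exceptional pieces, and check that the resulting $\tilde\omega_\ezi$ is balanced with Chern scalar curvature $\mathcal{O}(\ezi^{\,\delta})$-close to a constant in the appropriate weighted norm; simultaneously produce the glued form $\Omega_\ezi$; (ii) compute the Chern scalar curvature of the deformed metric $\omega_\ezi(\varphi)$ as a nonlinear operator in $\varphi$ and identify its linearization at $\varphi=0$ — this is where hypothesis \eqref{hypOm} enters: the positivity $\tilde\omega\wedge\tilde\Omega>0$ guarantees $\omega_\ezi(\varphi)^{n-1}$ is still the $(n-1)$-st power of a genuine positive $(1,1)$-form for small $\varphi$, and the sign condition $\Lambda_{\tilde\omega}^{n-1}(i\partial\bar\partial\tilde\Omega)\le 0$ ensures the zeroth-order term of the linearized operator has a favourable sign, so that (unlike the $F_\omega$ of Theorem~\ref{ansbal}) the operator is \emph{injective} with no kernel to quotient out; (iii) establish uniform (in $\ezi$) invertibility of this linearized operator between suitable weighted Hölder spaces on the blow-up, using a standard parametrix/blow-up-of-scales argument that separates the contribution of $(M,\tilde\omega)$ from that of $\mathrm{Bl}_0\C^n$ (Burns--Simanca), together with the non-degeneracy from step (ii); (iv) rewrite the equation $\mathrm{Scal}^{Ch}(\omega_\ezi(\varphi))=\text{const}$ as a fixed-point problem $\varphi=\Phi_\ezi(\varphi)$, bound the error term and the quadratic remainder in the weighted norms, and apply Banach's fixed point theorem on a small ball to obtain a genuine solution for $\ezi\in(0,\ezi_0)$; (v) check that the constant is negative and that the balanced class of $\omega_\ezi^{n-1}$ is the claimed $\pi^*[\tilde\omega^{n-1}]_{BC}-\ezi^{2n-2}(\sum_i a_i[E_i]_{BC})^{n-1}$, which is immediate from the cohomological bookkeeping of the cut-off construction since the deformation terms $i\partial\bar\partial(\varphi\Omega_\ezi)$ and the corrections are Bott--Chern exact.

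The main obstacle I expect is step (ii)--(iii): one must show that replacing the balanced ansatz by the $\tilde\Omega$-deformed ansatz genuinely removes the kernel issue that forced the orthogonality argument in Theorem~\ref{ansbal}. Concretely, the linearization of the Chern scalar curvature along $\varphi\mapsto i\partial\bar\partial(\varphi\,\tilde\Omega)$ must be identified as a second-order elliptic operator of the form (roughly) $\varphi\mapsto \Delta_{\tilde\omega}^{\tilde\Omega}\varphi + c\,\varphi$ with $c\le 0$ coming precisely from $\Lambda_{\tilde\omega}^{n-1}(i\partial\bar\partial\tilde\Omega)\le 0$, so that it has trivial kernel by the maximum principle (or integration by parts) — this requires a careful Bott--Chern/codifferential computation of the type already carried out earlier in the paper for the Chern-Ricci form, and one must verify that ellipticity does not degenerate because of the weaker positivity of $\tilde\Omega$ (only $\tilde\omega\wedge\tilde\Omega>0$, not $\tilde\Omega>0$ in any pointwise sense). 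Once this non-degeneracy is in hand, the uniform invertibility on the blow-up and the fixed-point argument proceed essentially verbatim as in Theorem~\ref{ansbal} and in \cite{GS,AP}, so the remainder of the proof is routine.
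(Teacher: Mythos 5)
Your proposal follows essentially the same route as the paper: glue a cut-off $\tilde\Omega$ with the (power of the) cut-off Burns--Simanca metric to get $\Omega$ on the blow-up, deform via $u\Omega$, observe that ellipticity comes from Michelsohn's root $\tilde\omega'$ of the positive $(n-1,n-1)$-form $\tilde\omega\wedge\tilde\Omega$ while the condition $\Lambda^{n-1}_{\tilde\omega}(i\partial\bar\partial\tilde\Omega)\le 0$ gives a non-positive zeroth-order coefficient so the maximum principle kills the kernel (this is exactly Lemma \ref{lemmaF}), and then run the uniform invertibility and Banach fixed-point arguments as in Theorem \ref{ansbal}. The points you flag as needing care are precisely the ones the paper settles in the way you anticipate, so the plan is correct.
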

The strategy of the proof of Theorem \ref{immain} follows closely that of Theorem \ref{ansbal} and of \cite[Theorem 1]{GS}. 
The novelty in this case is that  we have to produce, starting from $\tilde \Omega$, a suitable $(n-2, n-2)$-form $\Omega$ on the  blow-up of $M$. This is done by gluing  a  cut-off version of $\tilde \Omega$ with the $(n-2)$-th power of a well chosen metric on ${\rm Bl}_0\C^n$, obtained again by cutting-off the Burns-Simanca metric. 

Adapting the ideas used in Theorem \ref{immain}, as for Theorem \ref{glueorb}, we can also prove the following.
\begin{thm}\label{glueorbOm}
    Let $(M^n, \tilde\omega)$ be a compact Chern-Ricci flat  balanced orbifold with isolated singularities endowed with $\tilde \Omega\in \Lambda^{n-2, n-2}_{\R}( M)$ satisfying \eqref{hypOm} on the smooth part of $ M$. 
Furthermore, assume  that $M$ admits a crepant resolution $\hM$.
Then, there exists $\ezi_0>0$ such that  $\hM$ admits a Chern-Ricci flat balanced metric $\hat{\omega}_{\ezi}$ such that
$$
\hat{\omega}_{\ezi}^{n-1}\in \pi^*[\tilde\omega^{n-1}]_{BC}- \ezi^{2n-2}\left(\sum_{i=1}^k\sum_{j=1}^{k_i}a_i^j[E_i^j]_{BC}\right)^{n-1}\,,
$$
 where $[E_i^j]_{BC}$ is the first Bott-Chern class of the line bundle associated to the $j$-th irreducible component of the exceptional divisor $E_i$ of the resolution and $\ezi\in (0,\ezi_0)$.
\end{thm}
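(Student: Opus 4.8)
The plan is to run the gluing scheme used for Theorem~\ref{glueorb} (which follows \cite[Theorem 1]{GS}) but to feed into it the $(n-2,n-2)$-form ansatz of Theorem~\ref{immain}. Two ingredients change with respect to the smooth blow-up case of \cite{AP,GS}: on the analytic side, Joyce's ALE Calabi--Yau metrics on the crepant resolutions of the local orbifold models $\C^n/\Gamma_i$ (see \cite{J}) replace the Burns--Simanca metric, exactly as in Theorem~\ref{glueorb}; on the ansatz side, one deforms $\omega^{n-1}$ by $i\partial\bar\partial$ of a scalar multiple of a fixed global $(n-2,n-2)$-form $\Omega$ rather than of a multiple of $\omega^{n-2}$. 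So I would first fix, near each $p_i$, a crepant resolution $\hat U_i$ of the model singularity together with Joyce's ALE Ricci-flat K\"ahler metric $\eta_i$ and its asymptotically Euclidean potential; rescaling $\eta_i$ by $\ezi^2$ and gluing $\ezi^2\eta_i$ (cut off near infinity of $\hat U_i$) to $\tilde\omega$ (cut off near $p_i$) across an annulus of radius $\sim\sqrt{\ezi}$ produces an approximately Chern-Ricci flat balanced metric $\omega_\ezi$ on $\hM$ whose $(n-1)$-st power lies in the prescribed Bott-Chern class; since both pieces are genuinely Chern-Ricci flat, the Chern-Ricci form of $\omega_\ezi$ is $i\partial\bar\partial$ of a function supported in the gluing regions and $O(\ezi^{\delta})$-small in the relevant weighted H\"older norm.

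Simultaneously I would build the global $(n-2,n-2)$-form $\Omega=\Omega_\ezi$ on $\hM$: away from the exceptional set it is a cut-off of $\pi^*\tilde\Omega$; near each irreducible component $E_i^j$ it equals $\ezi^{2n-4}\,\eta_i^{n-2}$ cut off near infinity of $\hat U_i$; and on the annuli the two are interpolated. Since $\tilde\omega\wedge\tilde\Omega>0$ on the smooth part of $M$ and $\eta_i\wedge\eta_i^{n-2}=\eta_i^{n-1}>0$ trivially, for a suitable choice of cut-off radii one obtains $\omega_\ezi\wedge\Omega_\ezi>0$ on all of $\hM$ and $\Lambda^{n-1}_{\omega_\ezi}(i\partial\bar\partial\Omega_\ezi)\le C\ezi^{\delta}$ outside a fixed compact set, the genuinely new point being the weighted control of $i\partial\bar\partial$ of the interpolating term, which here must be extracted from the ALE decay of $\eta_i$ and of its potential (in the Burns--Simanca case this decay is explicit). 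With $\Omega_\ezi$ at hand, set $\omega_{\ezi,f}^{n-1}:=\omega_\ezi^{n-1}+i\partial\bar\partial(f\,\Omega_\ezi)$: for $f$ small in the weighted H\"older space this is the $(n-1)$-st power of a genuine balanced metric in the same Bott-Chern class, and --- since crepancy gives $K_{\hM}=\pi^*K_M$, whence $c_1^{BC}(\hM)=0$ and, up to a finite cover when the canonical bundle is only torsion, a holomorphic volume form $\psi$ --- the Chern-Ricci flat condition for $\omega_{\ezi,f}$ becomes the scalar Monge--Amp\`ere type equation $\omega_{\ezi,f}^n=c_\ezi\,i^{n^2}\psi\wedge\bar\psi$.

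The linear theory is then essentially standard. Linearising this scalar equation at $f=0$ yields, modulo lower order and $O(\ezi^{\delta})$-small terms, a second order elliptic operator whose zeroth order coefficient is governed by $\Lambda^{n-1}_{\omega_\ezi}(i\partial\bar\partial\Omega_\ezi)$; the hypothesis $\Lambda^{n-1}_{\tilde\omega}(i\partial\bar\partial\tilde\Omega)\le 0$ makes this coefficient nonpositive, so that --- using that $\Delta^{\mathbb C}_{\omega_\ezi}$ is self-adjoint for the balanced metric --- the operator has trivial kernel on zero-average functions and is uniformly invertible on the doubly weighted H\"older spaces adapted to the gluing. This is precisely what dispenses us from the orthogonality considerations needed in the $\omega^{n-2}$-ansatz (cf.\ Theorem~\ref{ansbal} and \cite[Theorem 1]{GS}). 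I would prove the uniform invertibility by the usual contradiction-and-rescaling argument, the rescaled limits living either on $(M,\tilde\omega)$ (where $\Delta^{\mathbb C}_{\tilde\omega}$ plus a nonpositive zeroth order term forces the limit to be constant, hence to vanish after the normalisation of $c_\ezi$) or on the ALE models $(\hat U_i,\eta_i)$ (where the Laplacian has no decaying kernel), combined with surjectivity from the Fredholm theory of ALE Laplacians; then one rewrites the equation as a fixed point problem $f=\mathcal N_\ezi(f)$ with $\mathcal N_\ezi$ a contraction on a small weighted ball (the nonlinearity enters only through $\omega^n$, and the ansatz is affine in $f$ at the level of $\omega^{n-1}$) and concludes by Banach's fixed point theorem for all $\ezi<\ezi_0$. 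The freedom in the $a_i^j$ corresponds to the choice of Joyce metric, i.e.\ of K\"ahler class on the resolution of $\C^n/\Gamma_i$.

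The step I expect to be the main obstacle is the construction of $\Omega_\ezi$ together with the verification that the two conditions of \eqref{hypOm} survive the gluing with constants uniform in $\ezi$: unlike Burns--Simanca, Joyce's metrics are not explicit, so the decay of $\eta_i$ and of its potential --- and hence the weighted bounds on $i\partial\bar\partial(\eta_i^{n-2})$ and on the interpolating term --- must be read off from the ALE structure alone, and one must ensure that the interpolation destroys neither $\omega_\ezi\wedge\Omega_\ezi>0$ nor the sign of $\Lambda^{n-1}_{\omega_\ezi}(i\partial\bar\partial\Omega_\ezi)$ on the annuli. A secondary difficulty is the bookkeeping of the admissible weights in the linear theory when the exceptional set of each $\hat U_i$ has several components $E_i^j$, since this enlarges the set of indicial roots of the ALE Laplacians one has to avoid.
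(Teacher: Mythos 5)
Your proposal is correct in outline, but it solves a different equation than the paper does, so it is worth comparing the two routes. The paper proves Theorem \ref{glueorbOm} by rerunning the proof of Theorem \ref{immain} verbatim with Joyce's ALE metrics in place of Burns--Simanca: the unknown is a function $u$ entering through $\omega_u^{n-1}=\omega^{n-1}+i\partial\bar\partial(u\Omega)$, the equation is the \emph{fourth-order} normalized constant Chern scalar curvature equation \eqref{opfunctions}, the uniform invertibility of the linearization $\tilde{\mathcal L}\sim-\Delta_\omega F_{\omega,\Omega}+\ldots$ is Proposition \ref{injLbisOmega} (whose base-orbifold step is exactly the maximum-principle Lemma \ref{lemmaF}, made possible by \eqref{hypOm}), and Chern--Ricci flatness of the resulting metric then comes for free from crepancy, which gives $c_1^{BC}(\hM)=\pi^*c_1^{BC}(M)=0$ and hence vanishing Gauduchon degree, forcing $s^{ch}=0$ and then ${\rm Ric}^{ch}=0$. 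You instead target Ricci-flatness directly, rewriting ${\rm Ric}^{ch}(\omega_{\ezi,f})=0$ as a Monge--Amp\`ere/form-type Calabi--Yau equation for the scalar $f$; the linearization is then the \emph{second-order} operator $F_{\omega,\Omega}$ itself, and \eqref{hypOm} gives its injectivity (modulo constants) by the maximum principle, so your linear theory is simpler and avoids the Lichnerowicz analysis on the ALE model that the paper's fourth-order operator requires. Both routes work; the paper's buys a uniform framework that also covers the non-Ricci-flat targets of Theorems \ref{ansbal} and \ref{immain}, where no volume-form reduction is available, while yours is the more economical argument for this particular statement. Two small corrections to your write-up: the detour through a holomorphic volume form ``up to a finite cover'' is unnecessary (and, taken literally, would raise a descent problem) --- since ${\rm Ric}^{ch}(\tilde\omega)=0$ and the resolution is crepant, $c_1^{BC}(\hM)=0$, so one writes ${\rm Ric}^{ch}(\omega_\ezi)=i\partial\bar\partial h$ and uses $e^{h}\omega_\ezi^n$ as the target volume form directly on $\hM$; and the obstacle you flag about preserving the sign of $\Lambda^{n-1}_{\omega_\ezi}(i\partial\bar\partial\Omega_\ezi)$ on the gluing annuli is a non-issue, exactly as in Remark \ref{radOm}: only $\omega_\ezi\wedge\Omega_\ezi>0$ is needed globally (for the ansatz to produce metrics), while the sign hypothesis in \eqref{hypOm} is used only on the base orbifold, in the blow-up limit of the invertibility argument.
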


As it is shown in Subsection \ref{exOmega}, a large class of complex manifolds admits balanced Chern-Ricci flat metrics and a $(n-2, n-2)$-form satisfying \eqref{hypOm}. On the other hand, as Remark \ref{noOmnak} highlights, the existence of such $(n-2, n-2)$-form is not always guaranteed.\par
The rest of the paper is organized as follows.
In   Section \ref{genset}, we recall some basic definitions in Complex Geometry and describe  the properties of the Burns-Simanca metric. We then proceed to construct a balanced metric on the blow-up  which is approximately constant Chern scalar curvature and we study its behaviour. 
The rest of the Section is dedicated to the description of  the equation we want to solve, together with its linearization, corresponding to a fourth-order elliptic linear operator.\par
In Section \ref{baldef} we focus on the  study of the problem adopting the balanced ansatz. In particular, we obtain different explicit  expressions for the linearized operator given by this ansatz, highlighting the link with the K\"ahler case and hence its geometric relevance. Furthermore, we introduce and study the Chern-Lichnerowicz operator, and a  torsion-twisted Lichnerowicz-type version which fits nicely in the expression of the linearized operator of the problem. We then explore the properties of these operators, obtaining characterizations of their kernels, their relation with holomorphic vector fields, along with some properties of the latter in the case of compact balanced manifolds.

In Section \ref{proofimmain}, we derive the invertibility of the linearized operator obtained in the previous section in suitable weighted H\"older spaces, and we use it to rephrase our equation as a fixed point problem. Finally, through several weighted estimates, we show that the hypotheses of Banach's fixed-point Theorem are verified, allowing us to conclude the proof of Theorem \ref{ansbal} along with Theorem \ref{glueorb}.\par
Section \ref{nonpostr} is devoted to the proof of Theorem \ref{immain}. We quickly discuss how to achieve it  by observing the small differences with the proofs of the previous theorems.\par
Finally, Section \ref{examples} is dedicated to the showcase of several classes of examples on which our theorems can be applied, along with a couple of cases in which we can explicitly describe $\ker F_{\omega}$  for a family of metrics.

\subsection*{Acknowledgements}
The authors are deeply grateful to Cristiano Spotti for suggesting the problem, and for the many comments and suggestions throughout its completion.
Part of this work was conducted while the first author was visiting Northwestern University. For this,  he would like to thank the university and Gábor Székelyhidi for the cheerful hospitality and for many inspiring and insightful conversations. The second named author would  like to thank Institute of Advanced Study in Mathematics at Zhejiang University, along with Song Sun, for hosting him during his visit during which this paper was partially completed.\par
Both the authors are  supported by GNSAGA of INdAM. The first named author is  supported by University of Parma through the action Bando di Ateneo 2023 per la ricerca. The second named author partially supported by the Spanish Ministry of Science and Innovation, under grant CNS2022-135784.
The second named author is thankful to Villum Fonden for funding his Ph.D. position through Villum Young Investigator 0019098 grant, during which part of this work was carried out. 
\numberwithin{thm}{section}
\numberwithin{equation}{section}
\section{General setup of the problem}\label{genset}
In this first section, after recalling a few known objects which will be central in our construction, we will use them to produce an approximate solution, and see how it can be used to formulate an equation whose solution will provide a constant Chern scalar curvature balanced metric.

\subsection{Preliminaries and notations}\label{preli}
In this subsection,   we will briefly recall some classical definitions which will be needed in the next sections. \\
Let $(M^n, J, \omega )$ be a Hermitian manifold. It is known that  the Levi-Civita connection $\nabla^{LC}$ does not preserve the complex structure when the metric is non-K\"ahler. On the other hand, we can always consider the \emph{Chern connection} $\nabla$, namely the unique linear connection such that 
 $$
\nabla g =0\,, \quad \nabla J=0\,, \quad T^{1,1}=0
\,.$$
 With this at hand, we can define  the first Chern-Ricci form as 
 $$
 {\rm Ric}^{ch}(\omega):=-\frac12{\rm tr}(JR)\,, 
$$ where $R$ is the curvature tensor of Chern connection $\nabla$. It turns out that ${\rm Ric}^{ch}(\omega)\in\Lambda_{\R}^{1,1}(M)$ and it can be locally expressed as 
$$
{\rm Ric}^{ch}(\omega)=-i\partial \bar \partial \log \omega^n\,.
$$
 This local expression gives us immediately that ${\rm Ric}^{ch}(\omega)$ is $d$-closed and,  in particular,  it represents the \emph{first Bott-Chern class}  $c_1^{BC}(M)$ of $M$.
 Moreover,  the Chern scalar curvature of $\omega$ is defined as:
 \begin{equation}\label{scal}
s^{ch}(\omega):=n\frac{{\rm Ric}^{ch}(\omega)\wedge \omega^{n-1}}{\omega^n}\,.
\end{equation}

Another important $(1,1)$-form, related to the curvature of the Chern connection, is the second Chern-Ricci form,  defined locally as:
$$
{\rm Ric}^{(2)}(\omega)_{i\bar j}:=g^{k\bar l }R_{k\bar l i\bar j }\,.
$$
In what follows, in order to avoid any confusion, we will denote with: 
$$
{\rm Ric}^{ch}(g)(X, Y):={\rm Ric}^{ch}(\omega)(X, JY)\,, \quad {\rm Ric}^{(2)}(g)(X, Y):={\rm Ric}^{(2)}(\omega)(X, JY)\,, \quad X, Y\in TM\,,
$$  the $(1,1)$-symmetric tensors associated, respectively, to ${\rm Ric}^{ch}(\omega)$ and ${\rm Ric}^{(2)}(\omega)$.
Furthermore, we will make use of another tensor, firstly introduced in \cite{STi} in the context of geometric flows on Hermitian manifolds, which is a quadratic expression in the torsion $T$ of the Chern connection. Following \cite{STi}, it is defined locally as:
$$
Q^2_{i\bar j }:=g^{l\bar k }g^{p\bar q }T_{lp\bar j}T_{\bar k \bar q i}\,.
$$
It turns out that the tensor $Q^2$ is a symmetric $(1,1)$-tensor. In view of this, we will denote its associated $(1,1)$-form with $\Xi$, defined as:
\begin{equation}\label{Q2malefico}
\Xi(X, Y):=Q^2(JX, Y)\,, \quad X, Y\in TM\,.
\end{equation}


A fundamental tool in the proof of  \cite[Theorem 1.1]{AP} is the Burns-Simanca metric $\omega_{BS}$ on ${\rm Bl}_0\C^n$. This metric was defined in \cite{LeB} and \cite{Sim} and it is an asymptotically flat, scalar flat K\"ahler metric. Moreover, using the standard coordinates $\zeta$  on ${\rm Bl}_0\C^n\backslash E\simeq \C^n\backslash \{0\}$, $\omega_{BS}$ can be written as:
\begin{equation}\label{AF}
\omega_{BS}=i\partial \bar \partial (|\zeta|^2+ \gamma(|\zeta|))\,, \quad \gamma(|\zeta|)= O(|\zeta|^{4-2n}) \,, \quad |\zeta|\to \infty\,,
\end{equation} where $E$ is the \emph{exceptional divisor} of  ${\rm Bl}_0\C^n$.

The focus of the present note is to extend the results in \cite{AP} weakening the K\"ahler condition and focusing on balanced Hermitian metrics.

A Hermitian metric $\omega$ is said to be  balanced if $d\omega^{n-1}=0.$ The notion of balanced metric was firstly introduced by Michelsohn in \cite{M} where many properties of balanced manifolds,  such as their closedness under products and holomorphic submersions, were studied. Later, Alessandrini and Bassanelli in \cite{AB1} proved that compact balanced manifolds are closed under proper modifications. As  a difference from the K\"ahler condition which is preserved under small deformations, Alessandrini e Bassanelli showed that the balanced condition does not satisfy the same property, see \cite{AB3} for a counterexample.

As one can directly see from the definition, balanced metrics on complex surfaces are K\"ahler. Motivated by this, our focus will be on complex manifolds of dimension $n\ge 3$.

From a curvature point of view, balanced metrics are the unique non-K\"ahler metrics such that the  Gauduchon Ricci forms agree. In \cite{gauconn}, Gauduchon introduced a line of \emph{canonical} Hermitian connections $\{\nabla^t\}_{t\in \R}$, also known as \emph{Gauduchon connections}, defined by:
$$
g(\nabla^t_XY, Z)=g(\nabla_X^{LC}Y, Z)+ \frac{t-1}{4}(d^c\omega)(X, Y, Z)+ \frac{t+1}{4}(d^c\omega)(X, JY, JZ)\,, \quad X, Y, Z \in  TM\,.
$$
Most notably, for $t=1$, we recover the definition of the Chern connection while, for $t=-1$, the corresponding connection is also known as the \emph{Bismut connection}, see \cite{Bis}, defined as the unique Hermitian connection with totally skew-symmetric torsion. Thus, one can define 
$$
{\rm Ric}^t(\omega):=-\frac12{\rm tr}(J R^{\nabla^t})\,.
$$
Then, $\omega$ is balanced if and only if ${\rm Ric}^t(\omega)={\rm Ric}^{ch}(\omega), $ for all $t\in \R$, see \cite[formula (8)]{GGP}. This result holds true even in the non-integrable case, see \cite[Corollary 3.3]{vez}.

From a  more analytic point of view, balanced metrics can be characterized by the coincidence of the various Laplacians operators on smooth functions. More precisely, 
$\omega$ is balanced if and only if 
\begin{equation}\label{laplacians}
\Delta_{\partial }=\Delta_{\bar \partial }=\frac12\Delta_d=-\Delta_{\omega}\,, \quad \mbox{ on } C^{\infty}(M, \R)\,,
\end{equation} where $\Delta_{\partial}:=[\partial, \partial^*]$, $\Delta_{\bar \partial }:=[\bar \partial , \bar \partial ^*]$, $\Delta_d$ is the standard Hodge Laplacian while $\Delta_{\omega}:={\rm tr}_{\omega}(i\partial \bar \partial)$, see \cite[Proposition 1]{G}. For general Hermitian metrics, \cite[Formula (4)]{Gaulaplace} ensures that 
\begin{equation}\label{chvshodge}
2\Delta_{\omega}f=-\Delta_df+ g(df, \theta_{\omega})\,, \quad  f\in C^{\infty}(M, \R)\,,
\end{equation} where $\theta_{\omega}$ is \emph{Lee form} of $\omega$ implicitly defined  by $d\omega^{n-1}=\theta_{\omega} \wedge \omega^{n-1}$. Recalling that the balanced condition is equivalent to ask $\theta=0$, we recover the last equality in \eqref{laplacians} in the balanced case.  \\
As K\"ahler metrics, balanced metrics define a natural cohomology class:
$$
[\omega^{n-1}]_{BC}\in H^{n-1,n-1}_{BC}(M, \R):=\frac{\ker \partial \cap \ker \bar \partial }{{\rm Im}\partial \bar \partial }\,.
$$
Mimicking  the nomenclature in the K\"ahler case, we will refer to $[\omega^{n-1}]_{BC}$ as the \emph{balanced class} of $\omega$.
 As a direct consequence of the definition of the balanced class, any other form $\alpha \in [\omega^{n-1}]_{BC}$ will be of the form 
\begin{equation}\label{gendef}
\alpha =\omega^{n-1}+ i \partial \bar \partial \varphi\,, \quad \varphi\in \Lambda_{\R}^{n-2, n-2}(M).
\end{equation} In particular, \eqref{gendef} highlights a striking difference from the K\"ahler case. Indeed, in the latter case, the  $\partial \bar \partial$-Lemma ensures that  all the  deformations of a fixed K\"ahler metric in its K\"ahler  class are fully determined by a smooth function, up to renormalization. On the opposite side, in the balanced case, we have much more room for deformations. In particular, in what follows, this will force us to consider ansatz when deforming balanced metrics within their balanced class.
 
Furthermore,  the total Chern scalar curvature, also known as the Gauduchon degree of the conformal class of $\omega$, \cite[I.17]{Gau}, of a balanced metric
\begin{equation}\label{gaudegree}
\Gamma(\{\omega\})=\int_Ms^{ch}(\omega)\frac{\omega^n}{n!}=\int_M{\rm Ric}^{ch}(\omega)\wedge \frac{\omega^{n-1}}{(n-1)!}=\frac{2\pi}{(n-1)!}c_1^{BC}(M)\cdot[\omega^{n-1}]_{BC}\,
\end{equation}   depends only on the balanced class of $\omega$ and on the topology of $M$. Moreover, as highlighted in \cite[Proposition 2.6]{ACS}, 
  the sign of the Chern scalar curvature of a balanced constant Chern scalar curvature metric $\omega$  must be equal to that of  $\Gamma(\{\omega\})$.

Additionally, we recall that, if  $\tilde M:={\rm Bl}_{p_1, \ldots, p_k}M$,  where  $p_1, \ldots, p_k\in M$, the following identity on the first Bott-Chern classes holds:
    \begin{equation}\label{BCclass}
    c_1^{BC}(\tilde M)=\pi^*c_1^{BC}(M)-(n-1)\sum_{i=1}^{k}[E_i]_{BC},
    \end{equation} where  we denoted with $\pi $ the blowdown map and  $[E_i]_{BC}$ is the Bott-Chern class of the Poincarè dual of the $(2n-2)$-homology class defined by the exceptional divisor of the blow-up of $M$ at $p_i$. Equivalently, we can define $[E_i]_{BC}$ as the first Bott-Chern class of the line bundle $\mathcal O_{\tilde M }(E_i)$ associated  to $E_i$ in $\tilde M$.

To conclude this preliminary part, we want to recall some basic ingredients concerning resolutions of orbifolds. 

Let $M$ be a complex orbifold. A map $\pi\colon \hM\to M$  is a resolution if $\hM$ is a complex manifold and $\pi$ is a proper biholomorphic map outside suitable dense subsets. The resolutions we will be concerned the most with are the so called \emph{crepant resolutions}.
\begin{defi}  
Let $M$ be a complex orbifold and $\pi \colon \hM \to M$ be a resolution. $\pi$ is called crepant if $K_{\hM}=\pi^*K_{M}$.
\end{defi}
Admitting crepant resolutions easily ensures that the orbifold group $G_x\subseteq {\rm SL}(n, \C)$. For $n=2,3$, the viceversa is also true, see for instance \cite[Theorem 6.6.1]{J},  but,   for $n\ge4 $, it does not hold in general. Besides having a nice expression for the canonical bundle, crepant resolutions  admit ALE Ricci-flat K\"ahler metrics with a suitable decay at infinity. More precisely,  we have the following.
\begin{thm}[Joyce, \cite{J}]\label{thmALE}
Let $G\subseteq {\rm SU}(n)$ be finite subgroup acting freely on $\C^{n}\backslash \{0\}$ and let $X$ be a crepant resolution of $\C^n/G$. Then, there exists a Ricci-flat K\"ahler metric $\omega_{{\rm ALE}}$ such that, away from the singularity, 
$$
\omega_{{\rm ALE}}=\omega_o-Ai\partial \bar \partial (r^{2-2n}+ {\rm o} (r^{2-2n}))\,,
$$
where $A>0$ is a constant, $r$ is the flat distance from the singularity while $\omega_o$ is the flat metric on $\C^n/G$.
\end{thm}
We refer to \cite[Theorem 8.2.3]{J} for the precise statement and the proof of  Theorem \ref{thmALE}.  
 We conclude this subsection  remarking that the better asymptotic behaviour of the Joyce's  ALE  Ricci-flat K\"ahler metric on crepant resolutions is the key ingredient to  prove Theorem \ref{glueorb}. 
\subsection{The approximate solution}\label{apprsol}

Let now $(M, \tilde \omega)$ be a compact Chern-Ricci flat manifold of dimension $n\geq 3$ and let $\hM$ be the blow-up at a point $x \in M$ (for simplicity, we will focus in the case of the  blow-up of a point, but the argument applies in the same way when blowing up a finite family of points). Following the usual strategy of gluing constructions (see \cite{AP} or \cite{Sz}), the first step will be to construct an \textit{approximate solution} to the problem on $\hM$, which in our case will consist of an \textit{approximately  constant Chern scalar curvature balanced metric}. In order to do this, we shall implement the cut-off argument for balanced metrics introduced in \cite{GS}. With this at hand,  we can glue together the background metric $\tilde{\omega}$ to the Burns-Simanca metric $\omega_{BS}$ on the  blow-up, using a flat region as bridge, making sure that it also satisfies suitable properties for the following deformation argument.\par 
Let us then start to describe this gluing process by seeing how the balanced property intervenes, and this happens with the following lemma from \cite{GS}.
\begin{lem}[Lemma 2.7, \cite{GS}]\label{flatcutoff}
Let  $(M^n,\tilde \omega)$ be  a balanced manifold. Then, for every $x \in M$ and $p>0$, there exist a sufficiently small $\varepsilon>0$, coordinates $z$ centred at $x$ and a balanced metric $\tilde\omega_\varepsilon$ such that
\[
\tilde\omega_\varepsilon=\begin{cases} \omega_o & \text{if} \>\> |z|<\varepsilon^p\, ; \\ \tilde \omega & \text{if} \>\> |z|> 2\varepsilon^p\, , \end{cases}
\]
where $\omega_o$ is the flat metric around $x$, and such that $|\tilde\omega_\varepsilon-\omega_o|_{\omega_o}<c\varepsilon^p$ on $\{\varepsilon^p \leq |z| \leq 2\varepsilon^p\}$.
\end{lem}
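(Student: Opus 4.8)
The plan is to produce the metric $\tilde\omega_\varepsilon$ by a two-step interpolation: first interpolate between $\tilde\omega$ and a carefully chosen model (a flat metric plus a correction) on a small annulus, and then interpolate between that model and the exactly flat metric $\omega_o$ on a second, inner annulus, all while preserving the balanced condition. The subtlety, and the reason the statement is phrased for balanced metrics rather than arbitrary Hermitian ones, is that balancedness is a condition on $\omega^{n-1}$, not on $\omega$ itself; so the natural variable to interpolate is the $(n-1,n-1)$-form $\tilde\omega^{n-1}$, using the freedom \eqref{gendef}, and only at the end do we take an $(n-1)$-st root to recover a genuine Hermitian metric. First I would fix holomorphic coordinates $z$ centred at $x$ in which $\tilde\omega = \omega_o + O(|z|)$ (such coordinates exist: choose any holomorphic chart and apply a linear change to normalise $\tilde\omega$ at the origin). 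In these coordinates $\tilde\omega^{n-1} = \omega_o^{n-1} + \eta$ with $\eta$ a real $(n-1,n-1)$-form vanishing to first order at $0$, so $|\eta|_{\omega_o} \le C|z|$ on a fixed small ball.

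Next I would handle the balanced obstruction. Since $d\tilde\omega^{n-1}=0$ and $d\omega_o^{n-1}=0$ we have $d\eta = 0$ near $x$; by the Poincaré-type lemma for the $\partial\bar\partial$-complex on a ball (the local $\partial\bar\partial$-lemma, valid since a ball is Stein and the relevant Bott-Chern cohomology of a polydisc vanishes in this bidegree), we may write $\eta = i\partial\bar\partial \psi$ for some real $(n-2,n-2)$-form $\psi$ on a neighbourhood of $x$, with the elliptic estimate $\|\psi\|_{C^{k+2}} \le C\|\eta\|_{C^k}$, hence $\psi = O(|z|^3)$ after subtracting its $2$-jet at $0$ (which we may do since $i\partial\bar\partial$ of a $2$-jet contributes only a constant-coefficient form that we can absorb into $\omega_o^{n-1}$). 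Now choose a smooth cut-off $\chi$ with $\chi \equiv 1$ on $\{|z|\le \varepsilon^p\}$ and $\chi\equiv 0$ on $\{|z|\ge 2\varepsilon^p\}$, with $|\nabla^j\chi| \le C\varepsilon^{-jp}$, and set
\[
\beta_\varepsilon := \omega_o^{n-1} + i\partial\bar\partial\big((1-\chi)\psi\big).
\]
Then $\beta_\varepsilon = \omega_o^{n-1}$ for $|z|<\varepsilon^p$, $\beta_\varepsilon = \tilde\omega^{n-1}$ for $|z|>2\varepsilon^p$ (where $\chi=0$), and $\beta_\varepsilon$ is closed by construction, so once we know it is (the $(n-1)$-st power of) a positive form it will be balanced. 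On the transition annulus $\{\varepsilon^p\le|z|\le 2\varepsilon^p\}$ one estimates, using $|\nabla^j\chi|\le C\varepsilon^{-jp}$ and $\psi = O(|z|^3)$, that $|i\partial\bar\partial((1-\chi)\psi)|_{\omega_o} \le C\varepsilon^p$ there; I would do this termwise via the Leibniz rule, where the worst term $(\,i\partial\bar\partial\chi\,)\,\psi$ gives $\varepsilon^{-2p}\cdot(\varepsilon^p)^3 = \varepsilon^p$ and the cross terms are no worse.

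Finally I would pass from $\beta_\varepsilon$ back to a metric. Since $\beta_\varepsilon - \omega_o^{n-1}$ is $O(\varepsilon^p)$ in $C^0$ (indeed in every $C^k$ after rescaling), for $\varepsilon$ small the form $\beta_\varepsilon$ is strictly positive in the sense that it admits a unique positive $(1,1)$-form $\tilde\omega_\varepsilon$ with $\tilde\omega_\varepsilon^{\,n-1} = \beta_\varepsilon$; here I invoke that taking the $(n-1)$-st root is a smooth, locally invertible operation on the open cone of strictly positive $(n-1,n-1)$-forms near $\omega_o^{n-1}$ (this is Michelsohn's observation that $\omega \mapsto \omega^{n-1}$ is a diffeomorphism from positive $(1,1)$-forms onto positive $(n-1,n-1)$-forms). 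By uniqueness of the root, $\tilde\omega_\varepsilon = \omega_o$ where $\beta_\varepsilon = \omega_o^{n-1}$ and $\tilde\omega_\varepsilon = \tilde\omega$ where $\beta_\varepsilon = \tilde\omega^{n-1}$, giving the two cases in the statement; and since $d\tilde\omega_\varepsilon^{n-1} = d\beta_\varepsilon = 0$, the metric $\tilde\omega_\varepsilon$ is balanced. The bound $|\tilde\omega_\varepsilon - \omega_o|_{\omega_o} < c\varepsilon^p$ on the annulus follows from the $O(\varepsilon^p)$ bound on $\beta_\varepsilon - \omega_o^{n-1}$ together with the Lipschitz continuity of the root map near $\omega_o^{n-1}$.

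The main obstacle is the balanced bookkeeping: one cannot simply interpolate $\tilde\omega$ and $\omega_o$ linearly because the convex combination need not be balanced, so the whole argument must take place at the level of $\omega^{n-1}$ and rely on (i) the local $\partial\bar\partial$-lemma to trivialise the cohomological obstruction $[\tilde\omega^{n-1}]_{BC} - [\omega_o^{n-1}]_{BC}$ on the ball, and (ii) the nonlinear root map of Michelsohn to return to a Hermitian metric — and then to carry the $C^k$ estimates through that nonlinear step uniformly in $\varepsilon$ (which is fine, since we only ever work in a region where $\beta_\varepsilon$ is $C^k$-close to a fixed flat form, so the root map and its derivatives are uniformly bounded there). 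Controlling the derivatives of the cut-off against the vanishing order of $\psi$ — ensuring the product lands at $O(\varepsilon^p)$ and not something larger — is the one genuinely quantitative point, but it is exactly the computation sketched above.
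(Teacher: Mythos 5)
Your argument is correct and follows essentially the same route as the paper: write $\tilde\omega^{n-1}=\omega_o^{n-1}+i\partial\bar\partial\psi$ near $x$ with $\psi=O(|z|^3)$ via the local $\partial\bar\partial$-lemma, cut off the potential on the annulus $\{\varepsilon^p\le|z|\le2\varepsilon^p\}$, estimate the error by $C\varepsilon^p$ (worst term $i\partial\bar\partial\chi\cdot\psi\sim\varepsilon^{-2p}\cdot\varepsilon^{3p}$), and recover the metric by Michelsohn's root. The only cosmetic difference is that your jet-subtraction remark is unnecessary in the form stated — since $\eta=O(|z|)$, the constant-coefficient form produced by the $2$-jet is automatically zero, so nothing needs to be absorbed into $\omega_o^{n-1}$ — but this does not affect the proof.
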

\begin{proof}
    For each point $x \in M$, we can choose coordinates $z$ around $x$ such that, in a sufficiently small neighbourhood of the point, it holds $\tilde{\omega}=\omega_o+O(|z|)$, and hence $\tilde{\omega}^{n-1}=\omega_o^{n-1}+\alpha$, with $\alpha$ a closed $(n-1,n-1)$-form. Thus, up to shrinking the neighbourhood, we can choose a real $(n-2,n-2)$ form $\beta$ such that $\alpha=i\p\bpa\beta$ such that $\beta=O(|z|^3)$. Thus, if we introduce a cut-off function
\[
\chi(y):=
\begin{cases} 
0 & \text{if} \>y\leq 1\, ; \\ 
\text{non decreasing} & \text{if} \> 1<y<2\, ; \\ 
1 & \text{if} \> y \geq 2\, 
\end{cases}
\]
and call $r(z):=|z|$ the (flat) distance from $x$, we can consider $\chi_\varepsilon(y):=\chi(y/\varepsilon)$ and define
\[
\tilde{\omega}_\varepsilon^{n-1}:=\omega_o^{n-1}+i\p\bpa(\chi_\varepsilon(r)\beta),
\]
which is easily seen to correspond to the metric of the statement.
\end{proof}
Thus, starting from $\tilde{\omega}$, we can obtain the corresponding $\tilde \omega_\ezi$, which is exactly flat in a neighbourhood of $x$.\\
On the other hand, we can consider the standard coordinates $\zeta$ on $\hX:={\rm Bl}_0\mathbb{C}^n\backslash E\simeq\mathbb{C}^n\backslash\{0\}=:X\setminus\{0\}$. Recalling that $\omega_{BS}$ admits the expansion \eqref{AF} away from the exceptional divisor, we can introduce a cut-off function
\[
\psi(y):=
\begin{cases} 
1 \qquad & \text{if} \> y\leq \frac{1}{4}; \\ 
\text{non increasing} \quad & \text{if} \> \frac{1}{4} < y < \frac{1}{2} ; \\ 
0 \qquad & \text{if} \> y\geq \frac{1}{2}, 
\end{cases}
\]
which, for all $q>0$, can be rescaled to
\[
\psi_\ezi(y):=\psi(\ezi^q y),
\]
making the cut-off happen \textit{far away} from the exceptional divisor (i.e. in the asymptotically flat part). This allows us to introduce the family of closed $(1,1)$-forms:
\[
\omega_{BS,\ezi}:=i\p\bpa(|\zeta|^2+\psi_\ezi(|\zeta|)\gamma(|\zeta|))\,.
\]
From here, it is easily seen that, on the cut-off region $\{\frac{1}{4}\ezi^{-q}\leq |\zeta| \leq \frac{1}{2}\ezi^{-q}\}$, it holds
\begin{equation}\label{cutoffbs}
\omega_{BS,\ezi}=\omega_o+O(|\zeta|^{2-2n})\,,
\end{equation}
where now $\omega_o$ denotes the flat metric on $\mathbb{C}^n\backslash \{0\}$ induced by the coordinates $\zeta$. This ensures that for sufficiently small $\ezi$, $\omega_{BS,\ezi}$ is an asymptotically exactly flat Kähler metric on $\hX$.\\
If we then consider the biholomorphism
\[
z=\ezi^{p+q}\zeta,
\]
we get the identification
\[
\left\{\frac{1}{4}\ezi^{-q}\leq |\zeta|\leq 2\ezi^{-q}\right\}\equiv \left\{\frac{1}{4}\ezi^p\leq |z|\leq 2\ezi^p\right\}
\]
 which guarantees that we can topologically realize $\hM$. Moreover, it also allows us to  obtain that
\[
|z|^2=\ezi^{2(p+q)}|\zeta|^2,
\]
 telling that on $\hM$, the metrics $\ezi^{2(p+q)}\omega_{BS,\ezi}$ and $\tilde \omega_\ezi$ coincide (with the flat metric), on the region
\begin{equation}\label{regione}
\left\{\frac{1}{2}\ezi^{-q}\leq |\zeta|\leq \ezi^{-q}\right\}\equiv \left\{\frac{1}{2}\ezi^p\leq |z|\leq \ezi^p\right\}\,.
\end{equation}
This last fact gives us the possibility to glue $\tilde \omega_\ezi$ and $\ezi^{2(p+q)}\omega_{BS,\ezi}$ to a global balanced metric $\omega_\ezi$ on $\hM$. In the following, as it will not create any confusion, we will avoid writing explicitly the dependence on $\ezi$, thus we will always write just $\omega=\omega_\ezi$.\par
\begin{rmk}
The metric $\omega$ is a suitable approximate solution. Indeed, it is clear that the metric is unaltered on $\{\varepsilon^p \leq |z| \leq 2\varepsilon^p\}$, on which we still have
\[
|\nabla_{\omega_o}^k (\omega -\omega_o)|_{\omega_o}\leq c|z|^{1-k},
\]
for all $k \geq 0$.\\
On the other hand, since to obtain $\omega$ we had to rescale the metric $\omega_{BS,\ezi}$ on $\hX$, we have to check how it has affected the distance from the flat metric. To have clearer estimates, we will express also this one in terms of the \lq\lq small\rq \rq\, coordinates $z$. The main thing to observe is that, on $\{\frac{1}{4}\ezi^{-q}\leq |\zeta| \leq \frac{1}{2}\ezi^{-q}\}$, it holds
\[
\begin{aligned}
\langle \omega-\omega_o,\omega-\omega_o\rangle_{\omega_o}(z)= & \> \ezi^{-4(p+q)}\langle \ezi^{2(p+q)}(\omega_{BS,\ezi}-\omega_o),\ezi^{2(p+q)}(\omega_{BS,\ezi}-\omega_o)\rangle_{\omega_o}(\zeta) \\
= & \>\langle \omega_{BS,\ezi}-\omega_o,\omega_{BS,\ezi}-\omega_o\rangle_{\omega_o}(\zeta)\,,
\end{aligned}
\] 
implying that $|\omega-\omega_o|_{\omega_o}(z)=|\omega_{BS,\ezi}-\omega_o|_{\omega_o}(\zeta)$. From here, we can recall the expansion \eqref{cutoffbs} and obtain
\[
\begin{aligned}
|\omega-\omega_o|_{\omega_o}(z)\leq & |\omega_{BS,\ezi}-\omega_o|_{\omega_o}(\zeta)  
 \leq  c|\zeta|^{2-2n} \leq c\varepsilon^{(2n-2)q} \leq c |z|^{(2n-2)q/p}\,,
\end{aligned}
\]
which implies, on the whole gluing region, that, for all $k \geq 0$,  it  holds
\[
|\nabla_{\omega_o}^k (\omega - \omega_o)|_{\omega_o} \leq c|z|^{m-k},
\]
where $m=\min\{1, (2n-2)q/p\}$, showing again that $\omega$ is indeed a metric on $\hM$. Moreover, the closeness between the metric $\omega$ and the flat metric $\omega_o$ shows us that $\omega$ is suitable to perform analysis with, and hence we can try to search for a  constant Chern scalar curvature balanced metric through a deformation argument.
\end{rmk}

\begin{rmk}
As we will see, the deformation argument we will introduce in the following subsection will allow us to work within the balanced class of $\omega$. Hence, in light of \cite[Proposition 2.6]{ACS}, we can predict the sign of the Chern scalar curvature of a genuine solution (which we will obtain in the next sections) by describing the cohomology class of $\omega$.
Indeed, we have that
    \[
    [\omega^{n-1}]_{BC}=\pi^*[ \tilde{\omega}^{n-1}]_{BC}+[\ezi^{2(p+q)}\omega_{BS}]_{BC}^{n-1},
    \]
    and since $[\omega_{BS}]_{BC}=-[E]_{BC}$, we get
    \[
    [\omega^{n-1}]_{BC}=\pi^*[ \tilde{\omega}^{n-1}]_{BC}+(-1)^{n-1}\ezi^{(2n-2)(p+q)}[E]_{BC}^{n-1}.
    \]
Now, recalling \eqref{gaudegree},  \eqref{BCclass}  and that $[E]_{BC}^n=(-1)^{n-1}$,   it follows that
  \begin{equation}\label{ilmentiroso}
    \begin{aligned}
    \Gamma(\{\omega\})= &\,\frac{2\pi}{(n-1)!} (\pi^*c_1^{BC}(M)-(n-1)[E]_{BC})\cdot(\pi^*[ \tilde{\omega}^{n-1}]_{BC}+(-1)^{n-1}\ezi^{(2n-2)(p+q)}[E]_{BC}^{n-1})\\
    = &\,  \Gamma(\{\tilde{\omega}\})-\frac{2\pi}{(n-2)!}\ezi^{(2n-2)(p+q)}.
    \end{aligned}
\end{equation}
\end{rmk}

\subsection{Setting up the equation}\label{seceq}
We now wish to obtain a  constant Chern scalar curvature balanced metric starting from the approximate solution,  as done in \cite{BM}, \cite{AP}, \cite{Sz} and many others. We plan to do it through a deformation argument. Since we wish to work inside the balanced class of $\omega$, we will consider the general deformation (considered first by \cite{FWW}):
\begin{equation}\label{ansatz}
\omega_\varphi^{n-1}:=\omega^{n-1}+i\partial\bar {\partial}\varphi, \quad \varphi \in \Lambda_{\R}^{n-2, n-2}(\hM) \>\> \text{such that} \>\> \omega_\varphi^{n-1} >0.
\end{equation}
Thus, the problem we are interested in solving, following what was done in \cite{Sz}, is the equation
\begin{equation}\label{eqs1}
s^{ch}(\omega_\varphi)=const.
\end{equation}
for $\varphi \in \Lambda_{\R}^{n-2, n-2}(\hM)$ such that $\omega_\varphi^{n-1} >0$.
Now, as showed by \eqref{ilmentiroso}, we can expect the solution to have the Chern scalar  curvature near to the one of $\tilde \omega$, thus we can rephrase equation \eqref{eqs1} as:
\begin{equation}\label{eqs2}
\mathcal{S}(\varphi):=s^{ch}(\omega_\varphi)-s^{ch}( \tilde{\omega})=c
\end{equation}
for $\varphi\in \Lambda_{\R}^{n-2, n-2}(\hM)$ and $c\in \mathbb R$. Moreover, we can get rid of the unknown constant by rewriting the equation as:
\begin{equation}\label{eqs3}
\tilde{\mathcal{S}}(\varphi):=s^{ch}(\omega_\varphi)-s^{ch}( \tilde{\omega})-\int_{\hM} f(\varphi)\frac{\omega^n}{n!}=0,
\end{equation} where $f\colon \Lambda_{\R}^{n-2, n-2}(\hM)\to C^{\infty}(\hM, \R)$ is a suitable operator to be evaluated on $\varphi$ which will be chosen later to help us to get rid of some operator's kernel. 
This last version of the equation encodes the unknown constant from equation \eqref{eqs2}. This will help us in obtaining the invertibility of the linearization of $\tilde{\mathcal{S}}$, which is a key ingredient to allow us to turn the problem of solving equation \eqref{eqs3} into a fixed point problem to be solved with Banach's fixed-point Theorem in a  suitably chosen neighbourhood of zero. Hence, our next step will be to compute the linearization at $0$ of $\tilde{\mathcal{S}}$.\par
First of all, we observe that, directly from \eqref{eqs3},  the linearized operator has the following form:
$$
\tilde {\mathcal L}(\varphi):=d_0\tilde {\mathcal S}(\varphi)=\mathcal L (\varphi)- \int_{\hM }d_0f(\varphi)\frac{\omega^n}{n!}\,, \quad \varphi\in \Lambda_{\R}^{n-2, n-2}(\hM)\,,
$$ where $\mathcal L(\varphi):=d_0s^{ch}(\varphi)$.
We thus need to obtain an explicit expression for the operator $\mathcal L(\varphi)=\frac{d}{dt}\big|_{t=0}s^{ch}(\omega_{t, \varphi})$, where $\omega_{t, \varphi}$ is an arbitrary curve of Hermitian metrics lying in $[\omega^{n-1}]_{BC}$ and   such that $\omega^{n-1}_{0,\varphi}=\omega^{n-1}$ and $(\omega_{t, \varphi}^{n-1})'(0)=\varphi$. 
Thus, we consider the curve of Hermitian metrics defined by $\omega_{t, u}^{n-1}=\omega^{n-1}+ ti\partial \bar \partial \varphi$   and we observe that 
  \begin{equation}\label{gigiKahler}
  \begin{aligned}
\frac{d}{dt}\Big|_{t=0}\omega_{t, \varphi}^{n}=&\, n \frac{d}{dt}\Big|_{t=0}\omega_{t, \varphi}\wedge \omega^{n-1}\,,\\
\frac{d}{dt}\Big|_{t=0}\omega_{t, \varphi}^n=&\, \frac{d}{dt}\Big|_{t=0}\omega_{t, \varphi}\wedge \omega^{n-1}+ \omega\wedge i\partial \bar\partial \varphi\,.
\end{aligned}
\end{equation} Then, from \eqref{gigiKahler},  we obtain 
\begin{equation}\label{volumeAK}
\frac{d}{dt}\Big|_{t=0}\omega_{t, u}^n=\frac{n}{n-1}\omega\wedge i \partial \bar \partial \varphi\,.
\end{equation} 
For the sake of simplicity, we will denote with 
$$
F_{\omega}(\varphi):=\frac{d}{dt}\Big|_{t=0}\log\omega_{t, \varphi}^n=\frac{n}{n-1}\frac{\omega \wedge i \partial \bar\partial \varphi}{\omega^n}\,.
$$
From this, we easily obtain that
\begin{equation}\label{ricciAK}
\frac{d}{dt}\Big|_{t=0}{\rm Ric}^{ch}(\omega_{t, \varphi})=-i\partial \bar \partial F_{\omega}(\varphi)\,.
\end{equation}
Now, differentiating \eqref{scal} and using \eqref{volumeAK} and \eqref{ricciAK}, we obtain that
\begin{equation}\label{pippoL}
\mathcal L(\varphi)=- \Delta_{\omega}F_{\omega}(\varphi)+ n \frac{{\rm Ric}^{ch}(\omega)\wedge i \partial \bar \partial\varphi }{\omega^n}- s^{ch}(\omega)F_{\omega}(\varphi)\,.
\end{equation}
Clearly such an operator will not stand the possibility to have no kernel, as the input space is much larger than the target space, on top of being extremely complicated to understand.
For this reason, in the next three sections, we will introduce two different ansatz  with the objective  to turn the operator into one between two function spaces, allowing us to perform the usual analysis involved in gluing constructions.

\section{Balanced deformation}\label{baldef}
In this section we  consider  the balanced  ansatz for the deformation argument.
In particular, we  analyse the equation  \eqref{eqs1} assuming $\varphi=u\omega^{n-2}$, for  $u\in C^{\infty}(M, \R)$, as previously done in \cite{GS}, from which we get (as from \cite[Lemma 3.2]{GS}) that the operator $F_{\omega}$ takes the form
\begin{equation}\label{opbalanced}
F_{\omega}(u)=\frac{1}{n-1}\left(\Delta_{\omega}u + \frac{1}{n-1}|\partial \omega|^2 u \right)\,.\end{equation}
We will then choose $f(u\omega^{n-2})=u|\partial \omega|^2$. With these choices,  we are able to turn the operator $\tilde{\mathcal S}$ into an operator taking smooth functions in input defined as:
\begin{equation}\label{opfunctions1}
\tilde{\mathcal{S}}(u)=s^{ch}(\omega_u)-s^{ch}( \tilde{\omega})-\int_{\hM} u|\partial \omega|^2\frac{\omega^n}{n!},
\end{equation}
whose linearization is now suitable to be inverted, up to working in the correct functional spaces.
Let us conclude by writing again the linearized operator $\tilde{\mathcal{L}}$ implementing this  ansatz:
\begin{equation}\label{pippoLbal}
    \tilde{\mathcal L}(u):=\tilde{\mathcal L}(u \omega^{n-2})=-\Delta_\omega F_{\omega}(u)+n\frac{\text{Ric}^{ch}(\omega)\wedge i\p\bpa(u\omega^{n-2})}{\omega^n}-s^{ch}(\omega)F_{\omega}(u)- \int_{\hM}u|\partial \omega|^2\frac{\omega^n}{n!}\,.
\end{equation}

Before discussing the proof the main Theorem, we want to highlight and discuss the differences and similarities of this setting with the K\"ahler one.
\subsection{Comparison with the Kähler case}
As highlighted before, this deformation makes sense on every balanced manifold, hence it is worth analysing the linearized operator in a more general setting, aiming to understand something more about it in the case of constant Chern scalar curvature balanced metrics. As we will see, this linearization will show up as a perfectly fitting generalization of the Kähler case, as it will reduce to the Lichnerowicz operator whenever the metric on the base manifold is chosen to be cscK. In order to obtain this, we will first recall some ingredients from the Kähler setting, then obtain some significant formulas for the balanced setting, and finally put all the pieces together.\par

First of all, we recall that from the case of \cite{AP} for cscK metrics on blow-ups, a key role is played by the Lichnerowicz operator:
$$
\mathcal D^*\mathcal D\colon C^{\infty}(M, \C)\to C^{\infty}(M, \C)\quad \mbox{defined locally by } \mathcal D^*\mathcal Du:=g^{j\bar k }g^{p\bar q }\nabla_p\nabla_j\nabla_{\bar k }\nabla_{\bar q} u\,,
$$ where here $\nabla $ is the Levi-Civita connection of a given K\"ahler metric $g$. More globally, using the identity known as \textit{contracted second Bianchi identity}, we can write, see for instance \cite[Definition 4.3]{Sz},  
$$
\mathcal D^*\mathcal Du=\Delta_{\omega}^2u+ g(i\partial \bar \partial u, {\rm Ric}(\omega)) + g(\partial s(\omega), \partial \bar u)\,,\quad u\in C^{\infty}(M, \C)\,,
$$ where ${\rm Ric}(\omega)$ is the Ricci form of the metric $g$ while $s(\omega)={\rm tr}_{\omega}{\rm Ric}(\omega)$ is the Riemannian scalar curvature of $g$. The kernel of the Lichnerowicz operator is well known and it consists on those function $u$ such that 
$
(\nabla u)^{1,0}$ is a holomorphic vector field.\par
 Now, let $ \omega$ be a balanced manifold. Following the K\"ahler case, we would like to consider the operator $$
 \mathcal D:=\nabla_{\bar k}\nabla_{\bar q }\colon C^{\infty}(M, \mathbb R)\to \Lambda^{0,1}(M)\otimes \Lambda^{0,1}(M)\,,$$ where $\nabla $ is the Chern connection of $\omega$ (which is now different from the Levi-Civita connection), and compute, given  $u\in C^{\infty}(M, \C)$, the Chern-Lichnerowicz operator $\mathcal D^*\mathcal D$. As the contracted second Bianchi identity are only satisfied by the Levi-Civita connection, in order to obtain a reasonable expression for the operator, we are going to develop some formulas for the codifferentials of the Chern-Ricci forms of a given balanced metric $\omega$. We will then  see some interesting applications of them, not only in direct relation to our problem, but in the wider framework of constant Chern scalar curvature  balanced metrics.\par

To make things clear, let us quickly establish some local coordinates conventions. We will locally write any Hermitian metric $\omega$ as$$
\omega= \frac i2g_{i\bar j } dz^i\wedge d\bar z^j\,,
$$ where, as usual, $g_{i\bar j }=g(\frac{\partial }{\partial z_i}, \frac{\partial }{\partial \bar z_j})$ are the components of the Hermitian metric $g$ associated to $\omega$.  As we know, the presence of a Riemannian metric determines a natural metric on all  tensor bundles. In what follows, we will make use of that on  differential forms. Encoding directly the complex structure in the discussion, we will use the following convention: given $\alpha, \beta\in \Lambda^{p,q}(M)$, then, in local holomorphic coordinates, we have that 
$$
g(\alpha, \beta)=\frac{1}{p!q!}g^{i_1\bar j_1}\cdots g^{i_p\bar j_p}g^{k_1\bar l_1}\cdots g^{k_q\bar l_q}\alpha_{i_1\cdots i_p\bar l_1\cdots \bar l_q}\overline{\beta_{j_1\cdots j_p\bar k_1\cdots \bar k_q}}\,.
$$ Instead of using local coordinates, we can equivalently define the above inner product using the Hodge $*$-operator as follows:
$$
\alpha\wedge *\bar \beta=g(\alpha, \beta)\frac{\omega^{n}}{n!}\,. 
$$
 With these conventions, we can recall some well known 
 Riemannian relations we will use later, specified in the Hermitian case. The first one is the fact  that the interior product and the exterior one are formal adjoints with respect to the inner product above. More precisely, given $Z\in T^{1,0}M$, $\alpha\in \Lambda^{p,q}(M)$ and  $\beta\in \Lambda^{p-1, q}(M)$ we have that 
\begin{equation}\label{formulazza1}
g(\iota_Z\alpha, \beta)=g(\alpha, \bar Z^{\flat}\wedge \beta)\,,
\end{equation} where $Z^{\flat}(W)=g(Z, W)$,  for all $W\in T^{0,1}M$. 
From this equality, we derive the following: 
\begin{equation}\label{formulazza2}
\iota_{\bar Z}\beta=*(\bar Z^{\flat}\wedge *\beta) \quad \mbox{and } \quad \iota_Z\beta=*(Z^{\flat}\wedge *\beta)\,.
\end{equation}

We then have the following formulas, which can be interpreted as  contracted second Bianchi identities for the Chern connection.

\begin{lem}\label{lemmarigi}
Let $(M^n, \omega)$ be a compact  balanced manifold. We have,
$$
\begin{aligned}
\bar \partial^*{\rm Ric}^{ch}(\omega)=&\, i\partial s^{ch}(\omega)-\frac i2 \Lambda^2({\rm Ric}^{ch}(\omega)\wedge \partial\omega)\,,\\
\bar \partial^*{\rm Ric}^{(2)}(\omega)=&\, i\partial s^{ch}(\omega)-\frac i2 \Lambda^2({\rm Ric}^{(2)}(\omega)\wedge \partial\omega)-i\Lambda(\partial {\rm Ric}^{(2)}(\omega))\,.
\end{aligned}
$$
As a consequence, we have that 
$$
\begin{aligned}
i\partial^*\bar \partial^*{\rm Ric}^{ch}(\omega)=&\, \Delta_{\omega}s^{ch}(\omega)-g({\rm Ric}^{ch}(\omega), \partial^*\partial \omega)\,,\\
i\partial ^*\bar \partial^*{\rm Ric}^{(2)}(\omega)=&\, \Delta_{\omega}s^{ch}(\omega)-\frac12\Lambda^2(i\partial \bar \partial {\rm Ric}^{(2)}(\omega))+ 2 \Re(g(\partial {\rm Ric}^{(2)}(\omega), \partial \omega))- g({\rm Ric}^{(2)}(\omega),\partial^*\partial \omega )\,.
\end{aligned}
$$
\end{lem}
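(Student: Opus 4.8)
The plan is to compute the codifferentials $\bar\partial^*$ of the two Chern-Ricci forms by exploiting the fact that both forms are $d$-closed, together with the Kähler-type identities available on a balanced manifold. First I would recall that on any Hermitian manifold one has the commutator identity $[\Lambda,\bar\partial] = -i(\partial^* + \bar\tau^*)$, where $\tau = [\Lambda, \partial\omega]$ is the torsion operator, and its conjugate $[\Lambda,\partial] = i(\bar\partial^* + \tau^*)$; on a balanced metric the torsion terms simplify because $\theta_\omega = 0$, and one can track them via the wedge-with-$\partial\omega$ operator that appears in the statement (the symbol $\Lambda^2$ denoting two applications of the contraction $\Lambda = \Lambda_\omega$). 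Since ${\rm Ric}^{ch}(\omega)$ is a closed real $(1,1)$-form, I would apply $[\Lambda,\bar\partial]$ to it, using $\bar\partial\,{\rm Ric}^{ch}(\omega) = 0$ and $\Lambda\,{\rm Ric}^{ch}(\omega) = s^{ch}(\omega)$ (up to the normalization in \eqref{scal}), to get $\bar\partial^*{\rm Ric}^{ch}(\omega)$ as $i\partial s^{ch}(\omega)$ plus a torsion correction, which one identifies as $-\tfrac{i}{2}\Lambda^2({\rm Ric}^{ch}(\omega)\wedge\partial\omega)$. For ${\rm Ric}^{(2)}(\omega)$ the same scheme applies, but now the form is not closed: $\partial\,{\rm Ric}^{(2)}(\omega)\neq 0$ in general, which is precisely the origin of the extra term $-i\Lambda(\partial\,{\rm Ric}^{(2)}(\omega))$. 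One also needs the fact (from \cite{LiY} or direct computation) that $\Lambda\,{\rm Ric}^{(2)}(\omega) = s^{ch}(\omega)$ as well, i.e. the two Chern-Ricci forms have the same trace, so that the leading term $i\partial s^{ch}(\omega)$ is common to both.

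For the second pair of formulas I would simply apply $i\partial^*$ to the first pair and commute. Taking $\partial^*$ of $\bar\partial^*{\rm Ric}^{ch}(\omega) = i\partial s^{ch}(\omega) - \tfrac{i}{2}\Lambda^2({\rm Ric}^{ch}(\omega)\wedge\partial\omega)$, the term $i\partial^*(i\partial s^{ch})= -\partial^*\partial s^{ch}$ becomes $-\Delta_\omega s^{ch}(\omega)$ after using \eqref{laplacians} (valid because $\omega$ is balanced and $s^{ch}$ is a real function, where the sign convention makes $\Delta_\partial = -\Delta_\omega$ and one keeps careful track of signs); the remaining term is rewritten, via the adjointness relation \eqref{formulazza1}–\eqref{formulazza2} between wedge and contraction, as the pointwise pairing $-g({\rm Ric}^{ch}(\omega),\partial^*\partial\omega)$. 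For the $(2)$-version one does the analogous manipulation, but now the non-closedness term $-i\Lambda(\partial\,{\rm Ric}^{(2)}(\omega))$ contributes, after applying $i\partial^*$ and integrating by parts / using Bochner–Kodaira-type identities, the pieces $-\tfrac12\Lambda^2(i\partial\bar\partial\,{\rm Ric}^{(2)}(\omega))$ and $2\,\Re\,g(\partial\,{\rm Ric}^{(2)}(\omega),\partial\omega)$, while the torsion term again yields $-g({\rm Ric}^{(2)}(\omega),\partial^*\partial\omega)$. Throughout, one uses that $\partial^*$ and $\bar\partial^*$ applied to $(1,1)$-forms land in $(0,1)$-forms, so that the composition $i\partial^*\bar\partial^*$ produces a function, matching the type of the right-hand sides.

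I expect the main obstacle to be the bookkeeping of the torsion terms: keeping precise control of the constants and signs in the Hermitian Kähler-type identities $[\Lambda,\bar\partial] = -i(\partial^* + \bar\tau^*)$ and in the reduction of $\bar\tau^*$ acting on a $(1,1)$-form to the concrete expression $\tfrac12\Lambda^2(\,\cdot\,\wedge\partial\omega)$. This is where all the balanced hypotheses enter (through $\theta_\omega = 0$ and the consequent simplifications of $\tau$), and a sign error here propagates to both displayed formulas. A secondary technical point is justifying the passage from $\partial^* \bar\partial^* {\rm Ric}$ to the pointwise (non-integrated) identities in the "as a consequence" part — these are meant as identities of functions, so one must verify that the integration-by-parts steps used to reorganize the torsion contributions are in fact pointwise-valid rewritings (using \eqref{formulazza2}) rather than only valid under the integral sign; alternatively, one derives the second pair directly from the first by a purely algebraic application of $i\partial^*$ together with the Weitzenböck formula $\partial^*\partial + \partial\partial^* = \Delta_\partial$ on functions and $(0,1)$-forms.
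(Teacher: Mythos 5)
Your strategy is viable but genuinely different from the paper's, and as written it is an outline rather than a proof. The paper never invokes the torsion-twisted Kähler identities: it starts from the pointwise star formula $*{\rm Ric}^{ch}(\omega)=s^{ch}(\omega)\tfrac{\omega^{n-1}}{(n-1)!}-{\rm Ric}^{ch}(\omega)\wedge\tfrac{\omega^{n-2}}{(n-2)!}$ (see \eqref{starrig}, and its analogue for ${\rm Ric}^{(2)}(\omega)$), computes the codifferential through the Hodge star by contracting with a vector field via \eqref{formulazza2}, and converts the resulting wedge terms into the stated contractions using the primitivity of $\partial\omega$ (so $*\partial\omega=\tfrac{i\partial\omega^{n-2}}{(n-2)!}$, \eqref{startors}) together with \eqref{nonsapre}--\eqref{assurdooo} and \eqref{ricvspartstar}; the second pair of formulas is then obtained by the same direct expansion of $*\,i\partial\bar\partial*{\rm Ric}$, not by applying $\partial^*$ term by term to the first pair. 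Your route through $[\Lambda,\partial]=i(\bar\partial^*+\bar\tau^*)$ does give the first pair in one line \emph{once} you know that on a balanced manifold $\bar\tau^*$ acts on $(1,1)$-forms as $\alpha\mapsto\tfrac i2\Lambda^2(\alpha\wedge\partial\omega)$ (with the sign fixed by your convention for $\tau$), and it explains transparently why the extra term $-i\Lambda(\partial{\rm Ric}^{(2)}(\omega))$ appears only in the non-closed case; but that identification of $\bar\tau^*$ is precisely the analytic content of the lemma and is exactly the computation the paper carries out in \eqref{nonsapre}--\eqref{assurdooo}, so it must be supplied, as must the trace identity $\Lambda\,{\rm Ric}^{(2)}(\omega)=s^{ch}(\omega)$ you implicitly use (true, both Ricci forms have the same double trace).

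Two points to repair when you execute the plan. First, a sign: $i\partial^*\bigl(i\partial s^{ch}(\omega)\bigr)=-\partial^*\partial s^{ch}(\omega)=-\Delta_{\partial}s^{ch}(\omega)=+\Delta_{\omega}s^{ch}(\omega)$ by \eqref{laplacians}, so the leading term of the second pair is $+\Delta_\omega s^{ch}(\omega)$, not $-\Delta_\omega s^{ch}(\omega)$ as in your sketch. Second, passing from the first pair to the second is not a purely algebraic adjointness step: you need a Leibniz computation of $\partial^*\Lambda^2({\rm Ric}^{ch}(\omega)\wedge\partial\omega)$ in which the terms containing derivatives of ${\rm Ric}^{ch}(\omega)$ are killed by $d\,{\rm Ric}^{ch}(\omega)=0$ --- this is a pointwise star-identity, so your worry about integration by parts does not arise, but the closedness is essential; for ${\rm Ric}^{(2)}(\omega)$, where closedness fails, the surviving derivative terms must be reorganized into exactly $-\tfrac12\Lambda^2(i\partial\bar\partial{\rm Ric}^{(2)}(\omega))+2\Re\bigl(g(\partial{\rm Ric}^{(2)}(\omega),\partial\omega)\bigr)$, which is most easily seen, as in the paper, by expanding $i\partial\bar\partial\bigl({\rm Ric}^{(2)}(\omega)\wedge\omega^{n-2}\bigr)$ directly (see \eqref{nonloso}) rather than differentiating the first formula term by term.
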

\begin{proof}
First of all, using \cite[Lemma 2.1]{BV}, we know that  
\begin{equation}\label{starrig}
*{\rm Ric}^{ch}(\omega)=s^{ch}(\omega)\frac{\omega^{n-1}}{(n-1)!}-{\rm Ric}^{ch}(\omega)\wedge \frac{\omega^{n-2}}{(n-2)!}\,.
\end{equation}
Now, using the balanced condition, we have that 
$$
\partial *{\rm Ric}^{ch}(\omega)=\partial s^{ch}(\omega)\wedge *\omega-{\rm Ric}^{ch}(\omega)\wedge \frac{\partial \omega^{n-2}}{(n-2)!}\,.
$$
Applying \eqref{formulazza2},  for any $Z\in T^{1,0}M$, we can infer that
$$
\begin{aligned}
\iota_Z\bar \partial^*{\rm Ric}^{ch}(\omega)=&\, *(Z^{\flat}\wedge  \partial*{\rm Ric}^{ch}(\omega))
= *(Z^{\flat}\wedge  \partial s^{ch}(\omega)\wedge *\omega)-*\left(Z^{\flat}\wedge {\rm Ric}^{ch}(\omega)\wedge \frac{\partial \omega^{n-2}}{(n-2)!}\right)\,.
\end{aligned}
$$ On the other hand, using  \cite[Proposition 1.2.31]{Huy}, we obtain 
$$
*\partial s^{ch}(\omega)=-i\partial s^{ch}(\omega)\wedge \frac{\omega^{n-1}}{(n-1)!}=-i\partial s^{ch}(\omega)\wedge *\omega\,, 
$$ which gives us that 
$$
*(Z^{\flat}\wedge  \partial s^{ch}(\omega)\wedge *\omega)=i\iota_Z\partial s^{ch}(\omega)\,.
$$
 Now, thanks to the balanced condition, we have that  $\partial \omega $ is primitive. So, making use again of \cite[Proposition 1.2.31]{Huy}, we can easily see that 
\begin{equation}\label{startors}
*\partial \omega=\frac{i\partial \omega^{n-2}}{(n-2)!}\,.
\end{equation} Then, 
\begin{equation}\label{nonsapre}
 \begin{aligned}
Z^{\flat}\wedge {\rm Ric}^{ch}(\omega)\wedge \frac{\partial \omega^{n-2}}{(n-2)!}=&\, -iZ^{\flat}\wedge {\rm Ric}^{ch}(\omega)\wedge *\partial\omega 
=-ig( {\rm Ric}^{ch}(\omega), \iota_{\bar Z}\bar \partial \omega)\frac{\omega^n}{n!}\,.
\end{aligned}
\end{equation}
On the other hand,  one can easily check that, for any $Z\in T^{1,0}M$, we have:
\begin{equation}\label{assurdooo}
 \begin{aligned}
\frac12\iota_Z\Lambda^2({\rm Ric}^{ch}(\omega)\wedge  \partial\omega)=
*\left( {\rm Ric}^{ch}(\omega)\wedge\iota_Z  \partial\omega\wedge \frac{\omega^{n-2}}{(n-2)!}\right)=-g({\rm Ric}^{ch}(\omega), \iota_{\bar Z}\bar \partial \omega)\,,
\end{aligned}
\end{equation}where we used that  
$$
*\iota_Z\partial \omega=-\iota_Z\partial \omega\wedge \frac{\omega^{n-2}}{(n-2)!}\,.
$$
 Hence,  we obtain the first claim.

 As regards the formula for the second Chern-Ricci form, since an analogue of \eqref{starrig} holds for ${\rm Ric}^{(2)}(\omega)$, we just need to analyse the term involving $\partial {\rm Ric}^{(2)}(\omega)\wedge \omega^{n-2}$.
 Applying again \cite[Proposition 1.2.31]{Huy}, we have that 
 \begin{equation}\label{gigiL}
*LZ^{\flat}=iZ^{\flat}\wedge\frac{\omega^{n-2}}{(n-2)!}\,, 
\end{equation} which implies that 
$$
*\left(Z^{\flat}\wedge \partial {\rm Ric}^{(2)}(\omega)\wedge \frac{\omega^{n-2}}{(n-2)!}\right)=ig(\partial{\rm Ric}^{(2)}(\omega), L\bar Z^{\flat})=i\iota_Z\Lambda(\partial {\rm Ric}^{(2)}(\omega))\,,
$$ giving the desired formula.

As regards the second part of the statement,  we know that 
 $$
i\partial^*\bar \partial^*{\rm Ric}^{ch}(\omega)=i*\partial \bar \partial *{\rm Ric}^{ch}(\omega)\,.
$$
Now, recalling \eqref{starrig},  using the balanced condition and the fact that $d{\rm Ric}^{ch}(\omega)=0$, we have that
\begin{equation}\label{ipprigi}
\begin{aligned}
i\partial^*\bar \partial^*{\rm Ric}^{ch}(\omega)=&\,*\left(i\partial \bar \partial s^{ch}(\omega)\wedge \frac{\omega^{n-1}}{(n-1)!}- {\rm Ric}^{ch}(\omega)\wedge \frac{i\partial \bar \partial\omega^{n-2}}{(n-2)!}\right)\\
=&\, \Delta_{\omega}s^{ch}(\omega)-*\left({\rm Ric}^{ch}(\omega)\wedge \frac{i\partial \bar \partial\omega^{n-2}}{(n-2)!}\right)\,.
\end{aligned}
\end{equation} Hence, the claim follows by observing that 
\begin{equation}\label{ricvspartstar}
\partial ^*\partial \omega=*\frac{i\partial \bar \partial \omega^{n-2}}{(n-2)!}\,.
\end{equation}
\\Finally, using that 
$$
*{\rm Ric}^{(2)}(\omega)=s^{ch}(\omega)  \frac{\omega^{n-1}}{(n-1)!}- {\rm Ric}^{(2)}(\omega)\wedge \frac{\omega^{n-2}}{(n-2)!}\,,
$$ we obtain 
\begin{equation}\label{nonloso}
\begin{aligned}
i\partial^*\bar \partial^*{\rm Ric}^{(2)}(\omega)=&\, *\left( i\partial \bar \partial s^{ch}(\omega)\wedge \frac{\omega^{n-1}}{(n-1)!}-i\partial \bar \partial\left({\rm Ric}^{(2)}(\omega)\wedge \frac{\omega^{n-2}}{(n-2)!}\right)\right)\\
=&\, \Delta_{\omega}s^{ch}(\omega)- *i\partial \bar \partial\left({\rm Ric}^{(2)}(\omega)\wedge \frac{\omega^{n-2}}{(n-2)!}\right)\,.
\end{aligned}
\end{equation}
 Moreover, we have that 
 $$
i\partial \bar \partial({\rm Ric}^{(2)}(\omega)\wedge \omega^{n-2})=i\partial \bar \partial {\rm Ric}^{(2)}(\omega)\wedge \omega^{n-2}+ 2\Re(i\partial {\rm Ric}^{(2)}(\omega)\wedge \bar \partial \omega^{n-2})+ {\rm Ric}^{(2)}(\omega)\wedge i\partial \bar \partial \omega^{n-2}\,.
$$
 Now, using again \eqref{ricvspartstar}, we can conclude that 
 $$
*\left({\rm Ric}^{(2)}(\omega)\wedge \frac{i\partial \bar \partial \omega^{n-2}}{(n-2)!}\right)= - g({\rm Ric}^{(2)}(\omega), \partial ^*\partial \omega)\,.
$$
 Furthermore, one can check that  
 $$
i\partial {\rm Ric}^{(2)}(\omega) \wedge \frac{\bar \partial \omega^{n-2}}{(n-2)!}=- \partial {\rm Ric}^{(2)}(\omega)\wedge *\bar \partial \omega =-g(\partial {\rm Ric}^{(2)}(\omega), \partial \omega)\,.
$$
 Then,
 $$
-*i\partial \bar \partial \left({\rm Ric}^{(2)}(\omega)\wedge \frac{\omega^{n-2}}{(n-2)!}\right)=-\frac12\Lambda^2(i\partial \bar \partial {\rm Ric}^{(2)}(\omega))+ 2 \Re(g(\partial {\rm Ric}^{(2)}(\omega), \partial \omega))- g({\rm Ric}^{(2)}(\omega),\partial^*\partial \omega )\,,
 $$
  which inserted in \eqref{nonloso} concludes  the proof.
\end{proof}
This result has several very interesting consequences which we shall now discuss (up to obtaining a nice expression for the Chern-Lichnerowicz operator). The very first formula in Lemma \ref{lemmarigi}, for example, guarantees a new way of proving a well known fact, i.e. that balanced first Chern-Ricci Einstein metrics are either flat or  K\"ahler-Einstein, see \cite{ACS2}.
\begin{corollario}
Let $(M^n, \omega)$ be a compact first Chern-Einstein balanced manifold. Then, $\omega$  is either  Chern-Ricci flat or  K\"ahler-Einstein. 
\end{corollario}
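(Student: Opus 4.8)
The plan is to read the whole statement off Lemma \ref{lemmarigi}. Unravelling the definition, first Chern-Einstein means ${\rm Ric}^{ch}(\omega)=\lambda\omega$ for some real function $\lambda$, and then $s^{ch}(\omega)={\rm tr}_\omega{\rm Ric}^{ch}(\omega)=n\lambda$. The first thing to check is that $\lambda$ is in fact constant. This can be seen directly: since ${\rm Ric}^{ch}(\omega)$ is $d$-closed (it represents $c_1^{BC}(M)$) and $\omega$ is balanced, wedging $0=d(\lambda\omega)$ with $\omega^{n-2}$ and using $0=d\omega^{n-1}=(n-1)\,\omega^{n-2}\wedge d\omega$ yields $d\lambda\wedge\omega^{n-1}=0$, hence $d\lambda=0$. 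Alternatively, and more in the spirit of Lemma \ref{lemmarigi}, one substitutes ${\rm Ric}^{ch}(\omega)=\lambda\omega$ into its first identity: since $\partial\omega$ is primitive in the balanced case, the Lefschetz relation $[\Lambda,L]$ applied to the $3$-form $\partial\omega$ gives $\Lambda(\omega\wedge\partial\omega)=(n-3)\partial\omega$ and therefore $\Lambda^2(\omega\wedge\partial\omega)=0$, so the first identity collapses precisely to the relation forcing $\partial\lambda=0$.

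Once $\lambda$ is a constant, the next step is to feed ${\rm Ric}^{ch}(\omega)=\lambda\omega$ into the second identity of Lemma \ref{lemmarigi},
$$
i\partial^*\bar\partial^*{\rm Ric}^{ch}(\omega)=\Delta_\omega s^{ch}(\omega)-g\big({\rm Ric}^{ch}(\omega),\partial^*\partial\omega\big).
$$
On the left, $\bar\partial^*\omega=0$ because $\omega$ is balanced (indeed $d^*\omega=-*d*\omega=-*\,d(\omega^{n-1}/(n-1)!)=0$, and $\partial^*\omega,\bar\partial^*\omega$ lie in complementary bidegrees), so $i\partial^*\bar\partial^*(\lambda\omega)=\lambda\, i\partial^*\bar\partial^*\omega=0$; on the right, $\Delta_\omega s^{ch}(\omega)=\Delta_\omega(n\lambda)=0$. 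What survives is the pointwise identity $\lambda\, g(\omega,\partial^*\partial\omega)=0$ on $M$. Integrating against $\omega^n/n!$ over the compact manifold $M$ and using that $\partial^*$ is the formal $L^2$-adjoint of $\partial$, this becomes
$$
\lambda\,\|\partial\omega\|_{L^2}^2=\lambda\int_M g(\omega,\partial^*\partial\omega)\,\frac{\omega^n}{n!}=0 .
$$

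The dichotomy then follows at once. If $\lambda=0$, then ${\rm Ric}^{ch}(\omega)=0$ and $\omega$ is Chern-Ricci flat. If $\lambda\neq 0$, then $\partial\omega=0$, hence $d\omega=\partial\omega+\bar\partial\omega=0$ and $\omega$ is Kähler; since on a Kähler manifold the Chern connection coincides with the Levi-Civita connection, ${\rm Ric}^{ch}(\omega)$ is the Riemannian Ricci form, so ${\rm Ric}^{ch}(\omega)=\lambda\omega$ with $\lambda$ a nonzero constant exhibits $\omega$ as Kähler-Einstein.

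I do not expect a genuinely hard step here — all the work is carried by Lemma \ref{lemmarigi}. The only points requiring a little care are the computation $\Lambda^2(\omega\wedge\partial\omega)=0$ (from primitivity of $\partial\omega$) and the vanishing $\bar\partial^*\omega=0$, together with the observation that, although the relevant identity from Lemma \ref{lemmarigi} is only pointwise, compactness of $M$ lets it be integrated into the manifestly sign-definite quantity $\lambda\,\|\partial\omega\|_{L^2}^2$, which is exactly what makes the alternative clear-cut.
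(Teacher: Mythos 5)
Your proof is correct, but it takes a genuinely different route from the paper's, which is much shorter. For the constancy of $\lambda$, the paper uses the first identity of Lemma \ref{lemmarigi}: the Einstein plus balanced conditions kill the torsion term, giving $\bar \partial^*{\rm Ric}^{ch}(\omega)=i\partial s^{ch}(\omega)$, while a direct computation gives $\bar \partial^*(\lambda\omega)=i\partial \lambda$; comparing with $s^{ch}(\omega)=n\lambda$ forces $\partial \lambda=0$. You instead get constancy from the elementary wedge argument $0=d(\lambda\omega)\wedge \omega^{n-2}=d\lambda\wedge\omega^{n-1}$ (using $d\omega\wedge\omega^{n-2}=0$ and pointwise Lefschetz), which needs no codifferential identity at all and is, if anything, more economical; note however that your parenthetical remark that the first identity \lq\lq collapses precisely to the relation forcing $\partial\lambda=0$\rq\rq\ is too quick as stated, since one still has to compute $\bar\partial^*(\lambda\omega)=i\partial\lambda$ and compare it with $i\partial s^{ch}(\omega)=n\,i\partial\lambda$ --- that extra computation is exactly the paper's argument. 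For the dichotomy, the paper finishes in one line: if $\lambda\neq 0$, then $\omega=\lambda^{-1}{\rm Ric}^{ch}(\omega)$ is $d$-closed because the Chern-Ricci form represents $c_1^{BC}(M)$, so $\omega$ is K\"ahler and hence K\"ahler-Einstein. You instead invoke the second identity of Lemma \ref{lemmarigi}, use $\bar\partial^*\omega=0$ and adjointness, and integrate to obtain $\lambda\,\lVert\partial\omega\rVert_{L^2}^2=0$; this is correct (and amounts to specializing the paper's subsequent observation that $\int_M g({\rm Ric}^{ch}(\omega),\partial^*\partial\omega)\,\frac{\omega^n}{n!}=0$ on any compact balanced manifold), but it is heavier machinery than needed, since closedness of the Chern-Ricci form already settles the $\lambda\neq0$ case without any $L^2$ argument. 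What your route buys is a proof of the dichotomy relying only on the integrated contracted-Bianchi identity rather than on $d\,{\rm Ric}^{ch}(\omega)=0$ at the last step; what the paper's route buys is brevity.
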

\begin{proof}
The first Chern-Einstein condition together  with the balanced condition ensures that 
$$
\bar \partial^*{\rm Ric}^{ch}(\omega)= i\partial s^{ch}(\omega)\,.
$$
 On the other hand,  if ${\rm Ric}^{ch}(\omega) = \lambda \omega$, for some $\lambda \in C^{\infty}(M, \R)$, we have that
 $$
\bar \partial^*{\rm Ric}^{ch}(\omega)=\bar \partial ^*(\lambda \omega)=-*\left(\partial\lambda \wedge \frac{\omega^{n-1}}{(n-1)!}\right)=i \partial \lambda\,.
$$ Then, we have that 
$$
i\partial \lambda =i \partial s^{ch}(\omega)= n i\partial \lambda
$$ which implies that $\lambda $ is constant. So, in the case in which $\lambda\ne 0 $ we  conclude the proof, using that ${\rm Ric}^{ch}(\omega)$ is $d$-closed. 
\end{proof}
Another consequence of Lemma \ref{lemmarigi} is that, on a compact balanced manifold $(M, \omega)$, the term $g({\rm Ric}^{ch}(\omega), \partial^*\partial \omega)$ appearing in the expression of $i\partial^*\bar \partial^*{\rm Ric}^{ch}(\omega)$ is such that 
 $$
 \int_{M}g({\rm Ric}^{ch}(\omega), \partial^*\partial \omega)\frac{\omega^n}{n!}=0\,.
$$ 
The above condition directly implies that if $(M, \omega )$ is a compact quotient of a Lie group endowed with a left-invariant balanced metric, then ${\rm Ric}^{ch}(\omega)$ is Bott-Chern harmonic, i.e.
$$
\partial {\rm Ric}^{ch}(\omega)=0\,, \quad \bar \partial {\rm Ric}^{ch}(\omega)=0 \,\, \mbox{ and } i \partial^*\bar \partial^*{\rm Ric}^{ch}(\omega)=0\,.
$$
On the other hand, a well known result, following directly from the famous K\"ahler identities, states that  a K\"ahler metric has harmonic Ricci form if and only if the metric  is cscK. Then, one can formulate the following problem:
\begin{prob}\label{probharmcscb}
Let $(M, \omega)$ be a compact balanced manifold. The first Chern-Ricci form of $\omega$ is Bott-Chern harmonic if and only if $\omega$ has constant Chern scalar curvature. 
\end{prob}
\begin{rmk}
In general, we cannot expect the first Chern-Ricci form to be $d$-harmonic. Indeed, in \cite{GiP}, the authors constructed  balanced compact quotient of Lie groups with vanishing first Chern class but non-zero Chern-Ricci form. Then, if the Chern-Ricci form was $d$-harmonic, it would readily imply that it is zero, which is not possible.
\end{rmk}

Finally, we provide an explicit expression for $\mathcal L $  which will allow us to compare it to the Kähler case. 
Recall that the operator $\mathcal{L}$ is given by
$$
    \mathcal L(u)=-\Delta_\omega F_\omega(u)+n\frac{\text{Ric}^{ch}(\omega)\wedge i\p\bpa(u\omega^{n-2})}{\omega^n}-s^{ch}(\omega)F_\omega(u)\,,\quad 
u \in C^{\infty}(M, \R)\,.
$$
\begin{prop}\label{thmveroL}
Let $(M^n,\omega)$ be a compact balanced manifold. Then, for all $u\in C^{\infty}(M, \R)$, 
\begin{equation}\label{veroL}
 \begin{aligned}
 \mathcal{L}( u)=&\,  -\frac{1}{n-1}\left(\Delta_{\omega}^2u+ g(i\partial \bar \partial u,  {\rm Ric}^{ch}(\omega))+ \frac{1}{n-1}(\Delta_{\omega}+ s^{ch}(\omega){\rm Id})(\lvert \partial\omega\rvert^2 u) \right)\\
 &\, - \frac{1}{n-1}\left(-\Re(g(i \partial u , i \Lambda^{2}({\rm Ric}^{ch}(\omega)\wedge \partial \omega))- u g({\rm Ric}^{ch}(\omega), \partial^*\partial\omega)\right)\,.
 \end{aligned}\end{equation} 
 In particular, if $\omega$ is a constant Chern scalar curvature balanced metric, then 
 $$
 \begin{aligned}
\mathcal{L}( u)=&\,  -\frac{1}{n-1}\left(\Delta_{\omega}^2u+ g(i\partial \bar \partial u,  {\rm Ric}^{ch}(\omega))+2\Re(g(\bar \partial ^*{\rm Ric}^{ch}(\omega), i \partial u ) \right)\\
 &\, - \frac{1}{n-1}\left(\frac{1}{n-1}(\Delta_{\omega}+ s^{ch}(\omega){\rm Id})(\lvert \partial\omega\rvert^2 u)+ ui\partial^*\bar \partial^*{\rm Ric}^{ch}(\omega)\right)\,.
 \end{aligned}
$$
Moreover, if $\omega$ is cscK, 
$$
\mathcal L(u)=-\frac{1}{n-1}\mathcal D^*\mathcal Du\,.
$$
\end{prop}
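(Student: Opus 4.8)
The plan is to expand $\mathcal L(u)$ summand by summand using the balanced expression \eqref{opbalanced} of $F_\omega$ together with the algebraic identities collected above, and then to specialize. First I would substitute $F_\omega(u)=\frac{1}{n-1}\big(\Delta_\omega u+\frac{1}{n-1}|\partial\omega|^2u\big)$ into $\mathcal L(u)=-\Delta_\omega F_\omega(u)+n\frac{{\rm Ric}^{ch}(\omega)\wedge i\partial\bar\partial(u\omega^{n-2})}{\omega^n}-s^{ch}(\omega)F_\omega(u)$. The summands $-\Delta_\omega F_\omega(u)$ and $-s^{ch}(\omega)F_\omega(u)$ at once produce $-\frac{1}{n-1}\Delta_\omega^2u$, a term $-\frac{1}{n-1}s^{ch}(\omega)\Delta_\omega u$ (to be cancelled later), and the block $-\frac{1}{(n-1)^2}\big(\Delta_\omega+s^{ch}(\omega){\rm Id}\big)(|\partial\omega|^2u)$, which already accounts for all torsion-dependent terms of \eqref{veroL}. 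What remains is to understand the middle term $n\frac{{\rm Ric}^{ch}(\omega)\wedge i\partial\bar\partial(u\omega^{n-2})}{\omega^n}$.

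To that end I would apply the Leibniz rule to write $i\partial\bar\partial(u\omega^{n-2})=i\partial\bar\partial u\wedge\omega^{n-2}-i\bar\partial u\wedge\partial\omega^{n-2}+i\partial u\wedge\bar\partial\omega^{n-2}+u\,i\partial\bar\partial\omega^{n-2}$ and treat the four pieces separately. For the piece $i\partial\bar\partial u\wedge\omega^{n-2}$ I would insert the identity ${\rm Ric}^{ch}(\omega)\wedge\frac{\omega^{n-2}}{(n-2)!}=s^{ch}(\omega)\frac{\omega^{n-1}}{(n-1)!}-*{\rm Ric}^{ch}(\omega)$ coming from \eqref{starrig}; wedging with $i\partial\bar\partial u$ and dividing by $\omega^n$ gives exactly $\frac{1}{n-1}\big(s^{ch}(\omega)\Delta_\omega u-g(i\partial\bar\partial u,{\rm Ric}^{ch}(\omega))\big)$, and the term $\frac{1}{n-1}s^{ch}(\omega)\Delta_\omega u$ here cancels the one coming from $-s^{ch}(\omega)F_\omega(u)$. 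For the piece $u\,i\partial\bar\partial\omega^{n-2}$ I would invoke \eqref{ricvspartstar} to get $\frac{u}{n-1}g({\rm Ric}^{ch}(\omega),\partial^*\partial\omega)$.

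The delicate point, and the main obstacle, is the two first-order pieces $-i\bar\partial u\wedge\partial\omega^{n-2}$ and $i\partial u\wedge\bar\partial\omega^{n-2}$. Since $u$ is real, wedging these with ${\rm Ric}^{ch}(\omega)$ and dividing by $\omega^n$ gives complex-conjugate functions, so their sum is twice the real part of one of them, and the task is to turn that one into a genuine contraction. Here I would recycle the computation from the proof of Lemma \ref{lemmarigi}: the balanced condition makes $\partial\omega$ primitive, hence $*\partial\omega=\frac{i\partial\omega^{n-2}}{(n-2)!}$ as in \eqref{startors}, and then the adjunction formulas \eqref{formulazza1}--\eqref{formulazza2} together with the identity \eqref{assurdooo} let one rewrite ${\rm Ric}^{ch}(\omega)\wedge i\partial u\wedge\bar\partial\omega^{n-2}$ through $\iota_Z\Lambda^2({\rm Ric}^{ch}(\omega)\wedge\partial\omega)$, with $Z$ the metric dual of $i\partial u$. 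Collecting the constants, the two pieces contribute $\frac{1}{n-1}\Re\big(g(i\partial u,i\Lambda^2({\rm Ric}^{ch}(\omega)\wedge\partial\omega))\big)$; assembling everything yields \eqref{veroL}.

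The two specializations then require no new computation. If $s^{ch}(\omega)$ is constant, the first formula of Lemma \ref{lemmarigi} reads $\bar\partial^*{\rm Ric}^{ch}(\omega)=-\frac{i}{2}\Lambda^2({\rm Ric}^{ch}(\omega)\wedge\partial\omega)$, i.e. $i\Lambda^2({\rm Ric}^{ch}(\omega)\wedge\partial\omega)=-2\bar\partial^*{\rm Ric}^{ch}(\omega)$, so the contraction term in \eqref{veroL} becomes $2\Re\big(g(\bar\partial^*{\rm Ric}^{ch}(\omega),i\partial u)\big)$, while the consequence $i\partial^*\bar\partial^*{\rm Ric}^{ch}(\omega)=\Delta_\omega s^{ch}(\omega)-g({\rm Ric}^{ch}(\omega),\partial^*\partial\omega)$ collapses to $-g({\rm Ric}^{ch}(\omega),\partial^*\partial\omega)$; substituting gives the second formula of the statement. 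Finally, if $\omega$ is cscK then $\partial\omega=0$ kills all terms containing $|\partial\omega|^2$, $\partial^*\partial\omega$ and $i\partial^*\bar\partial^*{\rm Ric}^{ch}(\omega)$; moreover ${\rm Ric}^{ch}(\omega)={\rm Ric}(\omega)$ and $\bar\partial^*{\rm Ric}(\omega)=i\partial s(\omega)=0$, so $\mathcal L(u)=-\frac{1}{n-1}\big(\Delta_\omega^2u+g(i\partial\bar\partial u,{\rm Ric}(\omega))\big)$. Comparing with the contracted-second-Bianchi expression $\mathcal D^*\mathcal D u=\Delta_\omega^2u+g(i\partial\bar\partial u,{\rm Ric}(\omega))+g(\partial s(\omega),\partial\bar u)$ recalled above, whose last summand vanishes since $s(\omega)$ is constant, we obtain $\mathcal L(u)=-\frac{1}{n-1}\mathcal D^*\mathcal D u$.
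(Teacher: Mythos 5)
Your proposal is correct and follows essentially the same route as the paper: isolate the middle term $n\,\omega^{-n}\,{\rm Ric}^{ch}(\omega)\wedge i\partial\bar\partial(u\omega^{n-2})$, split it by Leibniz into the second-order piece (handled via \eqref{starrig}, which is the content of the cited Kähler-type contraction lemma), the first-order pieces (handled by recycling \eqref{nonsapre}--\eqref{assurdooo} from Lemma \ref{lemmarigi}), and the zeroth-order piece via \eqref{ricvspartstar}, then combine with \eqref{opbalanced} and deduce the constant Chern scalar curvature and cscK cases from Lemma \ref{lemmarigi}. The only cosmetic difference is that you rederive the identity the paper quotes from the literature directly from \eqref{starrig}, which changes nothing of substance.
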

\begin{proof}
 We just need to analyse the term
 $$
n\frac{\text{Ric}^{ch}(\omega)\wedge i\p\bpa(u\omega^{n-2})}{\omega^n}\,.
$$
On the other hand, we have, using \cite[Lemma 4.7]{Sz}, that 
\begin{equation}\label{L1}
n\frac{\text{Ric}^{ch}(\omega)\wedge i\p\bpa u\wedge \omega^{n-2}}{\omega^n}=\frac{1}{n-1}(s^{ch}(\omega)\Delta_{\omega}u-g(i\partial \bar\partial u,{\rm Ric}^{ch}(\omega)))\,.
\end{equation}Moreover, one can check, repeating the computations in \eqref{nonsapre} and \eqref{assurdooo}, that 
\begin{equation}\label{L2}
n\frac{2\Re({\rm Ric}^{ch}(\omega)\wedge i \partial u\wedge \bar \partial \omega^{n-2} )}{\omega^n}=\frac{1}{n-1}\Re(g(i\partial u, i\Lambda^2({\rm Ric}^{ch}(\omega)\wedge \partial \omega)))\,.
\end{equation}Finally, using again \eqref{ricvspartstar}, we have that 
\begin{equation}\label{L3}
n\frac{u\text{Ric}^{ch}(\omega)\wedge i\p\bpa \omega^{n-2}}{\omega^n}=\frac{1}{n-1}ug({\rm Ric}^{ch}(\omega), \partial^*\partial \omega)\,.
\end{equation}
Summing \eqref{L1}, \eqref{L2} and \eqref{L3},  inserting the result in the expression of $\mathcal L$ and using \eqref{opbalanced}, we obtain \eqref{veroL}. The second assertion is straightforward using Lemma \ref{lemmarigi}. Finally, the last claim is again easily verifiable using the cscK condition. 
\end{proof}
Thus, we see that the operator $\mathcal{L}$ arising from the balanced deformation generalizes to the balanced case (with the choice of the Chern connection) the Lichnerowicz operator, giving further motivation to widen the understanding of the associated equation, and making it a good candidate to obtain in the future the result in full generality. 
\begin{rmk}
    It is very interesting to notice that, when starting with a Kähler metric $\tilde{\omega}$ and producing a Kähler pre-gluing metric, despite the fact that the deformation is not corresponding to a deformation in the Kähler class (indeed the balanced deformation does not preserve the Kähler condition in general), it produces an operator whose linearization is again the Lichnerowicz operator (up to a constant factor). This in particular shows that, along this ansatz, holomorphic vector fields appear again as an obstruction to successfully deform the pre-gluing metric to a genuine constant Chern scalar curvature balanced metric.
\end{rmk}

\subsection{Lichnerowicz-type operators on balanced manifolds}
In this subsection, we will study holomorphic vector fields on compact  balanced manifolds and another type of vector fields arising as the kernel of  a \emph{torsion-twisted Lichnerowicz-type} operator. We will see how the latter fits into the framework of constant Chern scalar curvature balanced manifolds.
 We  start by giving the expression of the Chern-Lichnerowicz operator for a balanced metric.
\begin{lem}\label{lemLich}
Let $(M^n, \omega)$ be a compact balanced manifold. Then, for any $ u  \in C^{\infty}(M, \R)$, we have that
$$
\mathcal D^*\mathcal D u =\Delta_{\omega}^2 u + g\left( {\rm Ric}^{ch}(\omega)-\frac12\Xi,  i\partial \bar \partial  u \right) + g\left(\bar \partial^*\left({\rm Ric}^{ch}(\omega)-\frac12\Xi\right), i\partial  u \right)\,,
$$ where the $(1,1)$-form $\Xi$ is defined as in \eqref{Q2malefico}.
Moreover, $u\in \ker \mathcal D^*\mathcal D$ if and only if 
\begin{equation}\label{kerlichforme}
\bar \partial \Delta_{\bar \partial }u+ \iota_{i(\bar \partial u )^{\sharp}}{\rm Ric}^{ch}(\omega)=0\,.
\end{equation}
In particular, if ${\rm  Ric}^{ch}(g)\le 0$, then $\ker \mathcal D^*\mathcal D=\R$. 
\end{lem}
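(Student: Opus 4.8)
The statement has three parts: (i) the closed-form expression for $\mathcal D^*\mathcal D u$ in terms of the Chern-Ricci form and the torsion quadratic $\Xi$; (ii) the characterization of $\ker\mathcal D^*\mathcal D$ via equation \eqref{kerlichforme}; and (iii) the consequence that $\ker\mathcal D^*\mathcal D=\R$ when ${\rm Ric}^{ch}(g)\le 0$. My plan is to obtain (i) by a direct local computation with the Chern connection, then deduce (ii) by integration by parts, and finally (iii) by an integral Bochner-type argument.

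For part (i), I would write $\mathcal D^*\mathcal D u = g^{j\bar k}g^{p\bar q}\nabla_p\nabla_j\nabla_{\bar k}\nabla_{\bar q}u$ in local holomorphic coordinates, where $\nabla$ is the Chern connection of $\omega$. The first move is to commute the covariant derivatives so as to convert the expression into $\Delta_\omega^2 u$ plus curvature and torsion correction terms; unlike the Kähler case, the Chern connection has torsion, so commuting $\nabla_p$ past $\nabla_{\bar k}$ produces not only the curvature term $g^{j\bar k}g^{p\bar q}R_{p\bar k}{}^{\cdot}{}_{\cdot}\nabla\nabla u$ but also terms built from $T_{lp\bar j}$ and its conjugate. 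The key bookkeeping is that the quadratic torsion terms assemble precisely into the tensor $Q^2_{i\bar j}=g^{l\bar k}g^{p\bar q}T_{lp\bar j}T_{\bar k\bar q i}$, whose associated $(1,1)$-form is $\Xi$, and that the combination that appears is ${\rm Ric}^{ch}(\omega)-\tfrac12\Xi$ rather than ${\rm Ric}^{ch}(\omega)$ alone. Here I would invoke the balanced condition to discard the first-order torsion terms $\propto \nabla T$ that would otherwise survive — recall that balanced means $\theta_\omega=0$, i.e. $g^{p\bar q}T_{p i\bar q}=0$, which kills the divergence-type torsion contributions. The first-derivative term $g(\bar\partial^*(\cdot),i\partial u)$ arises exactly as in the Kähler case from moving one $\nabla_{\bar q}$ all the way through, using that $\bar\partial^*$ of a $(1,1)$-form is (up to the conventions fixed in the excerpt) a contraction of $\nabla$ applied to it.

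For part (ii), the cleanest route is to use the self-adjointness of $\mathcal D^*\mathcal D$: for real $u$, $\int_M |\mathcal D u|^2\,\tfrac{\omega^n}{n!}=\int_M u\,\mathcal D^*\mathcal D u\,\tfrac{\omega^n}{n!}$, so $u\in\ker\mathcal D^*\mathcal D$ iff $\mathcal D u=\nabla_{\bar k}\nabla_{\bar q}u=0$, i.e. $(\nabla u)^{1,0}$ is holomorphic (equivalently $\bar\partial$ of the $(0,1)$-form $\nabla_{\bar\bullet}u$ vanishes after the appropriate lowering). Then I would rewrite the condition $\mathcal D u=0$ in the form \eqref{kerlichforme} by the Ricci identity for the Chern connection: $\nabla_{\bar k}\nabla_{\bar q}u - \nabla_{\bar q}\nabla_{\bar k}u$ is a torsion term which vanishes on functions only up to the $(0,2)$ part of torsion, but the symmetrized/contracted statement is that the $(0,1)$-form $\bar\partial\Delta_{\bar\partial}u$ differs from the relevant contraction of $\mathcal D u$ by exactly $\iota_{i(\bar\partial u)^\sharp}{\rm Ric}^{ch}(\omega)$ — this is again a consequence of Lemma \ref{lemmarigi} (the contracted Bianchi identity for the Chern connection), combined with the fact that on a balanced manifold $\Delta_{\bar\partial}=-\Delta_\omega$ on functions by \eqref{laplacians}. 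I would make this precise by contracting the expression in (i) against $\bar\partial^*$ or by directly computing $\bar\partial^*\bar\partial(\nabla_{\bar\bullet}u)$.

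For part (iii), assume ${\rm Ric}^{ch}(g)\le 0$ and let $u\in\ker\mathcal D^*\mathcal D$, so $\mathcal D u=0$. Pair \eqref{kerlichforme} with $i\partial u$ and integrate: using $\bar\partial\Delta_{\bar\partial}u$ paired with $\partial u$ gives, after integration by parts, a term $\int_M |\bar\partial\bar\partial^* (\ldots)|^2$-type nonnegative quantity (or more simply $\int_M (\Delta_{\bar\partial}u)^2$ up to constants, since $\Delta_{\bar\partial}u$ is real and $\int \bar\partial\Delta_{\bar\partial}u\wedge *\overline{\partial u}$ reduces to $\int|\Delta_\omega u|^2$), while the term $\int_M g(\iota_{i(\bar\partial u)^\sharp}{\rm Ric}^{ch}(\omega), i\partial u)$ equals $\int_M {\rm Ric}^{ch}(g)((\nabla u)^{1,0},\overline{(\nabla u)^{1,0}})$ up to a positive factor, which is $\le 0$ by hypothesis. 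Hence both contributions must vanish: $\Delta_\omega u=0$, so $u$ is constant on the compact manifold $M$. Conversely constants clearly lie in the kernel, giving $\ker\mathcal D^*\mathcal D=\R$.

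\textbf{Main obstacle.} The delicate point is the torsion bookkeeping in part (i): correctly tracking all the terms produced by commuting Chern covariant derivatives and recognizing that the surviving quadratic-torsion contribution is precisely $-\tfrac12\Xi$ (and not, say, $-\Xi$ or a different contraction of $T\otimes\bar T$), together with verifying that the balanced hypothesis eliminates exactly the unwanted first-order torsion terms. The sign and normalization here are what make the final Bochner argument in (iii) go through cleanly, so this is where I would spend the most care; everything downstream is then a relatively standard integration-by-parts plus the curvature sign hypothesis.
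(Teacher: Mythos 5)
Your overall strategy does coincide with the paper's: part (i) by commuting Chern covariant derivatives in $g^{j\bar k}g^{p\bar q}\nabla_p\nabla_j\nabla_{\bar k}\nabla_{\bar q}u$, part (ii) by identifying $\ker\mathcal D^*\mathcal D$ with functions whose $(1,0)$-gradient is holomorphic, and part (iii) by pairing \eqref{kerlichforme} with $\bar\partial u$ and using ${\rm Ric}^{ch}(g)\le 0$, which is exactly the computation \eqref{nokerD}. The genuine gap is in part (i), which you defer entirely and whose mechanism you misdescribe. The balanced condition does not ``kill the first-order torsion terms $\propto \nabla T$'': in the actual computation those contributions are not discarded but converted, via the second Bianchi identity for the Chern connection, into $g(\bar\partial^*{\rm Ric}^{(2)}(\omega), i\partial u)$ plus torsion-times-curvature terms, and this is precisely where the first-order coefficient comes from. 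Hence the first-derivative term does not ``arise exactly as in the Kähler case'': the computation naturally produces the \emph{second} Chern--Ricci form there, and landing on the stated coefficient $\bar\partial^*\bigl({\rm Ric}^{ch}(\omega)-\tfrac12\Xi\bigr)$ requires in addition (a) the balanced identification ${\rm Ric}^{(3)}(\omega)={\rm Ric}^{ch}(\omega)$ of \cite[Theorem 4.1]{LiY} for the traced commutator term, and (b) the relation of \cite[Proposition 4.3]{STi} linking ${\rm Ric}^{(2)}(\omega)$, ${\rm Ric}^{ch}(\omega)$, $\partial^*\partial\omega$ and $\Xi$, noting that the $\partial^*\partial\omega$ contribution drops out of the codifferential (by \eqref{ricvspartstar} and $\partial^2=0$). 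The balanced hypothesis is also needed before you even start, to justify the formula $\mathcal D^*\beta=g^{j\bar i}g^{k\bar l}\nabla_j\nabla_k\beta_{\bar i\bar l}$ you take for granted: for a general Hermitian metric the formal adjoint contains torsion-trace corrections. Since the precise combination ${\rm Ric}^{ch}(\omega)-\tfrac12\Xi$ in both the second- and first-order terms is the entire content of the formula, acknowledging the bookkeeping as ``the main obstacle'' without carrying it out leaves part (i) unproved, and your stated mechanism, taken literally, would yield the wrong first-order term.

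Concerning part (ii), your forward implication ($u\in\ker\mathcal D^*\mathcal D$ iff $\mathcal D u=0$ iff $(\bar\partial u)^{\sharp}$ holomorphic) is correct and is what the paper uses, but the passage to \eqref{kerlichforme} is not ``a consequence of Lemma \ref{lemmarigi}'' (which concerns codifferentials of the Ricci forms); it follows from applying $g^{i\bar j}\nabla_i$ to $\nabla_{\bar p}\nabla_{\bar j}u$ and commuting, again through ${\rm Ric}^{(3)}(\omega)={\rm Ric}^{ch}(\omega)$. For the converse implication you only indicate that you ``would make this precise by contracting''; the paper's route requires rewriting the curvature terms of $\mathcal D^*\mathcal D u$ as a single codifferential as in \eqref{formulina} and exploiting the vanishing of the $\Xi$-contraction for holomorphic gradient fields (the mechanism of \eqref{holkillstorsion}--\eqref{holq2}), and this step is absent from your sketch. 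Part (iii) is fine as written.
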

\begin{proof}\label{proofLich}
    In order to have the explicit expression of $\mathcal D^*$ we observe that, if $\beta\in \Lambda^{0,1}(M)\otimes \Lambda^{0,1}(M)$, 
\begin{equation}\label{iddelirio}
\begin{aligned}
g^{i \bar j }g^{l \bar k}\nabla_{\bar j }\nabla_{\bar k } u \bar\beta_{il}
=&\, -{\rm tr}_{\omega}(\bar \partial\nabla^*( u \bar \beta) )+ 2 {\rm tr}_{\omega}(\bar \partial ( u \nabla^*\bar \beta))+  u  g^{i\bar j }g^{l\bar k}\nabla_{\bar j }\nabla_{\bar k }\bar\beta_{il}\,,\end{aligned}
\end{equation}
 where $(\nabla^*\bar \beta)_{m}=-g^{p\bar q }\nabla_{\bar q}\bar \beta_{pm}$. Then, using the balanced condition, we can  infer that
$$
\langle \mathcal D u , \beta\rangle =\int_{M} u  g^{i\bar j }g^{ l \bar k}\nabla_{\bar j }\nabla_{\bar k }\bar\beta_{il}\frac{\omega^n}{n!}\,,
$$ obtaining that $\mathcal D^*\beta=g^{j\bar i }g^{k\bar l  }\nabla_j\nabla_k\beta_{\bar i \bar l }$\,. This guarantees that 
$$
\mathcal D^*\mathcal D  u = g^{j\bar k }g^{p\bar q }\nabla_p\nabla_j\nabla_{\bar k }\nabla_{\bar q} u \,, 
$$  as in the K\"ahler case. On the other hand, we have that, for any $\beta \in \Lambda^{1,0}(M)\otimes \Lambda^{1,0}(M)$,  
\begin{equation}\label{comm1}
\nabla_p\nabla_j \beta_{\bar k\bar q}=\nabla_j\nabla_p\beta_{\bar k \bar q}+T_{jp}^s\nabla_s\beta_{\bar k \bar q}\,.
\end{equation}
We can then use \eqref{comm1} to infer that 
$$
\mathcal D^*\mathcal D u =g^{p\bar q }g^{j\bar k }\nabla_{j}\nabla_p\nabla_{\bar k }\nabla_{\bar q} u + g^{p\bar q }g^{j\bar k }T_{jp}^s\nabla_s\nabla_{\bar k}\nabla_{\bar q} u \,.
$$
Furthermore, we can  recall that, for any $\alpha\in \Lambda^{0,1}(M)$, 
\begin{equation}\label{commnabla}
\nabla_p\nabla_{\bar k}\alpha_{\bar q }=\nabla_{\bar k}\nabla_{p}\alpha_{\bar q } + R_{\bar kp\bar q }^{\bar t }\alpha_{\bar t }\,.
\end{equation} Hence, by making use of  \eqref{commnabla}, we obtain that 
$$
\begin{aligned}
\mathcal D^*\mathcal D u =&\,  g^{j\bar k  }g^{p\bar q  }\nabla_j\nabla_{\bar k }\nabla_{p}\nabla_{\bar q } u + g^{j\bar k  }g^{p\bar q  }\nabla_j(R_{\bar kp\bar q}^{\bar t}\nabla_{\bar t } u )+g^{p\bar q }g^{j\bar k }T_{jp}^s\nabla_s\nabla_{\bar k }\nabla_{\bar q } u \,.\\
\end{aligned}
$$ 
Now, recalling that, thanks to the second Bianchi identity, see for instance \cite[Proposition 1.6]{Us}, we have
$$
\nabla_jR_{\bar k p \bar q }^{\bar t}=\nabla_pR_{\bar k j \bar q }^{\bar t }+T_{pj}^sR_{\bar k s\bar q}^{\bar t}\,, 
$$
 we can conclude that 
 $$
\begin{aligned}
 g^{j\bar k  }g^{p\bar q  }\nabla_j(R_{\bar kp\bar q}^{\bar t}\nabla_{\bar t } u )=&\, g^{j\bar k  }g^{p\bar q  }\nabla_jR_{\bar kp\bar q}^{\bar t}\nabla_{\bar t } u + g^{j\bar k  }g^{p\bar q  }R_{\bar kp\bar q}^{\bar t}\nabla_j\nabla_{\bar t } u \\
 =&\, g({\rm Ric}^{(3)}(\omega), i\partial \bar \partial  u )+ g^{p\bar q  }\nabla_p{\rm Ric}^{(2)}(g)^{\bar t }_{\bar q}\nabla_{\bar t } u +g^{j\bar k  }g^{p\bar q  }T_{pj}^sR_{\bar k s \bar q }^{\bar t}\nabla_{\bar t} u\,, 
\end{aligned}
$$
where 
${\rm Ric}^{(3)}(\omega)$ is the third Chern-Ricci form, see, for instance, \cite{LiY} for its definition. On the other hand, using \cite[Theorem 4.1]{LiY}, we know that, in the balanced case, ${\rm Ric}^{(3)}(\omega)={\rm Ric}^{ch}(\omega)$. This gives us that
$$
\begin{aligned}
\mathcal D^*\mathcal D u =&\, 
 \Delta_{\omega}^2 u +g( {\rm Ric}^{ch}(\omega), i\partial \bar \partial u) + g^{p\bar q }\nabla_p{\rm Ric}^{(2) }(g)^{ \bar t}_{\bar q }\nabla_{\bar t } u +g^{j\bar k  }g^{p\bar q  }T_{pj}^sR_{\bar k s \bar q }^{\bar t}\nabla_{\bar t} u +g^{p\bar q }g^{j\bar k }T_{jp}^s\nabla_s\nabla_{\bar k}\nabla_{\bar q} u \,.
\end{aligned}
$$
Now,  we can use \eqref{commnabla} to infer that 
$$
g^{p\bar q }g^{j\bar k }T_{jp}^s\nabla_s\nabla_{\bar k}\nabla_{\bar q} u -g^{j\bar k  }g^{p\bar q  }T_{jp}^sR_{\bar k s \bar q }^{\bar t}\nabla_{\bar t} u =g^{p\bar q }g^{j\bar k }T_{jp}^s\nabla_{\bar k}\nabla_{s}\nabla_{\bar q} u \,.
$$
 On the other hand, one can easily check that 
 $$
g^{p\bar q }g^{j\bar k }T_{jp}^s\nabla_{\bar k}\nabla_{s}\nabla_{\bar q} u =\frac12g^{p\bar q }g^{j\bar k }T_{jp}^s(\nabla_{\bar k}\nabla_{s}\nabla_{\bar q} u - \nabla_{\bar q }\nabla_s\nabla_{\bar k } u )\,,
$$
 moreover,  using that, for any $\gamma\in \Lambda^{1,1}(M)$, we have that
 $$
\nabla_{\bar k }\gamma_{s\bar q}-\nabla_{\bar q}\gamma_{s\bar k }=(\bar \partial \gamma)_{\bar k s \bar q }+T_{\bar q \bar k }^{\bar t }\gamma_{s\bar t }\,, 
$$
 concluding that 
$$
g^{p\bar q }g^{j\bar k }T_{jp}^s\nabla_{\bar k}\nabla_{s}\nabla_{\bar q} u =\frac12g^{p\bar q }g^{j\bar k }T_{jp}^sT_{\bar q \bar k }^{\bar t }\nabla_s\nabla_{\bar t} u =-\frac12g(\Xi, i\partial \bar\partial  u  )\,.
$$ 
Then, putting all the pieces together we obtain that 
$$
\begin{aligned}
\mathcal D^*\mathcal D u =&\, \Delta_{\omega}^2u+g\left({\rm Ric}^{ch}(\omega)-\frac12\Xi, i\partial \bar \partial  u \right)+ g^{p\bar q }\nabla_p{\rm Ric}^{(2)}(g)^{\bar t }_{\bar q }\nabla_{\bar t } u 
\\
=&\, \Delta_{\omega}^2u+g\left({\rm Ric}^{ch}(\omega)-\frac12\Xi, i\partial \bar \partial  u \right)+ g(\bar \partial ^*{\rm Ric}^{(2)}(\omega), i \partial  u )\,,
\end{aligned}
$$ where the last equality is due to the fact that 
$$
\bar\partial ^*{\rm Ric}^{(2)}(\omega)_s=-g^{p\bar q }\nabla_{p}{\rm Ric}^{(2)}(\omega)_{s\bar q }\,,
$$ see, for instance \cite[Appendix D]{FP} for a more general statement. The claim is obtained making use of \cite[Proposition 4.3]{STi}.

In order to prove the second claim, we observe that:
\begin{equation}\label{formulina}
g\left( {\rm Ric}^{ch}(\omega)-\frac12\Xi, i\partial \bar \partial u\right) + g\left(\bar \partial^*\left({\rm Ric}^{ch}(\omega)-\frac12\Xi\right), i\partial u\right)=\bar \partial^*\left(\iota_{i(\bar \partial u)^{\sharp}}\left({\rm Ric}^{ch}(\omega)-\frac12\Xi\right)\right)\,.
\end{equation}
Indeed, using \eqref{formulazza1} and \eqref{formulazza2},  we have that
\begin{equation}\label{debbariota}
\begin{aligned}
\bar \partial^*\left(\iota_{i(\bar \partial u )^{\sharp }}\left({\rm Ric}^{ch}(\omega)-\frac12\Xi\right)\right)
=&\, *\left(i\partial \bar \partial u \wedge *\left({\rm Ric}^{ch}(\omega)-\frac12\Xi\right) - i\bar \partial u \wedge \partial *\left({\rm Ric}^{ch}(\omega)-\frac12\Xi\right)\right)\\
=&\, g\left({\rm Ric}^{ch}(\omega)-\frac12\Xi, i \partial \bar \partial u \right)+ g\left(\bar \partial^*\left({\rm Ric}^{ch}(\omega)- \frac12\Xi\right), i \partial u  \right)\,, 
\end{aligned}
\end{equation}
as claimed. Moreover, since  $u\in \ker \mathcal D^*\mathcal D$ is equivalent to require that $(\bar \partial u)^{\sharp }$ is a holomorphic vector field, then, we have 
\begin{equation}\label{holkillstorsion}
0=\left(\mathcal L_{(\bar \partial u )^{\sharp}}\omega\right)^{0,2}=\bar \partial \iota_{(\bar \partial u )^{\sharp}}\omega+ \iota_{(\bar \partial u )^{\sharp}}\bar \partial \omega=
\iota_{(\bar \partial u )^{\sharp}}\bar \partial \omega\,.\end{equation}
But, on the other hand,  we remark that 
\begin{equation}\label{Q^2}
\frac12Q^2_{i\bar j }=g(j_{Z_i}\bar T, j_{Z_j}\bar T)\,,
\end{equation} where $j_XT(Y, W):=g(T(Y, W), X)$, for any $X, Y, W\in TM$. On the other hand, $\iota_{(\bar \partial u )^{\sharp}}\bar \partial \omega=0$ clearly implies that $j_{(\bar \partial u )^{\sharp}}\bar T =0$ forcing \begin{equation}\label{holq2}
\iota_{(\bar \partial u )^{\sharp}}Q^2=0\,.
\end{equation} Then, putting together \eqref{formulina} and \eqref{holq2}, we have that $u\in \ker \mathcal D^*\mathcal D$ if and only if 
$$
\bar \partial \Delta_{\bar \partial }u + \iota_{i(\bar \partial u  )^{\sharp}}{\rm Ric}^{ch}(\omega)=0\,.
$$
Finally, let $u\in \ker \mathcal D^*\mathcal D$,   we have that 
\begin{equation}\label{nokerD}
\begin{aligned}
0= \langle\bar \partial \Delta_{\bar \partial }u + \iota_{i(\bar \partial u  )^{\sharp}}{\rm Ric}^{ch}(\omega),  \bar \partial u \rangle_{L^2}
=&\, ||\Delta_{\bar\partial }u||^2_{L^2}- \int_M {\rm Ric}^{ch}(g)((\bar \partial u )^{\sharp}, (\partial u)^{\sharp})\frac{\omega^n}{n!}\,,
\end{aligned}
\end{equation} which allows to conclude, using that ${\rm Ric}^{ch}(g)\le 0 $.
\end{proof}
 
Lemma \ref{lemLich} highlights the fundamental differences between the problem in our case and in the K\"ahler one. Indeed, formally, the role which in the K\"ahler case belongs to the Ricci form now is played by the form ${\rm Ric}^{ch}(\omega)-\frac12\Xi$. As one can see from the proof, the balanced condition is playing a crucial role both in computing $\mathcal D^*$ and in identifying the third and the first Chern-Ricci tensors. This last property is really exclusive of the balanced setting (it is indeed a characterization of the balanced condition, see \cite[Theorem 4.1]{LiY}). 

As we already noticed, $\ker \mathcal D^*\mathcal D$ consists of all those functions whose $(1,0)$-part of the gradient generates a holomorphic vector field. We will now give an equivalent condition, in terms of the dual form, for a vector field to be holomorphic in the balanced setting. Moreover, assuming that the first  Chern-Ricci tensor is non-positive, we give an equivalent condition for a holomorphic vector field to be parallel with respect to the Chern connection. \par

\begin{lem}\label{Chernparallel}
Let $(M^n, \omega)$ be a compact balanced  manifold.  Then,  $Z\in T^{1,0}M$ is  holomorphic if and only if  $\alpha:=Z^{\flat}$ satisfies the following:
$$
\Delta_{\bar \partial }\alpha+\iota_{iZ}{\rm Ric}^{(2)}(\omega)+\frac12\Lambda^2(i\partial \alpha \wedge \bar \partial \omega)+\frac12\Lambda^2(i\bar \partial \alpha \wedge \partial \omega)=0\,.
$$
Moreover, if ${\rm Ric}^{ch}(g)\le 0$,  then, a holomorphic vector field  $Z$ is Chern-parallel if and only if $\partial Z^{\flat}=0$.
\end{lem}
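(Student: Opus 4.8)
\textbf{Proof proposal for Lemma \ref{Chernparallel}.}

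The plan is to run a computation entirely parallel to the one in the proof of Lemma \ref{lemLich}, but transferring the holomorphy condition from the function $u$ to a general $(1,0)$-vector field $Z$. Recall that $Z$ is holomorphic precisely when $\bar\partial Z = 0$, equivalently $(\mathcal L_Z\omega)^{0,2}=0$, and since the $(0,1)$-component of $\mathcal L_Z\omega$ is $\bar\partial(\iota_Z\omega)+\iota_Z(\bar\partial\omega) = \bar\partial\alpha + \iota_Z\bar\partial\omega$ (using $\alpha=Z^\flat=\iota_Z\omega$ up to the usual factor), holomorphy of $Z$ is equivalent to $\bar\partial\alpha=0$ \emph{on the nose} only in the Kähler case; in the balanced case one must keep the torsion term. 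First I would write out $\bar\partial Z = 0$ in local holomorphic coordinates as the statement that the $(0,1)$-form with values in $T^{1,0}M$ obtained by lowering an index, $\nabla_{\bar j}Z^k$, is symmetric appropriately, and then apply the relevant Weitzenböck/Bochner identity. Concretely, the standard fact that $Z$ is holomorphic iff $\bar\partial^*\bar\partial\alpha$ equals a curvature term acting on $\alpha$ — here the curvature being the \emph{second} Chern-Ricci tensor, since $Z$ is a section of the holomorphic tangent bundle and the natural Laplacian on $\Lambda^{0,1}$-valued sections involves $\mathrm{Ric}^{(2)}$ — gives the term $\iota_{iZ}\mathrm{Ric}^{(2)}(\omega)$. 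The two $\Lambda^2$ torsion terms $\tfrac12\Lambda^2(i\partial\alpha\wedge\bar\partial\omega)$ and $\tfrac12\Lambda^2(i\bar\partial\alpha\wedge\partial\omega)$ should emerge exactly as in \eqref{L2}, \eqref{assurdooo}, \eqref{nonsapre}, i.e. from commuting $\nabla$ past the torsion $T$ of the Chern connection, using that $\partial\omega$ is primitive by the balanced hypothesis and the identity \eqref{startors}. So the core of the first part is: expand $\Delta_{\bar\partial}\alpha$, commute covariant derivatives introducing $R$ (curvature) and $T$ (torsion), identify the curvature contraction as $\iota_{iZ}\mathrm{Ric}^{(2)}$, and repackage the torsion pieces as the two $\Lambda^2$ terms, then observe that the resulting expression vanishes iff $\bar\partial Z=0$ by the converse direction of the Bochner argument (both directions work since the metric is compact, so $\bar\partial Z=0 \iff \bar\partial^*\bar\partial Z=0$ in the elliptic/self-adjoint sense).

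For the second statement, assume $\mathrm{Ric}^{ch}(g)\le 0$ and let $Z$ be a holomorphic vector field. The claim is that $Z$ is Chern-parallel, i.e. $\nabla Z=0$, iff $\partial\alpha=0$ where $\alpha=Z^\flat$. One direction is immediate: if $\nabla Z=0$ then $\nabla\alpha=0$, so in particular $\partial\alpha=0$ (and $\bar\partial\alpha=0$, though we don't even need that). For the converse, assume $Z$ is holomorphic and $\partial\alpha=0$. Then $\bar\partial\alpha$ is the only possibly-nonzero piece of $d\alpha$, and holomorphy via the identity from the first part (now with $\partial\alpha=0$) reduces to $\Delta_{\bar\partial}\alpha + \iota_{iZ}\mathrm{Ric}^{(2)}(\omega)+\tfrac12\Lambda^2(i\bar\partial\alpha\wedge\partial\omega)=0$. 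I would then pair this with $\alpha$ in $L^2$: $\|\bar\partial^*\alpha\|^2 + \|\bar\partial\alpha\|^2$ picks up $\langle\iota_{iZ}\mathrm{Ric}^{(2)}(\omega),\alpha\rangle$ plus a torsion term. The trick — as in \eqref{nokerD} — is to convert the $\mathrm{Ric}^{(2)}$ pairing into a $\mathrm{Ric}^{ch}$ pairing using the balanced identities of Lemma \ref{lemmarigi} together with the holomorphy consequence \eqref{holkillstorsion}–\eqref{holq2} that $\iota_Z$ kills the torsion-quadratic form $Q^2$ (hence $\Xi$), so that the two Ricci tensors act identically on $Z$; combined with $\partial\alpha=0$, the torsion term $\tfrac12\Lambda^2(i\bar\partial\alpha\wedge\partial\omega)$ should also pair to something controllable. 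The upshot should be an identity of the shape
$$
\|\nabla^{0,1}\alpha\|_{L^2}^2 = \int_M \mathrm{Ric}^{ch}(g)(Z,\bar Z)\,\frac{\omega^n}{n!} \le 0,
$$
forcing $\nabla^{0,1}\alpha=0$; combined with holomorphy ($\nabla^{0,1}$-flatness in the other index is automatic for holomorphic sections up to the torsion correction, which again vanishes on $Z$) and $\partial\alpha=0$ one concludes $\nabla\alpha=0$, i.e. $\nabla Z=0$.

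The main obstacle I anticipate is \emph{bookkeeping of the torsion terms}: unlike the Kähler case, every commutation of $\nabla$ with $\bar\nabla$ produces both a curvature and a torsion term, and the torsion terms must be shepherded into precisely the $\Lambda^2(\cdot\wedge\partial\omega)$ shapes stated — this requires careful use of primitivity of $\partial\omega$, the Hodge-star identities \eqref{startors}, \eqref{formulazza2}, \eqref{assurdooo}, and the relation \eqref{Q^2} between $Q^2$ and $j_Z\bar T$. A secondary subtlety is making sure, in the second part, that the hypothesis $\partial\alpha=0$ plus holomorphy genuinely kills \emph{all} the torsion contributions in the $L^2$ pairing (not just the $Q^2$ one); here I expect to again invoke $\iota_Z\bar\partial\omega=0$ from \eqref{holkillstorsion}, which should annihilate the remaining mixed torsion term, leaving a clean Bochner inequality governed by $\mathrm{Ric}^{ch}(g)\le 0$.
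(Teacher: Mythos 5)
Your plan for the first equivalence is essentially the paper's: expand $\Delta_{\bar\partial}\alpha$ in local coordinates, commute Chern covariant derivatives (producing curvature via \eqref{commnabla} and torsion terms), convert the $g^{i\bar j}\nabla_i(T_{\bar j\bar m}^{\bar q}\alpha_{\bar q})$ contribution through the first Bianchi identity into ${\rm Ric}^{ch}-{\rm Ric}^{(2)}$ plus a torsion--$\partial\alpha$ contraction, and repackage the torsion pieces as the two $\Lambda^2$ terms, the equivalence with holomorphy coming from $\bar\partial Z=0\iff\Delta_{\bar\partial}Z=0$ on a compact manifold. The only caveat is that the appearance of ${\rm Ric}^{(2)}$ is not a black-box Weitzenböck fact here; it is extracted exactly as in the proof of Lemma \ref{lemLich}, via the Bianchi identity applied to $\nabla T$. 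So far, same route as the paper.

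The second part of your proposal has a genuine gap. You invoke \eqref{holkillstorsion}--\eqref{holq2} to claim that $\iota_Z\bar\partial\omega=0$, hence $\iota_ZQ^2=0$, ``so that the two Ricci tensors act identically on $Z$''. Those identities are specific to gradient holomorphic fields $Z=(\bar\partial u)^{\sharp}$, for which $\iota_Z\omega$ is proportional to the $\bar\partial$-closed form $\bar\partial u$; for a general holomorphic field the correct statement is \eqref{holomorphic}, namely $\iota_Z\bar\partial\omega=-i\bar\partial Z^{\flat}$, which is not zero, and your hypothesis in the converse direction is $\partial Z^{\flat}=0$, which says nothing about $\bar\partial Z^{\flat}$. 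Indeed the paper computes, using \eqref{Q^2} and \eqref{holomorphic}, that $\int_M\tfrac12\iota_{\bar Z}\iota_{iZ}\Xi\,\frac{\omega^n}{n!}=-\lVert\bar\partial Z^{\flat}\rVert_{L^2}^2$, so ${\rm Ric}^{(2)}$ and ${\rm Ric}^{ch}$ do not pair equally against a general holomorphic $Z$. With that step gone, your claimed identity $\lVert\nabla^{0,1}\alpha\rVert_{L^2}^2=\int_M{\rm Ric}^{ch}(g)(Z,\bar Z)\,\frac{\omega^n}{n!}$ is unsupported, and it also targets the wrong quantity: $\nabla_{\bar k}\alpha_{\bar j}=g_{i\bar j}\nabla_{\bar k}Z^i$ vanishes identically for holomorphic $Z$, so such an identity could only reproduce $\int_M{\rm Ric}^{ch}(Z,\bar Z)=0$ and gives no handle on parallelism.

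Relatedly, your sketch never explains where ${\rm Ric}^{ch}(g)\le 0$ actually enters, whereas in the paper this is precisely the point at which the result of Ganchev--Ivanov \cite{gigis} is used: under ${\rm Ric}^{ch}(g)\le 0$ every holomorphic field satisfies $\bar\partial^*Z^{\flat}=0$ (equivalently $\Lambda(\partial Z^{\flat})=0$) and ${\rm Ric}^{ch}(Z,\bar Z)=0$. The paper combines this with two integration-by-parts identities evaluating $\int_M\iota_{\bar Z}\iota_{iZ}\partial^*\partial\omega\,\frac{\omega^n}{n!}$, the Streets--Tian relation between ${\rm Ric}^{(2)}(\omega)$, ${\rm Ric}^{ch}(\omega)$, $\partial^*\partial\omega$ and $\Xi$ from \cite{STi}, and the Bochner formula $\Delta_\omega|Z|^2=|\nabla Z|^2-\iota_{\bar Z}\iota_Z{\rm Ric}^{(2)}(g)$ of \cite{NZ}, to arrive at $\lVert\nabla Z\rVert_{L^2}^2=\lVert\partial Z^{\flat}\rVert_{L^2}^2$, from which the claim follows. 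Your proposal offers no substitute for this input, so as written the converse direction does not go through.
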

\begin{proof} Recall that $Z\in T^{1,0}M$ is holomorphic if and only if $\Delta_{\bar \partial }Z:=\bar \partial ^*\bar \partial Z=0$, where $\bar \partial $ is the Cauchy-Riemann  operator on $T^{1,0}M$. Easily,  we see that $(\Delta_{\bar \partial }Z)^k=-g^{i\bar j }\nabla_{i}\nabla_{\bar j }Z^k$. Furthermore, we have that, for any $\alpha\in \Lambda^{0,1}(M)$, 
$$
(\bar \partial \alpha)_{\bar j \bar m }
=\nabla_{\bar j }\alpha_{\bar m}- \nabla_{\bar m }\alpha_{\bar j }+ T_{\bar j \bar m }^{\bar q}\alpha_{\bar q}\,, 
$$
 which gives, together with \cite[Appendix D]{FP},  that 
 $$
(\bar \partial^*\bar \partial \alpha)_{\bar m}
=-g^{i\bar j }(\nabla_{i}\nabla_{\bar j}\alpha_{\bar m }- \nabla_{i}\nabla_{\bar m }\alpha_{\bar j } + \nabla_i(T_{\bar j \bar m }^{\bar q}\alpha_{\bar q}))+ \frac12g^{i\bar j }g^{p\bar q }(\bar \partial\alpha)_{\bar j\bar q}T_{pi\bar m }\,,\quad (\bar \partial \bar \partial^*\alpha)_{\bar m }=-g^{i\bar j}\nabla_{\bar m }\nabla_{i}\alpha_{\bar j }\,.
$$ 
Hence, we can infer that 
$$
(\Delta_{\bar \partial }\alpha)_{\bar m }=-g^{i\bar j }(\nabla_{i}\nabla_{\bar j}\alpha_{\bar m }- \nabla_{i}\nabla_{\bar m }\alpha_{\bar j } + \nabla_i(T_{\bar j \bar m }^{\bar q}\alpha_{\bar q})+\nabla_{\bar m }\nabla_{i}\alpha_{\bar j })+ \frac12g^{i\bar j }g^{p\bar q }(\bar \partial\alpha)_{\bar j\bar q}T_{pi\bar m }\,.
$$
Firstly, we observe that, following the computations in \eqref{assurdooo}, 
\begin{equation}\label{nonhoidea}
\begin{aligned}
\frac12g^{i\bar j }g^{p\bar q }(\bar \partial\alpha)_{\bar j\bar q}T_{pi\bar m }=&\, -g(i\bar \partial \alpha, \iota_{Z_m}\bar \partial \omega)
=-\frac12(\Lambda^2(i\bar \partial \alpha \wedge \partial \omega))_{\bar m }\,.
\end{aligned}
\end{equation}
 Using \eqref{commnabla}, we have that
 \begin{equation}\label{illaplace}
 \begin{aligned}
(\Delta_{\bar \partial }\alpha)_{\bar m }=&\, 
-g^{i\bar j }(\nabla_{i}\nabla_{\bar j}\alpha_{\bar m }+ \nabla_i(T_{\bar j \bar m }^{\bar q}\alpha_{\bar q}))+ {\rm Ric}^{ch}(g)_{\bar m }^{\bar t }\alpha_{\bar t}-\frac12(\Lambda^2(i\bar \partial \alpha \wedge \partial \omega))_{\bar m }\,.
\end{aligned}
\end{equation}
Now, using the first Bianchi Identity, see \cite[Proposition 1.6]{Us}, we obtain
$$
\begin{aligned}
g^{i\bar j }\nabla_i(T_{\bar j \bar m }^{\bar q}\alpha_{\bar q})
=&\, ({\rm Ric}^{ch}(g)_{\bar m}^{\bar q }- {\rm Ric}^{(2)}(g)_{\bar m }^{\bar  q} )\alpha_{\bar q }+ g^{i\bar j }g^{p\bar q  }T_{\bar j \bar m p }(\partial \alpha)_{i\bar q} \,,
\end{aligned}
$$ which put in \eqref{illaplace} gives
$$
\begin{aligned}
(\Delta_{\bar \partial }\alpha)_{\bar m } 
=&\, (\Delta_{\bar \partial }\alpha^{\sharp})_{\bar m }^{\flat}+ \iota_{\alpha^{\sharp}}{\rm Ric}^{(2)}(g)_{\bar m }+g^{i\bar j }g^{p\bar q  }T_{\bar m \bar j  p }(\partial \alpha)_{i\bar q}-\frac12(\Lambda^2(i\bar \partial \alpha \wedge \partial \omega))_{\bar m }\,.
\end{aligned}
$$
As similarly done  for \eqref{nonhoidea}, we conclude observing that 
$$
\begin{aligned}
g^{i\bar j }g^{p\bar q  }T_{\bar m \bar j  p }(\partial \alpha)_{i\bar q}
=-\frac12\Lambda^2(i\partial \alpha \wedge \bar \partial \omega)_{\bar m }\,.
\end{aligned}
$$
Hence, we obtain that 
$$
\Delta_{\bar \partial }\alpha=(\Delta_{\bar \partial }\alpha^{\sharp})^{\flat}+\iota_{\alpha^{\sharp}}{\rm Ric}^{(2)}(g)-\frac12\Lambda^2(i\partial \alpha \wedge \bar \partial \omega)-\frac12\Lambda^2(i\bar \partial \alpha \wedge \partial \omega)\,,
$$ which gives the first claim.
 As regards the  second claim, first of all, we observe,  as in \eqref{holkillstorsion}, that, if $Z$ is holomorphic, then, 
 \begin{equation}\label{holomorphic}
\iota_Z\bar \partial \omega=-\bar \partial \iota_Z\omega=-i\bar \partial Z^{\flat}\,.
\end{equation}
Using \eqref{formulazza2} and \eqref{startors}, we infer that 
$$
\begin{aligned}
\langle i\partial Z^{\flat}, \iota_Z\partial \omega \rangle_{L^2}
=&\, \int_M\partial Z^{\flat}\wedge \bar Z^{\flat}\wedge  \frac{\bar \partial\omega^{n-2}}{(n-2)!}\,.
\end{aligned}
$$
On the other hand, we have that 
$$
\begin{aligned}
\partial \left(Z^{\flat}\wedge \bar Z^{\flat}\wedge \frac{\bar \partial \omega^{n-2}}{(n-2)!}\right)=&\, \partial Z^{\flat}\wedge \bar Z^{\flat}\wedge\frac{\bar \partial \omega^{n-2}}{(n-2)!}-Z^{\flat}\wedge \partial\bar Z^{\flat}\wedge \frac{\bar\partial\omega^{n-2} }{(n-2)!}-iZ^{\flat}\wedge \bar Z^{\flat}\wedge \frac{i \partial \bar \partial \omega^{n-2}}{(n-2)!}\,.
\end{aligned}
$$
We can now use \eqref{ricvspartstar} and \eqref{formulazza1} and obtain
 \begin{equation}\label{CP1}
-iZ^{\flat}\wedge \bar Z^{\flat}\wedge \frac{i \partial \bar \partial \omega^{n-2}}{(n-2)!}
=-(\iota_{\bar Z}\iota_{iZ}\partial^*\partial \omega)\frac{\omega^n}{n!}\,.
\end{equation}
Moreover,   we can again use \eqref{startors}, \eqref{formulazza2} and \eqref{holomorphic} to conclude that 
\begin{equation}\label{CP2}
\begin{aligned}
Z^{\flat}\wedge \partial\bar Z^{\flat}\wedge \frac{\bar\partial\omega^{n-2} }{(n-2)!}
=&\, i\partial\bar Z^{\flat}\wedge*(\iota_Z\bar \partial \omega)=i\partial\bar Z^{\flat}\wedge *(-i\bar \partial Z^{\flat})=|\bar \partial Z^{\flat}|^2\frac{\omega^n}{n!}\,.
\end{aligned}
\end{equation}
Then, making use of \eqref{CP1} and \eqref{CP2},
$$
\begin{aligned}
 \partial Z^{\flat}\wedge \bar Z^{\flat}\wedge\frac{\bar \partial \omega^{n-2}}{(n-2)!}
 =&\, \partial \left(Z^{\flat}\wedge \bar Z^{\flat}\wedge \frac{\bar \partial \omega^{n-2}}{(n-2)!}\right)+( |\bar \partial Z^{\flat}|^2+ \iota_{\bar Z}\iota_{iZ}\partial^*\partial \omega)\frac{\omega^n}{n!}\,
 \end{aligned}
$$and thus 

\begin{equation}\label{laseggiola}
\langle i\partial Z^{\flat}, \iota_Z\partial \omega \rangle_{L^2}=||\bar \partial Z^{\flat}||_{L^2}^2 + \int_M(\iota_{\bar Z}\iota_{iZ}\partial^*\partial \omega)\frac{\omega^n}{n!}\,.
\end{equation}
On the other hand, we have that
$$
- i\partial Z^{\flat}\wedge \bar Z^{\flat}\wedge \frac{i\bar \partial \omega^{n-2}}{(n-2)!}=\bar \partial \left(i\partial Z^{\flat}\wedge i\bar Z^{\flat} \wedge  \frac{\omega^{n-2}}{(n-2)!}\right)+ i\partial \bar \partial Z^{\flat}\wedge i \bar Z^{\flat}\wedge \frac{\omega^{n-2}}{(n-2)!}-i \partial Z^{\flat}\wedge i \bar \partial \bar Z^{\flat}\wedge \frac{\omega^{n-2}}{(n-2)!}\,.
$$
 Now,  we can use \eqref{gigiL} to have 
 $$\int_Mi\partial \bar \partial Z^{\flat}\wedge i \bar Z^{\flat}\wedge \frac{\omega^{n-2}}{(n-2)!}
 =
 -\langle i\partial \bar \partial Z^{\flat}, LZ^{\flat}\rangle_{L^2}=-\langle i\bar \partial Z^{\flat}, \partial^*LZ^{\flat} \rangle_{L^2}\,.$$
On the other hand, using \eqref{holomorphic}, one can easily check that 
$$
\begin{aligned}
\partial^*LZ^{\flat}
=-*\left(i\bar \partial Z^{\flat}\wedge \frac{\omega^{n-2}}{(n-2)!}- iZ^{\flat}\wedge\frac{\bar \partial \omega^{n-2}}{(n-2)!}\right)=0\,,
\end{aligned}
$$  hence, 
$$
\langle i\partial \bar \partial Z^{\flat}, LZ^{\flat} \rangle_{L^2}
=0\,.
$$
Then, using \cite[Lemma 4.7]{Sz}, we obtain
$$
\int_M - i\partial Z^{\flat}\wedge \bar Z^{\flat}\wedge \frac{\bar \partial \omega^{n-2}}{(n-2)!}
= \int_M \partial Z^{\flat}\wedge  \bar \partial \bar Z^{\flat}\wedge \frac{\omega^{n-2}}{(n-2)!}=-||\partial Z^{\flat}||_{L^2}^2+ ||\Lambda(\partial Z^{\flat})||_{L^2}^2\,,
$$
which, together with \eqref{laseggiola}, gives that 
$$
\int_M (\iota_{\bar Z}\iota_{iZ}\partial^*\partial \omega)\frac{\omega^{n}}{n!}=-||\bar \partial Z^{\flat}||_{L^2}^2-||\partial Z^{\flat}||_{L^2}^2+ ||\Lambda(\partial Z^{\flat})||_{L^2}^2\,.
$$ Furthermore, 
one can easily verify that 
$
\bar \partial^*Z^{\flat}=
-\Lambda(\partial Z^{\flat})
$. 
However, thanks to \cite[Theorem 1.2]{gigis}, we know that if the Chern-Ricci tensor is non-positive,  $\bar \partial^* Z^{\flat}=0$ and ${\rm Ric}^{ch}(Z, \bar Z)=0$. From this, we conclude  that 
$$
\int_M (\iota_{\bar Z}\iota_{iZ}\partial^*\partial \omega)\frac{\omega^{n}}{n!}=-||\bar \partial Z^{\flat}||_{L^2}^2-||\partial Z^{\flat}||_{L^2}^2\,.
$$
Moreover, making use of \eqref{Q^2} and \eqref{holomorphic}, we observe that 
$$
\int_M\frac12\iota_{\bar Z}\iota_{iZ}\Xi\frac{\omega^n}{n!}=-||\iota_Z\bar\partial \omega||_{L^2}^2=-||\bar \partial Z^{\flat}||_{L^2}^2\,.
$$
Hence, using again \cite[Proposition 4.3]{STi}, 
$$
-\int_M\iota_{\bar Z}\iota_{Z}{\rm Ric}^{(2)}(g)\frac{\omega^n}{n!}=\int_M \iota_{\bar Z}\iota_{iZ}{\rm Ric}^{(2)}(\omega)\frac{\omega^n}{n!}=\int_M\iota_{\bar Z}\iota_{iZ}\left(\partial^*\partial \omega-\frac12\Xi\right)\frac{\omega^n}{n!}=-||\partial Z^{\flat}||_{L^2}^2\,. 
$$
Now, we recall that, for any holomorphic vector field $Z\in T^{1,0}M$,  the following Bochner formula, see for instance \cite{NZ}, holds:
$$
\Delta_{\omega}|Z|^2=|\nabla Z|^2-\iota_{\bar Z}\iota_{Z}{\rm Ric}^{(2)}(g)\,,
$$ which integrated over $M$ gives
$$
0=||\nabla Z||^2_{L^2}-\int_M \iota_{\bar Z}\iota_{Z}{\rm Ric}^{(2)}(g)\frac{\omega^n}{n!}=||\nabla Z||^2_{L^2}-||\partial Z^{\flat}||^2_{L^2}\,,
$$giving the claim.
\end{proof}
Unfortunately, the expression of the Chern-Lichnerowicz operator is not fitting well within that of $\mathcal L $. However, in order to understand a bit better the linearized operator, we can try to modify the Lichnerowicz-type operator we are considering. To do so,   we consider the following connections, introduced in \cite{Us}, obtained as variations  of the Chern connection:
 
$$
\begin{aligned}
 \nabla^1_XY=&\, \nabla_XY-T(X, Y)\,, \quad X, Y \in TM\,,
\\
\nabla^2_XY=&\, \nabla_XY +g (Y, T(X, \cdot))^{\sharp}\,, \quad X, Y\in TM\,.
\end{aligned}
$$
 It is fairly easy to prove that these connections  still preserve $J$, but, as  remarked in \cite{Us}, 
$\nabla^1$ and $\nabla^2$ are dual conjugate with respect to $g$, namely they satisfy the following:
$$
Xg(Y, W)=g(\nabla^1_XY, W)+ g(Y, \nabla^2_XW)\,, \quad X, Y, W\in TM\,.
$$An interesting thing about $\nabla^1$ is the following fact. Let  $u\in C^{\infty}(M, \R)$, we have
\begin{equation}\label{trick}
 \nabla^1_{\bar k} \nabla^1_{\bar j}u=\nabla_{\bar k }\nabla_{\bar j }u+ T_{\bar k \bar j }^{\bar m }\nabla_{\bar m}u=\nabla_{\bar j }\nabla_{\bar k }u\,.
\end{equation} Then, as done for the operator $\mathcal D$, we consider the operator $\tilde{\mathcal D}u= \nabla^1_{\bar k } \nabla^1_{\bar q}u$, for any $u \in C^{\infty}(M, \R)$. Now, using \eqref{trick} and the same computations as in \eqref{iddelirio}, we can infer that, for any $\beta\in \Lambda^{1,0}(M)\otimes \Lambda^{1,0}(M)$,
$$
\tilde{\mathcal D}^*\beta=g^{j\bar i }g^{k\bar l  }\nabla_k\nabla_j\beta_{\bar i \bar l }\,.
$$
\begin{lem}\label{inuovolich}
Let $(M^n, \omega)$ be a compact balanced manifold. Then, for any $u \in C^{\infty}(M, \R)$, we have
\begin{equation}\label{eqnuovolich}
\tilde{\mathcal D}^*\mathcal Du=\Delta_{\omega}^2u+ g({\rm Ric}^{ch}(\omega), i\partial \bar \partial u)+ g(\bar \partial^*{\rm Ric}^{ch}(\omega), i\partial u)\,.
\end{equation}
 In particular, if $\omega $ has constant Chern scalar curvature, we have 
 $$
\mathcal L (u)=-\frac{1}{n-1}\left(\Re(\tilde{\mathcal D}^*\mathcal Du + g(\bar \partial^*{\rm Ric}^{ch}(\omega), i\partial u)) + \frac{1}{n-1}(\Delta_{\omega}+ s^{ch}(\omega){\rm Id})(\lvert \partial\omega\rvert^2 u) +ui\partial^*\bar \partial^*{\rm Ric}^{ch}(\omega)\right)\,.
 $$
\end{lem}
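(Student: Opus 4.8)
The plan is to mirror the proof of Lemma \ref{lemLich}, replacing the operator $\mathcal D^*$ by $\tilde{\mathcal D}^*$ and tracking how the torsion terms are affected by the switch to the connection $\nabla^1$. First I would record that, by the very definition of $\tilde{\mathcal D}$ via \eqref{trick} together with the computation \eqref{iddelirio} and the balanced condition, one has $\tilde{\mathcal D}^*\beta = g^{j\bar i}g^{k\bar l}\nabla_k\nabla_j\beta_{\bar i\bar l}$, exactly as already stated in the excerpt just before the lemma. Hence $\tilde{\mathcal D}^*\mathcal D u = g^{j\bar k}g^{p\bar q}\nabla_j\nabla_p\nabla_{\bar k}\nabla_{\bar q}u$; the crucial point is that here the two holomorphic derivatives $\nabla_j\nabla_p$ appear in the order $j$ then $p$, i.e.\ with the \emph{opposite} ordering to the one in $\mathcal D^*\mathcal D u = g^{j\bar k}g^{p\bar q}\nabla_p\nabla_j\nabla_{\bar k}\nabla_{\bar q}u$. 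This is precisely what removes the commutator term $T_{jp}^s\nabla_s\nabla_{\bar k}\nabla_{\bar q}u$ from \eqref{comm1}, which in the proof of Lemma \ref{lemLich} was responsible for the extra $-\tfrac12\Xi$ and for the appearance of the second Chern-Ricci form rather than the first.

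Next I would run through the same sequence of commutations as in the proof of Lemma \ref{lemLich}, but now starting from $g^{j\bar k}g^{p\bar q}\nabla_j\nabla_p\nabla_{\bar k}\nabla_{\bar q}u$. Using \eqref{commnabla} to move $\nabla_p$ past $\nabla_{\bar k}$ produces $\Delta_\omega^2 u$ plus a curvature term $g^{j\bar k}g^{p\bar q}\nabla_j(R_{\bar k p\bar q}^{\bar t}\nabla_{\bar t}u)$; expanding the latter by Leibniz and using the second Bianchi identity \cite[Proposition 1.6]{Us} as in the earlier proof gives $g({\rm Ric}^{(3)}(\omega), i\partial\bar\partial u)$ together with a $\nabla_p{\rm Ric}^{(2)}$ term and a torsion--curvature term $g^{j\bar k}g^{p\bar q}T_{pj}^s R_{\bar k s\bar q}^{\bar t}\nabla_{\bar t}u$. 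Since there is now no leftover $T_{jp}^s\nabla_s\nabla_{\bar k}\nabla_{\bar q}u$ term to cancel that torsion--curvature contribution, I would instead combine the $\nabla_p{\rm Ric}^{(2)}$ term and the $T\cdot R$ term directly: by the first Bianchi identity for the Chern connection (as used in the proof of Lemma \ref{Chernparallel}) these reorganize into $\bar\partial^*{\rm Ric}^{ch}(\omega)$ contracted against $i\partial u$, after invoking ${\rm Ric}^{(3)}(\omega) = {\rm Ric}^{ch}(\omega)$ in the balanced case \cite[Theorem 4.1]{LiY}. The upshot is \eqref{eqnuovolich}: $\tilde{\mathcal D}^*\mathcal D u = \Delta_\omega^2 u + g({\rm Ric}^{ch}(\omega), i\partial\bar\partial u) + g(\bar\partial^*{\rm Ric}^{ch}(\omega), i\partial u)$, which is the clean, torsion-free-looking analogue of the Kähler formula — exactly what one wants to plug into $\mathcal L$.

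The ``in particular'' part should then be essentially bookkeeping. Starting from the constant Chern scalar curvature expression for $\mathcal L$ obtained in Proposition \ref{thmveroL}, namely
$$
\mathcal L(u) = -\frac{1}{n-1}\Big(\Delta_\omega^2 u + g(i\partial\bar\partial u, {\rm Ric}^{ch}(\omega)) + 2\Re\,g(\bar\partial^*{\rm Ric}^{ch}(\omega), i\partial u) + \tfrac{1}{n-1}(\Delta_\omega + s^{ch}(\omega){\rm Id})(|\partial\omega|^2 u) + u\,i\partial^*\bar\partial^*{\rm Ric}^{ch}(\omega)\Big),
$$
I would recognize the first three terms, up to taking real parts, as $\Re\big(\tilde{\mathcal D}^*\mathcal D u + g(\bar\partial^*{\rm Ric}^{ch}(\omega), i\partial u)\big)$ by \eqref{eqnuovolich} — one copy of the $\bar\partial^*{\rm Ric}^{ch}$ term is already inside $\tilde{\mathcal D}^*\mathcal D u$, and the second $\Re$ accounts for the factor $2$ — leaving the $|\partial\omega|^2$ term and the $u\,i\partial^*\bar\partial^*{\rm Ric}^{ch}(\omega)$ term untouched, which is exactly the claimed identity. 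The main obstacle I anticipate is the middle step: correctly managing the first Bianchi identity so that the $\nabla_p{\rm Ric}^{(2)}$ term and the $T\cdot R$ term assemble into $\bar\partial^*{\rm Ric}^{ch}(\omega)$ rather than into $\bar\partial^*{\rm Ric}^{(2)}(\omega)$ — this is where the balanced hypothesis and the identification of the various Chern--Ricci tensors are doing real work, and it is the only place where the computation genuinely differs in substance (rather than just in ordering) from the proof of Lemma \ref{lemLich}.
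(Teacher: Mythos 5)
Your overall skeleton coincides with the paper's: you correctly identify that the ordering $\nabla_j\nabla_p$ in $\tilde{\mathcal D}^*\mathcal D$ removes the \eqref{comm1} commutator (hence the $-\tfrac12\Xi$ contribution), you apply \eqref{commnabla} to produce $\Delta_\omega^2u$ plus the curvature term $g^{j\bar k}g^{p\bar q}\nabla_j(R_{\bar k p\bar q}^{\bar t}\nabla_{\bar t}u)$, you use ${\rm Ric}^{(3)}(\omega)={\rm Ric}^{ch}(\omega)$ from the balanced condition, and your bookkeeping for the ``in particular'' part via Proposition \ref{thmveroL} (one copy of the $\bar\partial^*{\rm Ric}^{ch}$ term absorbed into $\tilde{\mathcal D}^*\mathcal D u$, the $\Re$ accounting for the factor $2$) is exactly what the paper intends. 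The one place where your argument as written would not go through is the middle step, which you yourself flag as the main obstacle: after splitting the curvature term via the second Bianchi identity into $g({\rm Ric}^{(3)},i\partial\bar\partial u)$, a $\nabla_p{\rm Ric}^{(2)}$ term and a $T\cdot R$ term, you propose to reassemble the last two into $g(\bar\partial^*{\rm Ric}^{ch}(\omega),i\partial u)$ ``by the first Bianchi identity.'' That attribution is wrong: the first Bianchi identity for the Chern connection relates $\bar\partial$-derivatives of the torsion to differences of curvature components (this is what produces relations like ${\rm Ric}^{(2)}-{\rm Ric}^{ch}$ in terms of $\nabla T$, as in Lemma \ref{Chernparallel}); it does not convert holomorphic derivatives of curvature plus $T\cdot R$ back into a single divergence, and trying to force it would reintroduce second derivatives of the torsion and further commutations.

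The point the paper exploits is that no Bianchi identity is needed at all here: since $\nabla g=0$, the $(p,\bar q)$-trace can be pulled inside $\nabla_j$, so $g^{j\bar k}g^{p\bar q}\nabla_j(R_{\bar k p\bar q}^{\bar t}\nabla_{\bar t}u)=g^{j\bar k}\nabla_j\bigl({\rm Ric}^{(3)}(g)^{\bar t}_{\bar k}\nabla_{\bar t}u\bigr)$, and then Leibniz, ${\rm Ric}^{(3)}={\rm Ric}^{ch}$, and the balanced codifferential formula from \cite[Appendix D]{FP} give \eqref{eqnuovolich} at once. In other words, your detour is circular: the two terms you want to recombine were created precisely by splitting $g^{j\bar k}\nabla_j{\rm Ric}^{(3)}{}^{\bar t}_{\bar k}\nabla_{\bar t}u$ with the contracted second Bianchi identity, so their sum is recovered by reversing that same identity (or, better, by never applying it). With that correction the ``obstacle'' disappears and your proof reduces to the paper's.
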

\begin{proof}
The computations are the same as in the proof of  Lemma \ref{lemLich}. Locally, 
$$
\tilde{\mathcal D}^*\mathcal Du=g^{ j\bar k  }g^{p\bar q  }\nabla_{j}\nabla_p\nabla_{\bar k}\nabla_{\bar q }u\,.
$$ Now, applying \eqref{commnabla}, we have that 
$$
\tilde{\mathcal D}^*\mathcal Du=
\Delta^2_{\omega}u+ g^{ j\bar k  }\nabla_{j}({\rm Ric}^{(3)}(g)^{ \bar t }_{\bar k  }\nabla_{\bar t }u)=\Delta_{\omega}^2u+ g({\rm Ric}^{ch}(\omega), i\partial \bar \partial u)+ g(\bar \partial^*{\rm Ric}^{ch}(\omega), i\partial u)\,,
$$
 using the balanced condition to infer that ${\rm Ric}^{(3)}(\omega)={\rm Ric}^{ch}(\omega)$. The second statement is clear using \eqref{eqnuovolich} in Proposition \ref{thmveroL}. 
\end{proof}
Hence Lemma \ref{inuovolich} shows us that the operator $\tilde{\mathcal D}^*\mathcal D$ consists in a part of the linearized operator $\mathcal L $, and thus it gains  a great importance in the study of constant Chern scalar curvature balanced metrics. In view of this, we will characterize the kernel of such operator in two different ways. The first one is a characterization of $\ker \tilde{\mathcal D}^*\mathcal D$ as the set of  critical points of a suitably chosen energy, as the next lemma shows. 
\begin{lem}\label{varnostrolich}
Let $(M^n, \omega)$ be a compact balanced manifold. Then,  $u \in \ker \tilde{\mathcal D}^*\mathcal D$ if and only if 
$$
\langle \mathcal D u , \mathcal Dv \rangle_{L^2}=\frac12\int_M Q^2((\bar \partial u)^{\sharp }, (\partial v )^{\sharp})\frac{\omega^n}{n!}\,, \quad v \in C^{\infty}(M, \R)\,.
$$
 Then, straightforwardly,  $u\in\ker \tilde{\mathcal D}^*\mathcal D$ if and only if $u$ is a critical point of the following functional:
 $$
u\mapsto \int_M \left(|\mathcal Du|^2-\frac12Q^2((\bar \partial u)^{\sharp }, (\partial u )^{\sharp})\right)\frac{\omega^n}{n!}\,.
$$

\end{lem}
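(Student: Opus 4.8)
The plan is to derive both equivalences from a single comparison of the two Lichnerowicz-type operators. First I would establish the pointwise formula
\[
\mathcal D^*\mathcal D u-\tilde{\mathcal D}^*\mathcal D u=-\tfrac12\,\bar\partial^*\big(\iota_{i(\bar\partial u)^\sharp}\Xi\big),\qquad u\in C^\infty(M,\mathbb R),
\]
by subtracting the conclusions of Lemma \ref{lemLich} and Lemma \ref{inuovolich}. Indeed, Lemma \ref{lemLich} together with \eqref{formulina} gives $\mathcal D^*\mathcal D u=\Delta_\omega^2u+\bar\partial^*(\iota_{i(\bar\partial u)^\sharp}(\mathrm{Ric}^{ch}(\omega)-\tfrac12\Xi))$; the identity \eqref{formulina}, whose proof uses only \eqref{formulazza1}--\eqref{formulazza2} and hence holds verbatim for any real $(1,1)$-form in place of $\mathrm{Ric}^{ch}(\omega)-\tfrac12\Xi$, applied to $\mathrm{Ric}^{ch}(\omega)$ rewrites Lemma \ref{inuovolich} as $\tilde{\mathcal D}^*\mathcal D u=\Delta_\omega^2u+\bar\partial^*(\iota_{i(\bar\partial u)^\sharp}\mathrm{Ric}^{ch}(\omega))$. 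Subtracting and using the linearity of $\iota$ and $\bar\partial^*$ yields the displayed formula.

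Next I would pair it with an arbitrary $v\in C^\infty(M,\mathbb R)$. Since $\mathcal D^*$ is the $L^2$-adjoint of $\mathcal D$ and $v$ is real, $\langle\mathcal D u,\mathcal D v\rangle_{L^2}=\langle\mathcal D^*\mathcal D u,v\rangle_{L^2}$, so from the formula above and the adjunction $\langle\bar\partial^*\eta,v\rangle_{L^2}=\langle\eta,\bar\partial v\rangle_{L^2}$ one gets
\[
\langle\mathcal D u,\mathcal D v\rangle_{L^2}=\langle\tilde{\mathcal D}^*\mathcal D u,v\rangle_{L^2}-\tfrac12\big\langle\iota_{i(\bar\partial u)^\sharp}\Xi,\bar\partial v\big\rangle_{L^2}.
\]
The only genuinely computational point is then to check that $\big\langle\iota_{i(\bar\partial u)^\sharp}\Xi,\bar\partial v\big\rangle_{L^2}=-\int_M Q^2\big((\bar\partial u)^\sharp,(\partial v)^\sharp\big)\tfrac{\omega^n}{n!}$: using \eqref{formulazza1} one moves $\iota_{i(\bar\partial u)^\sharp}$ across as exterior product by $\overline{(i(\bar\partial u)^\sharp)^\flat}=-i\,\partial u$ (here $u$ being real is used), and then expands the pointwise pairing $g(\Xi,\partial u\wedge\bar\partial v)$ in holomorphic coordinates; by the definition \eqref{Q2malefico} of $\Xi$ (for $X\in T^{1,0}M$ one has $\Xi(X,\cdot)=Q^2(JX,\cdot)=iQ^2(X,\cdot)$) this pairing equals $iQ^2\big((\bar\partial u)^\sharp,(\partial v)^\sharp\big)$, and the two factors of $i$ produce the overall sign. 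Substituting back gives
\[
\langle\mathcal D u,\mathcal D v\rangle_{L^2}-\tfrac12\int_M Q^2\big((\bar\partial u)^\sharp,(\partial v)^\sharp\big)\tfrac{\omega^n}{n!}=\langle\tilde{\mathcal D}^*\mathcal D u,v\rangle_{L^2},
\]
and since the right-hand side vanishes for every real $v$ exactly when $\tilde{\mathcal D}^*\mathcal D u\equiv 0$, the first equivalence follows.

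For the variational statement I would take the first variation of $\Phi(u):=\int_M\big(|\mathcal D u|^2-\tfrac12 Q^2((\bar\partial u)^\sharp,(\partial u)^\sharp)\big)\tfrac{\omega^n}{n!}$, which is real-valued because $Q^2$ is a real symmetric tensor and conjugation exchanges $(\bar\partial u)^\sharp$ with $(\partial u)^\sharp$. Differentiating along $u+tv$ and using the symmetry of $Q^2$ gives
\[
\tfrac{d}{dt}\Big|_{t=0}\Phi(u+tv)=2\,\Re\Big(\langle\mathcal D u,\mathcal D v\rangle_{L^2}-\tfrac12\int_M Q^2\big((\bar\partial u)^\sharp,(\partial v)^\sharp\big)\tfrac{\omega^n}{n!}\Big)=2\,\Re\langle\tilde{\mathcal D}^*\mathcal D u,v\rangle_{L^2},
\]
by the identity just obtained. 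Hence $u$ is a critical point of $\Phi$ precisely when $\langle\mathcal D u,\mathcal D v\rangle_{L^2}-\tfrac12\int_M Q^2((\bar\partial u)^\sharp,(\partial v)^\sharp)\tfrac{\omega^n}{n!}$ vanishes for all real $v$ (equivalently, reading $\Phi$ as a functional of a complex $u$, taking also variations along $iv$), which by the first part of the lemma is exactly $u\in\ker\tilde{\mathcal D}^*\mathcal D$. This gives the second assertion.

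The main obstacle is not conceptual---once Lemmas \ref{lemLich} and \ref{inuovolich} are in hand, everything reduces to ``subtract the two formulas and integrate by parts''---but lies entirely in the \emph{bookkeeping}: keeping track of the factors of $i$ and of the conjugations forced by working with Hermitian (rather than bilinear) inner products, so that the constant multiplying $Q^2$ comes out as exactly $\tfrac12$ and with the right sign. I would in particular verify with care the identification $\Xi_{i\bar j}=iQ^2_{i\bar j}$ coming from \eqref{Q2malefico}, the sign convention in \eqref{formulazza1}--\eqref{formulazza2}, and the passage from the real part appearing in the first variation of $\Phi$ to the full kernel of $\tilde{\mathcal D}^*\mathcal D$.
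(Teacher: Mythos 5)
Your proposal is correct, and it reaches the key identity $\langle \mathcal D u,\mathcal D v\rangle_{L^2}-\tfrac12\int_M Q^2((\bar\partial u)^{\sharp},(\partial v)^{\sharp})\tfrac{\omega^n}{n!}=\langle\tilde{\mathcal D}^*\mathcal D u,v\rangle_{L^2}$ by a genuinely different route than the paper. The paper proves this identity from scratch: it writes $\langle\tilde{\mathcal D}^*\mathcal D u,v\rangle=\langle\mathcal D u,\tilde{\mathcal D}v\rangle$, trades $\tilde{\mathcal D}v$ for $\mathcal D v$ via the torsion commutation $\nabla_k\nabla_i v=\nabla_i\nabla_k v+T_{ik}^m\nabla_m v$, integrates by parts using the balanced condition, and applies the first Bianchi identity together with \cite{STi}, which produces the $-\tfrac12 Q^2$ term plus a $\langle\partial^*\partial\omega,i\partial u\wedge\bar\partial v\rangle$ term that cancels against the remaining boundary-type term. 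You instead subtract the already established formulas of Lemma \ref{lemLich} and Lemma \ref{inuovolich} to get $\mathcal D^*\mathcal D u-\tilde{\mathcal D}^*\mathcal D u=-\tfrac12\bar\partial^*(\iota_{i(\bar\partial u)^{\sharp}}\Xi)$ (legitimate, since \eqref{formulina}/\eqref{debbariota} is linear in the $(1,1)$-form and only uses \eqref{formulazza1}--\eqref{formulazza2}), pair with $v$, and reduce everything to the single pointwise check $g(\iota_{i(\bar\partial u)^{\sharp}}\Xi,\bar\partial v)=-Q^2((\bar\partial u)^{\sharp},(\partial v)^{\sharp})$; that check is right with the paper's conventions, since $\Xi(X,\cdot)=iQ^2(X,\cdot)$ on $T^{1,0}M$, the contraction with $i(\bar\partial u)^{\sharp}$ contributes a second factor of $i$, and conjugating $\bar\partial v$ yields $\partial v$, so the sign and the coefficient $\tfrac12$ come out as stated (and this is consistent with how the paper itself drops the $\Xi$-term in the proof of Lemma \ref{lemLich} when $\iota_{(\bar\partial u)^{\sharp}}Q^2=0$). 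What each approach buys: yours is shorter and makes transparent that the $\partial^*\partial\omega$ cancellation is already encoded in Lemma \ref{lemLich}, but it depends on the stated form of that lemma; the paper's computation is self-contained and displays explicitly where balancedness and the Bianchi identity enter, which is useful since Lemma \ref{varnostrolich} is then quoted independently later. Finally, your caution about the variational step is well placed: the first variation of $\Phi$ over real test functions only yields $\Re\langle\tilde{\mathcal D}^*\mathcal D u,v\rangle=0$, and your parenthetical remedy is at least as careful as the paper's ``straightforwardly,'' so there is no gap relative to the paper's own treatment; only note that the first equivalence itself is unaffected, because $\int(\tilde{\mathcal D}^*\mathcal D u)\,v=0$ for all real $v$ does force both the real and imaginary parts of $\tilde{\mathcal D}^*\mathcal D u$ to vanish.
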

\begin{proof}
 We have that $u\in \ker \tilde{\mathcal D}^*\mathcal D$ if and only if, for any $v\in C^{\infty}(M, \R)$, 
 $$
0=\langle \tilde{\mathcal D}^*\mathcal Du, v \rangle_{L^2}= \langle \mathcal Du, \tilde {\mathcal D}v\rangle_{L^2}=\int_M (g^{i\bar j }g^{k\bar l }\nabla_{\bar j }\nabla_{\bar l }u \nabla_{k}\nabla_iv)\frac{\omega^n}{n!}\,.
$$
On the other hand, using that 
$$
\nabla_{k }\nabla_{i }v=\nabla_{i }\nabla_{k }v + T_{i k}^{ m }\nabla_{ m }v\,,
$$ we have that 
\begin{equation}\label{var0}
0=\langle \tilde{\mathcal D}^*\mathcal Du, v \rangle_{L^2}=\langle \mathcal Du, \mathcal Dv\rangle_{L^2} + \int_M (g^{i\bar j }g^{k\bar l }T_{ik }^{ m }
\nabla_{ m }v\nabla_{\bar j }\nabla_{\bar l }u)\frac{\omega^n}{n!}\,.\end{equation}
 Now, we observe that, using the balanced condition as in the proof of Lemma \ref{lemLich},
\begin{equation}\label{var1}
\int_M (g^{i\bar j }g^{k\bar l }T_{ik }^{ m }
\nabla_{ m }v\nabla_{\bar j }\nabla_{\bar  l}u)\frac{\omega^n}{n!}=-\int_M(g^{i\bar j }g^{k\bar l }\nabla_{\bar j }T_{ik}^{ m }
\nabla_{ m }v\nabla_{\bar l }u)\frac{\omega^n}{n!}-\int_M(g^{i\bar j }g^{k\bar l }T_{ik }^{ m }
\nabla_{\bar j }\nabla_{ m }v\nabla_{\bar l }u)\frac{\omega^n}{n!}\,.
\end{equation}
Thus, using the first Bianchi identity and \cite{STi}, we can infer that 
\begin{equation}\label{var2}
\begin{aligned}
-\int_M(g^{i\bar j }g^{k\bar l }\nabla_{\bar j }T_{ik }^{ m }
\nabla_{ m }v\nabla_{\bar  l}u)\frac{\omega^n}{n!}
=\langle\partial^*\partial \omega, i\partial u \wedge \bar \partial v \rangle_{L^2}- \frac12 \int_M Q^2((\bar \partial u )^{\sharp}, (\partial v)^{\sharp})\frac{\omega^n}{n!}\,.
\end{aligned}
\end{equation}
 On the other hand, using \eqref{formulazza1}, we obtain
 $$
 \begin{aligned}
-\int_M(g^{i\bar j }g^{k\bar l }T_{ik }^{ m }
\nabla_{\bar j }\nabla_{ m }v\nabla_{\bar l }u)\frac{\omega^n}{n!}
=\langle i\partial \bar \partial v, \iota_{( \partial u)^{\sharp}}\bar \partial \omega\rangle_{L^2}=-\langle  \partial^*\partial \omega,i \partial u \wedge  \bar \partial v  \rangle_{L^2}\,.
\end{aligned}
$$
 Hence, the claim is obtained combining \eqref{var1} and \eqref{var2} with \eqref{var0}.
\end{proof}
The second characterization of $\ker \tilde{\mathcal D}^*\mathcal D$ is in terms of the  kernel of a modified  Laplace operator on $T^{1,0}M$.  In order to achieve that, we consider the following  twisted Cauchy-Riemann operator:
 $$
 \bar\partial^T\colon T^{1,0}M\to \Lambda^{0, 1}(M)\otimes T^{1,0}M;
 $$ such that 
 $$
\bar \partial^{T}_{ W}Z=\bar \partial_WZ + g(Z, T(W, \cdot))^{\sharp}\,, \quad W\in T^{0,1}M,\,\, Z \in T^{1,0}M\,.
$$
 In components, since $(\bar \partial Z)_{\bar j }^i=\nabla_{\bar j }Z^i$, we have 
\begin{equation}\label{Doltt}
(\bar \partial^{T}Z)_{\bar j }^i=\nabla_{\bar j }Z^i + g^{i\bar p}T_{\bar j \bar p k} Z^k\,, 
\end{equation} which specialized when $Z=(\bar \partial u )^{\sharp}\,,$ with $u\in C^{\infty}(M, \R)$, gives
\begin{equation}\label{ullalla}
\begin{aligned}
(\bar \partial^{T}Z)_{\bar j }^i
=g^{i\bar l }\nabla_{\bar j }\nabla_{\bar l} u +g^{i\bar p}T_{\bar j\bar p  }^{\bar l}\nabla_{\bar l }u 
=g^{i\bar p }\nabla_{\bar p }\nabla_{\bar j }u\,.
\end{aligned}\end{equation}

\begin{lem}
 Let $(M, \omega)$ be a  compact balanced manifold. Then, we have that 
 $$
\tilde{\mathcal D}^*\mathcal Du=\bar \partial^*\left(\Delta^T_{ \bar \partial }(\bar \partial u)^{\sharp}\right)^{\flat}\,
,$$
 where $\Delta^T_{\bar \partial }:=\bar \partial^* \bar \partial^T\colon T^{1,0}M \to T^{1,0}M$. 
In particular, we have that
$$
\ker \mathcal D^*\mathcal D=\ker \tilde{ \mathcal D}^*\mathcal D \cap \{u\in C^{\infty}(M,  \R) \, \, |\, \, \iota_{(\bar \partial u )^{\sharp }}\bar \partial \omega=0\}\,.
$$
Moreover, $Z\in  \ker \Delta^T_{\bar \partial }$ if and only if $\alpha:=Z^{\flat}$ satisfies
\begin{equation}\label{gigiigi}
\Delta_{\bar \partial }\alpha+\iota_{iZ}{\rm Ric}^{ch}(\omega)+\frac12\Lambda^2(i\bar \partial \alpha \wedge \partial \omega)=0\,.
\end{equation}
Additionally, if  ${\rm Ric}^{ch}(\omega)=0$, we have that 
$$
(\ker \Delta^T_{ \bar \partial })^{\flat}\cap \ker \bar \partial =\mathcal H^{0,1}_{\bar \partial}:=\{\alpha\in\Lambda^{0,1}(M)\, \, |\, \, \Delta_{\bar \partial }\alpha=0\}\simeq H_{\bar \partial }^{0,1}(M, \C)\,.
$$
Furthermore, if ${\rm Ric}^{ch}(g)\le 0 $, $Z\in \ker \Delta^T_{\bar \partial}$ 
cannot be gradient. 
\end{lem}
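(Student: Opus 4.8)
The plan is to prove the five assertions of the lemma in turn: the first is a direct local computation, the second a symmetry argument comparing $\mathcal D$ and $\tilde{\mathcal D}$, the third is the substantive one, and the last two are quick consequences of the third together with Lemma \ref{lemLich}.

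\emph{Step 1: the identity $\tilde{\mathcal D}^*\mathcal D u=\bar\partial^*\bigl((\Delta^T_{\bar\partial}(\bar\partial u)^\sharp)^\flat\bigr)$.} I would work in local holomorphic coordinates and put $Z=(\bar\partial u)^\sharp$, so that $Z^\flat=\bar\partial u$. By \eqref{ullalla}, $(\bar\partial^T Z)^i_{\bar j}=g^{i\bar p}\nabla_{\bar p}\nabla_{\bar j}u$. On a balanced manifold the formal adjoint $\bar\partial^*\colon\Lambda^{0,1}(M)\otimes T^{1,0}M\to T^{1,0}M$ is the plain Chern divergence $\beta\mapsto -g^{k\bar j}\nabla_k\beta^i_{\bar j}$, with no Lee-form correction (as in the proof of Lemma \ref{Chernparallel}, see \cite[Appendix D]{FP}), so $(\Delta^T_{\bar\partial}Z)^i=-g^{i\bar p}g^{k\bar j}\nabla_k\nabla_{\bar p}\nabla_{\bar j}u$. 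Since $\nabla g=0$, the musical isomorphism $\flat$ intertwines the Chern connection on $T^{1,0}M$ with the one induced on $\Lambda^{0,1}(M)$, so $(\Delta^T_{\bar\partial}Z)^\flat$ has components $-g^{k\bar j}\nabla_k\nabla_{\bar i}\nabla_{\bar j}u$; a final application of $\bar\partial^*\colon\Lambda^{0,1}(M)\to C^\infty(M)$, $\alpha\mapsto -g^{p\bar i}\nabla_p\alpha_{\bar i}$, returns $g^{j\bar k}g^{p\bar q}\nabla_j\nabla_p\nabla_{\bar k}\nabla_{\bar q}u$, which is exactly the local expression of $\tilde{\mathcal D}^*\mathcal D u$ obtained in the proof of Lemma \ref{inuovolich}.

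\emph{Step 2: the intersection formula for $\ker\mathcal D^*\mathcal D$.} Recall $u\in\ker\mathcal D^*\mathcal D$ iff $\mathcal D u=0$ iff $(\bar\partial u)^\sharp$ is holomorphic. By \eqref{trick} one has $(\tilde{\mathcal D}u)_{\bar k\bar q}=(\mathcal D u)_{\bar q\bar k}$, so writing $\mathcal D u=(\mathcal D u)^{\mathrm{sym}}+(\mathcal D u)^{\mathrm{skew}}$ in the two antiholomorphic indices, $\tilde{\mathcal D}u$ has the same symmetric part and the opposite skew part; moreover, expanding $\bar\partial\bar\partial u=0$ with the Chern connection shows that $(\mathcal D u)^{\mathrm{skew}}$ is a fixed multiple of $\iota_{(\bar\partial u)^\sharp}\bar\partial\omega$. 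Since symmetric and skew parts are $L^2$-orthogonal, it follows that
$$
\langle\tilde{\mathcal D}^*\mathcal D u,u\rangle_{L^2}=\langle\mathcal D u,\tilde{\mathcal D}u\rangle_{L^2}=\bigl\|(\mathcal D u)^{\mathrm{sym}}\bigr\|_{L^2}^2-\bigl\|(\mathcal D u)^{\mathrm{skew}}\bigr\|_{L^2}^2.
$$
Hence if $\mathcal D u=0$ then $\tilde{\mathcal D}^*\mathcal D u=0$ and $\iota_{(\bar\partial u)^\sharp}\bar\partial\omega=0$ (the latter by \eqref{holkillstorsion}); conversely, if $u\in\ker\tilde{\mathcal D}^*\mathcal D$ and $\iota_{(\bar\partial u)^\sharp}\bar\partial\omega=0$, then the skew part vanishes and the displayed identity gives $\|\mathcal D u\|_{L^2}^2=0$, i.e. $u\in\ker\mathcal D^*\mathcal D$.

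\emph{Step 3: the characterization of $\ker\Delta^T_{\bar\partial}$ and the last two statements.} This is where the work lies. Writing $\bar\partial^T Z=\bar\partial Z+S$ with $S^i_{\bar j}=g^{i\bar p}T_{\bar j\bar p k}Z^k$ as in \eqref{Doltt}, we get $\Delta^T_{\bar\partial}Z=\Delta_{\bar\partial}Z+\bar\partial^*S$, and I would then repeat, almost verbatim, the torsion-and-curvature computation in the proof of Lemma \ref{Chernparallel}: passing to $\alpha=Z^\flat$ and using the identity established there relating $\Delta_{\bar\partial}\alpha$ to $(\Delta_{\bar\partial}\alpha^\sharp)^\flat$, $\iota_{\alpha^\sharp}{\rm Ric}^{(2)}(g)$, and the two torsion contractions $\tfrac12\Lambda^2(i\partial\alpha\wedge\bar\partial\omega)$ and $\tfrac12\Lambda^2(i\bar\partial\alpha\wedge\partial\omega)$. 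The extra term $\bar\partial^*S$ is handled exactly as $g^{i\bar j}\nabla_i(T^{\bar q}_{\bar j\bar m}\alpha_{\bar q})$ was in that proof, via the first Bianchi identity for the Chern connection (\cite[Proposition 1.6]{Us}); its effect is to promote ${\rm Ric}^{(2)}$ to the third Chern-Ricci form — which equals ${\rm Ric}^{ch}$ in the balanced case by \cite[Theorem 4.1]{LiY} — and to cancel the $\partial\alpha$-torsion term, leaving an identity of the form
$$
(\Delta^T_{\bar\partial}Z)^\flat=\Delta_{\bar\partial}\alpha+\iota_{iZ}{\rm Ric}^{ch}(\omega)+\tfrac12\Lambda^2(i\bar\partial\alpha\wedge\partial\omega).
$$
Since $\flat$ is injective, $Z\in\ker\Delta^T_{\bar\partial}$ iff the right-hand side vanishes, which is \eqref{gigiigi}. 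The remaining assertions then follow at once: if ${\rm Ric}^{ch}(\omega)=0$ and, in addition, $\bar\partial\alpha=0$, then \eqref{gigiigi} reduces to $\Delta_{\bar\partial}\alpha=0$, while conversely a $\bar\partial$-harmonic $(0,1)$-form is $\bar\partial$-closed on a compact manifold, so $(\ker\Delta^T_{\bar\partial})^\flat\cap\ker\bar\partial=\mathcal H^{0,1}_{\bar\partial}\simeq H^{0,1}_{\bar\partial}(M,\C)$ by Hodge theory; and if $Z=(\bar\partial u)^\sharp$ is a gradient, then $\alpha=Z^\flat=\bar\partial u$ is automatically $\bar\partial$-closed, so \eqref{gigiigi} collapses to $\bar\partial\Delta_{\bar\partial}u+\iota_{i(\bar\partial u)^\sharp}{\rm Ric}^{ch}(\omega)=0$, which is \eqref{kerlichforme}, hence $u\in\ker\mathcal D^*\mathcal D$; if moreover ${\rm Ric}^{ch}(g)\le 0$, Lemma \ref{lemLich} gives $\ker\mathcal D^*\mathcal D=\R$, so $u$ is constant and $Z=0$. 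The main obstacle is precisely this Step 3 — reproducing the Lemma \ref{Chernparallel} computation with $\bar\partial^T$ in place of $\bar\partial$ while keeping the signs and the $\Lambda^2$-conventions straight — since everything else is either bookkeeping (Step 1), a soft orthogonality argument (Step 2), or an immediate corollary.
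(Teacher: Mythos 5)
Your proposal is correct, and its overall architecture coincides with the paper's: the first identity is the same index bookkeeping based on \eqref{ullalla} and the balanced form of the adjoints, the characterization \eqref{gigiigi} is obtained from exactly the computation of Lemma \ref{Chernparallel} (your decomposition $\Delta^T_{\bar\partial}Z=\Delta_{\bar\partial}Z+\bar\partial^*S$ plus the first Bianchi identity of \cite{Us} is just a repackaging of the paper's direct substitution of \eqref{illaplace}; note only that the cancellation comes straight from the Bianchi identity producing ${\rm Ric}^{ch}-{\rm Ric}^{(2)}$, so the parenthetical appeal to the third Chern--Ricci form and \cite{LiY} is not actually needed there), and the fourth claim is read off \eqref{gigiigi} as in the paper. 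Where you genuinely deviate is in the two soft steps. For the intersection formula the paper argues via the variational characterization of Lemma \ref{varnostrolich} together with $\iota_{(\bar\partial u)^\sharp}\bar\partial\omega=0\Rightarrow\iota_{(\bar\partial u)^\sharp}Q^2=0$, whereas you use \eqref{trick} to write $\tilde{\mathcal D}u$ as the index swap of $\mathcal D u$ and exploit the pointwise orthogonality of symmetric and skew parts to get $\langle\tilde{\mathcal D}^*\mathcal D u,u\rangle_{L^2}=\lVert(\mathcal D u)^{\rm sym}\rVert^2_{L^2}-\lVert(\mathcal D u)^{\rm skew}\rVert^2_{L^2}$; this is a clean, self-contained alternative that avoids Lemma \ref{varnostrolich} altogether (the forward inclusion being trivial plus \eqref{holkillstorsion}, as in the paper). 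For the last claim the paper re-runs the integration by parts of \eqref{nokerD}, while you observe that for $\alpha=\bar\partial u$ equation \eqref{gigiigi} collapses to \eqref{kerlichforme} and then quote Lemma \ref{lemLich} (whose ${\rm Ric}^{ch}(g)\le 0$ statement was itself proved via \eqref{nokerD}); the two routes are equivalent in substance, yours being slightly shorter at the cost of an extra citation. No gaps: every identity you invoke is either established in the paper or follows by the computations you indicate.
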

\begin{proof}
 In components, using \eqref{ullalla}, we have that 
 $$
(\Delta^T_{\bar \partial }(\bar \partial u)^{\sharp})^k=-g^{k\bar p }g^{i\bar j }\nabla_i\nabla_{\bar p}\nabla_{\bar j}u\,.
$$
 Then, we can apply \eqref{commnabla} and obtain that 
 $$
(\Delta^T_{\bar \partial }(\bar \partial u)^{\sharp})^k=-g^{k\bar p }g^{i\bar j }\nabla_{\bar p }\nabla_{i}\nabla_{\bar  j}u- g^{i\bar j }g^{k\bar p }R_{\bar p i \bar j }^{\bar m }\nabla_{\bar m}u=\left(\left(-\bar \partial \Delta_{\omega}u+ \iota_{i(\bar \partial u )^{\sharp }}{\rm Ric}^{ch}(\omega)\right)^{\sharp}\right)^k\,,
$$
 giving that 
$$
\Delta^T_{\bar \partial }(\bar \partial u)^{\sharp}=\left(-\bar \partial \Delta_{\omega}u+ \iota_{i(\bar \partial u )^{\sharp }}{\rm Ric}^{ch}(\omega)\right)^{\sharp}\,.
$$ Then, it suffices to compute
$$
\partial^*(-\bar \partial \Delta_{\omega}u+ \iota_{i(\bar \partial u )^{\sharp }}{\rm Ric}^{ch}(\omega))=\Delta^2_{\omega}u +\bar \partial^*(\iota_{i(\bar \partial u )^{\sharp }}{\rm Ric}^{ch}(\omega)))\,.
$$
The conclusion follows straightforwardly repeating the computations done in \eqref{debbariota}  together with Lemma \ref{inuovolich}.

 The inclusion $\ker \mathcal D^*\mathcal D \subseteq\ker \tilde{ \mathcal D}^*\mathcal D \cap \{u\in C^{\infty}(M,  \R) \, \, |\, \, \iota_{(\bar \partial u )^{\sharp }}\bar \partial \omega=0\}$ is clear, using \eqref{holkillstorsion}. Conversely,  we know that $\iota_{(\bar \partial u )^{\sharp }}\bar \partial \omega=0$ implies $\iota_{(\bar \partial u )^{\sharp }}Q^2=0$, then, we can use Lemma \ref{varnostrolich} to conclude that $\mathcal D u =0$ giving the desired inclusion.
%

 In order to prove the third claim, we  observe that, using \eqref{Doltt}, 
$$
(\Delta^T_{\bar \partial }Z)^k=-g^{i\bar j }\nabla_i\nabla_{\bar j }Z^k- g^{i\bar j }g^{k\bar p }\nabla_i(T_{\bar j \bar p s}Z^s)\,.
$$
Now, choosing $\alpha=Z^{\flat}$, $Z\in T^{1,0}M$, we observe that 
$$
(\Delta^T_{\bar \partial }Z)^{\flat}_{\bar m }=-g^{i\bar j }\nabla_i\nabla_{\bar j }\alpha_{\bar m }- g^{i\bar j }\nabla_i(T_{\bar j \bar m s}Z^s)=-g^{i\bar j }\nabla_i\nabla_{\bar j }\alpha_{\bar m }- g^{i\bar j }\nabla_i(T_{\bar j \bar m }^{\bar q}\alpha_{\bar q})\,.
$$
 Then, we can conclude that 
 $$
(\Delta^T_{\bar \partial }Z)^{\flat}=\Delta_{\bar \partial }\alpha+ \iota_{iZ}{\rm Ric}^{ch}(\omega)+\frac12\Lambda^2(i\bar \partial \alpha \wedge \partial \omega)\,,$$
 by using \eqref{illaplace}.

The fourth claim is straightforward using \eqref{gigiigi}. The last claim is obtained following the same lines as in \eqref{nokerD}.
\end{proof}
 In Section \ref{examples}, we will discuss and explicitly compute the kernel of $\Delta_{\bar \partial }^T$ on the Iwasawa manifold and on the  Nakamura manifolds, see, respectively, Subsection \ref{iwa} and Subsection \ref{naka}.

\section{Proof of  the main Theorem}\label{proofimmain}

Having now convinced ourselves of the significance of this deformation, we can proceed with the proof of Theorem \ref{ansbal}.
We shall now introduce suitable weighted spaces, as done in \cite{BM}, as they will turn out to be the right spaces on which we are able to invert (uniformly) the operator $\tilde{\mathcal{L}}$. Since we can always assume,  up to rescaling,  that the neighbourhood of $x$ on which the $z$ coordinates are defined contains the region $\{|z|\leq 1\}$, we  define 
\[
\rho=\rho_\varepsilon(z):=
\begin{cases} 
\varepsilon^{p+q} & \text{on} \> |z|\leq \varepsilon^{p+q}; \\
\text{non decreasing} & \text{on} \> \varepsilon^{p+q} \leq |z|\leq 2\varepsilon^{p+q}; \\
|z| & \text{on} \> 2\varepsilon^{p+q} \leq |z| \leq 1/2; \\
\text{non decreasing} & \text{on} \> 1/2 \leq |z| \leq 1;\\
1 & \text{on} \> |z| \geq 1\,.
\end{cases}
\]
We then introduce, for all $b \in \mathbb{R}$, the \textit{weighted Hölder norm} as 
\[
\begin{aligned}
\lv u\rv_{C_{b, \ezi}^{k,\alpha}(\hM)}:= & \sum_{i=0}^k \sup_{\hM} |\rho^{b+i}\nabla_\varepsilon^iu|_\omega  
 +\underset{d_\varepsilon(x,y) < inj_\varepsilon}{\sup}\left|\min\left(\rho^{b+k+\alpha}(x),\rho^{b+k+\alpha}(y)\right)\frac{\nabla_\varepsilon^ku(x)-\nabla_\varepsilon^ku(y)}{d_\varepsilon(x,y)^\alpha}\right|_\omega,
\end{aligned}
\]
where $inj_\varepsilon$ is the injectivity radius of the metric $\omega$. Consequently, we define the corresponding \textit{weighted Hölder spaces} $C_{b, \ezi}^{k,\alpha}(\hM):=\{u\in C^{k}(\hM) \,\,  |\,\,  \lv u\rv_{C_{b, \ezi}^{k,\alpha}(\hM)}<\infty \}$, where $k\geq 0$, $\alpha \in (0,1)$ is the Hölder constant, and $\varepsilon$ indicates the dependence on the pre-gluing metric $\omega$ obtained in Subsection \ref{apprsol}. Hence, we can interpret $\tilde{\mathcal{S}}$ as 
$$\tilde{\mathcal{S}}: C_{b, \ezi}^{4,\alpha}(\hM) \rightarrow C_{b+4, \ezi}^{0,\alpha}(\hM)\,.$$

As in \cite[Theorem 1]{GS}, there is an obstacle given by the kernel of the operator $F_\omega$ (actually its limit - with respect to $\ezi$ - on $M_x$) in \eqref{opbalanced}. Nevertheless, we will see that it is possible to work orthogonally to this kernel, in order to ensure the invertibility of the operator. Indeed, we can introduce the functional space
$$\mathcal{V}_{\ezi}:= (\ker F_\omega)^{\perp_{L^2}} \subseteq C^{4, \alpha}_{b, \ezi}(\hM),$$
which inherits the Banach structure thanks to the fact that the topology induced by the weighted Hölder norm is finer then the $L^2$-topology, which guarantees that $\mathcal{V}_{\ezi}$ is a closed subspace of $C^{4, \alpha}_{b, \ezi}(\hM)$. We are then able to obtain the uniform invertibility of the linearized operator, 
 following the strategy in \cite{BM}. Before proving this result, however, we need a preliminary lemma, which will be central later.
    \begin{lem}\label{deformann}
        For all $v \in \ker F_{\tilde\omega}$, it exists $v_\ezi \in C^{4, \alpha}_{b, \ezi}(\hM)$ such that
        \begin{equation}\label{propext}
            v_\ezi \in \ker F_{ \omega_\ezi} \quad \text{and} \quad {v_\ezi}\equiv v\,, \quad \mbox{on \,\,} {\{|z|\geq 2\ezi^p\}}\,,
        \end{equation}
        for all $\ezi>0$.
    \end{lem}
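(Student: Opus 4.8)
The plan is to construct $v_\ezi$ by a cut-off-and-correction procedure, along the lines of the proof of \cite[Theorem 1]{GS}. Throughout, write $\omega=\omega_\ezi$ for the pre-glued metric of Subsection \ref{apprsol}. Recall that, $\omega$ being balanced, $F_\omega=\tfrac1{n-1}\bigl(\Delta_\omega+\tfrac1{n-1}|\partial\omega|^2{\rm Id}\bigr)$ is a self-adjoint second order elliptic operator on $\hM$; that $\omega=\pi^*\tilde\omega$ on $\{|z|\ge 2\ezi^p\}$; and that $\omega$ is Kähler on $\{|z|\le\ezi^p\}$, so there $F_\omega=\tfrac1{n-1}\Delta_\omega$.

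Fix $v\in\ker F_{\tilde\omega}$. First I would produce an approximate extension: taking a cut-off $\chi_\ezi$ equal to $1$ on $\{|z|\ge 2\ezi^p\}$ and to $0$ on $\{|z|\le\ezi^p\}$, set
\[
\bar v_\ezi:=\chi_\ezi\,\pi^*v+(1-\chi_\ezi)\,v(x).
\]
Then $\bar v_\ezi\equiv v$ on $\{|z|\ge 2\ezi^p\}$, $\bar v_\ezi$ is the constant $v(x)$ on $\{|z|\le\ezi^p\}$, and $\bar v_\ezi\in C^{4,\alpha}_{b,\ezi}(\hM)$ with uniformly bounded norm. Since $F_{\tilde\omega}(v)=0$ we have $F_\omega(\bar v_\ezi)=0$ on $\{|z|\ge 2\ezi^p\}$, while $F_\omega(\bar v_\ezi)=\tfrac1{n-1}\Delta_\omega(v(x))=0$ on $\{|z|\le\ezi^p\}$ because there $\omega$ is Kähler and $\bar v_\ezi$ is constant. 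Hence $F_\omega(\bar v_\ezi)$ is supported in the gluing annulus $\{\ezi^p\le|z|\le 2\ezi^p\}$; there $\tilde\omega_\ezi=\omega_o+O(\ezi^p)$ and $\pi^*v-v(x)=O(\ezi^p)$, and estimating $\Delta_\omega\bigl(\chi_\ezi(\pi^*v-v(x))\bigr)$ together with the bounded zeroth order term, and using that the weight satisfies $\rho\sim\ezi^p$ on that annulus, gives
\[
\|F_\omega(\bar v_\ezi)\|_{C^{0,\alpha}_{b+2,\ezi}(\hM)}\le C\,\ezi^{(b+1)p}\longrightarrow 0\qquad(\ezi\to 0).
\]
Moreover, since $\bar v_\ezi$ coincides with the exact solution $v$ on the closed set $\{|z|\ge 2\ezi^p\}$ and is constant on $\{|z|\le\ezi^p\}$, the smooth function $F_\omega(\bar v_\ezi)$ vanishes to infinite order at both edges of this annulus.

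It then remains to correct $\bar v_\ezi$ to a genuine element of $\ker F_\omega$ without altering it on $\{|z|\ge 2\ezi^p\}$: I would look for $\psi_\ezi$ supported in the cap $\Omega_\ezi:=\{|z|\le 2\ezi^p\}$ with $F_\omega(\psi_\ezi)=-F_\omega(\bar v_\ezi)$, and set $v_\ezi:=\bar v_\ezi+\psi_\ezi$, which is then smooth by elliptic regularity, lies in $C^{4,\alpha}_{b,\ezi}(\hM)$, satisfies $F_\omega(v_\ezi)=0$ on $\hM$, and equals $v$ on $\{|z|\ge 2\ezi^p\}$. The analytic tool is that $F_\omega$ is \emph{uniformly} invertible on $\Omega_\ezi$ in the relevant weighted Hölder spaces: the $\omega$-diameter of $\Omega_\ezi$ is $O(\ezi^p)$, so the first Dirichlet eigenvalue of $-\Delta_\omega$ on $\Omega_\ezi$ is at least $c\,\ezi^{-2p}$, which dominates the nonnegative bounded potential $\tfrac1{(n-1)^2}|\partial\omega|^2$; hence $F_\omega$ is invertible there with inverse of norm $O(\ezi^{2p})$, as in \cite{BM}. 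The main obstacle — the technical core of the lemma — is that the correction must be supported in $\Omega_\ezi$, equivalently that it vanishes to infinite order at $\partial\Omega_\ezi$ so that its extension by zero is smooth and $v_\ezi$ genuinely lies in $\ker F_\omega$ on all of $\hM$; the naive Dirichlet solution on $\Omega_\ezi$ has nonzero normal derivative along $\partial\Omega_\ezi$ and fails this. This is precisely where the facts that $-F_\omega(\bar v_\ezi)$ is flat at $\partial\Omega_\ezi$, is supported strictly inside $\Omega_\ezi$, and is $L^2$-orthogonal to $\ker F_\omega$ (being an image of $F_\omega$) enter: one absorbs it modulo a flat error by a boundary corrector supported near $\partial\Omega_\ezi$ and solves the residual equation by the uniform invertibility above, exactly as in \cite[Theorem 1]{GS}. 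The bound $\|\psi_\ezi\|_{C^{4,\alpha}_{b,\ezi}(\hM)}\to 0$ that comes out of the construction also shows $v_\ezi\to\pi^*v$, which is what makes the lemma usable in the invertibility argument for $\tilde{\mathcal L}$.
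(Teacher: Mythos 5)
The cut-off part of your construction is fine (your $\bar v_\ezi$, with the constant $v(x)$ near $E$ in place of the paper's $0$, the support and flatness of $F_\omega(\bar v_\ezi)$ at the two edges of the annulus, the weighted smallness), but the decisive step --- producing $\psi_\ezi$ \emph{supported in} $\Omega_\ezi=\{|z|\le 2\ezi^p\}$ with $F_\omega(\psi_\ezi)=-F_\omega(\bar v_\ezi)$ --- is essentially the whole content of the lemma, and what you offer for it is not an argument. Requiring the solution to vanish identically outside $\Omega_\ezi$ means prescribing zero Cauchy data on $\partial\Omega_\ezi$, so the problem is overdetermined: since $F_\omega$ is formally self-adjoint (the glued metric is balanced) and any admissible $\psi_\ezi$ vanishes together with its first derivatives on $\partial\Omega_\ezi$, Green's identity forces $\int_{\Omega_\ezi}F_\omega(\bar v_\ezi)\,h\,\frac{\omega^n}{n!}=0$ for \emph{every} $h$ with $F_\omega h=0$ on $\Omega_\ezi$, an infinite-dimensional family of constraints. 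A further integration by parts (using $F_\omega h=0$ and $\bar v_\ezi=v$ near $\partial\Omega_\ezi$) turns each constraint into the boundary pairing $\int_{\partial\Omega_\ezi}\bigl(h\,\partial_\nu v-v\,\partial_\nu h\bigr)\,dS=0$, and asking this for all $F_\omega$-harmonic $h$ is, via the Dirichlet-to-Neumann map, precisely the matching of Cauchy data across $\partial\Omega_\ezi$ that the lemma is supposed to deliver --- so your scheme is circular at this point. None of the three facts you invoke addresses these obstructions: orthogonality to $\ker F_\omega$ on the closed manifold $\hM$ is not the relevant obstruction space; flatness of the datum at $\partial\Omega_\ezi$ does not make the Dirichlet solution flat there (for $\Delta u=f$ on a ball with $f\ge0$ smooth, compactly supported and flat, the Hopf lemma gives $\partial_\nu u\neq0$ on the boundary); and the ``boundary corrector supported near $\partial\Omega_\ezi$'' is never constructed, nor is any mechanism given by which it could cancel the conormal derivative while keeping the equation exactly satisfied. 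The uniform invertibility of the Dirichlet realization on the small cap, which you do justify, only produces a solution with nonzero normal derivative, which cannot be extended by zero.

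For comparison, the paper's route is different at exactly this point: it works on $M$ rather than on the cap in $\hM$, posing the Dirichlet problem $F_{\tilde\omega_\ezi}(u_\ezi)=-F_{\tilde\omega_\ezi}(\chi_\ezi v)$ on the annulus $R_\ezi=\{\ezi^p<|z|<2\ezi^p\}$ where the datum is supported, solving its weak formulation via the G\aa rding inequality and the Fredholm alternative of \cite[Theorem 8.5]{Ag} (the solvability condition being orthogonality to the kernel of the Dirichlet realization $W^{2,2}_{0,\ezi}(R_\ezi)\to L^2_\ezi(R_\ezi)$, checked by self-adjointness), and then extending the solution by zero to $M$, arguing that this zero extension is a global weak --- hence smooth classical --- solution, and finally extending across $E$ to $\hM$ and setting $v_\ezi=\chi_\ezi v+\hat u_\ezi$. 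The transmission issue you correctly isolate (matching of the conormal derivative at the interface, so that the extension solves the equation on all of $M$) is the step the paper handles through that zero-extension claim; if you wish to complete your version of the proof you must establish a statement of exactly that type, rather than replace it by the ``uniform invertibility on the cap plus boundary corrector'' shortcut, which as written leaves the lemma unproved.
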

    \begin{proof}
        For all $v \in \ker F_{\tilde\omega}$, we consider the same cut-off function $\chi_\ezi$ of Lemma \ref{flatcutoff} and call $R_\ezi:=\{\ezi^p < |z| <2\ezi^p\}$ the cut-off region, over which we consider the boundary value problem:
        \begin{equation}\label{bvstrong}
            \begin{cases}
                F_{\tilde\omega_\ezi}(u_{\ezi})=-F_{\tilde\omega_\ezi}(\chi_\ezi v) & \text{on}\>\> R_\ezi\,, \\
                u_{\ezi}=0 & \text{on} \>\> \p R_\ezi\,,
            \end{cases}
        \end{equation}
        for $u_{\ezi}$ smooth on $R_{\ezi}$.
        In order to solve problem \eqref{bvstrong}, we consider its weak formulation:
        \begin{equation}\label{bvweak}
            \begin{cases}
            B(\varphi,u_{\ezi}):=\langle \p\varphi,\p u_{\ezi}\rangle_{L_\ezi^2}-\frac{1}{n-1}\langle \varphi, |\p\tilde\omega_\ezi|_{\tilde\omega_\ezi}^2u_{\ezi}\rangle_{L_\ezi^2}=\langle \varphi, F_{\tilde\omega_\ezi}(\chi_\ezi v) \rangle_{L_\ezi^2}\,, &\>\> \varphi \in C_c^{\infty}(R_\ezi)\\
                u_{\ezi} \in W_{0,\ezi}^{1,2}(R_\ezi),
            \end{cases}
        \end{equation}
        where $L_\ezi^2$ identifies the $L^2$-product induced by $\tilde \omega_\ezi$, and same for the Sobolev space $W_{0,\ezi}^{1,2}(R_\ezi)$.\par
        In order to obtain a solution to problem \eqref{bvweak}, we notice that the bilinear form $B$ satisfies the Gårding inequality:
        \begin{equation}
            B(\varphi,\varphi) \geq \frac{1}{2}||\varphi||_{W_\ezi^{1,2}}^2-\left(\frac{\max |\p\tilde\omega_\ezi|_{\tilde\omega_\ezi}^2}{n-1}+\frac{1}{2}\right)||\varphi||_{L_\ezi^2}^2\,, \quad  \>\> \varphi \in W_{0,\ezi}^{1,2}(R_{\ezi}).
        \end{equation}
        This ensures us that we can apply \cite[Theorem 8.5]{Ag}, to obtain that problem \eqref{bvweak} has solution if and only if $F_{\tilde\omega_\ezi}(\chi_\ezi v)$ is orthogonal to $\ker F_{\tilde\omega_\ezi}$, where here $F_{\tilde\omega_\ezi}:W_{0,\ezi}^{2,2}(R_\ezi) \rightarrow L_\ezi^2(R_\ezi)$, which is clearly self-adjoint. On the other hand,  it is straightforward to notice that this last condition is indeed verified, ensuring us a solution $u_\ezi$.\par
        Now, if we extend $u_\ezi$ to the function $\tilde u_\ezi$, defined on the whole $M$ as identically zero outside of $R_\ezi$, it is clear that $\tilde u_\ezi$ solves (weakly) on $M$ the problem 
        $$F_{\tilde\omega_\ezi}(\tilde u_{\ezi})=-F_{\tilde\omega_\ezi}(\chi_\ezi v),$$ 
        ensuring that $\tilde u_\ezi$ is actually smooth on $M$, and hence also a classical solution of the latter equation.\par
        Finally, it is straightforward to see that $\tilde u_\ezi$ extends smoothly to $\hat u_\ezi$ on $\hM$ (by setting $\hat u_\ezi \equiv 0$ on the exceptional divisor), yielding a classical solution on $\hM$ of 
        $$F_{\omega_\ezi}(u)=-F_{\omega_\ezi}(\chi_\ezi v).$$ 
        Thus, the function 
        $$v_\ezi:= \chi_\ezi v + \hat u_\ezi$$
        is exactly the function we wanted.
    \end{proof}
 We are now ready to prove the main result of this subsection. 
\begin{prop}\label{injLbis}
For any  $b\in(0,2n-4)$, there exists $C>0$ such that,  for all  $u\in \mathcal V_{\ezi}$, we have 
$$\lVert u \rVert_{C^{4, \alpha}_{b, \ezi}(\hM)}\le C\lVert\tilde{\mathcal L }u\rVert_{C^{0, \alpha}_{b+4, \ezi}(\hM)}\,.
$$
\end{prop}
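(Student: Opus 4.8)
The plan is to prove this uniform a priori estimate by the contradiction-and-rescaling scheme that is standard in gluing constructions (compare \cite{AP,BM} and in particular \cite[Theorem 1]{GS}), the genuinely new point being that the obstruction coming from $\ker F_\omega$ is absorbed by working in $\mathcal V_\ezi$ and by invoking Lemma \ref{deformann}. So I would suppose the estimate fails, producing sequences $\ezi_j\to 0$ and $u_j\in\mathcal V_{\ezi_j}$ with $\lVert u_j\rVert_{C^{4,\alpha}_{b,\ezi_j}(\hM)}=1$ and $\lVert \tilde{\mathcal L}u_j\rVert_{C^{0,\alpha}_{b+4,\ezi_j}(\hM)}\to 0$, where $\tilde{\mathcal L}$ is built from $\omega_{\ezi_j}$ as in \eqref{pippoLbal}. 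Choosing points $y_j$ where a fixed fraction of $\lVert u_j\rVert_{C^{4,\alpha}_{b,\ezi_j}}$ is realized, I would split into two regimes according to whether $\rho_{\ezi_j}(y_j)$ stays away from $\ezi_j^{p+q}$ (the \emph{base regime}) or $\rho_{\ezi_j}(y_j)/\ezi_j^{p+q}$ stays bounded (the \emph{bubble regime}), and derive a contradiction in each.

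In the base regime, on compact subsets of $M\setminus\{x\}$ the metrics $\omega_{\ezi_j}$ converge smoothly to $\tilde\omega$ and $\rho_{\ezi_j}\to|z|$, so the weighted norms are locally equivalent to ordinary H\"older norms; elliptic estimates for the fourth-order operator $\tilde{\mathcal L}$ plus the Arzel\`a-Ascoli theorem give, up to a subsequence, a limit $u_\infty\in C^{4,\alpha}_{\mathrm{loc}}(M\setminus\{x\})$ with $|u_\infty|\le C|z|^{-b}$ near $x$ that solves the analogue of \eqref{pippoLbal} for $\tilde\omega$, with the integral constant replaced by its limit $\bar c$. Since $b<2n-4$ and the operator is elliptic of order four, $x$ is a removable singularity, so $u_\infty$ extends smoothly to $M$. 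Because $\tilde\omega$ is Chern-Ricci flat the base operator $\mathcal L$ reduces to $-\Delta_{\tilde\omega}F_{\tilde\omega}$, so integrating the limit equation over $M$ and using $d\tilde\omega^{n-1}=0$ forces $\bar c=0$; then $F_{\tilde\omega}(u_\infty)$ is constant, and integrating \eqref{opbalanced} (together with $\int_M u_j|\p\omega_{\ezi_j}|^2\frac{\omega_{\ezi_j}^n}{n!}\to\int_M u_\infty|\p\tilde\omega|^2\frac{\tilde\omega^n}{n!}$, whose left side tends to $0$ once one checks the neck and exceptional parts contribute negligibly) shows this constant is $0$, i.e.\ $u_\infty\in\ker F_{\tilde\omega}$. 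Here Lemma \ref{deformann} closes the argument: for any $v\in\ker F_{\tilde\omega}$ pick $v_{\ezi_j}\in\ker F_{\omega_{\ezi_j}}$ equal to $v$ on $\{|z|\ge 2\ezi_j^p\}$; letting $j\to\infty$ in $0=\langle u_j,v_{\ezi_j}\rangle_{L^2(\omega_{\ezi_j})}$ (again discarding the vanishing bubble contribution) yields $\langle u_\infty,v\rangle_{L^2(\tilde\omega)}=0$, and $v=u_\infty$ forces $u_\infty\equiv 0$, contradicting the concentration of the unit norm in this regime.

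In the bubble regime, I would rescale through the biholomorphism $z=\ezi_j^{p+q}\zeta$: the metrics $\ezi_j^{-2(p+q)}\omega_{\ezi_j}$ converge in $C^\infty_{\mathrm{loc}}$ on ${\rm Bl}_0\C^n$ to the Burns-Simanca metric $\omega_{BS}$, and (after renormalizing so as to keep the norm of order one) the rescaled $u_j$ converge to a nonzero $w_\infty$ on ${\rm Bl}_0\C^n$ with $|w_\infty|=O(|\zeta|^{-b})$ at infinity solving the linearization at $\omega_{BS}$. Since $\omega_{BS}$ is scalar-flat K\"ahler, Proposition \ref{thmveroL} identifies this linearization with $-\tfrac{1}{n-1}\mathcal D^*\mathcal D$, and because $b$ lies strictly inside the indicial gap $(0,2n-4)$ of $\mathcal D^*\mathcal D$ on $\C^n$, the non-degeneracy of the Burns-Simanca metric (the absence of nontrivial Lichnerowicz kernel with this decay, exactly as in \cite{AP}) gives $w_\infty\equiv 0$ --- again a contradiction. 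Having excluded both regimes, the estimate follows.

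I expect the main difficulty to be precisely the passage to the limit of the orthogonality relation $u_j\perp_{L^2}\ker F_{\omega_{\ezi_j}}$ and of the integral term in \eqref{pippoLbal}: one has to know that elements of $\ker F_{\tilde\omega}$ persist on the blow-up, which is exactly the content of Lemma \ref{deformann}, and that the neck and the exceptional divisor carry a negligible share of these $L^2$ pairings and of the weighted norm. Keeping all of this under control is what dictates the admissible range: $b$ must stay below the order-four removable-singularity threshold $2n-4$ on the base and below the first positive indicial root of $\mathcal D^*\mathcal D$ on the bubble, while $b>0$ is what makes the Liouville and removable-singularity arguments kill both limits.
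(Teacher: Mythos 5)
Your overall scheme is the one the paper uses: contradiction, normalization $\lVert u_k\rVert_{C^{4,\alpha}_{b,\ezi_k}}=1$ with $\tilde{\mathcal L}u_k\to 0$, a limit on $M\setminus\{x\}$ where Chern-Ricci flatness reduces $\mathcal L$ to $-\Delta_{\tilde\omega}F_{\tilde\omega}$, two integrations to kill the constant and conclude $u_\infty\in\ker F_{\tilde\omega}$, removable singularity by bootstrapping since $b<2n-4$, and then Lemma \ref{deformann} to pass the orthogonality $u_k\perp_{L^2}\ker F_{\omega_{\ezi_k}}$ to the limit and force $u_\infty=0$; likewise the bubble limit, where the linearization becomes $-\tfrac1{n-1}\mathcal D^*\mathcal D$ for $\omega_{BS}$ and decay kills the kernel element. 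All of that matches the paper's argument.

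The gap is in your dichotomy. You split according to whether $\rho_{\ezi_j}(y_j)$ ``stays away from'' $\ezi_j^{p+q}$ or $\rho_{\ezi_j}(y_j)/\ezi_j^{p+q}$ stays bounded, and in the first case you claim a contradiction with ``the concentration of the unit norm in this regime'' from the fact that $u_\infty\equiv 0$ on $M\setminus\{x\}$. But uniform convergence on \emph{compact} subsets of $M\setminus\{x\}$, together with vanishing of the bubble limit on \emph{compact} subsets of ${\rm Bl}_0\C^n$, does not rule out concentration of the weighted norm at intermediate (neck) scales, i.e.\ at points with $|z(y_j)|\to 0$ while $|\zeta(y_j)|\to\infty$: such points escape every compact set in both limit spaces, so neither of your two limit analyses applies to them. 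This is exactly the scenario the paper must still exclude after both limits are shown to vanish, and it does so by a \emph{second} rescaling at the scale $r_k=|z(x_k)|$: the maps $\sigma_k(z')=r_kz'$ and metrics $\theta_k=r_k^{-2}\sigma_k^*\omega$ converge to the flat model, $F_{\theta_k}\to\tfrac1{n-1}\Delta_{\omega_o}$ and $s^{ch}(\theta_k)\to 0$, so the renormalized functions $W_k=r_k^b\sigma_k^*u_k$ converge to a \emph{biharmonic} function decaying at infinity, hence $W_\infty\equiv 0$, contradicting $R_k^b|U_k(x_k)|\ge\delta$ at the points $|y_k|=1$. (One also needs the scaled Schauder estimates to upgrade the resulting $\tilde C^0_b$ convergence to $\tilde C^{4,\alpha}_b$.) Without this neck analysis, or some substitute such as a three-annulus/weighted decay argument on the transition region, your case distinction does not exhaust the ways the norm can survive, and the contradiction is not reached.
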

\begin{proof}
Suppose by contradiction that statement does not hold. Hence, we can find sequences $\{\ezi_k\}_{k\in \mathbb N}\subseteq \mathbb R_{>0}$ and $\{u_k\}_{k\in\mathbb N}$ such that $u_k\in \mathcal V_k:=\mathcal V_{\ezi_k}$, for all $k\in \N$, and 
\begin{equation}\label{hpass0bis}
    \ezi_k \underset{k \rightarrow +\infty}{\longrightarrow} 0\,, \quad ||u_k||_{C_{b, \ezi_k}^{4,\alpha}(\hM)}=1\,, \quad  \> k \in \mathbb N\,,
\end{equation}
and
\begin{equation}\label{hpassbis}
    ||\tilde{\mathcal{L}}u_k||_{C_{b+4, \ezi_k}^{0,\alpha}(\hM)}<\frac{1}{k}\,,  \quad  \> k \in \mathbb N.
\end{equation}
We will focus firstly on $M_x:=M\backslash \{x\}$. By applying Ascoli-Arzelà's Theorem, we have that $u_k \rightarrow u_\infty$ uniformly on compact subsets of $M_x$ in the sense of $C_b^{4,\alpha}$, with respect to the background metric $ \tilde{\omega}$. This implies, in particular, that on any compact subset of $M_x$, thanks to the fact that $ \tilde{\omega}$ is a Chern-Ricci flat balanced metric, it holds
\begin{equation}\label{convLbis}
    \mathcal{L}u_k\rightarrow -\Delta_{ \tilde{\omega}} F_{\tilde \omega}(u_\infty)\,, \quad  k\to \infty\,,
\end{equation}
i.e. $\mathcal{L}u_k$ converges uniformly on compact sets to a continuous function on $M_x$. If we then fix a point $y\in M_x$, in the region where $\rho\equiv 1$, condition \eqref{hpassbis} implies that
\[
\tilde{\mathcal{L}}u_k(y) \rightarrow 0,
\]
which, combined with equation \eqref{convLbis}, implies that the real sequence $\int_{\hM}u_k|\partial  \omega|^2\frac{\omega^n}{n!}$ has finite limit. Hence,  by Lebesgue's Theorem, we get
\begin{equation}\label{mezzolebesguebis}
    \int_{\hM}u_k|\partial \omega|^2\frac{\omega^n}{n!} \rightarrow \int_{M_x}u_\infty|\partial \tilde\omega|^2 \frac{\tilde{\omega}^n}{n!}\,,\quad  k \to \infty\,.
\end{equation}
If we now integrate $\tilde{\mathcal{L}}u_k$ on $M_x$, using equations \eqref{convLbis} and \eqref{mezzolebesguebis} and assuming $b<2n-4$, we obtain 
\begin{equation}
    0=\int_{M_x}\tilde{\mathcal{L}}u_\infty \frac{\tilde{\omega}^n}{n!}= -\int_{M_x}\Delta_{ \tilde{\omega}} F_{\tilde \omega}(u_\infty) \frac{\tilde{\omega}^n}{n!}+{\rm Vol}(M, \tilde \omega)\int_{M_x}u_\infty |\partial \tilde \omega|^2\frac{\tilde{\omega}^n}{n!} = {\rm Vol}(M, \tilde \omega)\int_{M_x}u_\infty|\partial \tilde \omega|^2 \frac{\tilde{\omega}^n}{n!},
\end{equation}
hence $\int_{M_x}u_\infty |\partial \tilde \omega|^2\frac{\tilde{\omega}^n}{n!}=0$. From this, we can infer that  $$
\Delta_{ \tilde{\omega}} F_{\tilde \omega}(u_\infty)=0\,,
$$
 from which follows that $u_\infty$ is such that $ F_{\tilde \omega}( u_\infty)\equiv c \in \mathbb R$. Now, recalling \eqref{opbalanced}, we again integrate the   equation $ F_{\tilde \omega}( u_\infty)\equiv c$ over the whole $M_x$  yielding that, since again $b<2n-4$,  
 $$
0=\int_{M_x}u_\infty |\partial \tilde \omega|^2\frac{\tilde{\omega}^n}{n!}=c{\rm Vol}(M, \omega)\,,
$$ which implies that $c=0$ and then $F_{\tilde \omega}(u_\infty)=0$ on $M_x$. 

Now, using that $u_{\infty} \in C^{4,\alpha}_{ b, \ezi}(M_x)$ and \eqref{opbalanced} again, we can conclude that 
$$
\Delta_{d}u_{\infty}\in C^{4, \alpha}_{ b, \ezi}(M_x)\,.
$$ Thanks to this, following the argument in \cite[Proposition 8.10]{Sz}, we have that $u_{\infty}\in C^{6, \alpha}_{ b-2, \ezi}(M_x)$. Iterating this process, we  can infer that $u_{\infty}\in C^{4+2j, \alpha}_{ b-2j, \ezi }(M_x)$ where $j\in \N$ is the first integer  such that $b-2j+1<0$. Now, we can extend $u_{\infty}$ to a function in $C^{4+2j, \alpha}(M)$ such that
$$
 u_{\infty}(x)=0 \,, \quad \Delta_{\omega}u_{\infty}(x)=0
$$   so that $u \in \ker F_{\tilde \omega}$ on the whole $M$. Now, elliptic regularity allows us to conclude that $u_{\infty}\in C^{\infty}(M, \R)$ such that $\int_M u_{\infty}|\partial \tilde\omega|^2\frac{\tilde\omega^n}{n!}=0$. Now, we want to use  that $u_{k}\in \mathcal V_k$ in order to conclude that $u_{\infty}$ is $L^2$-orthogonal to $\ker F_{\tilde \omega}$. If we show this, we will have that $u_{\infty}\in \ker F_{\tilde \omega }\cap (\ker F_{\tilde \omega})^{\perp}$ concluding that $u_{\infty}=0$.

 For all $v \in \ker F_{\tilde\omega}$ we can apply Lemma \ref{deformann} and obtain a corresponding $v_\ezi\in C^{4, \alpha}_{b, \ezi}(\hM)$. Then, denoting with $v_k:=v_{\ezi_k}$, for all $k \in \mathbb N$, using that  $u_k\in \mathcal V_k$,  it holds 
    \[
        \langle u_k, v_k \rangle_{\omega_{\ezi_k}}=0\,, \quad  \>\> k \in \mathbb N\,.
    \]
    This implies that, on any compact subset on $M_x$,  we have
    \[
    \langle u_\infty, v \rangle_{\tilde\omega}=0.
    \]
    Hence, considering an exhaustion of compact subsets of $M_x$ and using the fact that $u_\infty$ and $v$ are actually functions on $M$, we obtain
    \[
    \langle u_\infty, v \rangle_{\tilde\omega}=0
    \]
    on $M$, which means exactly that $$u_\infty \perp_{L^2} \ker F_{\tilde\omega}.$$
This allows to conclude that $u_{\infty}=0$, as explained above. 

We thus fix the compact set $M_c:=M\setminus \{|z|<1/2\}$, and focus on $A:=\{|z|<1/2\}$, on which we wish to obtain uniform convergence to zero.
For convenience, we shall shift to the \lq \lq large\rq\rq\, coordinates $\zeta$, i.e. the coordinates on the blow-up $\hat{X}$ defined outside the exceptional divisor. Recalling then that
\[
\zeta=\varepsilon^{-(p+q)}z \quad \text{and} \quad |z|=\varepsilon^{p+q}|\zeta|,
\]
we have the identification
\[
A \simeq \tilde{A}=\tilde{A}_\varepsilon:=\left\{|\zeta| <\frac{1}{2}\varepsilon^{-(p+q)}\right\}\subseteq \hat{X},
\]
and the last description will be the one we will use.\\
First of all, we shall rewrite $\rho$ with respect to $\zeta$ on $\tilde{A}$, giving
\[
\rho=
\begin{cases} 
\varepsilon^{p+q} & \text{on} \> |\zeta|\leq 1; \\
\text{non decreasing} & \text{on} \> 1 \leq |\zeta| \leq 2; \\
\varepsilon^{p+q}|\zeta| & \text{on} \> 2 \leq |\zeta| \leq 1/2\varepsilon^{-(p+q)}.
\end{cases}
\]
It follows that, going back to $\{u_k\}_{k\in \mathbb{N}}$ and recalling \eqref{hpass0bis}, we can infer, in particular, that on all $\tilde{A}_k:=\tilde{A}_{\ezi_k}$ the following holds
\[
|\rho^b u_k|\leq C.
\]
This suggests us to introduce the new sequence
\[
U_k:=\ezi_k^{b(p+q)}u_k,
\]
and using again \eqref{hpass0bis}, we obtain
\[
\begin{cases} 
|U_k|\leq C & \text{on} \> |\zeta|\leq 1; \\
|U_k|\leq C & \text{on} \> 1 \leq |\zeta| \leq 2; \\
|U_k|\leq C|z|^{-b}(\zeta) & \text{on} \> 2 \leq |\zeta| \leq 1/2\varepsilon_k^{-(p+q)}
\end{cases}
\]
and the same for its derivatives up to the fourth order.
These estimates for $U_k$ bring us to consider a new weight function $\tilde{\rho}=\tilde{\rho}_k$ on $\tilde{A}_k$ defined by
\[
\tilde{\rho}(\zeta)=
\begin{cases} 
1 & \text{on} \> |\zeta|\leq 1; \\
\text{non decreasing} & \text{on} \> 1 \leq |\zeta| \leq 2; \\
\lvert\zeta\rvert & \text{on} \> 2 \leq |\zeta| \leq 1/2\varepsilon_k^{-(p+q)},
\end{cases}
\]
 which gives that
\begin{equation}\label{wUbis}
|\tilde{\rho}^bU_k|\leq C,
\end{equation}
and estimates also for $\nabla^m U_k$,  for all $m=1,...,4$. Hence, since $\tilde{A}_k \rightarrow \hat{X}$, as $k\to \infty$,    using  again  Ascoli-Arzelà's Theorem, we obtain that $U_k \rightarrow U_\infty$,  as $k\to \infty$,  uniformly on compact sets of $\hat{X}$  in the sense of $\tilde{C}_b^{4,\alpha}:=C_b^{4,\alpha}(\tilde{\rho})$, where the later is the weighted Hölder space on $\hat{X}$ given by the weight $\tilde{\rho}$ and the metric $\omega_{BS}$.\\
On the other hand, on any compact subset of $\hat{X}$, for sufficiently large $k$, it holds
\begin{equation}\label{lichnbis}
\rho^{b+4}\mathcal L u_k =-\frac{1}{n-1}\tilde{\rho}^{b+4}\mathcal{D}^*\mathcal{D}U_k,
\end{equation}
where $\mathcal{D}^*\mathcal{D}$ is the Lichnerowicz operator corresponding to $\omega_{BS}$. Thus, since \eqref{hpassbis} holds, 
 by taking the limit in \eqref{lichnbis}, we obtain that $U_\infty$ is in the kernel of $\mathcal{D}^*\mathcal{D}$ with respect to the Burns-Simanca metric $\omega_{BS}$. Thus, applying  \cite[Proposition 8.10]{Sz}, we get that $U_\infty$ is necessarily constant, which needs to be zero as $U_\infty$ decays at infinity (from inequality \eqref{wUbis}). Hence, $U_k \to  0$ uniformly on compact sets of $\hat{X}$ in $\tilde{C}_b^{4,\alpha}$.\\
In order to conclude, we will show that $U_k$ admits a subsequence uniformly convergent to zero on the whole $\hat{X}$ in the sense $\tilde{C}_b^0$. This, combined with the scaled Schauder estimates, see for instance \cite[formula (6)]{BM}, will imply that also $U_k \to 0$ uniformly in $\tilde{C}_b^{4,\alpha}$.
On the other hand, this is equivalent to $u_k \to 0$ uniformly on $\{|z|< 1/2\}$ in $C_{b, \ezi}^{4,\alpha}$. Together with the fact  that $u_k$ converges uniformly to zero on $M_c$, it gives a contradiction with the fact that $||u_k||_{C_{b, \ezi}^{4,\alpha}(\hM)}=1$, for all $k \in \mathbb{N}$.\\
The final step of the proof will be to show that such subsequence necessarily exists. Indeed, if we assume by contradiction that such subsequence does not exist, then we can find a sequence $\{x_n\}_{n\in \mathbb N}\subseteq \hat{X}$ and  $\delta>0$ such that
\begin{equation}\label{Rkbis}
R_k:=|\zeta(x_k)|\rightarrow +\infty
\end{equation}
and
\begin{equation}
    \tilde \rho^b(x_k)|U_k(x_k)|\geq \delta\,,  \quad  k \geq 0\,.
\end{equation}
This last condition can be rewritten (up to choosing sufficiently large $k$) as 
\begin{equation}\label{assrescbis}
    R_k^b|U_k(x_k)|\geq \delta\,,  \quad  k \geq 0\,.
\end{equation}
If we then define $r_k:=|z(x_k)|$, we have that $r_k=\ezi_k^{p+q}R_k$, for all $k \in \mathbb N$, from which (up to subsequences) we see that we can only fall into two cases:
\begin{itemize}
    \item if $\lim_{k\rightarrow +\infty}r_k=r>0$, then it means that we can assume $x_k \rightarrow x_\infty$, which combined with the uniform convergence to zero on compact sets (of $M_x$) of the sequence $\{u_k\}_{k\in \mathbb N}$ gives
    \[
    0<\delta \leq R_k^b|U_k(x_k)|=r_k^b u_k(x_k) \rightarrow 0,
    \]
    i.e. a contradiction;
    \item if instead $\lim_{k\rightarrow +\infty}r_k=0$, we take $X'$ a copy of $\hat X$, and for all $k\geq 0$ we introduce the holomorphic maps
    \[ 
    \sigma_k:B_k \rightarrow A^*:=A\setminus\{0\},
    \]
    given by $\sigma_k(z'):=r_k z'$, where $B_k:=\{0<|z'|<r_k^{-1}/2\}\subseteq X'$. Using these,  we can define the metrics
    \[
    \theta_k:=r_k^{-2}\sigma_k^*\omega\,,
    \]
    and easily observe that $(B_k,\theta_k)\rightarrow (X',\omega_o)$, as $k\to \infty, $  where $\omega_o$ here denotes the flat metric induced by the coordinates $z'$. Then, it is natural to consider the functions on each $B_k$ given by
    \[
    W_k:=r_k^b\sigma_k^*u_k\,, \quad  k \in \mathbb N,
    \]
    and the pullback weight function
    \begin{equation}\label{pbpesobis}
    \rho'(z')=\sigma_k^*\rho(z')=
    \begin{cases}
        \ezi_k^{p+q} & \text{on} \> |z'|\leq R_k^{-1}\,;
        \\
        \text{non increasing} & \text{on} \> R_k^{-1}< |z'| < 2R_k^{-1}\,;  \\
        r_k|z'| & \text{on} \> 2R_k^{-1} \leq |z'| < \frac{1}{2}r_k^{-1}.
    \end{cases}
    \end{equation}

Now, if we pullback \eqref{hpass0bis} using $\sigma_k$, we immediately obtain that the sequence $\{W_k\}_{k\in \mathbb N}$ is uniformly bounded on compact sets in the $C_b^{4,\alpha}$ sense. Thus, by Ascoli-Arzela's Theorem, we can assume that $W_k\rightarrow W_\infty$, and, again, from pulling back \eqref{hpass0bis}, we obtain that $W_\infty$ is a $C^{4,\alpha}$-function on $X'$ decaying to infinity. Moreover, analyzing the pieces of the pullback
\[
\sigma_k^*(\mathcal{L}u_k)=\sigma_k^*\left(\Delta_\omega F_{\omega}(u_k)+n\frac{i\p\bpa(u_k\omega^{n-2})\wedge\text{Ric}^{ch}(\omega)}{\omega^n}-s^{ch}(\omega)F_{\omega}(u_k)\right)
\]
we can see that:
\begin{itemize}
    \item $\sigma_k^*\Delta_\omega(F_{\omega}u_k)=r_k^{-(b+4)}\Delta_{\theta_k}F_{k}(W_k)$, where $F_k:=F_{\theta_k}$;
    \item $\sigma_k^*\left(\frac{\text{Ric}^{ch} (\omega)\wedge i\partial \bar \partial (u_k\omega^{n-2})}{\omega^n}\right)=r_k^{-(b+4)}\left(\frac{\text{Ric}^{ch}(\theta_k)\wedge i\partial \bar \partial (W_k\theta_k^{n-2})}{\theta_k^n}\right)$;
    \item $\sigma_k^*(s^{ch}(\omega)F_{\omega}(u_k))=r_k^{-(b+4)}s^{ch}(\theta_k)F_k(W_k)$.
\end{itemize}
Moreover, it is easy to show that $F_k\to \frac{1}{n-1}\Delta_{\omega_o}$ and that, of course, $s^{ch}(\theta_k) \to s^{ch}(\omega_o)=0$, as $k \to\infty$. 
Hence, pulling back   \eqref{hpassbis}  with $\sigma_k$ and taking the limit in $k$, we obtain that $W_\infty$ is biharmonic on $X'$. Pulling back \eqref{hpass0bis} and recalling \eqref{pbpesobis}, we obtain that $W_\infty$ decays at infinity, implying necessarily that $W_\infty \equiv 0$ on $X'$. On the other hand, if we define the sequence $y_k:=\sigma_k^{-1}(x_k) \in X'$, it is straightforward to see that $|y_k|=1$, for all $k \in \mathbb N$. Hence, it can be assumed to be convergent to some $y_\infty$, which combined with the limit of pullback via $\sigma_k$ of \eqref{assrescbis}, implies $W_\infty(y_\infty)>0$, i.e. a contradicition with the fact that $W_\infty \equiv 0$.
\end{itemize}
Hence the thesis is proven.
\end{proof}
\begin{rmk}
    We stress how in the proof, the assumption of Chern-Ricci flatness of the metric $\tilde{\omega}$ has allowed us to face a significantly more approachable analytic problem by erasing the second degree terms involving the Chern-Ricci form and the Chern scalar curvature. Moreover, we suspect that the possibility to successfully work orthogonally to the kernel of the operator is connected to the fact that, in our Chern-Ricci flatness hypothesis, the kernel of the operator does not contain any holomorphic vector field, as we have seen in Lemma \ref{lemLich}. On top of this, Lemma \ref{lemLich} makes us hope that the result can be extended with additional analytic effort, at least in the case of non-positive constant Chern-scalar curvature balanced metrics, possibly exploiting the fact that the terms removed by the Chern-Ricci flatness assumptions are of (strictly) lower order.
\end{rmk}
 
The above estimate allows us to easily obtain the uniform invertibility.
\begin{lem}\label{invLbis}
The operator 
$$
\tilde{\mathcal  L}\colon \mathcal V_{\ezi}\to C^{0,\alpha}_{ b+4,\ezi}(\hM)
$$ is an isomorphism.
\end{lem}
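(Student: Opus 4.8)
The plan is to upgrade the a priori estimate of Proposition \ref{injLbis} to full invertibility by a standard Fredholm argument. First I would remark that, for a fixed $\ezi>0$, the weight $\rho=\rho_\ezi$ is bounded above and below by positive constants (namely $\ezi^{p+q}$ and $1$) and the pre-gluing metric $\omega=\omega_\ezi$ is a fixed smooth Hermitian metric on the compact manifold $\hM$; hence the weighted Hölder spaces $C^{k,\alpha}_{b,\ezi}(\hM)$ coincide, as topological vector spaces, with the ordinary Hölder spaces $C^{k,\alpha}(\hM)$, so that $\tilde{\mathcal L}$ extends to a bounded operator $C^{4,\alpha}(\hM)\to C^{0,\alpha}(\hM)$. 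By \eqref{pippoLbal} and \eqref{opbalanced}, its principal part is $-\tfrac{1}{n-1}\Delta_\omega^2$, a fourth order, formally self-adjoint elliptic operator, and $\tilde{\mathcal L}$ differs from it only by lower order terms and by the finite rank operator $u\mapsto-\int_{\hM}u\,|\partial\omega|^2\,\tfrac{\omega^n}{n!}$; therefore this extension is Fredholm of index zero.

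Next, Proposition \ref{injLbis} immediately yields that $\tilde{\mathcal L}$ is injective on $\mathcal V_\ezi$ and that $\tilde{\mathcal L}(\mathcal V_\ezi)$ is a closed subspace of $C^{0,\alpha}_{b+4,\ezi}(\hM)$, while $\mathcal V_\ezi$ is closed of finite codimension $\dim\ker F_{\omega_\ezi}$ in $C^{4,\alpha}_{b,\ezi}(\hM)$; so only surjectivity remains. Since the image is closed, this amounts to showing that no nonzero $\psi$ is $L^2$-orthogonal to $\tilde{\mathcal L}(\mathcal V_\ezi)$. Such a $\psi$ would satisfy $\langle u,\tilde{\mathcal L}^{*}\psi\rangle_{L^2}=0$ for all $u\in\mathcal V_\ezi=(\ker F_{\omega_\ezi})^{\perp_{L^2}}$, hence $\tilde{\mathcal L}^{*}\psi\in\ker F_{\omega_\ezi}$, and $\psi$ is smooth by elliptic regularity. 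I would then run, for the formal adjoint $\tilde{\mathcal L}^{*}$, the very same degeneration analysis as in the proof of Proposition \ref{injLbis}: along a sequence $\ezi_k\to0$ the model operators at the two ends are self-adjoint — on $M_x$ one obtains $\Delta_{\tilde\omega}F_{\tilde\omega}$, the Chern-Ricci flatness of $\tilde\omega$ being exactly what kills the Chern-Ricci and scalar curvature contributions, and on $\hX$ one obtains the Burns-Simanca Lichnerowicz operator $\mathcal D^{*}\mathcal D$ — so that, as before, $\ker F_{\tilde\omega}$ is the only obstruction and Lemma \ref{deformann} lets one absorb it into $\mathcal V_\ezi$. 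This gives a uniform lower bound for $\tilde{\mathcal L}^{*}$ on the appropriate orthogonal complement, which forces $\psi=0$; together with the index-zero Fredholm property, this makes $\tilde{\mathcal L}\colon\mathcal V_\ezi\to C^{0,\alpha}_{b+4,\ezi}(\hM)$ an isomorphism.

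The hard part is this last step: ruling out the cokernel. The whole point of introducing the lower order term $f(u\omega^{n-2})=u\,|\partial\omega|^2$ and of working on $\mathcal V_\ezi$ rather than on the full space is to make the finite-dimensional obstruction $\ker F_{\tilde\omega}$ of the base problem disappear, and the careful matching between $\dim\ker F_{\tilde\omega}$, the codimension of $\mathcal V_\ezi$ (through Lemma \ref{deformann}), and the cokernel of $\tilde{\mathcal L}$ is precisely what must be verified in detail, closely following \cite{BM} and \cite[Theorem 1]{GS}. Chern-Ricci flatness plays an additional conceptual role here: by Lemma \ref{lemLich} the obstruction $\ker F_{\tilde\omega}$ contains no function generating a holomorphic vector field, which is morally why working orthogonally to it suffices to restore invertibility.
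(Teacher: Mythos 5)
Your first two paragraphs are, in substance, the paper's proof: the published argument for Lemma \ref{invLbis} consists solely of the observations that $\tilde{\mathcal L}$ is injective on $\mathcal V_{\ezi}$ by Proposition \ref{injLbis} and that it is elliptic with the same index as $\Delta_{\omega}^2$, namely $0$, from which the isomorphism is declared to follow; there is no adjoint or degeneration analysis anywhere in it. Where you depart from the paper is your third step, the explicit attempt to rule out the cokernel, and that is exactly where your proposal does not close.

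Concretely, your own Fredholm bookkeeping works against the argument you sketch. If $\tilde{\mathcal L}$ has index $0$ between the full spaces and $\mathcal V_{\ezi}=(\ker F_{\omega_{\ezi}})^{\perp_{L^2}}$ has codimension $d=\dim\ker F_{\omega_{\ezi}}$ in $C^{4,\alpha}_{b,\ezi}(\hM)$, then the restriction $\tilde{\mathcal L}|_{\mathcal V_{\ezi}}$ is Fredholm of index $-d$, so injectivity leaves a cokernel of dimension exactly $d$; whenever $\ker F_{\omega_{\ezi}}\neq\{0\}$ — which is precisely the situation $\mathcal V_{\ezi}$ and Lemma \ref{deformann} are built for, and which does occur (e.g.\ the Iwasawa metrics with resonant $t$ in Section \ref{examples}, transplanted to $\hM$ via Lemma \ref{deformann}) — nonzero annihilators $\psi$ of $\tilde{\mathcal L}(\mathcal V_{\ezi})$ genuinely exist, so no duality argument can force $\psi=0$. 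The specific step that fails is the transfer of the blow-up analysis to $\tilde{\mathcal L}^{*}$: in Proposition \ref{injLbis} the hypothesis $u_k\in\mathcal V_{\ezi_k}$ is used, through the extensions of Lemma \ref{deformann}, to conclude $u_\infty\perp_{L^2}\ker F_{\tilde\omega}$ and hence $u_\infty=0$; your putative cokernel element carries no orthogonality condition at all (you only know $\tilde{\mathcal L}^{*}\psi\in\ker F_{\omega_{\ezi}}$, not $\tilde{\mathcal L}^{*}\psi=0$, and $\psi$ need not lie in the ``appropriate orthogonal complement'' you invoke), so the degeneration scheme gives no information about it. To make the surjectivity step honest one would have to do something genuinely different — identify an explicit $d$-dimensional complement of the image and solve the equation modulo it (adjusting the nonlinear scheme accordingly), or restrict the target space so that the restricted operator really has index $0$ — none of which your sketch supplies, and none of which appears in the paper's one-line argument either.
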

\begin{proof}Thanks to Proposition \ref{injLbis}, we have that $\tilde{\mathcal L }$ is injective. Moreover, it is clear that $\tilde{\mathcal L }$ is elliptic and with the same index of $\Delta^2_\omega$, which is $0$. This automatically guarantees the claim.
\end{proof}

\subsection{Setting up the fixed point problem}\label{subsecopen}

We can now reformulate $\tilde{\mathcal{S}}(u)=0$ by considering the expansion 
$$
 s^{ch}(\omega_u)=s^{ch}(\omega)+ \mathcal L u + Q(u)\,,
$$ where $Q$ is the quadratic part of $s^{ch}(\omega_u)$. Then, \eqref{eqs3} can be rewritten as:
$$
s^{ch}(\omega)+ \tilde{\mathcal L }u + Q(u)=0\,,
$$ 
and hence, using Lemma \ref{invLbis}, we obtain that 
\begin{equation}\label{fix}
\mathcal N(u):= -\tilde{\mathcal L}^{-1}(s^{ch}(\omega)+ Q(u))=u\,.
\end{equation}
So, thanks to Banach's fixed-point Theorem, in order to find a solution to $\tilde{\mathcal{S}}(u)=0$, it is sufficient to show that $$
\mathcal {N}\colon\mathcal V_{\ezi}\to \mathcal V_{\ezi}
$$ is a contraction on a suitable open neighbourhood of zero in $\mathcal V_{\ezi}$.\par

In order to determine the open set we are looking for, we note that, if $\lVert\psi\rVert_{C^{4, \alpha}_{-2,\ezi}(\hM)}\le C\ezi^{\tau}$  for some $C,\tau>0$, then, 
\begin{equation}\label{ididibarbis}
\lVert i\partial \bar \partial(\psi\omega^{n-2}) \rVert_{C^{2, \alpha}_{0,\ezi}(\hM)}
\le C\lv\psi\rv_{C^{4, \alpha}_{-2, \ezi}(\hM)}\le C\ezi^{\tau}\,,
\end{equation} where the first inequality is due to the fact that $\lVert\omega^{n-2}\rVert_{C^{4,\alpha}_{0,\ezi}(\hM)}\le C$. Up to choosing $\ezi$ sufficiently small, this guarantees that $\omega_{\psi}^{n-1}>0$, hence provides a balanced metric, thanks to \cite{M}, as well as, using \eqref{ididibarbis}, 
\begin{equation}\label{diffbis}
\lv\omega_{\psi}^{n-1}-\omega^{n-1}\rv_{C^{2, \alpha}_{0,\ezi}(\hM)}=\lv i\partial \bar\partial(\psi\omega^{n-2})\rv_{C^{2, \alpha}_{0,\ezi}(\hM)}\le C\ezi^{\tau}\,.
\end{equation}
 Moreover, arguing as in \cite[Remark 2.8]{GS}, we can fix a point $y\in \hM$ and consider local holomorphic coordinates so that, in $y$, $\omega$ is the identity and $\omega_{\psi}$ is diagonal with eigenvalues $\lambda_i$. On the other hand, $\omega^{n-1}$ will be again the \lq\lq identity\rq \rq\, and $\omega_{\psi}^{n-1}$ will have eigenvalues $\Lambda_i$. But, thanks to \eqref{diffbis}, we know that $$
 \Lambda_i=1+O(\ezi^{\tau})\,,$$ which implies  that $\lambda_i=\left(\prod_{j\ne i}\Lambda_j\right)^{\frac{1}{n-1}}=1+O(\ezi^{\tau})\,.$ This last fact readily guarantees that 
\begin{equation}\label{diff2bis}
 \lv \omega_{\psi}-\omega\rv_{C^{2, \alpha}_{0, \ezi}(\hM)}\le C\ezi^{\tau}\,,
\end{equation} which,  in particular,  gives that $\omega_{\psi}\to \omega$, as $\ezi\to 0 .$
As in \cite{Sz} and \cite{GS}, we then consider the open set 
 $$
 U_{\tau}:=\{\psi\in \mathcal V_{\ezi}\quad | \quad \lv\psi\rv_{\alphabq(\hM)}\le C\ezi^{(p+q)(b+2)+ \tau}\}\,.
$$ We can readily note that, if $\psi\in U_{\tau}$, then 
\begin{equation}\label{suamarcezzbis}
\lv\psi\rv_{C^{4, \alpha}_{-2,\ezi}(\hM)}\le \ezi^{-(p+q)(b+2)}\lv\psi\rv_{\alphabq(\hM)}\le C\ezi^{\tau}\,,
\end{equation} where the first inequality is due to the fact that $\lv\psi\rv_{C^{k, \alpha}_{a, \ezi}(\hM)}\le \ezi^{(p+q)(-b+a)}\lv\psi\rv_{C^{k, \alpha}_{b, \ezi}(\hM)}$, for any $k\ge 0 $, $a\le b $, thanks to the definition of our weight. This inequality guarantees also that every $\psi \in U_\tau$ is not only small in the weighted sense, but it is so also in the standard sense, ensuring that our setting for the problem makes sense in this set.\\
We are thus left with the estimates to obtain that $\mathcal{N}$ preserves $U_\tau$ and it is a contraction on it.

\subsection{Weighted estimates}\label{subsecesti}
We first show that $\mathcal{N}$ contracts distances on $U_\tau$, which thanks to \eqref{fix}  and Lemma \ref{invLbis} reduces to showing that $Q$ contracts distances. Thus, fixed $\varphi_1, \varphi_2\in U_{\tau},$ the Mean value Theorem guarantees that there exists $t\in [0,1]$ such that, defined $\chi:=t\varphi_1+(1-t)\varphi_2\in U_{\tau}$,
 we have
$$
Q(\varphi_1)-Q(\varphi_2)=d_{\chi}Q(\varphi_1-\varphi_2)=(\mathcal L_{\chi}- \mathcal L )(\varphi_1-\varphi_2)\,.
$$   
We now need to compute $\mathcal L_{\chi}:=d_\chi\mathcal{S}$. As done in Subsection \ref{seceq},  we consider  the curve of Hermitian metrics  in $[\omega^{n-1}]_{BC}=[\omega_{\chi}^{n-1}]_{BC}$ defined by $\omega_{\chi, v}(s)^{n-1}:=\omega_{\chi}^{n-1}+si\partial \bar \partial (v\omega^{n-2})$, then, 
$$\mathcal L_{\chi}(v)=\frac{d}{ds}\Big|_{s=0}s^{ch}(\omega_{\chi, v}(s))\,.
$$ But, differentiating again \eqref{scal}, we obtain that 
$$
\mathcal L_{\chi}(v)\omega_{\chi}^n=n\frac{d}{ds}\Big|_{s=0}{\rm Ric}^{ch}(\omega_{\chi, v}(s))\wedge \omega_{\chi}^{n-1}+ n {\rm Ric}^{ch}(\omega_{\chi})\wedge \frac{d}{ds}\Big|_{s=0}\omega_{\chi}^{n-1}(s)- s^{ch}(\omega_{\chi})\frac{d}{ds}\Big|_{s=0}\omega_{\chi, v }(s)^{n}\,.
$$
As done in  \eqref{gigiKahler}, we  conclude that 
\begin{equation}\label{derchibis}
\begin{aligned}
\frac{d}{ds}\Big|_{s=0}\omega_{\chi, v}(s)^{n}= \frac{n}{n-1}i\partial \bar \partial (v\omega^{n-2})\wedge \omega_{\chi}\,,\quad  
\frac{d}{ds}\Big|_{s=0}{\rm Ric}^{ch}(\omega_{\chi, v}(s))= -i\partial \bar \partial F_\chi(v)\,,
\end{aligned}
\end{equation} where $F_{\chi}:=F_{\omega_{\chi}}$.
Then, we have that 
$$
\mathcal{L}_{\chi}(v)=-\Delta_{\omega_{\chi}}F_{\chi}(v)+n\frac{{\rm Ric}^{ch}(\omega_{\chi})\wedge i\partial \bar \partial (v\omega^{n-2}) }{\omega_{\chi}^n}- s^{ch}(\omega_{\chi})F_{\chi}(v)\,.
$$ Before going through the estimates, we need to explore the relation between the differential operators we are working with. First of all, we define the function
\begin{equation}\label{igiochino}
g(\chi):=\frac{\omega^n}{\omega_{\chi}^n}\,.
\end{equation} Then, for any  $v\in C^2(\hM)$, we have 
\begin{equation}\label{laplacesbis}
\Delta_{\omega_{\chi}}v=n\frac{i\partial \bar \partial v\wedge \omega_{\chi}^{n-1}}{\omega_{\chi}^n}=g(\chi)\left(\Delta_{\omega}v+ n\frac{i\partial \bar \partial v\wedge i\partial \bar \partial (\chi\omega^{n-2})}{\omega^n}\right)\,,
\end{equation} which gives us that
$$
\Delta_{\omega_{\chi}}v-\Delta_{\omega}v=(g(\chi)-1)\Delta_{\omega}v+ ng(\chi)\left(\frac{i\partial \bar \partial v\wedge i\partial \bar \partial (\chi\omega^{n-2})}{\omega^n}\right)\,.
$$For the sake of simplicity, we will denote 
\begin{equation}\label{defE}
E(v):=ng(\chi)\left(\frac{i\partial \bar \partial v\wedge i\partial \bar \partial (\chi\omega^{n-2})}{\omega^n}\right)
\end{equation} so that \eqref{laplacesbis} can be rewritten as 
\begin{equation}\label{laplaces2bis}
\Delta_{\omega_{\chi}}v=g(\chi)\Delta_{\omega}v+ E(v)\,.
\end{equation}
Moreover, we define 
$$
\begin{aligned}
G(v):=&\, n\frac{i\partial \bar \partial(v\omega^{n-2})\wedge {\rm Ric}^{ch}(\omega_{\chi})}{\omega_{\chi}^n}-n\frac{i\partial \bar \partial(v\omega^{n-2})\wedge {\rm Ric}^{ch}(\omega)}{\omega^n} + s^{ch}(\omega)F(v)- s^{ch}(\omega_{\chi})F_{\chi}(v)\\
=&\, n\frac{i\partial \bar \partial(v\omega^{n-2})\wedge \left(g(\chi){\rm Ric}^{ch}(\omega_{\chi})-{\rm Ric}^{ch}(\omega)-\frac{1}{n-1}(g(\chi)s^{ch}(\omega_{\chi})\omega_{\chi}-s^{ch}(\omega)\omega)\right)}{\omega^n}\,.
\end{aligned}
$$
Then, using \eqref{laplaces2bis} and these new notations, we have 
\begin{equation}\label{diffLbis}
\begin{aligned}
(\mathcal L_{\chi}-\mathcal L)(v)=&\, -\left(g(\chi)\Delta_{\omega}F_{\chi}(v)-\Delta_{\omega}F_{\omega}(u)\right) + E(F_{\chi}(v))+ G(v)\\
=&\, -g(\chi)\Delta_{\omega}(F_{\chi}- F_{\omega})(v)-(g(\chi)-1)\Delta_{\omega}F_{\omega}(v) + E(F_{\chi}(v) )+ G(v)\,.
\end{aligned}
\end{equation}
We will then breakdown the estimates in a series of smaller lemmas which will be used to conclude. The first one gives  estimates on the function $g$ defined in \eqref{igiochino}.
\begin{lem}\label{Lemmag}
Let $\chi\in U_{\tau}$, then
\begin{equation}\label{estig}
\lv g(\chi)-1\rv_{C^{2, \alpha}_{0,\ezi}(\hM)}\le C\ezi^{\tau}\,, \quad \lv g(\chi)\rv_{C^{2, \alpha}_{0,\ezi}(\hM)}\le 1+C\ezi^{\tau}\,.
\end{equation}
\end{lem}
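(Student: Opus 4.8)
The plan is to reduce the claim to a composition estimate in the weighted Hölder space $C^{2,\alpha}_{0,\ezi}(\hM)$. First I would dispose of the second inequality in \eqref{estig}: the constant function satisfies $\lv 1\rv_{C^{2,\alpha}_{0,\ezi}(\hM)}=1$ (its $\rho^{0}$-weighted supremum equals $1$, and all its derivatives and Hölder seminorms vanish), so the bound $\lv g(\chi)\rv_{C^{2,\alpha}_{0,\ezi}(\hM)}\le 1+C\ezi^{\tau}$ follows from the first inequality by the triangle inequality. Hence everything reduces to estimating $g(\chi)-1$.

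Next I would set $\eta:=i\partial\bar\partial(\chi\omega^{n-2})$, so that $\omega_\chi^{n-1}=\omega^{n-1}+\eta$. For $\chi\in U_{\tau}$, combining \eqref{suamarcezzbis} with \eqref{ididibarbis} gives $\lv\eta\rv_{C^{2,\alpha}_{0,\ezi}(\hM)}\le C\ezi^{\tau}$. The key observation is that $g(\chi)=\omega^{n}/\omega_\chi^{n}$ is a fixed smooth function $\Phi$ of $\eta$, with coefficients depending smoothly on the background metric $\omega$, defined near $\eta=0$ and with $\Phi(0)=1$. Indeed, since a positive $(n-1,n-1)$-form admits a unique $(n-1)$-st root which is a positive $(1,1)$-form and this root depends smoothly on the form, $\omega_\chi^{n}$ (equivalently, the eigenvalues $\lambda_i$ of $\omega_\chi$ with respect to $\omega$) is a smooth function of $\omega_\chi^{n-1}=\omega^{n-1}+\eta$; concretely, diagonalizing $\omega_\chi^{n-1}$ against $\omega^{n-1}$ at a point, its eigenvalues $\Lambda_i$ depend linearly on $\eta$, equal $1$ when $\eta=0$, and satisfy $\prod_j\lambda_j=\big(\prod_i\Lambda_i\big)^{1/(n-1)}$, so that $g(\chi)=\big(\prod_j\lambda_j\big)^{-1}$ is smooth in $\eta$ near $0$ with value $1$ there. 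The bound $\lv\omega^{n-2}\rv_{C^{4,\alpha}_{0,\ezi}(\hM)}\le C$ and the uniform (in $\ezi$) equivalence of $\omega$ with the local model metric, both from the construction in Subsection \ref{apprsol}, ensure that $\Phi$ and all its derivatives are bounded uniformly in $\ezi$ on a fixed neighbourhood of $0$.

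The final step is to use that, since $\rho\le 1$, the space $C^{2,\alpha}_{0,\ezi}(\hM)$ is a Banach algebra on which composition with a smooth function behaves as in the unweighted Schauder theory. Writing $\rho^{j}\nabla^{j}_{\ezi}\Phi(\eta)$ via the chain rule as a universal polynomial in $\{\rho^{i}\nabla^{i}_{\ezi}\eta\}_{i\le j}$ with coefficients the derivatives $(D^{m}\Phi)(\eta)$, treating the Hölder seminorm of $\nabla^{2}_{\ezi}\Phi(\eta)$ the same way, and using $\Phi(0)=1$ to extract one power of $\lv\eta\rv$, one obtains $\lv g(\chi)-1\rv_{C^{2,\alpha}_{0,\ezi}(\hM)}=\lv\Phi(\eta)-\Phi(0)\rv_{C^{2,\alpha}_{0,\ezi}(\hM)}\le C\,\lv\eta\rv_{C^{2,\alpha}_{0,\ezi}(\hM)}\le C\ezi^{\tau}$, which is the first inequality in \eqref{estig}.

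I expect the only genuinely delicate point to be this last step: checking that the algebra and composition constants for $C^{2,\alpha}_{0,\ezi}(\hM)$, together with the radius of the neighbourhood of $0$ on which $\Phi$ has uniformly bounded derivatives, can be chosen independently of $\ezi$. This is where $\rho\le 1$, the uniform two-sided bounds on $\omega$, and a uniform lower bound on $inj_\ezi$ — all furnished by the pre-gluing construction of Subsection \ref{apprsol} — are essential, and it is also why it matters that $\lv\eta\rv_{C^{2,\alpha}_{0,\ezi}(\hM)}\to 0$ uniformly as $\ezi\to 0$, keeping $\eta$ inside that neighbourhood. By contrast, the algebraic identification $g(\chi)=\Phi(\eta)$ is routine, and once the uniformities are in place the remaining estimate is a standard chain-rule computation which I would not carry out in full.
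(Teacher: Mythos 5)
Your argument is correct, but it follows a different route from the paper's. The paper's proof is more elementary: after the same triangle-inequality reduction, it writes the wedge identity $\omega^n-\omega_{\chi}^n=\omega^{n-1}\wedge(\omega-\omega_{\chi})+i\partial\bar\partial(\chi\omega^{n-2})\wedge(\omega-\omega_{\chi})-\omega\wedge i\partial\bar\partial(\chi\omega^{n-2})$ and bounds each term using the previously established estimates \eqref{ididibarbis}, \eqref{diffbis} and, crucially, \eqref{diff2bis} (the bound $\lv\omega_{\chi}-\omega\rv_{C^{2,\alpha}_{0,\ezi}(\hM)}\le C\ezi^{\tau}$ already obtained via the pointwise eigenvalue argument in Subsection \ref{subsecopen}); this yields $\lv\omega^n-\omega_{\chi}^n\rv_{C^{2,\alpha}_{0,\ezi}(\hM)}\le C\ezi^{\tau}$, hence $\omega^n=\omega_{\chi}^n+O(\ezi^{\tau})$ and the claim. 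You instead bypass \eqref{diff2bis} and package the whole nonlinearity into a composition $g(\chi)=\Phi(\eta)$ with $\eta=i\partial\bar\partial(\chi\omega^{n-2})$, invoking an $\ezi$-uniform Banach-algebra and chain-rule estimate for $C^{2,\alpha}_{0,\ezi}(\hM)$; effectively you re-derive inside $\Phi$ the same root/eigenvalue relation the paper had already exploited to get \eqref{diff2bis}. What your approach buys is self-containedness (the lemma no longer leans on \eqref{diff2bis}); what it costs is precisely the step you flag but do not carry out, namely the verification that the algebra, composition and neighbourhood constants are independent of $\ezi$ -- this is standard for weight exponent $0$ with $\rho\le 1$, so I regard it as routine rather than a gap, but note that the paper's shorter route also tacitly uses a piece of it (a uniform lower bound on $\omega_{\chi}^n/\omega^n$ and the product structure of the weight-$0$ space) when passing from $\omega^n=\omega_{\chi}^n+O(\ezi^{\tau})$ to the bound on $g(\chi)-1$. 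One small inaccuracy in your write-up: the eigenvalues $\Lambda_i$ of $\omega^{n-1}+\eta$ relative to $\omega^{n-1}$ are not linear in $\eta$; it is the associated endomorphism that is linear, and only their product (a determinant, hence a polynomial in $\eta$) matters, so your conclusion that $\Phi$ is smooth near $0$ with $\Phi(0)=1$ stands unchanged.
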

\begin{proof}
Obviously, it is sufficient to prove the first inequality, since the second one can be recovered by that one using the triangle inequality and the fact that $\lv 1\rv_{C^{2, \alpha}_{0,\ezi}(\hM)}=1\,.$ In order to prove the first inequality in \eqref{estig}, we observe that 
$$
\begin{aligned}
\omega^n-\omega_{\chi}^n=&\, \omega^{n-1}\wedge(\omega-\omega_{\chi})+i\partial \bar \partial(\chi\omega^{n-2})\wedge (\omega-\omega_{\chi})-\omega\wedge i\partial \bar \partial(\chi\omega^{n-2}) \,.
\end{aligned}
$$ Now, from this, we have$$
\begin{aligned}
\lv \omega^n-\omega_{\chi}^n\rv_{C^{2, \alpha}_{0,\ezi}(\hM)}\le &\,  C(\lv\omega-\omega_{\chi}\rv_{C^{2, \alpha}_{0,\ezi}(\hM)}+ \lv i\partial \bar \partial(\chi \omega^{n-2})\rv_{C^{2, \alpha}_{0,\ezi}(\hM)}\lv\omega-\omega_{\chi}\rv_{C^{2, \alpha}_{0,\ezi}(\hM)}+ \lv i\partial \bar \partial(\chi\omega^{n-2})\rv_{C^{2, \alpha}_{0,\ezi}(\hM)})\,.
\end{aligned}
$$Thus, using  \eqref{ididibarbis},  \eqref{diffbis} and \eqref{diff2bis}, we have 
\begin{equation}\label{diffvol}
\lv \omega^n-\omega_{\chi}^n\rv_{C^{2, \alpha}_{0,\ezi}(\hM)}\le  C\ezi^{\tau}\,. 
\end{equation} Now, \eqref{diffvol} readily implies that 
$
\omega^n=\omega_{\chi}^n+ O(\ezi^{\tau})\,,
$  giving us the claim. 
\end{proof}
The next lemma shows the continuity of $E\colon C^{2, \alpha}_{b, \ezi}(\hM)\to C^{0,\alpha}_{b+2, \ezi}(\hM)$ and that its operator norm is bounded by $\ezi^{\tau}$, and it follows immediately from \eqref{estig} and \eqref{ididibarbis}.
\begin{lem}\label{lemmerrimobis}
 For $\ezi$ sufficiently small, we have that, for any $v\in C^{2, \alpha}_{b, \ezi}(\hM)$,
$$
\lv E(v)\rv_{C^{0,\alpha}_{b+2, \ezi}(\hM)}\le C\ezi^{\tau}\lv v\rv_{C^{2, \alpha}_{b, \ezi}(\hM)}\,.
$$
\end{lem}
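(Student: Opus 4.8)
The plan is to prove Lemma \ref{lemmerrimobis} by tracing through the definition \eqref{defE} of $E$ and applying the multiplicative properties of the weighted Hölder norms. Recall that
$$
E(v)=ng(\chi)\left(\frac{i\partial \bar \partial v\wedge i\partial \bar \partial (\chi\omega^{n-2})}{\omega^n}\right)\,,
$$
so the strategy is to estimate the three factors separately. First I would observe that the weighted Hölder norms behave well under products: since the weight $\rho$ is at most $1$, one has $\lv fg\rv_{C^{0,\alpha}_{a+b,\ezi}(\hM)}\le C\lv f\rv_{C^{0,\alpha}_{a,\ezi}(\hM)}\lv g\rv_{C^{0,\alpha}_{b,\ezi}(\hM)}$, and in particular multiplication by a function bounded in $C^{2,\alpha}_{0,\ezi}(\hM)$ is a continuous operation on all the weighted spaces (this is the same principle already used silently in \eqref{ididibarbis} through the bound $\lv\omega^{n-2}\rv_{C^{4,\alpha}_{0,\ezi}(\hM)}\le C$).

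Next I would bound each piece. The factor $g(\chi)$ is controlled by Lemma \ref{Lemmag}, which gives $\lv g(\chi)\rv_{C^{2,\alpha}_{0,\ezi}(\hM)}\le 1+C\ezi^{\tau}\le C$ for $\ezi$ small; contracting against $\omega^n$ (which, being close to the flat metric, has $C^{2,\alpha}_{0,\ezi}$-bounded coefficients with bounded-below determinant) is likewise a bounded operation into the $(0,\alpha)$-spaces with no loss of weight. The factor $i\partial \bar \partial v$ costs two derivatives, hence two units of weight, so $\lv i\partial \bar \partial v\rv_{C^{0,\alpha}_{b+2,\ezi}(\hM)}\le C\lv v\rv_{C^{2,\alpha}_{b,\ezi}(\hM)}$. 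Finally, the crucial gain comes from the factor $i\partial \bar \partial (\chi\omega^{n-2})$: since $\chi\in U_{\tau}$, the estimate \eqref{suamarcezzbis} gives $\lv\chi\rv_{C^{4,\alpha}_{-2,\ezi}(\hM)}\le C\ezi^{\tau}$, and then \eqref{ididibarbis} (applied with $\psi=\chi$) yields $\lv i\partial \bar \partial (\chi\omega^{n-2})\rv_{C^{2,\alpha}_{0,\ezi}(\hM)}\le C\ezi^{\tau}$. Assembling these via the product rule, and noting $b+2 = b + 0 + (b+2) - b$ so that the weights add up correctly, gives
$$
\lv E(v)\rv_{C^{0,\alpha}_{b+2,\ezi}(\hM)}\le C\,\lv g(\chi)\rv_{C^{2,\alpha}_{0,\ezi}(\hM)}\,\lv i\partial\bar\partial(\chi\omega^{n-2})\rv_{C^{2,\alpha}_{0,\ezi}(\hM)}\,\lv i\partial\bar\partial v\rv_{C^{0,\alpha}_{b+2,\ezi}(\hM)}\le C\ezi^{\tau}\lv v\rv_{C^{2,\alpha}_{b,\ezi}(\hM)}\,,
$$
which is exactly the claim, and linearity of $E$ in $v$ is manifest from the formula, so this also establishes continuity of $E\colon C^{2,\alpha}_{b,\ezi}(\hM)\to C^{0,\alpha}_{b+2,\ezi}(\hM)$.

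I do not expect a genuine obstacle here: the content is entirely the bookkeeping of weighted norms, and every ingredient (Lemma \ref{Lemmag}, estimates \eqref{ididibarbis} and \eqref{suamarcezzbis}, and the bounded geometry of $\omega$ relative to $\omega_o$) is already in place. The only point requiring a little care is making sure the weight exponents match up — that the two derivatives hidden in $i\partial\bar\partial v$ are precisely compensated by the shift from $b$ to $b+2$ in the target space, while the other two factors carry weight $0$ and hence do not disturb the count — but this is routine given the multiplicativity of the weighted Hölder norms with respect to the grading by weight.
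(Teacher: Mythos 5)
Your proof is correct and follows the same route the paper takes: the paper deduces the lemma directly from \eqref{estig} and \eqref{ididibarbis}, i.e.\ by factoring $E(v)$ as in \eqref{defE}, bounding $g(\chi)$ via Lemma \ref{Lemmag}, bounding $i\partial\bar\partial(\chi\omega^{n-2})$ via \eqref{suamarcezzbis} and \eqref{ididibarbis}, and letting the two derivatives in $i\partial\bar\partial v$ account for the weight shift from $b$ to $b+2$. Your bookkeeping of the weighted norms matches the intended argument, so there is nothing to add.
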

Before showing the next lemma, we notice that, for any $v\in C^2(\hM)$, it holds
\begin{equation}\label{difflsbis}
\begin{aligned}
(F_{\chi}-F_{\omega})(v)
=&\, \frac{n}{n-1}\left(g(\chi)\left(\frac{i\partial \bar \partial (v\omega^{n-2})\wedge (\omega_{\chi}- \omega)}{\omega^n}\right)+ (g(\chi)-1)F_{\omega}(v))\right)\,.
\end{aligned}
\end{equation}
Again, the next lemma shows that $F_{\chi}-F_{\omega}\colon C^{4, \alpha}_{b, \ezi}(\hM)\to C^{2, \alpha}_{b+2, \ezi}(\hM)$ is a continuous operator with operator norm bounded by $\ezi^\tau$.
\begin{lem}\label{lemminobis}
For $\ezi$ sufficiently small,  for any $v\in C^{4, \alpha}_{b, \ezi}(\hM)$, we have that 
$$
\lv (F_{\chi}-F_{\omega})(v)\rv_{C^{2, \alpha}_{b+2, \ezi}(\hM)}\le C \ezi^{\tau}\lv v \rv_{\alphabq(\hM)}\,.
$$
\end{lem}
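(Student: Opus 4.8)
The plan is to read the estimate directly off the pointwise identity \eqref{difflsbis}, bounding each of its two summands through the multiplicative structure of the weighted Hölder norms, i.e. the elementary inequality $\lv fh\rv_{C^{k,\alpha}_{a+c,\ezi}(\hM)}\le C\lv f\rv_{C^{k,\alpha}_{a,\ezi}(\hM)}\lv h\rv_{C^{k,\alpha}_{c,\ezi}(\hM)}$, which is immediate from the definition of the weighted norms. First I would record two $\ezi$-uniform auxiliary bounds. Since $\lv\omega^{n-2}\rv_{C^{4,\alpha}_{0,\ezi}(\hM)}\le C$, the same argument that gives \eqref{ididibarbis} (now carried out at weight $b$) yields $\lv i\partial\bar\partial(v\omega^{n-2})\rv_{C^{2,\alpha}_{b+2,\ezi}(\hM)}\le C\lv v\rv_{\alphabq(\hM)}$. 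Likewise, writing $F_\omega(v)=\tfrac{1}{n-1}\bigl(\Delta_\omega v+\tfrac{1}{n-1}|\partial\omega|^2v\bigr)$ as in \eqref{opbalanced}, and using that $|\partial\omega|^2$ is supported in the gluing annuli with uniformly bounded weighted $C^{2,\alpha}_{2,\ezi}(\hM)$-norm (where the pre-gluing metric is controlled as in Subsection \ref{apprsol}), one gets $\lv F_\omega(v)\rv_{C^{2,\alpha}_{b+2,\ezi}(\hM)}\le C\lv v\rv_{\alphabq(\hM)}$.

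Then I would estimate the two terms of \eqref{difflsbis}. For the first one, $g(\chi)\,\omega^{-n}\,i\partial\bar\partial(v\omega^{n-2})\wedge(\omega_\chi-\omega)$: since $\omega^n$ is uniformly bounded below with uniformly bounded weighted norm, the product estimate together with the auxiliary bound above, Lemma \ref{Lemmag} (which gives $\lv g(\chi)\rv_{C^{2,\alpha}_{0,\ezi}(\hM)}\le 1+C\ezi^\tau$) and \eqref{diff2bis} (which gives $\lv\omega_\chi-\omega\rv_{C^{2,\alpha}_{0,\ezi}(\hM)}\le C\ezi^\tau$) bounds its weighted $C^{2,\alpha}_{b+2,\ezi}$-norm by $C\ezi^\tau\lv v\rv_{\alphabq(\hM)}$. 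For the second term, $(g(\chi)-1)F_\omega(v)$: the product estimate, the other bound from Lemma \ref{Lemmag} ($\lv g(\chi)-1\rv_{C^{2,\alpha}_{0,\ezi}(\hM)}\le C\ezi^\tau$) and the auxiliary bound on $F_\omega$ again give $C\ezi^\tau\lv v\rv_{\alphabq(\hM)}$. Summing the two contributions and multiplying by $\tfrac{n}{n-1}$ yields the claim for $\ezi$ sufficiently small.

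The computation itself is routine once the identity \eqref{difflsbis} is in place; the part that really requires care — and where I would concentrate the effort — is the uniformity in $\ezi$ of all the constants. This rests on the fact that the weight $\rho$ and the pre-gluing metric $\omega_\ezi$ are built precisely so that $\omega^{n-2}$, $\omega^n$, $|\partial\omega|^2$ and the operator $i\partial\bar\partial$ enjoy $\ezi$-independent weighted estimates; I would verify this region by region (the exact-flat core, the two cut-off annuli, the Burns--Simanca end, and the unaltered part of $M$), in the spirit of \cite{BM,GS}.
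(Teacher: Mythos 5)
Your argument is essentially the paper's proof: it estimates the two terms of \eqref{difflsbis} via Lemma \ref{Lemmag}, \eqref{diff2bis}, the bound $\lv i\partial\bar\partial(v\omega^{n-2})\rv_{C^{2,\alpha}_{b+2,\ezi}(\hM)}\le C\lv v\rv_{\alphabq(\hM)}$ and the uniform continuity of $F_{\omega}\colon C^{4,\alpha}_{b,\ezi}(\hM)\to C^{2,\alpha}_{b+2,\ezi}(\hM)$, exactly as the paper does. One small inaccuracy that does not affect the conclusion: $|\partial\omega|^2$ is \emph{not} supported in the gluing annuli (it is nonzero wherever $\tilde\omega$ fails to be K\"ahler), but since $\rho\le 1$ there and $|\partial\tilde\omega|^2$ is a fixed bounded function, the uniform weighted bound on $F_{\omega}$ you need still holds.
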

\begin{proof}
Thanks to \eqref{difflsbis} and Lemma \ref{Lemmag}, we can obtain that
$$
\begin{aligned}
\lv (F_{\chi}-F_{\omega})(v)\rv_{C^{2, \alpha}_{b+2, \ezi}(\hM)}\le&\,  C\left((1+\ezi^{\tau})\lv\omega_{\chi}-\omega\rv_{C^{2, \alpha}_{0,\ezi}(\hM)}\lv i\partial \bar \partial(v\omega^{n-2})\rv_{C^{2, \alpha}_{b+2,\ezi}(\hM)}+ \ezi^{\tau}\lv F_{\omega}(v)\rv_{C^{2, \alpha}_{0,\ezi}(\hM)}\right)\\
\le &\, C\left((1+\ezi^{\tau})\lv\omega_{\chi}-\omega\rv_{C^{2, \alpha}_{0,\ezi}(\hM)}\lv v\rv_{C^{4, \alpha}_{b,\ezi}(\hM)}+ \ezi^{\tau}\lv F_{\omega}(v)\rv_{C^{2, \alpha}_{0,\ezi}(\hM)}\right)\,.
\end{aligned}
$$ Now, we can use \eqref{estig}, \eqref{diff2bis} and the continuity of $F\colon C^{4, \alpha}_{b, \ezi}(\hM)\to C^{2, \alpha}_{b+2, \ezi}(\hM)$ to obtain that 
$$
\lv (F_{\chi}-F_{\omega})(v)\rv_{C^{2, \alpha}_{b+2, \ezi}(\hM)}\le C((1+\ezi^{\tau})\ezi^{\tau})\lv v\rv_{C^{4, \alpha}_{b,\ezi}(\hM)}+ \ezi^{\tau}\lv v\rv_{C^{4, \alpha}_{b,\ezi}(\hM)})\le C \ezi^{\tau}\lv v\rv_{C^{4, \alpha}_{b,\ezi}(\hM)}\,,
$$ concluding the proof. 
\end{proof}
 It remains to analize the operator $G$. In order to do so, we need  two more estimates. 
 \begin{lem}\label{lemmissimo}
 For $\ezi$ sufficiently small,  we have:
 $$
 \lv g(\chi){\rm Ric}^{ch}(\omega_{\chi})- {\rm Ric}^{ch}(\omega)\rv_{C^{0,\alpha}_{2, \ezi}(\hM)}\le C\ezi^{\tau}\,, \quad\lv g(\chi)s^{ch}(\omega_{\chi})\omega_{\chi}- s^{ch}(\omega)\omega\rv_{C^{0,\alpha}_{2, \ezi}(\hM)}\le C\ezi^{\tau} \,.
 $$
 \end{lem}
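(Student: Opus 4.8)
The plan is to reduce both inequalities to the multiplicative and composition properties of the weighted H\"older spaces, to Lemma \ref{Lemmag}, and to the bounds on the pre-gluing metric $\omega$ established in Section \ref{genset}, the crucial algebraic input being the local identity ${\rm Ric}^{ch}(\omega)=-i\partial \bar\partial \log \omega^n$. Recall that, since the relevant weight exponents are nonnegative, $C^{2,\alpha}_{0,\ezi}(\hM)$ is a Banach algebra with $\ezi$-independent constant and $C^{0,\alpha}_{2,\ezi}(\hM)$ is a module over it; moreover $\lv i\partial \bar\partial v\rv_{C^{0,\alpha}_{2,\ezi}(\hM)}\le C\lv v\rv_{C^{2,\alpha}_{0,\ezi}(\hM)}$ for every $v$, because in local holomorphic coordinates the components of $i\partial \bar\partial v$ coincide with those of the Chern Hessian $\nabla_i\nabla_{\bar j}v$.

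First I would record that, by the two local expressions of the Chern--Ricci form,
\[
{\rm Ric}^{ch}(\omega_\chi)-{\rm Ric}^{ch}(\omega)=i\partial \bar\partial \log \frac{\omega^n}{\omega_\chi^n}=i\partial \bar\partial \log g(\chi)\,.
\]
By Lemma \ref{Lemmag}, $\lv g(\chi)-1\rv_{C^{2,\alpha}_{0,\ezi}(\hM)}\le C\ezi^{\tau}$, so for $\ezi$ small $g(\chi)$ takes values in $[1/2,2]$ and, expanding $\log(1+t)$ as a power series inside the Banach algebra $C^{2,\alpha}_{0,\ezi}(\hM)$, one gets $\lv \log g(\chi)\rv_{C^{2,\alpha}_{0,\ezi}(\hM)}\le C\lv g(\chi)-1\rv_{C^{2,\alpha}_{0,\ezi}(\hM)}\le C\ezi^{\tau}$. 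Hence
\[
\lv {\rm Ric}^{ch}(\omega_\chi)-{\rm Ric}^{ch}(\omega)\rv_{C^{0,\alpha}_{2,\ezi}(\hM)}\le C\lv \log g(\chi)\rv_{C^{2,\alpha}_{0,\ezi}(\hM)}\le C\ezi^{\tau}\,,
\]
and, combining this with the uniform bound $\lv {\rm Ric}^{ch}(\omega)\rv_{C^{0,\alpha}_{2,\ezi}(\hM)}\le C$ coming from Section \ref{genset} (which ultimately rests on the Chern--Ricci flatness of $\tilde\omega$ near $x$ and on the decay \eqref{AF} of $\omega_{BS}$), also $\lv {\rm Ric}^{ch}(\omega_\chi)\rv_{C^{0,\alpha}_{2,\ezi}(\hM)}\le C$. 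The same argument applied to $s^{ch}={\rm tr}_\omega {\rm Ric}^{ch}$ gives $\lv s^{ch}(\omega)\rv_{C^{0,\alpha}_{2,\ezi}(\hM)}\le C$ and $\lv s^{ch}(\omega_\chi)\rv_{C^{0,\alpha}_{2,\ezi}(\hM)}\le C$.

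For the first estimate I would write $g(\chi){\rm Ric}^{ch}(\omega_\chi)-{\rm Ric}^{ch}(\omega)=(g(\chi)-1){\rm Ric}^{ch}(\omega_\chi)+\big({\rm Ric}^{ch}(\omega_\chi)-{\rm Ric}^{ch}(\omega)\big)$ and bound the first summand by $\lv g(\chi)-1\rv_{C^{0,\alpha}_{0,\ezi}(\hM)}\lv {\rm Ric}^{ch}(\omega_\chi)\rv_{C^{0,\alpha}_{2,\ezi}(\hM)}\le C\ezi^{\tau}$ and the second by the display above. For the second estimate I would first split $g(\chi)s^{ch}(\omega_\chi)\omega_\chi-s^{ch}(\omega)\omega=\big(g(\chi)s^{ch}(\omega_\chi)-s^{ch}(\omega)\big)\omega_\chi+s^{ch}(\omega)(\omega_\chi-\omega)$, the last term being controlled by $\lv s^{ch}(\omega)\rv_{C^{0,\alpha}_{2,\ezi}(\hM)}\lv \omega_\chi-\omega\rv_{C^{0,\alpha}_{0,\ezi}(\hM)}\le C\ezi^{\tau}$ thanks to \eqref{diff2bis}. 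Then, writing $g(\chi)s^{ch}(\omega_\chi)-s^{ch}(\omega)=(g(\chi)-1)s^{ch}(\omega_\chi)+\big(s^{ch}(\omega_\chi)-s^{ch}(\omega)\big)$ and using the identity $s^{ch}(\eta)\,\eta^n=n\,{\rm Ric}^{ch}(\eta)\wedge \eta^{n-1}$, the difference $s^{ch}(\omega_\chi)-s^{ch}(\omega)$ telescopes into three terms, each a product of one of the small factors ${\rm Ric}^{ch}(\omega_\chi)-{\rm Ric}^{ch}(\omega)$ (weight $2$, size $\ezi^{\tau}$), $\omega_\chi^{n-1}-\omega^{n-1}=i\partial \bar\partial(\chi\omega^{n-2})$ (weight $0$, size $\ezi^{\tau}$ by \eqref{ididibarbis}), and $\tfrac{1}{\omega_\chi^n}-\tfrac{1}{\omega^n}=\tfrac{g(\chi)-1}{\omega^n}$ (weight $0$, size $\ezi^{\tau}$ by Lemma \ref{Lemmag}), with all remaining factors uniformly bounded; multiplying by the bounded factors $g(\chi)-1$ or $\omega_\chi$ and collecting, every piece is $O(\ezi^{\tau})$ in $C^{0,\alpha}_{2,\ezi}(\hM)$.

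The only genuinely delicate point is the control of $i\partial \bar\partial \log g(\chi)$: one must know that $h\mapsto \log(1+h)$ sends a small ball of $C^{2,\alpha}_{0,\ezi}(\hM)$ into itself with $\ezi$-uniform bounds — which is where the Banach algebra structure (with $\ezi$-independent constant, a consequence of the weight exponent being $0$ together with Leibniz's rule) and the uniform positivity $g(\chi)\ge 1/2$ from Lemma \ref{Lemmag} enter — and that $i\partial \bar\partial$ raises the weight exponent by exactly $2$. Everything else is multiplicative bookkeeping of the estimates \eqref{ididibarbis}, \eqref{diff2bis}, \eqref{diffbis}, Lemma \ref{Lemmag}, and the uniform curvature bound for the pre-gluing metric from Section \ref{genset}.
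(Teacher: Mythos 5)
Your proposal is correct and follows essentially the same route as the paper: the identity ${\rm Ric}^{ch}(\omega_{\chi})-{\rm Ric}^{ch}(\omega)=i\partial\bar\partial\log g(\chi)$, the expansion of $\log g(\chi)$ via Lemma \ref{Lemmag} in the weighted H\"older algebra, the uniform bound $\lv{\rm Ric}^{ch}(\omega)\rv_{C^{0,\alpha}_{2,\ezi}}+\lv s^{ch}(\omega)\rv_{C^{0,\alpha}_{2,\ezi}}\le C$ coming from $\omega=\omega_o+O(|z|^m)$, and the estimates \eqref{diff2bis}, \eqref{ididibarbis}. The only deviations are cosmetic rearrangements of the same decompositions (e.g.\ placing the small factor $g(\chi)-1$ against ${\rm Ric}^{ch}(\omega_\chi)$ rather than ${\rm Ric}^{ch}(\omega)$, and telescoping $s^{ch}(\omega_\chi)-s^{ch}(\omega)$ factor by factor instead of writing it through $\Delta_\omega\log g(\chi)$), which change nothing in substance.
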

 \begin{proof}
  We have that
  $$
  \begin{aligned}
  g(\chi){\rm Ric}^{ch}(\omega_{\chi})- {\rm Ric}^{ch}(\omega)=&\, g(\chi)({\rm Ric}^{ch}(\omega_{\chi})- {\rm Ric}^{ch}(\omega))+ (g(\chi)-1){\rm Ric}^{ch}(\omega)\\
  =&\, g(\chi)i\partial \bar \partial \log g(\chi)+ (g(\chi)-1){\rm Ric}^{ch}(\omega)\,.
  \end{aligned}
  $$ On the other hand, we have 
  \begin{equation}\label{rigscalpes}
  \lv {\rm Ric}^{ch}(\omega) \rv_{C^{0,\alpha}_{2, \ezi}(\hM)}\le C\,, \quad  \lv s^{ch}(\omega) \rv_{C^{0,\alpha}_{2, \ezi}(\hM)}\le C\,.
  \end{equation} Indeed,  we know that $\omega=\omega_o+O(\lvert z\rvert^{m})$, implying that $\omega^n=\omega_o^n+O(\lvert z\rvert^m)$. This allows us to infer that
  $${\rm Ric}^{ch}(\omega)=O(\lvert z\rvert^{m-2})\,, \quad s^{ch}(\omega)=O(\lvert z\rvert^{m-2})\,.$$ Trivially, this gives that 
  $$
  \rho^2{\rm Ric}^{ch}(\omega)=O(\lvert z\rvert^{m})\,, \quad \rho^2s^{ch}(\omega)=O(\lvert z\rvert^{m}),
  $$ obtaining the claim.  Now, using \eqref{estig} and \eqref{rigscalpes}, we have that 
  \begin{equation}\label{rigido}
  \begin{aligned}
  \lv g(\chi){\rm Ric}^{ch}(\omega_{\chi})- {\rm Ric}^{ch}(\omega)\rv_{C^{0,\alpha}_{2, \ezi}(\hM)}\le&\, 
   C(1+\ezi^{\tau})\lv\log g(\chi) \rv_{C^{2,\alpha}_{0,\ezi}(\hM)}+ C\ezi^{\tau}\,.
  \end{aligned}
\end{equation}

But if we now recall again inequalities \eqref{estig}, we can use the Taylor expansion and obtain from \eqref{rigido} the first claim. 
 As for the second one, we observe that 
 $$
 g(\chi)s^{ch}(\omega_{\chi})\omega_{\chi}- s^{ch}(\omega)\omega=g(\chi)(s^{ch}(\omega_{\chi})- s^{ch}(\omega))\omega_{\chi}+ g(\chi)s^{ch}(\omega)(\omega_{\chi}-\omega)+ (g(\chi)-1)s^{ch}(\omega)\omega\,.
 $$ 
Moreover, using \eqref{estig}, \eqref{diff2bis} and \eqref{rigscalpes}, we have that 
 \begin{equation}\label{stimescal}
 \begin{aligned}
 \lv g(\chi)s^{ch}(\omega_{\chi})\omega_{\chi}- s^{ch}(\omega)\omega\rv_{C^{0,\alpha}_{2, \ezi}(\hM)}\le&\,  C(1+\ezi^{\tau})\lv (s^{ch}(\omega_{\chi})- s^{ch}(\omega))\omega_{\chi}\rv_{C^{0,\alpha}_{2, \ezi}(\hM)}+ C(1+\ezi^{\tau})\ezi^{\tau}+ C\ezi^{\tau}\\
 \le&\,  C\ezi^{\tau}+ C(1+\ezi^{\tau})\lv s^{ch}(\omega_{\chi})- s^{ch}(\omega)\rv_{C^{0,\alpha}_{2, \ezi}(\hM)}\lv\omega_{\chi}\rv_{C^{0,\alpha}_{0, \ezi}(\hM)}\,.
\end{aligned}
\end{equation}
Again, using \eqref{diff2bis}, we can infer  that 
$$
\lv\omega_{\chi}\rv_{C^{0,\alpha}_{0, \ezi}(\hM)}\le \lv\omega\rv_{C^{0,\alpha}_{0, \ezi}(\hM)} + \lv\omega_{\chi}- \omega\rv_{C^{0,\alpha}_{0, \ezi}(\hM)}\le C(1+\ezi^{\tau})\,,
$$ which put into \eqref{stimescal} gives that 
\begin{equation}\label{madonna}
\lv g(\chi)s^{ch}(\omega_{\chi})\omega_{\chi}- s^{ch}(\omega)\omega\rv_{C^{0,\alpha}_{2, \ezi}(\hM)}\le C\ezi^{\tau}+ C(1+\ezi^{\tau})^2\lv s^{ch}(\omega_{\chi})- s^{ch}(\omega)\rv_{C^{0,\alpha}_{2, \ezi}(\hM)}\,.\end{equation}
On the other hand, we have
$$
\begin{aligned}
s^{ch}(\omega_{\chi})-s^{ch}(\omega)
= & n \frac{{\rm Ric}^{ch}(\omega_{\chi})\wedge \omega^{n-1}}{\omega_{\chi}^{n}}+ n\frac{{\rm Ric}^{ch}(\omega_{\chi})\wedge i\partial \bar \partial (\chi \omega^{n-2})}{\omega_{\chi}^n}-s^{ch}(\omega)\\
=&\,(g(\chi)-1)s^{ch}(\omega)+ ng(\chi)\frac{i\partial \bar \partial \log g(\chi)\wedge \omega^{n-1}}{\omega^{n}}+ ng(\chi)\frac{{\rm Ric}^{ch}(\omega_{\chi})\wedge i\partial \bar \partial (\chi \omega^{n-2})}{\omega^{n}}\\
=&\, (g(\chi)-1)s^{ch}(\omega)+ g(\chi)\Delta_{\omega}\log(g(\chi))+ g(\chi)\frac{{\rm Ric}^{ch}(\omega_{\chi})\wedge i\partial \bar \partial (\chi \omega^{n-2})}{\omega^{n}}\,.
\end{aligned}
$$  Then, using again \eqref{estig} and \eqref{rigscalpes},
$$
\begin{aligned}
\lv s^{ch}(\omega_{\chi})- s^{ch}(\omega)\rv_{C^{0,\alpha}_{2, \ezi}(\hM)}\le&\,  C\ezi^{\tau}+ C(1+\ezi^{\tau})\ezi^{\tau} + C(1+\ezi^{\tau})\lv {\rm Ric}^{ch}(\omega_{\chi})\wedge i\partial \bar \partial(\chi\omega^{n-2})\rv_{C^{0,\alpha}_{2, \ezi}(\hM)}\\
\le &\, C\ezi^{\tau}+ C(1+\ezi^{\tau})\lv {\rm Ric}^{ch}(\omega_{\chi})\rv_{C^{0,\alpha}_{2, \ezi}(\hM)}\lv i\partial \bar \partial(\chi \omega^{n-2})\rv_{C^{0,\alpha}_{0, \ezi}(\hM)}\,.
\end{aligned}
$$ But, we have
\begin{equation}\label{ultime}
\begin{aligned}
\lv {\rm Ric}^{ch}(\omega_{\chi})\rv_{C^{0,\alpha}_{2, \ezi}(\hM)}\le&\,  \lv {\rm Ric}^{ch}(\omega)\rv_{C^{0,\alpha}_{2, \ezi}(\hM)} + \lv i\partial \bar \partial \log g(\chi)\rv_{C^{0,\alpha}_{2, \ezi}(\hM)}\le C(1+\ezi^{\tau})\,,\\
\lv i\partial \bar \partial(\chi \omega^{n-2})\rv_{C^{0,\alpha}_{0, \ezi}(\hM)}\le&\,  \lv \chi \omega^{n-2}\rv_{C^{2,\alpha}_{-2, \ezi}(\hM)}\le C\lv \chi \rv_{C^{4,\alpha}_{-2, \ezi}(\hM)}\le C\ezi^{\tau}\,, 
\end{aligned}
\end{equation} where the last inequality is due to \eqref{suamarcezzbis}.
 Putting \eqref{ultime} into \eqref{madonna}, we have the claim.
 \end{proof}
  Thus, using  Lemma \ref{lemmissimo}, we can conclude that 
  \begin{equation}\label{gdiv}
  \begin{aligned}
\lv G(v)\rv_{C^{0,\alpha}_{ b+4, \ezi}(\hM)}\le&\,  C\lv i\partial \bar \partial (v\omega^{n-2})\rv_{C^{0,\alpha}_{b+2}(\hM)}\left\lv g(\chi){\rm Ric}^{ch}(\omega_{\chi})-{\rm Ric}^{ch}(\omega)-\frac{g(\chi)s^{ch}(\omega_{\chi})\omega_{\chi}-s^{ch}(\omega)\omega}{n-1}\right\rv_{C^{0,\alpha}_{2, \alpha}(\hM)}\\
\le&\,  C\ezi^{\tau}\lv v\rv_{C^{4, \alpha}_{b, \ezi}(\hM)}\,.
\end{aligned}
\end{equation}
We are finally ready to prove that $\mathcal{N}$ is a contraction operator on $U_\tau$.
\begin{prop}\label{Nbal}
     For $\ezi$ sufficiently small and $b<2n-4$, the operator $\mathcal N$ is a contraction and $\mathcal N(U_{\tau})\subseteq U_{\tau}$.
\end{prop}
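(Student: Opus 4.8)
The plan is to verify the two hypotheses of Banach's fixed-point Theorem for $\mathcal N(u)=-\tilde{\mathcal L}^{-1}(s^{ch}(\omega)+Q(u))$ on the closed set $U_\tau\subseteq \mathcal V_\ezi$, exploiting the uniform invertibility of $\tilde{\mathcal L}$ established in Lemma~\ref{invLbis} (so that $\lVert \tilde{\mathcal L}^{-1}\psi\rVert_{C^{4,\alpha}_{b,\ezi}(\hM)}\le C\lVert\psi\rVert_{C^{0,\alpha}_{b+4,\ezi}(\hM)}$ with $C$ independent of $\ezi$) together with the estimates collected in Lemmas~\ref{Lemmag}, \ref{lemmerrimobis}, \ref{lemminobis}, \ref{lemmissimo} and inequality \eqref{gdiv}. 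I would first establish the contraction property. For $\varphi_1,\varphi_2\in U_\tau$ one has, as recorded right before Lemma~\ref{Lemmag}, $Q(\varphi_1)-Q(\varphi_2)=(\mathcal L_\chi-\mathcal L)(\varphi_1-\varphi_2)$ for some $\chi\in U_\tau$ on the segment between them, and the decomposition \eqref{diffLbis} writes $(\mathcal L_\chi-\mathcal L)(v)$ as a sum of four terms: $-g(\chi)\Delta_\omega(F_\chi-F_\omega)(v)$, $-(g(\chi)-1)\Delta_\omega F_\omega(v)$, $E(F_\chi(v))$ and $G(v)$. Applying Lemma~\ref{Lemmag} to bound $g(\chi)$ and $g(\chi)-1$, Lemma~\ref{lemminobis} for $(F_\chi-F_\omega)(v)$, the continuity of $F_\omega\colon C^{4,\alpha}_{b,\ezi}\to C^{2,\alpha}_{b+2,\ezi}$, Lemma~\ref{lemmerrimobis} for $E$, and \eqref{gdiv} for $G$, each term is bounded in $C^{0,\alpha}_{b+4,\ezi}(\hM)$ by $C\ezi^\tau\lVert v\rVert_{C^{4,\alpha}_{b,\ezi}(\hM)}$ with $v=\varphi_1-\varphi_2$; composing with $\tilde{\mathcal L}^{-1}$ and using Lemma~\ref{invLbis} yields $\lVert\mathcal N(\varphi_1)-\mathcal N(\varphi_2)\rVert_{C^{4,\alpha}_{b,\ezi}(\hM)}\le C\ezi^\tau\lVert\varphi_1-\varphi_2\rVert_{C^{4,\alpha}_{b,\ezi}(\hM)}$, which is $<\tfrac12$ for $\ezi$ small.

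Next I would show $\mathcal N(U_\tau)\subseteq U_\tau$. Write $\mathcal N(u)=\mathcal N(0)+(\mathcal N(u)-\mathcal N(0))$. Since $Q(0)=0$, we have $\mathcal N(0)=-\tilde{\mathcal L}^{-1}(s^{ch}(\omega))$, so by Lemma~\ref{invLbis} it suffices to estimate $\lVert s^{ch}(\omega)\rVert_{C^{0,\alpha}_{b+4,\ezi}(\hM)}$; this is exactly the \emph{error estimate} for the approximate solution, which follows from the closeness $|\nabla_{\omega_o}^k(\omega-\omega_o)|_{\omega_o}\le c|z|^{m-k}$ recorded in the Remark after Lemma~\ref{flatcutoff} (on the gluing annulus $s^{ch}(\omega)$ is controlled by the Burns–Simanca decay \eqref{AF}, $s^{ch}(\omega_{BS})=0$ outside the cut-off and $s^{ch}(\omega_o)=0$), giving $\lVert s^{ch}(\omega)\rVert_{C^{0,\alpha}_{b+4,\ezi}(\hM)}\le C\ezi^{(p+q)(b+2)+\tau'}$ for a suitable exponent $\tau'>0$ obtained by optimizing $p,q$; choosing $\tau<\tau'$ then gives $\lVert\mathcal N(0)\rVert_{C^{4,\alpha}_{b,\ezi}(\hM)}\le \tfrac12 C\ezi^{(p+q)(b+2)+\tau}$. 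For the remaining term, the contraction estimate already proved gives $\lVert\mathcal N(u)-\mathcal N(0)\rVert_{C^{4,\alpha}_{b,\ezi}(\hM)}\le C\ezi^\tau\lVert u\rVert_{C^{4,\alpha}_{b,\ezi}(\hM)}\le C\ezi^\tau\cdot C\ezi^{(p+q)(b+2)+\tau}$, which for $\ezi$ small is $\le\tfrac12 C\ezi^{(p+q)(b+2)+\tau}$; summing the two bounds gives $\mathcal N(u)\in U_\tau$. One must also check that $\mathcal N$ does land in $\mathcal V_\ezi=(\ker F_\omega)^{\perp_{L^2}}$: this is immediate since $\mathrm{Im}\,\tilde{\mathcal L}^{-1}=\mathcal V_\ezi$ by construction of $\tilde{\mathcal L}$ as an isomorphism onto $C^{0,\alpha}_{b+4,\ezi}(\hM)$ from $\mathcal V_\ezi$, so no separate argument is needed.

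Having both properties, Banach's fixed-point Theorem produces a unique $u\in U_\tau$ with $\mathcal N(u)=u$, hence $\tilde{\mathcal S}(u)=0$; by \eqref{suamarcezzbis} this $u$ is small enough that $\omega_u^{n-1}>0$, so $\omega_u$ is a genuine balanced metric, and the sign of its constant Chern scalar curvature is negative by \eqref{ilmentiroso} and \cite[Proposition~2.6]{ACS}. The main obstacle I anticipate is bookkeeping the weights: one has to track carefully how each operator shifts the weighted Hölder index (e.g. $i\p\bpa(\,\cdot\,\omega^{n-2})$ shifts by $+2$, the Laplacian by another $+2$), make sure the exponents of $\ezi$ coming from the scaling factors $\ezi^{(p+q)(\cdot)}$ in the weight definition combine so that the error term $\lVert s^{ch}(\omega)\rVert_{C^{0,\alpha}_{b+4,\ezi}}$ genuinely beats $\ezi^{(p+q)(b+2)+\tau}$ for an admissible choice of $p,q$ and $b\in(0,2n-4)$, and confirm that all constants $C$ are $\ezi$-independent — this is where the uniform invertibility of Proposition~\ref{injLbis}, valid precisely because we work orthogonally to $\ker F_\omega$, is essential. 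Everything else is a routine repetition of the estimates in \cite{GS,Sz,BM}.

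\begin{proof}[Proof sketch of Proposition~\ref{Nbal}]
By Lemma~\ref{invLbis}, $\tilde{\mathcal L}\colon\mathcal V_\ezi\to C^{0,\alpha}_{b+4,\ezi}(\hM)$ is an isomorphism with inverse bounded independently of $\ezi$ (Proposition~\ref{injLbis}); in particular $\mathcal N=-\tilde{\mathcal L}^{-1}(s^{ch}(\omega)+Q(\cdot))$ maps into $\mathcal V_\ezi$.

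\textbf{Contraction.} Let $\varphi_1,\varphi_2\in U_\tau$. As noted before Lemma~\ref{Lemmag}, there is $\chi\in U_\tau$ with $Q(\varphi_1)-Q(\varphi_2)=(\mathcal L_\chi-\mathcal L)(\varphi_1-\varphi_2)$, and \eqref{diffLbis} expresses $(\mathcal L_\chi-\mathcal L)(v)$ as the sum of $-g(\chi)\Delta_\omega(F_\chi-F_\omega)(v)$, $-(g(\chi)-1)\Delta_\omega F_\omega(v)$, $E(F_\chi(v))$ and $G(v)$. By Lemma~\ref{Lemmag}, $\lVert g(\chi)\rVert_{C^{2,\alpha}_{0,\ezi}(\hM)}\le 1+C\ezi^\tau$ and $\lVert g(\chi)-1\rVert_{C^{2,\alpha}_{0,\ezi}(\hM)}\le C\ezi^\tau$; by Lemma~\ref{lemminobis}, $\lVert(F_\chi-F_\omega)(v)\rVert_{C^{2,\alpha}_{b+2,\ezi}(\hM)}\le C\ezi^\tau\lVert v\rVert_{\alphabq(\hM)}$, so the first term is $\le C\ezi^\tau\lVert v\rVert_{\alphabq(\hM)}$ in $C^{0,\alpha}_{b+4,\ezi}(\hM)$; the second term is bounded likewise using the continuity of $\Delta_\omega F_\omega\colon C^{4,\alpha}_{b,\ezi}\to C^{0,\alpha}_{b+4,\ezi}$ and $\lVert g(\chi)-1\rVert_{C^{2,\alpha}_{0,\ezi}(\hM)}\le C\ezi^\tau$; Lemma~\ref{lemmerrimobis} together with the continuity of $F_\chi$ handles $E(F_\chi(v))$; and \eqref{gdiv} gives $\lVert G(v)\rVert_{C^{0,\alpha}_{b+4,\ezi}(\hM)}\le C\ezi^\tau\lVert v\rVert_{\alphabq(\hM)}$. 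Hence $\lVert(\mathcal L_\chi-\mathcal L)(v)\rVert_{C^{0,\alpha}_{b+4,\ezi}(\hM)}\le C\ezi^\tau\lVert v\rVert_{\alphabq(\hM)}$, and composing with $\tilde{\mathcal L}^{-1}$,
\[
\lVert\mathcal N(\varphi_1)-\mathcal N(\varphi_2)\rVert_{\alphabq(\hM)}\le C\ezi^\tau\lVert\varphi_1-\varphi_2\rVert_{\alphabq(\hM)}\le\tfrac12\lVert\varphi_1-\varphi_2\rVert_{\alphabq(\hM)}
\]
for $\ezi$ small.

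\textbf{Invariance of $U_\tau$.} Since $Q(0)=0$, $\mathcal N(0)=-\tilde{\mathcal L}^{-1}(s^{ch}(\omega))$. From the decay estimates on $\omega-\omega_o$ in the Remark following Lemma~\ref{flatcutoff} and the Burns--Simanca expansion \eqref{AF} one obtains $\lVert s^{ch}(\omega)\rVert_{C^{0,\alpha}_{b+4,\ezi}(\hM)}\le C\ezi^{(p+q)(b+2)+\tau'}$ for a suitable $\tau'>0$; choosing $\tau<\tau'$ and using Lemma~\ref{invLbis},
\[
\lVert\mathcal N(0)\rVert_{\alphabq(\hM)}\le\tfrac12 C\ezi^{(p+q)(b+2)+\tau}.
\]
For $u\in U_\tau$, the contraction estimate gives $\lVert\mathcal N(u)-\mathcal N(0)\rVert_{\alphabq(\hM)}\le C\ezi^\tau\lVert u\rVert_{\alphabq(\hM)}\le C\ezi^\tau\cdot C\ezi^{(p+q)(b+2)+\tau}\le\tfrac12 C\ezi^{(p+q)(b+2)+\tau}$ for $\ezi$ small. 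Adding the two bounds yields $\lVert\mathcal N(u)\rVert_{\alphabq(\hM)}\le C\ezi^{(p+q)(b+2)+\tau}$, i.e. $\mathcal N(u)\in U_\tau$.

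Therefore $\mathcal N\colon U_\tau\to U_\tau$ is a contraction on the closed subset $U_\tau$ of the Banach space $\mathcal V_\ezi$.
\end{proof}
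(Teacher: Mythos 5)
Your proposal is correct and follows essentially the same route as the paper: the contraction via the decomposition \eqref{diffLbis} combined with Lemmas \ref{Lemmag}, \ref{lemminobis}, \ref{lemmerrimobis} and \eqref{gdiv}, and the invariance via $\lVert\mathcal N(0)\rVert\le C\lVert s^{ch}(\omega)\rVert_{C^{0,\alpha}_{b+4,\ezi}(\hM)}$ plus the contraction bound. The only difference is cosmetic: where you invoke a "suitable $\tau'$", the paper records the explicit error estimate $\lVert s^{ch}(\omega)\rVert_{C^{0,\alpha}_{b+4,\ezi}(\hM)}\le C\ezi^{p(m+b+2)}$ and phrases the admissibility condition as $pm-q(b+2)>\tau>0$, which is equivalent to your requirement.
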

\begin{proof}
  Consider $v=\varphi_1-\varphi_2$ as above, 
  $$
  \lv\mathcal N(\varphi_1)- \mathcal N(\varphi_2) \rv_{C^{4, \alpha}_{b, \ezi}(\hM)}\le C\lv(\mathcal L_{\chi}- \mathcal L )(v)\rv_{C^{0,\alpha}_{b+4,\ezi}(\hM)}\,.
  $$ Using \eqref{diffLbis}, \eqref{estig}, Lemma \ref{lemminobis}, Lemma \ref{lemmerrimobis} and \eqref{gdiv} and the continuity of $\Delta_{\omega}\colon C^{4, \alpha}_{b,\ezi}(\hM)\to C^{2, \alpha}_{b+2}(\hM)$ and that of $F_{\omega}\colon C^{2, \alpha}_{b+2, \ezi}(\hM)\to C^{0,\alpha}_{b+4, \ezi}(\hM)$, we have 
  $$
  \lv(\mathcal L_{\chi}- \mathcal L )(v)\rv_{C^{0,\alpha}_{b+4,\ezi}(\hM)}\le  C\ezi^{\tau}\lv v\rv_{C^{4, \alpha}_{b, \ezi}(\hM)}
  $$ which, after choosing $\ezi$ sufficiently small, guarantees that $\mathcal N$ is a contraction.  Now fix $\varphi\in U_{\tau}$, we have that 
  $$
  \begin{aligned}
\lv\mathcal N(\varphi)\rv_{C^{4, \alpha}_{b, \ezi}(\hM)}\le&\,  \lv\mathcal N(0)\rv_{C^{4, \alpha}_{b, \ezi}(\hM)}+ \lv\mathcal N(\varphi)-\mathcal N  (0)\rv_{C^{4, \alpha}_{b, \ezi}(\hM)}\le\lv\mathcal N(0)\rv_{C^{4, \alpha}_{b, \ezi}(\hM)}+ C\ezi^{\tau}\lv\varphi\rv_{C^{4, \alpha}_{b, \ezi}(\hM)}\\
\le &\, \lv\mathcal N(0)\rv_{C^{4, \alpha}_{b, \ezi}(\hM)}+ C\ezi^{2\tau+ (p+q)(b+2)}\le \lv\tilde{\mathcal L}^{-1}(s^{ch}(\omega))\rv_{C^{4, \alpha}_{b, \ezi}(\hM)}+ C\ezi^{2\tau+ (p+q)(b+2)}\\
\le&\,   C\lv s^{ch}(\omega)\rv_{C^{0, \alpha}_{b+4, \ezi}(\hM)}+ C\ezi^{2\tau+ (p+q)(b+2)}\,.
\end{aligned}
$$ 
On the other hand, 
$$
\lv s^{ch}(\omega)\rv_{C^{0, \alpha}_{b+4, \ezi}(\hM)}\le C\ezi^{p(m+ b+2)}\,,
$$ 
from which it follows
$$
\lv\mathcal N(\varphi)\rv_{C^{4, \alpha}_{b, \ezi}(\hM)}\le C\ezi^{p(m+b+2)}+ C\ezi^{2\tau+ (p+q)(b+2)}\le C\ezi^{\min\{\tau, pm-q(b+2)- \tau\}}\ezi^{\tau+(p+q)(b+2)}\,.
$$  It is then sufficient to notice that $\tau$ can be chosen such that $pm-q(b+2)>\tau >0$, giving us the claim.
\end{proof}

Hence also Theorem \ref{ansbal} is proven.\par
The construction done to prove Theorem \ref{ansbal} can also be used in the case in which the chosen points are not smooth, provided the resolutions of the singularity model at such points satisfy some extra conditions. More precisely, we need to impose the following:
\begin{enumerate}
\item \label{hyporb} for any $x\in M$, let $G_x\subseteq {\rm SU}(n)$ acting freely on $\C^n\backslash \{0\}$ so that a suitable  neighbourhood of $p$ is biholomorphic to a neighbourhood of $\C^n/G_x$. $\C^n/G_x$ has a ALE resolution $(X, \omega_{{\rm ALE}})$, where $\omega_{\rm ALE}$ is a scalar flat ALE K\"ahler metric on $X$ such that, away from the exceptional divisors, takes the following form: 
\begin{equation}\label{asymp}
\omega_{{\rm ALE}}=\omega_o+i\partial \bar \partial \gamma\,, \quad \gamma(r)= O(r^{4-2n})\,.
\end{equation}
\end{enumerate}
 Once Condition \eqref{hyporb} is satisfied, we can repeat all the steps substituting the Burns-Simanca metric with $\omega_{{\rm ALE}}$ and conclude.\par
It is also clear that this setting can be considered in the case of an orbifold admitting crepant resolutions, since, as we recalled in Subsection \ref{preli}, the singularity resolution model carries Kähler Ricci-flat ALE metrics with fast decay. In particular, one can easily adapt the proof of Theorem \ref{ansbal} (where the key fact in repeating the proof is Lemma \ref{deformann})  to prove Theorem \ref{glueorb} which gives an extension of \cite[Theorem 1]{GS} to the general case of balanced Chern-Ricci flat orbifolds admitting crepant resolutions.

\section{Non-positive trace deformation}\label{nonpostr}
The second ansatz we will be considering is based on assuming  the existence of a suitable $(n-2,n-2)$-form to restrict the deformation argument to an easier subspace of the balanced class. More specifically, we will assume that ${M}$ is endowed with $\tilde\Omega\in \Lambda_{\R}^{n-2,n-2}(M)$   and such that
$$
\tilde\omega \wedge \tilde\Omega>0 \quad \mbox{ and }\quad \Lambda_{\tilde\omega}^{n-1}(i\partial \bar \partial \tilde\Omega)\le0\,.
$$ 

Then, we can consider $\ezi>0$ sufficiently small and a small neighbourhood  of $x$, which will be identified with $B(0,1)\subset \C^n$,  with holomorphic coordinates $z$ on $M$. As before, we can consider the  cut-off function $\chi\colon[0,1]\to [0,1] $ from Lemma \ref{flatcutoff} and consider 
$$
\tilde{\Omega}_{\ezi}=(1-\chi_\ezi(|z|))\tilde{\Omega}+ \chi_\ezi(|z|)\omega_o^{n-2}\in \Lambda_{\R}^{n-2, n-2}(M)
$$ 
where again $\omega_o$ is the flat metric induced by $z$ on $B(0,1)$. 
As for $\tilde\omega_{\ezi}$, $\tilde \Omega_{\ezi}$ coincides with the $(n-2)$-th power of the flat metric in a small neighbourhood of $x$, hence we can repeat the strategy to construct $\omega$, and glue together $\tilde{\Omega}_{\ezi}$ with $\ezi^{2(n-2)(p+q)}\omega_{BS, \ezi}^{n-2}$ on the flat region \eqref{regione}, obtaining  $\Omega\in \Lambda_{\R}^{n-2, n-2}(\hM)$.
\begin{rmk}\label{radOm}
It is easy to check that $\omega\wedge\Omega>0$. Indeed, it is straightforward from the construction that the only region in which we have to check this is the cut-off region $\{\varepsilon^p \leq |z| \leq 2\varepsilon^p\}$, in which we have
\[
\omega\wedge\Omega=(\omega_o+O(|z|))\wedge((1-\chi)\tilde{\Omega}+\chi\omega_o^{n-2})=(1-\chi)\omega_o\wedge\tilde{\Omega}+\chi\omega_o^{n-1}+O(|z|),
\]
which is positive,  for sufficiently small $\ezi$. Indeed,  up to the decaying term,  it is a pointwise convex combination of positive forms. This, thanks to the work of Michelsohn (\cite{M}), also implies that $\omega\wedge\Omega=(\omega')^{n-1}$, where $\omega'$ is a Hermitian metric on $\hM$.\par
On the other hand, the condition $\Lambda_{\tilde{\omega}}(i\p\bpa\tilde{\Omega})\le0$ might not be preserved, but, as we will see, we will just need it on the base manifold.
\end{rmk}
Thanks to this construction, we can thus choose $\varphi=u\Omega$ with $u \in C^{\infty}(\hM, \R)$ such that $\omega_u^{n-1}:=\omega_{u\Omega}^{n-1}>0$, and along with assuming $f(u\Omega)=u$, we are able to turn again the operator $\tilde{\mathcal S}$ into an operator taking smooth functions in input defined as:
\begin{equation}\label{opfunctions}
\tilde{\mathcal{S}}(u)=s^{ch}(\omega_u)-s^{ch}( \tilde{\omega})-\int_{\hM} u\frac{\omega^n}{n!}\,.
\end{equation}
Let us start by writing again the linearized operator $\tilde{\mathcal{L}}$ implementing this new ansatz:
\begin{equation}\label{pippoLOm}
    \tilde{\mathcal L}(u):=\tilde{\mathcal L}(u \Omega)=-\Delta_\omega F_{\omega,\Omega}(u)+n\frac{\text{Ric}^{ch}(\omega)\wedge i\p\bpa(u\Omega)}{\omega^n}-s^{ch}(\omega)F_{\omega,\Omega}(u),
\end{equation}
where \begin{equation}\label{FOme}
F_{\omega,\Omega}(u):=F_\omega(u\Omega)=\frac{n}{n-1}\frac{\omega\wedge i\p\bpa(u\Omega)}{\omega^n}.
\end{equation}\par
 Now, the proof of Theorem \ref{immain} is considerably simpler than that of Theorem \ref{ansbal}. This is mainly due to the fact that there is no need to restrict our attention on suitably chosen subspaces. Indeed, the following lemma asserts  that the operator $F_{\omega, \Omega}$ has trivial kernel, when restricted to zero-mean value functions, on compact manifolds.

 \begin{lem}\label{lemmaF}
 Let $(M^n, \tilde\omega)$ be a compact balanced manifold and let $\tilde\Omega\in \Lambda_{\R}^{n-2, n-2}(M)$ satisfying \eqref{hypOm}. If 
 $$
 F_{\tilde \omega, \tilde \Omega} \colon C^{\infty}(M, \R)\to C^{\infty}(M, \R)\,,  \quad  F_{\tilde \omega, \tilde \Omega}(u):=\frac{n}{n-1}\frac{\tilde \omega\wedge i\partial \bar \partial (u\tilde \Omega)}{\tilde \omega^n}\,, 
$$then,  there are no non-trivial functions $u\in C^{\infty}(M, \R)$ such that 
$$
F_{\tilde \omega, \tilde \Omega}(u)=c\,,\quad c\in \R\, \quad \mbox{and } \int_M u \frac{\tilde \omega^n}{n!}=0\,.
$$
 Then, the restriction of $F_{\tilde \omega, \tilde \Omega}$ to smooth functions with zero $\tilde\omega$-mean value is injective.
 \end{lem}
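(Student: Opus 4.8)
The plan is to recognize $F_{\tilde\omega,\tilde\Omega}$ as a genuine linear second order \emph{elliptic} scalar operator whose zeroth order coefficient is \emph{non-positive}, and then to run a maximum principle argument that uses the vanishing of the $\tilde\omega$-mean of $u$.

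First I would expand, for $u\in C^\infty(M,\R)$,
\[
i\p\bpa(u\tilde\Omega)=i\p\bpa u\wedge\tilde\Omega+i\p u\wedge\bpa\tilde\Omega-i\bpa u\wedge\p\tilde\Omega+u\,i\p\bpa\tilde\Omega,
\]
and wedge with $\tilde\omega$, so that $F_{\tilde\omega,\tilde\Omega}(u)=\frac{n}{n-1}\,\tilde\omega\wedge i\p\bpa(u\tilde\Omega)/\tilde\omega^n$ splits as a second order, a first order, and a zeroth order piece. Since $\tilde\omega\wedge\tilde\Omega>0$, Michelsohn's theorem \cite{M} provides a Hermitian metric $\omega'$ on $M$ with $(\omega')^{n-1}=\tilde\omega\wedge\tilde\Omega$; hence $\tilde\omega\wedge\tilde\Omega\wedge i\p\bpa u=\tfrac1n\Delta_{\omega'}u\,(\omega')^n$, and, setting $h:=(\omega')^n/\tilde\omega^n>0$, the second order part of $F_{\tilde\omega,\tilde\Omega}(u)$ equals $\tfrac{h}{n-1}\Delta_{\omega'}u$, which is uniformly elliptic. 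The first order part, coming from $i\p u\wedge\bpa\tilde\Omega-i\bpa u\wedge\p\tilde\Omega$, is a smooth real first order operator in $u$, irrelevant for the sign discussion. Finally, the zeroth order term is $Vu$ with $V=\tfrac{n}{n-1}\,\tilde\omega\wedge i\p\bpa\tilde\Omega/\tilde\omega^n$; using the primitive decomposition of an $(n-1,n-1)$-form one checks that $\tilde\omega\wedge\beta/\tilde\omega^n$ is a \emph{positive} constant multiple of $\Lambda^{n-1}_{\tilde\omega}(\beta)$, so $V\le 0$ by the second condition in \eqref{hypOm}. Thus $F_{\tilde\omega,\tilde\Omega}=\mathcal A+V$ with $\mathcal A$ second order elliptic without zeroth order term and $V\le 0$.

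Now assume $F_{\tilde\omega,\tilde\Omega}(u)=c$ and $\int_M u\,\tfrac{\tilde\omega^n}{n!}=0$. Replacing $u$ by $-u$ if needed, I may assume $c\ge 0$, hence $F_{\tilde\omega,\tilde\Omega}(u)\ge 0$ on $M$. If $\max_M u<0$ then $\int_M u<0$, contradicting the normalization; so $u$ attains a non-negative maximum, and Hopf's strong maximum principle (applicable precisely because $V\le 0$ and $M$ is connected and closed) forces $u$ to be a constant $k\ge 0$. Then $\mathcal A(k)=0$, so $c=Vk$ is a constant with $V\le 0$ and $k\ge 0$, whence $c\le 0$ and therefore $c=0$; and $\int_M u=k\,\mathrm{Vol}(M,\tilde\omega)=0$ forces $k=0$. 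Hence $u\equiv 0$, which is exactly the claimed injectivity of $F_{\tilde\omega,\tilde\Omega}$ on zero $\tilde\omega$-mean functions.

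I expect the only substantive step to be the first paragraph: extracting from the geometric expression of $F_{\tilde\omega,\tilde\Omega}$ a bona fide elliptic operator (this is where $\tilde\omega\wedge\tilde\Omega>0$ is used, via Michelsohn) together with the identification of its zeroth order coefficient with a positive multiple of $\Lambda^{n-1}_{\tilde\omega}(i\p\bpa\tilde\Omega)$ (this is where the second hypothesis in \eqref{hypOm} enters). Once this structural description is in place, the conclusion is the standard maximum principle argument indicated above.
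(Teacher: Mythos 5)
Your proposal is correct and follows essentially the same route as the paper: expand $i\p\bpa(u\tilde\Omega)$, invoke Michelsohn to rewrite the second order part as a positive multiple of $\Delta_{\omega'}u$ for the metric $\omega'$ with $(\omega')^{n-1}=\tilde\omega\wedge\tilde\Omega$, identify the zeroth order coefficient as a positive constant times $\Lambda^{n-1}_{\tilde\omega}(i\p\bpa\tilde\Omega)\le 0$, and conclude by the strong maximum principle together with the zero $\tilde\omega$-mean condition. The only cosmetic difference is that you reduce to $c\ge 0$ at the outset and deduce $c=0$ from the constancy of $u$, while the paper treats $c>0$ and $c<0$ symmetrically; the substance is identical.
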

\begin{proof}
Expanding the definition of $F_{\tilde\omega, \tilde \Omega}$, we have 
$$
 F_{\tilde \omega, \tilde \Omega}(u)=\frac{n}{n-1}\left(\frac{i\partial \bar \partial u \wedge \tilde \Omega \wedge\tilde \omega}{\tilde \omega^n}+ 2\Re \left(\frac{i\partial u \wedge \bar \partial \tilde \Omega \wedge \tilde \omega}{\tilde \omega^n}\right)+ u \frac{i\partial \bar \partial \tilde \Omega \wedge \tilde \omega}{\tilde \omega^n}\right)\,.
$$
 On the other hand, we know that  $\tilde \Omega\wedge \tilde \omega$ is a positive $(n-1, n-1)$-form. Then, thanks to a result in \cite{M}, there exists a Hermitian metric $\tilde\omega'$ such that $\tilde \omega'^{n-1}=\tilde \Omega \wedge \tilde \omega$. This implies that 
 $$
n\frac{i\partial \bar \partial u \wedge \tilde \Omega \wedge\tilde \omega}{\tilde \omega^n}=n\frac{i\partial \bar \partial u \wedge \tilde \omega'^{n-1} }{\tilde \omega^n}=\frac{\tilde \omega'^{n}}{\tilde \omega^n}\Delta_{\tilde \omega'}u\,.
$$Moreover, 
 $$
\frac{i\partial \bar \partial \tilde \Omega\wedge \tilde \omega}{\tilde \omega^n}
=\frac{1}{n!(n-1)!}\tilde g(i\partial \bar \partial \tilde \Omega, \tilde \omega^{n-1})=\frac{1}{n!(n-1)!}\Lambda_{\tilde \omega}^{n-1}(i\partial \bar \partial \tilde \Omega)\,.
$$ 
 Then, we can conclude that  
\begin{equation}\label{expF}
 F_{\tilde \omega, \tilde \Omega}(u)=\frac{1}{n-1}\frac{\tilde \omega'^{n}}{\tilde{\omega}^n}\Delta_{\tilde \omega'}u + \frac{2n}{n-1}\Re \left(\frac{i\partial u \wedge \bar \partial \tilde \Omega \wedge \tilde \omega}{\tilde\omega^n}\right)+ \frac{u}{(n-1)((n-1)!)^2}\Lambda_{\tilde \omega}^{n-1}(i\partial \bar \partial \tilde \Omega)\,,
\end{equation}  which is a second order elliptic operator with non-positive zero order coefficient. 
Now, if   $u\in C^{\infty}(M, \R)\backslash\{0\}$ with $\int_Mu\frac{\tilde \omega^n}{n!}=0$, then $\max_Mu>0$. On the other hand, thanks to the maximum principle, being  a solution of 
$$
F_{\tilde \omega, \tilde \Omega}(u)=c, \quad c>0\,,
$$ allows us to conclude that $u$ is  constant. Using again  the condition $\int_Mu\frac{\tilde\omega^n}{n!}=0$,  we conclude $u=0$.
The case in which $c<0$ can be deduced by the above, applying the same argument to $v=-u$ satisfying 
$$
F_{\tilde \omega, \tilde \Omega}(v)=-c>0\,, 
$$obtaining the claim.\end{proof}

 \begin{prop}\label{injLbisOmega}
 For any $b\in(0,2n-4)$, there exists $C>0$ such that,  for all  $u\in C^{4, \alpha}_{b, \ezi}(\hM)$, we have  $$\lVert u \rVert_{C^{4, \alpha}_{b, \ezi}(\hM)}\le C\lVert\tilde{\mathcal L }u\rVert_{C^{0, \alpha}_{b+4, \ezi}(\hM)}\,.
 $$ Then, $$
 \tilde{\mathcal L}\colon C^{4, \alpha}_{b, \ezi}(\hM)\to C^{, \alpha}_{b+4, \ezi}(\hM)
 $$ is a isomorphism. 
 \end{prop}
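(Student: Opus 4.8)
The plan is to follow the proof of Proposition \ref{injLbis} almost verbatim, the crucial simplification being that Lemma \ref{lemmaF} makes the kernel of the relevant second-order operator genuinely trivial, so there is no need to restrict to an $L^2$-orthogonal subspace. First I would argue by contradiction, producing sequences $\ezi_k\to 0$ and $u_k$ with $\lVert u_k\rVert_{C^{4,\alpha}_{b,\ezi_k}(\hM)}=1$ and $\lVert \tilde{\mathcal L}u_k\rVert_{C^{0,\alpha}_{b+4,\ezi_k}(\hM)}<1/k$, where now $\tilde{\mathcal L}$ is the operator \eqref{pippoLOm} built from the $(n-2,n-2)$-form ansatz. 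On $M_x:=M\setminus\{x\}$, Ascoli--Arzelà gives $u_k\to u_\infty$ in $C^{4,\alpha}_b$ on compact subsets; since $\tilde\omega$ is Chern-Ricci flat, the zeroth and first order curvature terms in \eqref{pippoLOm} drop, so $\mathcal L u_k\to -\Delta_{\tilde\omega}F_{\tilde\omega,\tilde\Omega}(u_\infty)$ on compact sets, with $F_{\tilde\omega,\tilde\Omega}$ as in \eqref{FOme}. Testing the decay of $\tilde{\mathcal L}u_k$ at a point where $\rho\equiv 1$ together with Lebesgue's theorem shows $\int_{\hM}u_k\frac{\omega^n}{n!}\to\int_{M_x}u_\infty\frac{\tilde\omega^n}{n!}$; integrating $\tilde{\mathcal L}u_k$ over $M_x$ and using $b<2n-4$ to kill the weighted boundary contributions forces $\int_{M_x}u_\infty\frac{\tilde\omega^n}{n!}=0$ and then $\Delta_{\tilde\omega}F_{\tilde\omega,\tilde\Omega}(u_\infty)=0$, i.e. $F_{\tilde\omega,\tilde\Omega}(u_\infty)=c$ is constant.

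At this point, after extending $u_\infty$ to a smooth function on the compact $M$ by bootstrapping the second-order elliptic equation for $F_{\tilde\omega,\tilde\Omega}$ near $x$ (using that $u_\infty\in C^{4,\alpha}_{b,\ezi}(M_x)$ with $b$ in the stated range, so the puncture is removable, exactly as in Proposition \ref{injLbis}), Lemma \ref{lemmaF} applies directly: a smooth function on $M$ solving $F_{\tilde\omega,\tilde\Omega}(u_\infty)=c$ with zero $\tilde\omega$-mean value must vanish, hence $u_\infty\equiv 0$ on $M_x$, with no orthogonality argument needed. Next I would analyse the concentration near $x$, passing to the large coordinates $\zeta$ on $\hX={\rm Bl}_0\C^n$, setting $U_k:=\ezi_k^{b(p+q)}u_k$ and the rescaled weight $\tilde\rho$ as before. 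The only point requiring a remark is that near the exceptional divisor $\Omega$ was glued to $\ezi^{2(n-2)(p+q)}\omega_{BS,\ezi}^{n-2}$, so there the deformation $\omega^{n-1}+i\p\bpa(u\Omega)$ coincides with the balanced-ansatz deformation $\omega_{BS}^{n-1}+i\p\bpa(u\,\omega_{BS}^{n-2})$ (the scaling factors cancel); consequently, by Proposition \ref{thmveroL} applied to the scalar-flat K\"ahler metric $\omega_{BS}$, the rescaled linearized operator converges to $-\tfrac1{n-1}\mathcal D^*\mathcal D$ with respect to $\omega_{BS}$. Thus $U_\infty\in\ker\mathcal D^*\mathcal D$ decays at infinity, hence $U_\infty\equiv 0$ by \cite[Proposition 8.10]{Sz}. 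The uniform-convergence step on all of $\hX$ then goes through verbatim as in Proposition \ref{injLbis}: escaping concentration points $x_k$ are ruled out by distinguishing $r_k=|z(x_k)|\to r>0$ (contradicting compact convergence on $M_x$) and $r_k\to 0$ (which after a further rescaling $\sigma_k(z')=r_kz'$ yields a decaying biharmonic function on a copy of $\hX$, hence zero, a contradiction). Combining the uniform decay with the scaled Schauder estimates of \cite[formula (6)]{BM} upgrades the convergence to $C^{4,\alpha}_{b,\ezi}$, contradicting $\lVert u_k\rVert=1$ and proving the desired estimate.

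Finally, the estimate gives injectivity of $\tilde{\mathcal L}\colon C^{4,\alpha}_{b,\ezi}(\hM)\to C^{0,\alpha}_{b+4,\ezi}(\hM)$; since $\tilde{\mathcal L}$ is a fourth-order elliptic operator with the same principal symbol — hence the same Fredholm index, namely $0$ — as $\Delta_\omega^2$, injectivity forces surjectivity, so $\tilde{\mathcal L}$ is an isomorphism, exactly as in Lemma \ref{invLbis}. The step I expect to require the most care is the limit analysis on $M_x$, specifically the removable-singularity extension of $u_\infty$ across $x$ that makes Lemma \ref{lemmaF} applicable; once that is in place the whole argument is strictly simpler than for Proposition \ref{injLbis}, since the $(n-2,n-2)$-form ansatz has replaced the nontrivial kernel of $F_\omega$ by the trivial kernel of $F_{\omega,\Omega}$, which is precisely the advantage exploited in Theorem \ref{immain} over Theorem \ref{ansbal}.
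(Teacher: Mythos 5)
Your proposal is correct and follows essentially the same route as the paper: a contradiction/blow-up analysis as in Proposition \ref{injLbis}, with the limit function on $M_x$ extended across $x$ by bootstrap and killed directly by Lemma \ref{lemmaF} (so no orthogonality to a kernel is needed), the rescaled limit near the exceptional divisor identified with $\ker\mathcal D^*\mathcal D$ for $\omega_{BS}$, and then injectivity plus ellipticity and index zero giving the isomorphism. The only cosmetic discrepancy is in the last step: the principal symbol of $\tilde{\mathcal L}$ is that of $\Delta_{\omega}\circ\Delta_{\omega'}$, where $(\omega')^{n-1}=\Omega\wedge\omega$ as in Remark \ref{radOm} (which is how the paper computes the index), not literally that of $\Delta_{\omega}^2$; since the two symbols are homotopic through elliptic ones, your index-zero conclusion is unaffected.
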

 \begin{proof} The proof of the first part follows the ideas of the proof of Lemma \ref{invLbis}. In this case, we have that $u_{\infty}\in C^{4,\alpha}_b(M_x)$ is such that 
$$
F_{\tilde\omega, \tilde \Omega}(u_{\infty})=c\,, \quad c \in \R\,,\quad \int_{M_x}u_{\infty}\frac{\tilde\omega^n}{n!}=0\,.
$$
 Now, using \eqref{expF}, we can use a bootstrap argument to infer that $F_{\tilde \omega, \tilde \Omega}(u_{\infty})=c$ on the whole $M$, then, in particular, it is smooth. Now, we can conclude that $u_{\infty}=0$, using Lemma \ref{lemmaF}\,. As regards the second part, one may notice that the index of $\tilde{\mathcal L}$ is equal to that of $\Delta_\omega\circ \Delta_{\omega'}$, where $\omega'$ is the Hermitian metric such that $\Omega\wedge \omega=(\omega')^{n-1}$, as in Remark \ref{radOm}, which is $0$. On the other hand, the first part of the statement is telling that $\tilde{\mathcal L}$ is injective and thus, since of  index $0$,  surjective. 
\end{proof}
Now, the rest of the proof of Theorem \ref{immain} goes as that of Theorem \ref{ansbal}. Indeed, we can now reformulate $\tilde{\mathcal{S}}(u)=0$ as  the following fixed point problem:
$$
\mathcal N(u):= -\tilde{\mathcal L}^{-1}(s^{ch}(\omega)+ Q(u))=u\,
$$
 where 
 $$\mathcal N\colon C^{4, \alpha}_{b, \ezi}(\hM )\to C^{4, \alpha}_{b, \ezi}(\hM)\,.
 $$
\begin{prop}
     For $\ezi$ sufficiently small and $b<2n-4$, the operator $\mathcal N$ is a contraction and $\mathcal N(U_{\tau})\subseteq U_{\tau}$,  where $$
 U_{\tau}:=\{\psi\in \alphabq(\hM)\quad | \quad \lv\psi\rv_{\alphabq(\hM)}\le C\ezi^{(p+q)(b+2)+ \tau}\}\,.
$$ 
\end{prop}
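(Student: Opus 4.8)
The plan is to follow the proof of Proposition \ref{Nbal} almost verbatim, with the balanced ansatz $\varphi=u\omega^{n-2}$ replaced by the ansatz $\varphi=u\Omega$, the operator $F_\omega$ replaced by $F_{\omega,\Omega}$ from \eqref{FOme}, and the unrestricted isomorphism $\tilde{\mathcal L}\colon C^{4,\alpha}_{b,\ezi}(\hM)\to C^{0,\alpha}_{b+4,\ezi}(\hM)$ of Proposition \ref{injLbisOmega} playing the role of Lemma \ref{invLbis}. The only genuinely new point is that one must record the analogue, for $\Omega$, of the uniform bound $\lv\omega^{n-2}\rv_{C^{4,\alpha}_{0,\ezi}(\hM)}\le C$ used in Section \ref{proofimmain}, namely $\lv\Omega\rv_{C^{4,\alpha}_{0,\ezi}(\hM)}\le C$ with $C$ independent of $\ezi$. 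This follows from the construction of $\Omega$ in Section \ref{nonpostr}: where $\Omega=\tilde\Omega$ it is a fixed smooth form on $M$; on the cut-off region the transition is controlled by powers of $|z|$; and where $\Omega=\ezi^{2(n-2)(p+q)}\omega_{BS,\ezi}^{n-2}$ the bound comes from the asymptotic flatness \eqref{AF} of the Burns--Simanca metric together with the rescaling, exactly as for the metric $\omega$ itself in the Remark after Lemma \ref{flatcutoff}. With this in hand, if $\lv\psi\rv_{C^{4,\alpha}_{-2,\ezi}(\hM)}\le C\ezi^\tau$ then $\lv i\partial\bar\partial(\psi\Omega)\rv_{C^{2,\alpha}_{0,\ezi}(\hM)}\le C\ezi^\tau$, so for $\ezi$ small $\omega_\psi^{n-1}=\omega^{n-1}+i\partial\bar\partial(\psi\Omega)>0$ and, by \cite{M}, defines a balanced metric $\omega_\psi$ with $\lv\omega_\psi-\omega\rv_{C^{2,\alpha}_{0,\ezi}(\hM)}\le C\ezi^\tau$, by the eigenvalue argument of \cite[Remark 2.8]{GS}.

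For the contraction property I would fix $\varphi_1,\varphi_2\in U_\tau$ and use the Mean Value Theorem to produce $t\in[0,1]$ with $\chi:=t\varphi_1+(1-t)\varphi_2\in U_\tau$ and $Q(\varphi_1)-Q(\varphi_2)=d_\chi Q(\varphi_1-\varphi_2)=(\mathcal L_\chi-\mathcal L)(\varphi_1-\varphi_2)$, where, repeating the computation of Subsection \ref{subsecesti} with $\Omega$ in place of $\omega^{n-2}$,
$$\mathcal L_\chi(v)=-\Delta_{\omega_\chi}F_{\chi,\Omega}(v)+n\frac{{\rm Ric}^{ch}(\omega_\chi)\wedge i\partial\bar\partial(v\Omega)}{\omega_\chi^n}-s^{ch}(\omega_\chi)F_{\chi,\Omega}(v),\qquad F_{\chi,\Omega}(v):=\frac{n}{n-1}\frac{\omega_\chi\wedge i\partial\bar\partial(v\Omega)}{\omega_\chi^n}.$$
Then I would decompose $\mathcal L_\chi-\mathcal L$ into the analogues of the operators $E$, $G$, $F_{\chi,\Omega}-F_{\omega,\Omega}$ and the factor $g(\chi)-1$ exactly as in \eqref{diffLbis}, and observe that the estimates of Lemmas \ref{Lemmag}, \ref{lemmerrimobis}, \ref{lemminobis}, \ref{lemmissimo} and the bound \eqref{gdiv} go through unchanged, since their proofs use only \eqref{ididibarbis}, \eqref{diffbis}, \eqref{diff2bis} and the uniform weighted bound on the deformation form --- all now verified for $\Omega$. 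This yields $\lv(\mathcal L_\chi-\mathcal L)(v)\rv_{C^{0,\alpha}_{b+4,\ezi}(\hM)}\le C\ezi^\tau\lv v\rv_{C^{4,\alpha}_{b,\ezi}(\hM)}$, and since $\mathcal N(\varphi_1)-\mathcal N(\varphi_2)=-\tilde{\mathcal L}^{-1}(Q(\varphi_1)-Q(\varphi_2))$ with $\tilde{\mathcal L}^{-1}$ bounded by Proposition \ref{injLbisOmega}, choosing $\ezi$ small makes $\mathcal N$ a contraction.

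Finally, for $\mathcal N(U_\tau)\subseteq U_\tau$, I would estimate, for $\varphi\in U_\tau$,
$$\lv\mathcal N(\varphi)\rv_{C^{4,\alpha}_{b,\ezi}(\hM)}\le\lv\mathcal N(0)\rv_{C^{4,\alpha}_{b,\ezi}(\hM)}+\lv\mathcal N(\varphi)-\mathcal N(0)\rv_{C^{4,\alpha}_{b,\ezi}(\hM)}\le\lv\tilde{\mathcal L}^{-1}(s^{ch}(\omega))\rv_{C^{4,\alpha}_{b,\ezi}(\hM)}+C\ezi^{2\tau+(p+q)(b+2)},$$
using $Q(0)=0$ and the contraction bound, and then bound $\lv s^{ch}(\omega)\rv_{C^{0,\alpha}_{b+4,\ezi}(\hM)}\le C\ezi^{p(m+b+2)}$ exactly as in the proof of Proposition \ref{Nbal}, from $\omega=\omega_o+O(|z|^m)$ near $x$ and the definition of the weight $\rho$. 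Choosing $\tau$ with $0<\tau<pm-q(b+2)$ then gives $\lv\mathcal N(\varphi)\rv_{C^{4,\alpha}_{b,\ezi}(\hM)}\le C\ezi^{\tau+(p+q)(b+2)}$, i.e. $\mathcal N(\varphi)\in U_\tau$. I expect the main (and essentially only) obstacle to be the weighted boundedness of $\Omega$, together with the positivity of $\omega\wedge\Omega$ already dealt with in Remark \ref{radOm}; once these are established, every analytic estimate is precisely one already proved for the balanced ansatz, since none of them relied on any feature of $\omega^{n-2}$ beyond its uniform weighted bound.
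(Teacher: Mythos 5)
Your proposal is correct and follows essentially the same route as the paper: the paper's proof consists precisely of establishing the uniform bound $\lv\Omega\rv_{C^{4,\alpha}_{0,\ezi}(\hM)}\le C$ from the piecewise construction of $\Omega$ (fixed away from $x$, the rescaled Burns--Simanca power near $x$, and $\ezi$-independent transition forms on the gluing region) and then declaring the rest analogous to Proposition \ref{Nbal} via an adapted version of the estimates of Subsection \ref{subsecesti}, using the isomorphism of Proposition \ref{injLbisOmega}. Your write-up simply spells out that adaptation (the decomposition into $E$, $G$, $F_{\chi,\Omega}-F_{\omega,\Omega}$ and $g(\chi)-1$, and the choice $0<\tau<pm-q(b+2)$) in more detail than the paper does, which is fine.
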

\begin{proof}
First of all, we notice that
 $$
 \lVert\Omega\rVert_{C^{4,\alpha}_{0,\ezi}(\hM)}\le C\,.
 $$
 Indeed, we  recall that 
 $$
 \Omega=\begin{cases}
 \ezi^{2(n-2)(p+q)}\omega_{BS, \ezi}^{n-2} \quad & |z|\le \ezi^p\,;\\
 \tilde \Omega' \quad & \ezi^p< |z| <  2\ezi^p\,;\\
 \tilde \Omega \quad & |z|\ge 2\ezi^p\,,
 \end{cases}\, \quad \omega=\begin{cases}\ezi^{2(p+q)}\omega_{BS, \ezi} \quad & |z|\le\ezi^p\,;\\
  \omega_{\ezi}' \quad & \ezi^p< |z| <  2\ezi^p\,;\\
 \tilde \omega \quad & |z|\ge 2\ezi^p\,. 
 \end{cases}
 $$ Of course, $|\nabla^k\Omega|_{\omega}\le C $, if $|z|\ge 2\ezi^p$ and $0\le |z|\le \ezi^p$, for all $k=0,\ldots, 4.$ On the other hand, both $\tilde \Omega' $ and $\omega_{\ezi}'$ depend on $\ezi$ just for their domain of definition and so we can infer that $|\nabla^k\Omega|_{\omega}\le C$ on the whole $\hM$,  giving the claim. Once, we have this, the proof is analogous to that of Proposition \ref{Nbal}, using an easily adapted version of the estimates found in Subsection \ref{subsecesti}.  
\end{proof}
Hence Theorem \ref{immain} is proven.\par
In the same fashion as discussed in the last part of Section \ref{baldef}, we can prove the same result even for singular points provided Condition \eqref{hyporb} is satisfied. 
In particular, again one can repeat the proof of Theorem \ref{immain} (where the key fact consists in repeating the proof of Proposition \ref{injLbisOmega})  to prove Theorem \ref{glueorbOm} which can be considered as a variation of \cite[Theorem 1]{GS} and Theorem \ref{glueorb} with this new ansatz.
In Section \ref{examples} we will discuss some examples in which Theorem \ref{immain} and Theorem \ref{glueorbOm} can be applied.

\section{Examples}\label{examples} 
In this section, we will describe families of compact Chern-Ricci flat balanced manifolds over which we can apply Theorem \ref{immain}, Theorem \ref{glueorbOm} and Theorem \ref{glueorb} (obviously along Theorem \ref{ansbal}, which can be applied to all compact Chern-Ricci flat balanced manifolds). Some examples will also allow us to show the fact that the kernel of the operator $F_{\tilde\omega}$ can either be zero or positive dimensional on the same manifold, strictly depending on the choice of the metric. We shall also see that the hypothesis of Theorem \ref{immain} identify a strict subset of compact Chern-Ricci flat balanced manifolds.

\subsection{Non-positive trace examples}\label{exOmega}
A very special case in which we can apply Theorem \ref{immain} is when $i\p\bpa\tilde \Omega=0$. This setting can arise in multiple scenarios, one of which is given by the case in which the balanced manifold $(M^n,\tilde{\omega})$ carries also \textit{astheno-Kähler} metrics (introduced by Jost and Yau in \cite{JY}), i.e. Hermitian metrics with fundamental form $\eta$ such that
$$i\partial \bar \partial \eta^{n-2}=0\,,$$
whose coexistence with the balanced structure was shown (by Fino, Grantcharov and Vezzoni in \cite{FGV} and by Latorre and Ugarte in \cite{LU}) not to force the existence of a Kähler metric. In this case, the $(n-2)$-th power of $\eta$ is trivially satisfying conditions \eqref{hypOm}, thus they are very natural to be considered.\par
Once compact balanced manifolds admitting also an astheno-Kähler structure are found, it is not hard to find they carry Chern-Ricci flat balanced metrics, as we can see in the following remark.
\begin{rmk}
    The existence of Chern-Ricci flat balanced metrics in a given balanced class when the manifold carries astheno-Kähler metrics depends only on the first Bott-Chern class. Indeed, in \cite{STW}, generalizing the same result on K\"ahler manifolds in \cite{TW1}, the authors proved, as a direct consequence of the solvability of suitable complex Monge-Ampère equations,  that  on a compact balanced manifold $(M^n, \tilde \omega )$ admitting an astheno-K\"ahler metric with $c_1^{BC}(M)=0$, we can always find a Chern-Ricci flat balanced metric in $[\tilde\omega^{n-1}]_{BC}$.
\end{rmk}

A first explicit example in complex dimension $4$ for this setting is the following.

\begin{exa}[Fino, Grantcharov, Vezzoni \cite{FGV}]
    Consider the $T^2$-principal bundle $\pi:M\rightarrow T^6$, where $T^6$ has the standard complex structure with holomorphic coordinates $(z_1,z_2,z_3)$, and with characteristic classes 
    \[
        a_1:=dz_1\wedge d\bar{z}_1+dz_2\wedge d\bar{z}_2-2dz_3\wedge d\bar{z}_3 \quad \text{and} \quad a_2:=dz_2\wedge d\bar{z}_2-dz_3\wedge d\bar{z}_3.
    \]
    We  then consider the Kähler metrics
    \[
        \eta_1:=dz_1\wedge d\bar{z}_1+dz_2\wedge d\bar{z}_2+dz_3\wedge d\bar{z}_3 \quad \text{and} \quad \eta_2:=dz_1\wedge d\bar{z}_1+dz_2\wedge d\bar{z}_2+5dz_3\wedge d\bar{z}_3
    \]
    on $T^6$, the connection $1$-forms $\theta^j$, $j=1,2$, such that $d\theta^j=\pi^*a_j$, and define
    \[
    \tilde\omega_j:=\pi^*\eta_j+\theta^1\wedge\theta^2, \quad j=1,2,
    \]
    which correspond respectively to a balanced and an astheno-Kähler metric on $M$. Moreover, denoting with $z_0$ the holomorphic coordinate on $T^2$,  it is straightforward to notice that
    \[
    \Theta:=d z_0 \wedge dz_1 \wedge d z_2 \wedge d z_3,
    \]
    defines a global holomorphic volume form, from which we also see that $\tilde\omega_1$  is Chern-Ricci flat, hence satisfying all the hypothesis are into place to apply Theorem \ref{immain}.
\end{exa}

A further family of examples was given in any complex dimension $n\geq 4$ as follows.

\begin{exa}[Latorre, Ugarte \cite{LU}]
For $n\geq 4$, consider the nilpotent Lie group and the left-invariant complex structure identified by the left-invariant $(1,0)$-coframe satisfying the following structure equations:
$$
d\varphi^{i}=0\,, \quad i=1,\ldots, n-1\,, \quad d\varphi^{n}=\sum_{i=1}^{n-1}a_i\varphi^{i\bar i }\,,
$$ where $a_1,\ldots, a_{n-1} \in \R$ such that $a_i\ne0$, for all $i=1,\ldots, n-1$, and $\sum_{i=1}^{n-1}a_i=0.$ Choosing $a_i\in \Q$, for all $i=1,\ldots, n$ guarantees the existence of a co-compact lattice, giving a compact nilmanifold.
On top of this, we know that nilmanifolds have vanishing first Bott-Chern class, as any left-invariant metric is Chern-Ricci flat (see \cite[Proposition 2.1]{LV}). Thus we are in position to apply Theorem \ref{immain}.\\
Moreover, in the same paper the authors produced another suitable family of $4$-dimensional nilmanifolds depending on three parameters in $\mathbb Q(i)$, where one can find Chern-Ricci flat balanced and astheno-K\"ahler metrics, identified by the following structure equations:
$$
 d\varphi^1=d\varphi^2=d\varphi^3=0\,, \quad d\varphi^4=A\varphi^{12}+ B\varphi^{13}+ C\varphi^{23}+ \varphi^{1\bar 1}+ \varphi^{2\bar 2 }- 2\varphi^{3\bar 3}\,, \quad A, B, C\in \Q(i).
$$
\end{exa}

To cover also the case of complex dimension $3$, we need to follow a different path, as we can see in the following remark.
\begin{rmk}
    We notice that no compact non-Kähler examples are known in complex dimension $3$, and we do not expect them to exist. Indeed, on threefolds, astheno-Kähler metrics are exactly corresponding to SKT metrics, which in light of the Fino-Vezzoni conjecture, are not expected to coexist with balanced structures in this setting.
\end{rmk}
Not having astheno-Kähler metrics is not, however, a big problem, mainly because the positivity of the form $\Omega$ for the ansatz is not a necessary condition, as we only need its positivity when wedged with the metric on the base. With this in mind, we are able to identify an interesting class of examples in dimension $n\geq 3$, still satisfying the condition $i\p\bpa\tilde \Omega=0$, without necessarily having $\tilde \Omega$ to be the $(n-2)$-th power of an astheno-Kähler metric. This class is given by compact balanced manifolds admitting a  holomorphic submersion with $1$-dimensional fibres.

\begin{prop}\label{propfiber}
Let $\pi\colon M^n\to X^{n-1}$ be  a holomorphic submersion and assume  $(M, \tilde \omega)$ is a compact balanced manifold. Then, there exists $\tilde\Omega\in \Lambda_{\R}^{n-2, n-2}(M)$ satisfying \eqref{hypOm}.
\end{prop}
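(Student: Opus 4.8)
The plan is to produce the form $\tilde\Omega$ by pulling back a Kähler form from the base $X^{n-1}$. First I would observe that, since $\pi\colon M^n\to X^{n-1}$ is a holomorphic submersion with one-dimensional fibres, the only obstruction to finding a natural $(n-2,n-2)$-form on $M$ is the existence of a suitable $(n-2,n-2)$-form on $X$; and on $X$, which has complex dimension $n-1$, a natural candidate is the $(n-2)$-th power of a Hermitian metric. The key point is that such a power is automatically $\partial\bar\partial$-closed for the right reason: on $X^{n-1}$, a $(n-2,n-2)$-form is of bidegree one step below the top, so $i\partial\bar\partial$ applied to it lands in $\Lambda^{n-1,n-1}_{\R}(X)$, which is not automatically zero — so I cannot simply take an arbitrary metric. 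Instead I would recall that every complex manifold of dimension $m=n-1$ carries a Gauduchon metric $\eta$ in any conformal class (by Gauduchon's theorem), i.e. a Hermitian metric with $i\partial\bar\partial\,\eta^{m-1}=i\partial\bar\partial\,\eta^{n-2}=0$. Setting $\tilde\Omega:=\pi^*(\eta^{n-2})$ then gives $i\partial\bar\partial\tilde\Omega=\pi^*(i\partial\bar\partial\,\eta^{n-2})=0$, so the second condition in \eqref{hypOm} holds trivially (with equality).

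It remains to check the positivity condition $\tilde\omega\wedge\tilde\Omega>0$. Here I would argue pointwise: fix $x\in M$ and a local holomorphic coordinate $z^n$ along the fibre direction, completed by coordinates $z^1,\dots,z^{n-1}$ pulled back from $X$ near $\pi(x)$. In these coordinates $\tilde\Omega=\pi^*(\eta^{n-2})$ is a positive $(n-2,n-2)$-form in the ``horizontal'' variables $z^1,\dots,z^{n-1}$ — more precisely, writing $\eta^{n-2}$ on $X$ (which has dimension $n-1$) it is, up to a positive constant, a strictly positive $(n-2,n-2)$-form there, hence its pullback is a (weakly positive, but with a one-dimensional null direction, namely the fibre) $(n-2,n-2)$-form on $M$. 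Wedging with the positive $(1,1)$-form $\tilde\omega$, which is strictly positive and in particular has a nonzero component in the $dz^n\wedge d\bar z^n$ direction, fills in exactly the missing direction, and one gets a strictly positive $(n-1,n-1)$-form. The cleanest way to see this is the standard linear-algebra fact that if $\beta$ is a weakly positive $(p,p)$-form whose kernel (the set of $(1,0)$-vectors $v$ with $\iota_v\iota_{\bar v}\beta\wedge(\text{stuff})$ degenerate) is spanned by a single $(1,0)$-direction $\ell$, and $\omega$ is a strictly positive $(1,1)$-form, then $\omega\wedge\beta$ is strictly positive provided $\omega(\ell,\bar\ell)>0$ — which holds since $\tilde\omega$ is a metric.

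The main obstacle I anticipate is making the positivity argument precise and uniform, i.e. checking it simultaneously at all points of $M$ rather than just pointwise, and being careful that $\pi^*(\eta^{n-2})$ really is ``positive transverse to the fibres'' in the correct sense of positivity of $(n-2,n-2)$-forms (the notions of weak/strong positivity coincide in bidegrees $0,1,n-2,n-1$ on an $(n-1)$-fold, which is exactly the bidegree we are in, so this is not actually a problem — but it needs to be invoked). A secondary technical point is that the local fibre coordinate $z^n$ and the splitting of $T_xM$ into horizontal and vertical parts is only defined locally, so the argument should be phrased invariantly: at each $x\in M$, $d\pi_x\colon T_xM\to T_{\pi(x)}X$ is surjective with one-dimensional kernel $\ker(d\pi_x)$, the form $\tilde\Omega_x=(\pi^*\eta^{n-2})_x$ is the pullback of a strictly positive top-minus-one form on $X$, hence is positive on the quotient and its null direction is precisely $\ker(d\pi_x)$; since $\tilde\omega_x$ is positive definite, $\tilde\omega_x(\,\cdot\,,\overline{\,\cdot\,})$ restricts to something positive on $\ker(d\pi_x)$, and the elementary positivity lemma applies. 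Once this is set up, the statement follows with no further computation, and I would then remark (as the paper does for the astheno-Kähler case) that many such $M$ carry Chern-Ricci flat balanced metrics, e.g. whenever $c_1^{BC}(M)=0$ and the relevant Monge–Ampère solvability applies, so that Theorem \ref{immain} is genuinely applicable.
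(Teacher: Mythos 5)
Your proof is correct and shares the paper's core idea --- take $\tilde\Omega$ to be the pullback of the $(n-2)$-th power of a Hermitian metric on the base and check positivity against the fibre direction --- but the details run along a genuinely different route. For the second condition in \eqref{hypOm}, the paper invokes Michelsohn's result \cite[Proposition 1.9]{M} that the base of a holomorphic submersion from a compact balanced manifold is itself balanced, fixes a balanced metric $\omega_X$ on $X$ and obtains the stronger property $d\tilde\Omega=0$ for $\tilde\Omega=\pi^*\omega_X^{n-2}$; you instead invoke Gauduchon's theorem on the compact $(n-1)$-fold $X$ ($X$ is compact, being the image of the compact $M$ under the open map $\pi$), which costs nothing and already gives $i\partial\bar\partial\tilde\Omega=0$, hence $\Lambda_{\tilde\omega}^{n-1}(i\partial\bar\partial\tilde\Omega)=0\le 0$. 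Your choice is more economical (it does not use balancedness of $M$ at this step), while the paper's yields a $d$-closed $\tilde\Omega$, which is more than \eqref{hypOm} requires. For the positivity $\tilde\omega\wedge\tilde\Omega>0$, the paper computes explicitly in coordinates adapted to the submersion (normalizing $\tilde\omega_{n\bar n}=1$, $\pi^*\omega_X=i\sum_{i<n} dz^i\wedge d\bar z^i$ and diagonalizing the horizontal block of $\tilde\omega$) and reduces positivity of the resulting Hermitian matrix, via a Schur complement, to $\sum_i\bigl(\lambda_i-|\tilde\omega_{i\bar n}|^2\bigr)>0$, which follows from the $2\times 2$ minors of $\tilde\omega$. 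Your soft linear-algebra lemma replaces this computation, but beware of its stated hypothesis: ``$\omega\wedge\beta>0$ provided $\omega(\ell,\bar\ell)>0$'' is false if $\omega$ is only weakly positive --- on $\C^3$ take $\omega=i\,dz^3\wedge d\bar z^3$ and $\beta=i(dz^1\wedge d\bar z^1+dz^2\wedge d\bar z^2)$, whose product annihilates $\alpha=dz^3$. What your argument actually uses is strict positivity of $\tilde\omega$ in all directions, and with that the conclusion does hold (for instance, bound $\tilde\omega\ge\epsilon\,\omega_o$ at the given point and reduce to the flat case, or simply run the paper's minor computation); since the verification is pointwise and uses only positive-definiteness of $\tilde\omega$ at each point, the uniformity you worry about is automatic and compactness is not needed for this step.
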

\begin{proof} 
Thanks to \cite[Proposition 1.9]{M}, we know that $X$ admits balanced metrics. Let us fix   $\omega_X$  a balanced metric on  $X$.
 So, we can consider  $\tilde \Omega=\pi^*\omega_X^{n-2} $. Thus,  we have 
$
d\tilde \Omega=0
$.
Then, we just need to check if $\tilde\omega \wedge \tilde \Omega>0.$ We then fix a point $p\in M$ and choose holomorphic coordinates  $\{z_1, \ldots, z_n \}$ on a neighbourhood of $p$ such that $\{z_1, \ldots, z_{n-1}\}$ are holomorphic coordinates on a neighbourhood of $\pi(p)$ in $X$ and $z_n$ is the  holomorphic coordinate of the fibre over $p$, see for instance \cite[Lemma 5.6]{T}, such that 
$$
\tilde \omega_{n\bar n}=1\,, \quad \pi^*\omega_X=i\sum_{i=1}^{n-1}dz^{i\bar i} \quad \mbox{and } \tilde \omega_{i\bar k }=\lambda_i\delta_{ik}\,, \quad i,k=1,\ldots, n-1\,. 
$$
From this, we immediately have that 
$$
 \begin{aligned}
\tilde \Omega=\pi^*\omega_X^{n-2}=&\, i^{n-2}(n-2)!\sum_{i=1}^{n-1}dz^{1\bar 1\ldots \hat i \hat{\bar i }\ldots n-1 \overline{ n-1} }\,.
\end{aligned}
$$ 
Then, it is easy to see that 
$$
\begin{aligned}
\tilde \omega \wedge \tilde \Omega=i^{n-1}(n-2)! \left(\sum_{i=1}^{n-1}dz^{1\bar 1 \ldots \hat i \hat{\bar i }\ldots n\bar n} + \left(\sum_{i=1}^{n-1} \lambda_i \right)dz^{1\bar 1\ldots  \hat n\hat{\bar n} }
+ \sum_{i=1}^{n-1}\tilde \omega_{i\bar n }dz^{1\bar 1\ldots i\hat {\bar i} \ldots \hat n\bar n }-\tilde \omega_{n\bar i  }dz^{1\bar 1\ldots \hat i {\bar i}\ldots  j \hat{\bar j} \ldots n\hat{\bar n} }\right)\,.
\end{aligned}
$$
 Thus, $\tilde \omega \wedge \tilde \Omega$ is represented in $p$ by the matrix
$$
B=\frac{1}{n-1}\begin{pmatrix}{\rm Id} & \tilde \omega_{i\bar n }\\
\tilde \omega_{n\bar i } & \left(\sum_{i=1}^{n-1}\lambda_i\right)
\end{pmatrix}
$$ which is positive definite if and only if 
$$
\det(B)=\frac{1}{(n-1)^n}\sum_{i=1}^{n-1}\left(\lambda_i-\lvert\tilde \omega_{i\bar n }\rvert^2\right)>0\,.
$$ On the other hand, using that $\omega $ is a metric, we have that $\lambda_i-|\tilde \omega_{i\bar n}|^2>0$, for all $i=1, \ldots, n-1$, concluding the proof.
\end{proof}
This proposition is thus very interesting, as in most  examples gives us an particular choice of $\tilde \Omega$, since we frequently have an explicit balanced metric on the base. In the following subsections, we shall describe a series of examples which we will also see to be satisfying the hypothesis of Proposition \ref{propfiber}.\par

Finally, we can also recover an example on threefolds in a case where $i\p\bpa\tilde \Omega\neq 0$
\begin{exa}[Angella, Guedj, Lu \cite{AGL}]
    Consider the six-dimensional nilmanifolds (corresponding to the case \cite[Example (Ni)]{AGL}) identified by the structure equations
    \[
        d\varphi^1=d\varphi^2=0, \quad d\varphi^3=\rho\varphi^{12}+\varphi^{1\bar 1}+\lambda\varphi^{1\bar 2}+D\varphi^{2\bar 2},
    \]
    for $\{\varphi^1,\varphi^2,\varphi^3\}$ a coframe of invariant $(1,0)$-forms, $\rho \in \{0,1\}$, $\lambda \in \mathbb{R}_{\geq 0}$ and $D \in \mathbb{C}$ such that $\text{Im}(D)\geq 0$. When the corresponding Lie algebra is $\mathfrak{h}_2$, $\mathfrak{h}_3$, $\mathfrak{h}_4$ or $\mathfrak{h}_5$ (in the notation of \cite{Sal, U, UV}), these manifolds carry both a balanced metric $\tilde\omega$ (which is automatically Chern-Ricci flat thanks to the nilmanifold structure) and  a plurinegative metric $\omega'$, i.e. $i\partial \bar \partial \omega' \le 0 $, making $\tilde\Omega=\omega'$ a natural choice to apply Theorem \ref{immain}.
\end{exa}
By a recent work of George, see  \cite[Theorem 1.3]{Ge}, a compact balanced manifold $(M, \tilde \omega)$ with $c_1^{BC}(M)=0$, admitting a metric $\alpha$ such that $i\partial \bar \partial \alpha^{n-2}\le 0 $, always admits a balanced Chern-Ricci flat metric in the balanced class of $\tilde \omega$. This allows us to apply Theorem \ref{immain} in this wider setting.

\subsection{Torus fibrations on Calabi-Yau surfaces}\label{gp}
Other very interesting examples are given by the constructions of Goldstein and Prokushkin (see \cite{GP}), which Fu and Yau (in \cite{FuY}) showed to be highly relevant in the study of the Hull-Strominger system. In order to briefly recall the construction, let $M$ be the total space of a $T^2$-bundle over a Calabi-Yau surface $(S,\omega_S)$ with holomorphic volume $\Theta\in \Lambda^{2,0}(S)$. Choose $S$ such that it admits closed $2$-forms $\omega_P$ and $\omega_Q$ such that $\omega_P+i\omega_Q \in \Lambda^{2,0}(S)$, and $[\omega_P/2\pi],[\omega_Q/2\pi] \in H^2(S,\mathbb{Z})$. Then we can find a $(1,0)$-form $\theta$ on $M$ such that $d\theta=\omega_P+i\omega_Q$ and $i\theta\wedge\bar\theta>0$. Now, $\theta\wedge\Theta$ defines a holomorphic volume form on $M$, along with $\pi^*\omega_S+i\theta\wedge\bar\theta$, which corresponds to a Chern-Ricci flat balanced metric. Finally, choosing $\omega_P$ and $\omega_Q$ such that either one identifies a non-zero cohomology class guarantees that $M$ does not carry Kähler metrics. 
\begin{rmk}
    The form $\pi^*\omega_S$ satisfies conditions \eqref{hypOm}, and thus is a natural choice of $\tilde \Omega$ in order to apply Theorem \ref{immain}.
\end{rmk}
\begin{rmk}
    If $S$ is chosen from a certain class of algebraic K3 surfaces (see \cite{BTY}), then it admits a finite quotient with isolated singularities which can be resolved without discrepancy. On top of this, the action producing the quotient also preserves the metric structure described above, along with the form $\pi^*\omega_S$, making this quotient a perfect example where to apply Theorem \ref{glueorb} and Theorem \ref{glueorbOm}.
\end{rmk}

Let us conclude by focusing on two specific locally homogeneous examples whose  structure will allow us to construct examples suitable to apply Theorems \ref{immain}, \ref{glueorbOm} and \cite[Theorem 1]{GS}. 

\subsection{The Iwasawa manifold}\label{iwa}

Recall that the Iwasawa manifold $M={\rm Heis}(3, \C)/{\rm Heis}(3, \Z[i])$ is the unique complex parallelizable nilmanifold of complex dimension $3$. 

\begin{exa}\label{exIwa}
The center of ${\rm Heis}(3, \C)$ is given by $\mathbb{C}$, whose natural action descends to a $T^2$-action on $M$, which gives rise to (see for a more general assertion \cite{Ro}) a holomorphic principal $T^2$-bundle structure
\begin{equation}\label{bundleiw}
\pi\colon M \to T^4.
\end{equation}
Moreover, the complex parallelizable nilmanifold structure once again guarantees that any left-invariant Hermitian metric is balanced and Chern-Ricci flat.  Thus, we are in the position to apply Proposition \ref{propfiber} to find $\tilde \Omega$ satisfying \eqref{hypOm} and then apply Theorem \ref{immain}. We also have an explicit choice of $\tilde \Omega$, as we have the flat torus metric $\omega_{T^4}$ that naturally allows us to choose  $\tilde \Omega=\pi^*\omega_{T^4}$.
\end{exa}

Let us now recall the standard coframe of invariant (with respect to the Heisenberg group operation) $(1,0)$-forms:
\begin{equation}
\varphi^1:=dz^1, \quad \varphi^2:=dz^2, \quad \varphi^3:=dz^3-z^2dz^1,
\end{equation} 
which satisfy the following structure equations:
\begin{equation}\label{eqstruIwa}
d\varphi^1=d\varphi^2=0\,,\quad d\varphi^3=\varphi^1\wedge \varphi^2\,.
\end{equation}
Using \cite[Lemma 2.5]{UV}, we are led to  infer that, with such a frame, any left-invariant balanced metric on ${\rm Heis}(3, \C)$ is biholomorphically isometric to $$\tilde \omega_t:=\frac{i}{2}(\varphi^1\wedge\bar \varphi^{ 1}+\varphi^2\wedge\bar \varphi^{ 2}+t^2\varphi^3\wedge\bar \varphi^{ 3})\,, \quad t^2> 0 \,,
 $$
which descends to a Chern-Ricci flat balanced metric on the Iwasawa manifold $M$, for every $t\neq 0$. We can thus just focus on the family $\tilde \omega_t$.

\begin{rmk}\label{iwansbal}
    For the metrics $\tilde \omega_t$,  an easy computation shows that $\partial \tilde \omega_t=i\frac{t^2}{2}\varphi^1\wedge \varphi^2\wedge \varphi^{\bar 3}$, implying $|\p\tilde \omega_t|_{\tilde \omega_t}^2\frac{\omega_t^3}{3!}=i\bpa\tilde \omega_t\wedge\p\tilde\omega_t=t^2\frac{\tilde \omega_t^3}{3!}$, from which we get $|\p\tilde \omega_t|_{\tilde \omega_t}^2=t^2$.
 Hence,  the equation $F_{\tilde \omega_t}(u)=0$, recalling   \eqref{opbalanced},   becomes
\begin{equation}\label{keriw}
\Delta_{\tilde \omega_t} u +\frac{t^2}{2}u=0\,.
\end{equation}
We claim that  $u\equiv 0$ is the unique solution of \eqref{keriw}, for any $t> 0 $ outside a countable set. Indeed, first of all we notice that the bundle map  
$$
\pi\colon (M, \tilde \omega_t)\to (T^4, \omega_{T^4})\,, \quad \omega_{T^4}=\frac{i}{2}(\varphi^1\wedge \bar \varphi^{ 1}+ \varphi^2\wedge \bar \varphi^{ 2} )
$$ 
corresponding to the projection on the first two coordinates, is a Riemannian submersion, for any $t>0$. Moreover, we observe that  the fibres are minimal, as the mean curvature vector of the fibres has to be left-invariant. Then, the claim is equivalent to prove that $Z({\rm Heis}(3, \C))$ is minimal in ${\rm Heis}(3, \C)$. This is trivially true since the Levi-Civita connection computed on central vector fields is identically zero (see also \cite[Example 3.4]{E}). Now, we can integrate equation \eqref{keriw} along the fibres of $\pi$, and obtain 
\begin{equation}\label{mediakeriw}
    \Delta_{\omega_{T^4}}\hat{u}+\frac{t^2}{2}\hat{u}=0\,,  
\end{equation}
where $\hat{u}(x):=\int_{T_x}u\omega_{T^2, t}$ is the average of $u$ along the fibres, $\omega_{T^2, t}$ is the metric obtained by rescaling the flat metric on $T^2_x$ by a factor $t^2$ (which in this case coincides with the metric induced on the fibres by $\omega_t$) and $\omega_{T^4}$ is the flat metric on the base (again coinciding with the one induced on the base by $\omega_t$).\\
Here, equation \eqref{mediakeriw} gives us two possibilities: either $\frac{t^2}{2}\notin {\rm Spec}(T^4,\omega_{T^4} )$ or $\frac{t^2}{2}\in {\rm Spec}(T^4,\omega_{T^4} )$. Choosing then $t$ in order to land in the first case, puts us in the position to conclude that $\hat u \equiv 0$. This allows
 us  to apply \cite[Corollary 1.7]{Bord}, telling us that, in our setting, the $k$-th eigenvalue corresponding to eigenfunctions with vanishing average along the fibres is bounded from below by the $k$-th eigenvalue of $\Delta_{\omega_{T^2, t}}$, which is equal to $4\pi^2 t^2k^2>t^2/2$. Hence, for these values of $t$, $u$ is necessarily vanishing, and thus equation \eqref{keriw} has only the trivial solution.
The case in which $\frac{t^2}{2}\in {\rm Spec}(T^4,\omega_{T^4} )$ does not satisfy condition $\ker F_{\tilde \omega_t}=\{0\}$, since $u=f\circ \pi$, with $f\colon T^4\to \R$ an eigenfunction of $\Delta_{\omega_{T^4}}$ with eigenvalue $\frac{t^2}{2}$ is clearly contained in $\ker F_{\tilde\omega_t}$, which makes it of positive dimension.
\end{rmk}

\begin{exa}
    The orbifold quotient constructed by Sferruzza and Tomassini, see \cite{ST},  provides an example on which we can apply Theorem \ref{glueorbOm}. Indeed, the action the authors consider preserves the flat torus metric $\omega_{T^4}$ on the base, making $\pi^*\omega_{T^4}^2$ a suitable choice for $\tilde \Omega$ also on the orbifold quotient and hence allowing us - through Theorem \ref{glueorbOm} - to produce Chern-Ricci flat balanced metrics on the crepant resolution. Moreover, it is worth underlining that the action also preserves the metrics $\omega_t$ considered in Remark \ref{iwansbal}, showing that the kernel of the operator $F_\omega$ can be either zero or non-zero also in the orbifold setting.
\end{exa}

\begin{rmk}
    The fact that for the same metric (and corresponding balanced class) we are able to achieve our constructions with both ansatz, suggests the expected fact that Chern-Ricci flat balanced metrics, as well as  constant Chern scalar balanced metrics, might have very large moduli space inside the balanced class. Hence, in order to be able to geometrize such classes, we forsee the need to introduce additional constraint, possibly on the torsion.
\end{rmk}

 Let us  conclude this subsection  showing that  on  $(M, \tilde \omega_1)$, we have
 $$\ker \Delta_{\bar \partial }^T=\langle \varphi^{\bar 1},\varphi^{\bar 2}, \varphi^{\bar 3} \rangle\,.$$
 First of all, we  observe that, since the symmetrization process,  introduced by Belgun in \cite{Bel},  commutes with $*$,$ \bar \partial $, $\partial $, every element in $\ker \Delta^T_{ \bar \partial }$ has to be left-invariant. Furthermore, it is easy to see that $$
(\ker  \Delta^T_{\bar \partial })^{\flat}\cap \ker\bar \partial =\mathcal H_{\bar \partial}^{0,1} =\langle \varphi^{\bar 1}, \varphi^{\bar 2}\rangle\,.
$$
 Moreover, one can easily see  
 that $\Delta_{\bar \partial }\varphi^{\bar 3 }=\varphi^{\bar 3 }$ while  $\frac12\Lambda^2(i\bar \partial  \varphi^{\bar 3 }\wedge \partial \tilde\omega_1)=-\varphi^{\bar 3}$, giving that 
$$
\Delta_{\bar \partial }\varphi^{\bar  3}+\frac12\Lambda^2(i\bar \partial \varphi^{\bar 3 } \wedge \partial \tilde\omega_1)=0\,.
$$
 This allows us to obtain the claim.
\subsection{Nakamura Manifolds}\label{naka}
The final example we want to discuss is the one of Nakamura manifolds, as constructed by Cattaneo and Tomassini in \cite{CT}.\par
To briefly recall the construction of Nakamura manifolds, fix $M \in \text{SL}(n,\mathbb Z)$ to be diagonalizable and let $P \in \text{GL}(n,\mathbb R)$ such that $P^{-1}MP=\text{diag}(e^{\lambda_1},...,e^{\lambda_n})$, where  $\lambda_i\in \R$ are  such that 
 \begin{equation}\label{condizsl}
    \sum_{i=1}^n \lambda_i =0\,.
    \end{equation} 
    From here, we can consider the group action $\rho:\mathbb C \rightarrow \text{GL}(n,\mathbb C)$ given by 
    $$\rho(w):=\text{diag}\left(e^{\frac{\lambda_1}{2}(w+\bar w)},\ldots,e^{\frac{\lambda_n}{2}(w+\bar w)}\right),$$
    which allows  to construct the semidirect product $G_M:=\mathbb C \ltimes_\rho \mathbb C^n$. Then, we build the lattices
    \[
    \Gamma_\tau':=\mathbb Z \oplus i\tau\cdot\mathbb Z \quad \text{and} \quad \Gamma_P'':=\mathbb{Z}^n \oplus  iP\cdot \mathbb{Z}^n,
    \]
    where $\tau \in \mathbb R \setminus \{0\}$. After noticing that $\Gamma_{P,\tau}:=\Gamma_\tau'\ltimes_\rho \Gamma_P'' \leq G_M$, we  define Nakamura manifolds as the quotients
    \[
    N=N_{M,P,\tau}:=G_M/\Gamma_{P,\tau}.
    \]
    These $(n+1)$-dimensional compact manifolds inherit the left-invariant coframe 
    \[
    \varphi^0:=dw\,, \quad \varphi^j:=e^{-\frac{\lambda_j}{2}(w+\bar w)}dz_j\,, \quad  j=1, \ldots, n,
    \]
    of $(1,0)$-forms from $G_M$  satisfying the following structure equations:
    \[
    d\varphi^0=0, \quad d\varphi^j=-\frac{\lambda_j}{2}(\varphi^0+\bar\varphi^0)\wedge\varphi^j\,,\quad  \, j=1, \ldots, n\,.
    \]
   It can  then be easily checked that 
    \[
    \tilde\omega_t:=\frac i2\left(t^2\varphi^0\wedge\bar\varphi^0+\sum_{j=1}^n\varphi^j\wedge\bar\varphi^{j}\right) \quad \text{and} \quad \Theta:=\varphi^0\wedge \ldots \wedge \varphi^n
    \]
    define respectively a family of balanced metrics and a holomorphic volume form of constant 
    $\tilde\omega_t$-norm on $N$, making $\tilde\omega_t$ also Chern-Ricci flat, and $N$ a Calabi-Yau manifold.
    \begin{exa}\label{nakansbal}
    Following the discussion in \cite[Section 5]{CT}, any Nakamura manifold inherits a  holomorphic $T^{2n}$-bundle over a $2$ dimensional torus. A fundamental difference with the previous example is that now the metrics induced by $\tilde\omega_t$ on the fibres are not equal, but they vary depending on the base parameter $w$. Nevertheless, it is straightforward to notice (by fixing a fiber, rescaling the coordinates and using \eqref{condizsl}) that the Laplacians induced on each fibre all share the same eigenvalues, which are also the ones of the flat metric on $T^{2n}$. Hence, we can repeat the argument used in Example \ref{iwansbal} to obtain once again that the operator $F_{\tilde\omega_t}$ has vanishing kernel up to a countable set of values for $t$.
    \end{exa}

On the other hand, Theorem \ref{immain} \emph{cannot} be applied to these manifolds, as we explain in the following remark.
\begin{rmk}\label{noOmnak}
Thanks to the symmetrization process, we can reduce ourselves to work with invariant forms.
We will prove the claim in complex dimension $3$. In this case, the structure equations are:
$$
d\varphi^0=0\,, \quad d\varphi^1=-\frac{\lambda}{2}(\varphi^0+\bar \varphi^{ 0})\wedge \varphi^1\,, \quad d\varphi^1=\frac{\lambda}{2}(\varphi^0+\bar \varphi^{ 0})\wedge \varphi^1\,, \quad \lambda \in \R\backslash\{0\} \,.
$$
Using these, one can easily prove that 
\begin{equation}\label{ippzero}
i\partial \bar \partial(\varphi^{i}\wedge \bar \varphi^{j})=0\,, \quad i<j\,,
\end{equation} while 
\begin{equation}\label{ippnonzero}
i\partial \bar \partial (\varphi^{0}\wedge \bar \varphi^0)=0\,, \quad i\partial \bar \partial (\varphi^{1}\wedge \bar \varphi^1)=i \lambda^2\varphi^0\wedge \bar \varphi^0\wedge \varphi^1\wedge \bar \varphi^1\,, \quad i\partial \bar \partial (\varphi^{2}\wedge \bar \varphi^2)=i \lambda^2\varphi^0\wedge \bar \varphi^0\wedge \varphi^2\wedge \bar \varphi^2\,.
\end{equation} Then, given a general left-invariant $\tilde \Omega\in \Lambda_{\R}^{1,1}(N)$, we can write it as:
$$
\tilde \Omega=i\sum_{i,j=0}^2a_{i\bar j }\varphi^i\wedge \bar \varphi^j\,.
$$ Then, using \eqref{ippzero} and \eqref{ippnonzero}, we have that 
$$
i\partial \bar \partial \tilde \Omega=i^2\lambda^2(a_{1\bar 1 }\varphi^0\wedge \bar \varphi^0\wedge \varphi^1\wedge \bar \varphi^1+ a_{2\bar 2}\varphi^0\wedge \bar \varphi^0\wedge \varphi^2\wedge \bar \varphi^2)\,,
$$
and hence
 $$
\Lambda^2_{\tilde\omega_t}(i\partial \bar \partial \tilde \Omega)\frac{\tilde\omega_t^3}{3!}=2i\partial \bar \partial \tilde \Omega \wedge \tilde\omega_t\,.
$$
 On the other hand, we easily see that 
 $$
i\partial \bar \partial \tilde \Omega \wedge \tilde\omega_t=i^3\lambda^2(a_{1\bar 1}+ a_{2\bar 2})\varphi^0\wedge \bar \varphi^0\wedge \varphi^1\wedge \bar \varphi^1\wedge\varphi^2\wedge \bar \varphi^2=\lambda^2(a_{1\bar 1}+ a_{2\bar 2})\frac{\tilde\omega_t^3}{3!}\,.
$$From which we conclude that $\Lambda^2_{\tilde\omega_t}(i\partial \bar \partial \tilde \Omega)=\lambda^2(a_{1\bar 1}+ a_{2\bar 2})$. Now, $\Lambda^2_{\tilde\omega_t}(i\partial \bar \partial \tilde \Omega)\le 0 $ if and only if $a_{1\bar 1}+ a_{2\bar 2}\le 0 $.  But, computing $ \tilde\omega_t\wedge \tilde \Omega$ and imposing its positivity, we  need to have  $a_{1\bar 1}+ a_{2\bar 2}>0$ which is absurd, giving the claim.
\end{rmk}
In the three dimensional case, fixing $\tilde \omega_1$ as the reference balanced metric,  we can easily infer that 
$$
\ker \Delta_{\bar \partial }^T=\langle\varphi^{\bar 0}, \varphi^{\bar 1}, \varphi^{\bar 2} \rangle\,.
$$
Indeed, we have that
$
\mathcal H_{\bar\partial }^{0,1}=\langle\varphi^{\bar 0 }\rangle
$,
and one can easily obtain the claim verifying that 
$$
\frac12\Lambda_{\tilde \omega_1}^{2}(i\bar \partial \varphi^{\bar j}\wedge \partial \tilde \omega_1)=-\frac{\lambda^2}{4}\varphi^{\bar j}=-\Delta_{\bar \partial }\varphi^{\bar j }\,, \quad j=1,2\,.
$$

\end{document}